\numberwithin{equation}{section}
\newcommand{\He}{\mathop{\mathrm{He}}\nolimits}
\newcommand{\ii}{{\rm{i}}}
\newcommand{\bD}{\mathbb{D}}
\newcommand{\cD}{\mathcal{D}}
\newcommand{\cM}{\mathcal{M}}
\def\dint{\textup{d}}
\newcommand{\E}{\mathbb E}
\newcommand{\R}{\mathbb{R}}
\newcommand{\N}{\mathbb{N}}
\newcommand{\C}{\mathbb{C}}
\newcommand{\Z}{\mathbb{Z}}
\renewcommand{\P}{\mathbb{P}}
\renewcommand{\Re}{\operatorname{Re}}
\renewcommand{\Im}{\operatorname{Im}}
\newcommand{\eps}{\varepsilon}
\newcommand{\toprobab}{\overset{\P}{\underset{n\to\infty}\longrightarrow}}
\newcommand{\toasj}{\overset{a.s.}{\underset{j\to\infty}\longrightarrow}}
\newcommand{\ton}{\overset{}{\underset{n\to\infty}\longrightarrow}}
\newcommand{\ind}{\mathbbm{1}}
\newcommand{\dd}{{\rm d}}
\newcommand{\eee}{{\rm e}}
\theoremstyle{plain}
\newtheorem{theorem}{Theorem}[section]
\newtheorem{lemma}[theorem]{Lemma}
\newtheorem{corollary}[theorem]{Corollary}
\newtheorem{proposition}[theorem]{Proposition}
\newtheorem{conjecture}[theorem]{Conjecture}
\theoremstyle{definition}
\newtheorem{definition}[theorem]{Definition}
\newtheorem{example}[theorem]{Example}
\newtheorem{openproblem}[theorem]{Open Problem}
\theoremstyle{remark}
\newtheorem{remark}[theorem]{Remark}
\begin{document}

\author{Brian C. Hall}
\address{Brian C. Hall: University of Notre Dame, Notre Dame, IN 46556, USA
}
\email{bhall@nd.edu}

\author{Ching-Wei Ho}
\address{Ching-Wei Ho: Institute of Mathematics, Academia Sinica, Taipei 10617, Taiwan
}
\email{chwho@gate.sinica.edu.tw}

\author{Jonas Jalowy}
\address{Jonas Jalowy: Institut f\"ur Mathematische Stochastik,
Westf\"alische Wilhelms-Universit\"at\linebreak M\"unster,
Orl\'eans-Ring 10,
48149 M\"unster, Germany}
\email{jjalowy@uni-muenster.de}

\author{Zakhar Kabluchko}
\address{Zakhar Kabluchko: Institut f\"ur Mathematische Stochastik,
Westf\"alische Wilhelms-Universit\"at M\"unster,
Orl\'eans-Ring 10,
48149 M\"unster, Germany}
\email{zakhar.kabluchko@uni-muenster.de}

\title
{
Zeros of random polynomials undergoing the heat flow
}

\date{\today}

\keywords{Random polynomials, complex zeros, empirical distribution of zeros, weak convergence,  heat flow, random matrices, circular law, Wigner's semicircle law, elliptic law, Hamilton--Jacobi PDE's, Burgers' PDE, Weyl polynomials, Littlewood--Offord polynomials, Kac polynomials, Hermite polynomials, logarithmic potential theory, free probability, Stieltjes transform, optimal transport, point processes,  Wasserstein geodesic, hydrodynamic limit}

\subjclass[2020]{Primary: 60B20, 35K05; Secondary: 30A08, 30A06, 31A05, 60B10, 30D20
 46L54, 35F21, 35F20, 49L25, 49L12}

\begin{abstract}
We investigate the evolution of the empirical distribution of the complex roots of high-degree random polynomials,  when the polynomial undergoes the heat flow. In one prominent example of Weyl polynomials, the limiting zero distribution evolves from the circular law into the elliptic law until it collapses to the Wigner semicircle law, as was recently conjectured for characteristic polynomials of random matrices by Hall and Ho, 2022. Moreover, for a general family of random polynomials with independent coefficients and isotropic limiting distribution of zeros, we determine the zero distribution of the heat-evolved polynomials in terms of its logarithmic potential. Furthermore, we explicitly identify two critical time thresholds, at which singularities develop and at which the limiting distribution collapses to the semicircle law. We completely characterize the limiting root distribution of the heat-evolved polynomials before singularities develop as the push-forward of the initial distribution under a transport map. Finally, we discuss the results from the perspectives of partial differential equations (in particular Hamilton--Jacobi equation and Burgers' equation), optimal transport, and free probability. The theory is accompanied by explicit examples, simulations, and conjectures.
\end{abstract}

\maketitle
\tableofcontents

\section{Introduction}\label{sec:intro}
This paper is concerned with the behavior of zeros of high-degree random polynomials undergoing the heat flow.
In order to motivate the study, we shall begin with the counterpart of our problem in the setting of random matrices, the heat-flow conjecture introduced by Hall and Ho in~\cite{hallho}.

Let us recall two fundamental results from the world of random matrices, the circular law and semicircle law. Consider a non-degenerate complex-valued random variable $\xi$ with zero mean and unit variance and let $X_n$ denote an $n\times n$-random matrix whose entries are independent realizations of $\xi$.
The \emph{circular law} was introduced by Ginibre \cite{ginibre} for Gaussian entries and by Girko \cite{girko_circular_law} for general entries, and then established in increasing generality by many authors, including Bai \cite{bai}, G\"otze--Tikhomirov \cite{GT10Circular},  and Tao--Vu \cite{tao_vu_circular}. The result states that the empirical eigenvalue distribution of the non-Hermitian random matrix $n^{-1/2} X_n$ converges weakly as $n\to\infty$ to the uniform distribution on the complex unit disk $\{|z|\leq 1\}$. On the other hand, the \textit{semicircle law} was
introduced by Wigner \cite{wigner55,wigner58} and states that the empirical eigenvalue distribution of the Hermitian random matrix $(2n)^{-1/2} (X_n + X^*_n)$ converges weakly to the Wigner semicircle distribution $\mathsf{sc}_1$ with density $\frac 1 {2\pi}\sqrt{4-x^2}$ on the interval $[-2,2]$. Note that if $\xi\sim\mathcal N_{\C}(0,1)$ has standard complex Gaussian distribution, then $X_n$ is a so-called Ginibre matrix, while $(X_n + X_n^*)/\sqrt 2$ is a GUE matrix. We refer to~\cite{byun2022progress} for a complete overview of Ginibre matrices, to~\cite{BC12} for a great survey on the circular law and to~\cite[Theorem~2.1.1]{AGZ} for a proof of the semicircle law and random matrix theory in general.

Meanwhile, the family of \textit{elliptic laws}, introduced by Girko \cite{girko_elliptic} and studied also in
\cite{naumov,nguyenorourke,
akemann2018universality,byun2023real,ameur2023almost,fyodorov1997almost}, provides a continuous interpolation between these fundamental results. 
The elliptic law with Hermiticity parameter $t\in (-1,1)$ is defined as the uniform distribution on the ellipse
$$
\mathcal E_t \coloneqq \left\{z\in\C:\frac{(\Re z)^2}{(1+t)^2} + \frac{(\Im z)^2}{(1-t)^2} \leq 1\right\}.
$$
Note that for $t=0$ one recovers the circular law, while at $t\uparrow 1$, the ellipse collapses to the interval $[-2,2]$ and the elliptic law converges to the Wigner semicircle distribution on this interval. The elliptic laws appear as the limits for the empirical eigenvalue distributions of the matrices
$$
\sqrt{1+t} \,\frac{X_n+X_n^*}{2\sqrt{n}}+\sqrt{1-t}\,\frac{X_n-X_n^*}{2\sqrt{n}}.
$$


It is conjectured in \cite{hallho} that the transition from circular to elliptic can be seen as the evolution of the corresponding characteristic polynomials under the heat flow. Specifically, applying the heat flow for time $t$ to the characteristic polynomial of a Ginibre matrix conjecturally should give a new polynomial whose zeros are asymptotically uniform on the ellipse $\mathcal E_t$. This claim is an example of a more general \emph{heat-flow conjecture}, which has been studied in \cite{hallho}. In the above example, the uniform distribution on $\mathcal E_t$ is easily seen to be the 
push-forward of the circular law under the transport map $T_t(w)=w+t\bar w$. The results in \cite{hallho} suggest that the individual zeros of the heat-evolved characteristic polynomial should move approximately along curves of the form $t\mapsto T_t(w)$. 

In this work, we prove an analogue of the heat-flow conjecture in the setting of random polynomials with independent coefficients.
Let us summarize the results of this paper.
\begin{itemize}
\item We confirm the transition between the above-mentioned circular, elliptic and semicircle laws for random Weyl polynomials undergoing the heat flow, as their degree tends to infinity.
\item More generally, we study a large class of rotationally invariant initial distributions $\nu_0$ that can arise as limiting empirical root distributions of random polynomials with independent coefficients. After the heat flow at time $t$, we determine the zero distribution $\nu_t$ of the heat-evolved polynomials  in terms of its logarithmic potential.
\item We find two explicit critical times of the heat flow: Singularities develop in the limiting distribution $\nu_t$ at time $t=t_{\mathrm{sing}}$ and $\nu_t$ collapses to the semicircle distribution at time $t= t_{\mathrm{Wig}}$.
\item For times $0<t<t_{\mathrm{sing}}$, we give a complete description of the non-singular distribution $\nu_t$ as the push-forward of $\nu_0$ under an explicit transport map $T_t$, which turns out to be linear in $t$. In the case of Weyl polynomials, $T_t$ is optimal in Wasserstein-$2$-sense.
\item We exemplify the theory by the several examples including Weyl, Kac and Littlewood--Offord random polynomials.
\item These results will be accompanied by various perspectives, such as Hamilton--Jacobi equations for the logarithmic potential, Burgers' equation for the Stieltjes transform and a strong connection to free probability:
\item If the initial distribution $\nu_0$ has a freely additive circular component, then the effect of the heat flow can be viewed as increasing a semicircle component in the real direction and decreasing it in the imaginary direction.
\end{itemize}

Some of the results listed above could be predicted using the heuristic
arguments developed in \cite{hallho} to support the heat flow conjecture. See, for
example, Section \ref{subsec:heuristics} in the present paper, where one such heuristic argument
is explained. We emphasize, however, that it is currently not known how to
carry out a rigorous analysis of the $n\rightarrow\infty$ limit using the
methods in \cite{hallho}. The setting of random polynomials with independent
coefficients---including Weyl polynomials and more generally the polynomials
in \cite{KZ14}---allows for rigorous results using completely different techniques.
We thus obtain the first rigorous results for polynomials with complex roots
evolving under the heat flow.

The starting point for our approach is Theorem \ref{theo:main_general_g}. This theorem expresses the
logarithmic potential of the limiting root distribution of a heat-evolved
random polynomial as the supremum of a certain expression, involving the
\textquotedblleft exponential profile\textquotedblright\ of the initial
distribution $\nu_{0}$ and the logarithmic potential of the semicircle
distribution. The proof of Theorem \ref{theo:main_general_g}, meanwhile, generalizes the pattern
laid out in \cite{KZ14}. Once this theorem is proved, we then want to (1) compute the
supremum as explicitly as possible (which is a nontrivial task already in
simple special cases such as Kac polynomials), and (2) understand how the
limiting root distribution evolves in time. The time-evolution of the limiting
root distribution can be expressed either (a) by a transport theorem (Section
\ref{sec:transport_map}), (b) by the PDE\ satisfied by its log potential (Section \ref{subsec:PDE_perspective}), or (c) in
the language of free probability (Section \ref{sec:free_prob}). The tasks (1) and (2) occupy the
technical heart of the paper and give the results stated above in terms newly
available objects such as transport maps.

The results of this paper belong to an active research area that investigates how differential operators acting on random functions affect their zeros, see \cite{Byun,OSteiner,KT22,Steiner21,hallho,COR23,diff-paper, ZakharLeeYang,tao_blog1,rodgers_tao,Angst}.
We will review related literature during the course of our presentation, which is organized as follows. In Section \ref{sec:main_results}, we first present  our main result in the special case of Weyl polynomials and then state the general main result for arbitrary isotropic distributions of zeros. In Section \ref{sec:transport_map}, we describe the limiting distribution in terms of the push-forward under a transport map and apply the results to explicit examples in Section \ref{sec:examples}. Sections \ref{sec:Further_perspective} and \ref{sec:free_prob} are devoted to the connections of heat-evolved random polynomials to PDE's (\S \ref{subsec:PDE_perspective}), optimal transport (\S \ref{subsec:OT}), and free probability (\S \ref{sec:free_prob}). Up to this point, we aim to keep the presentation free of technicalities. The proof of the general main theorem will be given in Section \ref{sec:proof_main_thm} and proofs of its implications can be found in Section \ref{sec:proof_nu_t}.

\section{Main results}
\label{sec:main_results}
Let us now be more precise, fix notation and rediscover the semicircle and circular law in the world of (random) polynomials.

\subsection{Heat-flow operator and notation}
Given a complex ``time'' $s\in \C$, the action of the heat-flow operator on a polynomial  $P(z)$  is defined by the terminating series
$$
\exp\left\{-\frac {s} {2} \partial_z^2 \right\} P(z) =  \sum_{j=0}^\infty \frac 1 {j!}\left(-\frac {s}{2}\right)^j \partial_z^{2j} P(z),
$$
where $\partial_z$ denotes differentiation in the complex variable $z$.
More precisely, in the course of this paper, we will use the following notation. If $z= x+\ii y$ is a \textit{complex variable}, then the Wirtinger derivatives (or the Cauchy--Riemann operators) will be denoted by
\begin{align}\label{eq:Wirtinger}
\partial_z  = \frac 12 \left(\frac{\partial}{\partial x} - \ii \frac{\partial}{\partial y}\right),
\qquad
\partial_{\bar{z}}  = \frac 12 \left(\frac{\partial}{\partial x} + \ii \frac{\partial}{\partial y}\right).
\end{align}
On the other hand, the notation $\frac{\partial}{\partial x}, \frac{\partial}{\partial y},\ldots$ will be used to denote partial derivatives in the \textit{real} variables $x,y,\ldots$.

In the special case when $P(z) = z^n$ and $s=1$, the heat-evolved polynomial leads to the classical (probabilist) Hermite polynomials $\He_0(z) = 1, \He_1(z) = z, \He_2(z)=z^2-1,\ldots$ given by
\begin{equation}\label{eq:hermite_poly_def}
\He_n(z)
=
\exp\left\{-\frac 1 2  \partial_z^2\right\} z^n
=
n! \sum_{m=0}^{[n/2]}  \frac{(-1)^m }{m!  2^m} \cdot \frac{z^{n-2m}}{(n-2m)!},
\qquad n\in \N_0.
\end{equation}

The \textit{empirical distribution of zeros} of an algebraic polynomial $P(z)$ of degree $n\in \N$, i.e.\ the probability measure assigning to each zero the same weight $1/n$, will be denoted by
\begin{equation}\label{eq:empirical_distr_zeros_def}
 \llbracket  P \rrbracket \coloneqq \frac 1n \sum_{\substack{z\in \C:\, P(z) = 0}} \delta_z.
\end{equation}
By convention, the zeros are always counted with multiplicities.\footnote{Also, we agree that if $P$ is constant, we define $\llbracket P \rrbracket \coloneqq0$. For random polynomials that we consider, such as Weyl polynomials, the probability of this event converges to $0$ as $n\to\infty$.}
For instance, it is well known~\cite{gawronski_strong_asymptotics} that  the limiting empirical distribution $ \llbracket  \He_n(\sqrt {\tfrac n t} z)\rrbracket  $ of rescaled Hermite polynomials converges weakly to the semicircle distribution $\mathsf{sc}_t$ on the interval $[-2\sqrt t,2 \sqrt t]$ with Lebesgue density $x\mapsto \sqrt{4t - x^2}/(2\pi t)$, for $t>0$.
This fact, together with the formula
$\exp\left\{-\frac{t}{2n} \partial_z^2\right\} z^n =  (\tfrac t n)^{n/2} \He_n(\sqrt{\tfrac n t }z)$ explains why it is natural to choose the time scaling 
$$s= t/n
$$ 
when applying the heat-flow operator to a polynomial of degree $n\in \N$. This time scaling will be used in the sequel.

\subsection{Weyl polynomials}
The first main result of the present paper verifies a  modification of the heat-flow conjecture in which the characteristic  polynomial of the Ginibre random matrix is replaced by the so-called Weyl polynomial, which has the same limiting distribution of zeros.  The \textit{Weyl polynomials} are defined by
\begin{equation}\label{eq:weyl_poly_def}
W_n(z) = \sum_{k=0}^n \frac{(\sqrt n \, z)^k}{\sqrt{k!}}\xi_k,
\end{equation}
where $\xi_0,\xi_1,\ldots$ are i.i.d.\ complex-valued, non-degenerate (i.e., non-deterministic) random variables with
$$
\E \log (1+ |\xi_0|)<\infty.
$$

It is known~\cite[Theorem~2.3]{KZ14} that the empirical distribution of zeros of the Weyl polynomials converges, as $n\to\infty$, to the uniform distribution on the unit disk $\bD$ in the following sense. Let  $\cM(\C)$ be the space of finite measures on $\C$ endowed with the weak topology (which is generated by the L\'evy--Prokhorov metric, for example).\footnote{If $\mu_1,\mu_2,\ldots$ and $\mu$ are random elements with values in $\cM(\C)$, then $\mu_n$ converges weakly to $\mu$ in probability (respectively, almost surely) if and only if $\int_\C f \dd \mu_n \to \int_\C f \dd \mu$ in probability (respectively, almost surely) for every bounded continuous function $f:\C\to\R$; see~\cite{berti_pratelli_rigo_almost_sure_weak_conv}.} Then, $\llbracket W_n\rrbracket $, viewed as a random element in the metric space $\cM(\C)$, converges weakly a.s.\ to the uniform distribution on the unit disk $\{|z|\leq 1\}$. The next theorem, which can be seen as a random-polynomials analogue of the heat-flow conjecture,  describes the limit distribution for zeros of the \textit{heat-evolved} Weyl polynomials $\exp\{-\frac t{2n} \partial_z^2\} W_n$, as $n\to\infty$.  For an illustration, see Figure~\ref{fig:weyl_heat_flow}.

\begin{theorem}\label{theo:main_weyl}
For every $t\in \C$, the empirical measure of zeros of the polynomial $\exp\{-\frac t{2n} \partial_z^2\} W_n$, viewed as a random element with values in $\cM(\C)$, converges in probability to a deterministic probability measure $\nu_t$ that can be described as follows.
\begin{itemize}
\item[(i)] If $t \in (0,1)$, then $\nu_t$ is the uniform distribution on the ellipse
\begin{equation}\label{eq:ellipse_E_t_def}
\mathcal E_t = \left\{z= x+\ii y\in \C: \frac{x^2}{(1+t)^2} + \frac{y^2}{(1-t)^2} \leq 1\right\}.
\end{equation}
\item[(ii)] If $t\geq 1$, then $\nu_t$ is the Wigner law supported on the interval $[-2\sqrt t,+2 \sqrt t]$.
\item[(iii)] If $t = |t| \eee^{\ii \phi} \in \C$ is complex, then $\nu_t$ is the push-forward of $\nu_{|t|}$ under the rotation  map $z\mapsto z \eee^{\ii \phi/2}$.
\end{itemize}
\end{theorem}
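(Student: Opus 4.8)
The plan is to reduce the theorem to the convergence of logarithmic potentials and then to solve a one-parameter variational problem. Throughout write $Q_n:=\exp\{-\tfrac{t}{2n}\partial_z^2\}W_n$ and $U_\mu(z):=\int_\C\log|z-w|\,\mu(\dd w)$.

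\emph{Step 1: Hermite representation and reduction.} From $\exp\{-\tfrac s2\partial_z^2\}z^k=s^{k/2}\He_k(z/\sqrt s)$, immediate from \eqref{eq:hermite_poly_def}, applied termwise to \eqref{eq:weyl_poly_def} with $s=t/n$ one gets
\[
Q_n(z)=\sum_{k=0}^{n}\frac{t^{k/2}}{\sqrt{k!}}\,\He_k\!\Bigl(\sqrt{\tfrac nt}\,z\Bigr)\,\xi_k,
\]
so $Q_n$ is a random polynomial of degree $n$ with \emph{deterministic} coefficient polynomials $c_{k,n}(z):=\tfrac{t^{k/2}}{\sqrt{k!}}\He_k(\sqrt{n/t}\,z)$ and \emph{independent} randomizers $\xi_k$; its leading coefficient equals $\tfrac{n^{n/2}}{\sqrt{n!}}\xi_n$, so $\tfrac1n\log|\mathrm{lc}(Q_n)|\to\tfrac12$ a.s. Since $\int_\C\log|z-w|\,\lsem Q_n\rsem(\dd w)=\tfrac1n\log|Q_n(z)|-\tfrac1n\log|\mathrm{lc}(Q_n)|$, it suffices to prove $\tfrac1n\log|Q_n(z)|\to\Phi_t(z)$ for Lebesgue-a.e.\ $z$ and in $L^1_{\mathrm{loc}}(\C)$, for an explicit $\Phi_t$, and then to identify $\nu_t:=\tfrac1{2\pi}\Delta\Phi_t$. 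The passage from this to $\lsem Q_n\rsem\toprobab\nu_t$ in $\cM(\C)$ is the standard potential-theoretic argument (uniform integrability of the potentials plus the distributional Laplacian; cf.\ \cite{KZ14}).

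\emph{Step 2: the variational formula.} The triangle inequality $|Q_n(z)|\le(n{+}1)\max_k|c_{k,n}(z)|\,|\xi_k|$ together with $\max_{k\le n}\tfrac1n\log^+|\xi_k|\to0$ a.s.\ (Borel--Cantelli, using $\E\log(1+|\xi_0|)<\infty$) gives the upper bound. For the asymptotics of $|c_{k,n}|$ one uses that $\He_m$ has all its zeros in $[-2\sqrt m,2\sqrt m]$ and that $\lsem\He_m(\sqrt m\,\cdot)\rsem\to\mathsf{sc}_1$ (\cite{gawronski_strong_asymptotics}), whence $\tfrac1m\log|\He_m(\sqrt m\,\zeta)|-\tfrac12\log m\to U_{\mathsf{sc}_1}(\zeta)$ locally uniformly for $\zeta\in\C\setminus[-2,2]$. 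Substituting $m=\lfloor un\rfloor$, $\zeta=z/\sqrt{tu}$ and using Stirling (all $\log n$ and $\log u$ terms cancel), one obtains for a.e.\ $z\in\C$
\[
\Phi_t(z)=\sup_{u\in[0,1]}\gamma_t(u,z),\qquad\gamma_t(u,z):=\tfrac u2\bigl(1+\log t\bigr)+u\,U_{\mathsf{sc}_1}\!\bigl(z/\sqrt{tu}\bigr),
\]
with $\gamma_t(0,z):=0$.

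\emph{Step 3: solving the variational problem; complex $t$.} Differentiating in $u$ (and using $u\,\partial_u[U_{\mathsf{sc}_1}(z/\sqrt{tu})]=-\tfrac12\Re[\zeta G_{\mathsf{sc}_1}(\zeta)]$ with $\zeta=z/\sqrt{tu}$ and $G_{\mathsf{sc}_1}(\zeta)=\tfrac12(\zeta-\sqrt{\zeta^2-4})$) gives
\[
\partial_u\gamma_t(u,z)=\tfrac12\log t+h_0\bigl(z/\sqrt{tu}\bigr),\qquad h_0(\zeta):=\tfrac12+U_{\mathsf{sc}_1}(\zeta)-\tfrac12\Re\bigl[\zeta\,G_{\mathsf{sc}_1}(\zeta)\bigr].
\]
One checks that $h_0$ is harmonic on $\C\setminus[-2,2]$, vanishes on $[-2,2]$, and equals $\log|\zeta|+o(1)$ at infinity; hence $h_0$ is the Green function of $\C\setminus[-2,2]$ with pole at $\infty$ — concretely $h_0(\zeta)=\log|\psi(\zeta)|$ for the exterior conformal map $\psi(\zeta)=\tfrac12(\zeta+\sqrt{\zeta^2-4})$ — it is $\ge0$, and its sublevel sets $\{h_0\le c\}$, $c>0$, are the confocal ellipses. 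Consequently $\partial_u\gamma_t(u,z)<0$ exactly when $z$ lies inside the rescaled ellipse $\sqrt u\,\cE_t$, and $\gamma_t(\cdot,z)$ is unimodal on $[0,1]$. For $t\ge1$ (so $\tfrac12\log t\ge0$) this never happens, the supremum is at $u=1$, and $\Phi_t(z)=\tfrac12+\tfrac12\log t+U_{\mathsf{sc}_1}(z/\sqrt t)=\tfrac12+U_{\mathsf{sc}_t}(z)$, so $\nu_t=\mathsf{sc}_t$: this is (ii). For $t\in(0,1)$ the supremum is at $u=1$ if $z\notin\cE_t$, and at the interior point $u^\ast(z)=\tfrac{(\Re z)^2}{(1+t)^2}+\tfrac{(\Im z)^2}{(1-t)^2}=|T_t^{-1}(z)|^2$ if $z\in\cE_t$, where $T_t(w):=w+t\bar w$; evaluating $\gamma_t$ there — a Joukowski-map computation using $\hat U_{\mathsf{sc}_1}(\zeta)=\log\psi(\zeta)+\tfrac1{2\psi(\zeta)^2}$ — identifies $\Phi_t(z)-\tfrac12$ with $U_{\mathrm{unif}(\cE_t)}(z)$, so $\nu_t=\mathrm{unif}(\cE_t)$; and since $T_t$ maps $\bD$ onto $\cE_t$ with constant Jacobian $1-t^2$ this equals $(T_t)_\ast\nu_0$, the push-forward of the circular law, giving (i). For complex $t=|t|\eee^{\ii\phi}$ one substitutes $z=\eee^{\ii\phi/2}\zeta$: then $t\,\partial_z^2=|t|\,\partial_\zeta^2$ and $W_n(\eee^{\ii\phi/2}\zeta)=\sum_k\tfrac{(\sqrt n\,\zeta)^k}{\sqrt{k!}}(\eee^{\ii k\phi/2}\xi_k)$, whose coefficients $\eee^{\ii k\phi/2}\xi_k$ are again independent with $\E\log(1+|\cdot|)<\infty$; since parts (i)--(ii) use only this property of the randomizers, the zeros of $Q_n(\eee^{\ii\phi/2}\,\cdot)$ converge to $\nu_{|t|}$, and undoing the substitution yields (iii).

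\emph{Step 4: the lower bound — the main obstacle.} What remains is the matching estimate $\liminf_n\tfrac1n\log|Q_n(z)|\ge\Phi_t(z)$, with enough uniformity in $z$ to conclude a.e.\ and $L^1_{\mathrm{loc}}$ convergence, and this is the heart of the argument. The scheme is the familiar one: fix a near-optimal index $k^\ast=\lfloor u^\ast(z)n\rfloor$, decompose $Q_n(z)=\xi_{k^\ast}c_{k^\ast,n}(z)+R_n(z)$ with $R_n$ independent of $\xi_{k^\ast}$, use $\tfrac1n\log|c_{k^\ast,n}(z)|\to\Phi_t(z)$ together with the non-degeneracy of $\xi_{k^\ast}$ to obtain the small-ball bound $\P(\tfrac1n\log|Q_n(z)|<\Phi_t(z)-\delta)\le\eps$, and promote it via a Fubini/Borel--Cantelli argument over a countable dense set of $z$, plus a domination bound on $\tfrac1n\log|Q_n|$ to absorb exceptional neighbourhoods. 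I expect the genuine difficulties to be concentrated here: (a) because the $c_{k,n}$ are genuine polynomials rather than monomials, the comparison of $|c_{k^\ast,n}(z)\xi_{k^\ast}|$ with $\sum_{k\ne k^\ast}|c_{k,n}(z)\xi_k|$ must be carried out through Plancherel--Rotach asymptotics, which degenerate on $\R$, on $\partial\cE_t$, and as $t\uparrow1$ where $u^\ast(z)$ reaches the endpoint $1$; (b) the Hermite input has to be made locally uniform jointly in $\zeta$ and in the fractional degree $u$. As a consistency check, the answer also matches the PDE picture of \S\ref{sec:Further_perspective}: Cole--Hopf turns $\partial_tQ_n=-\tfrac1{2n}\partial_z^2Q_n$ into an approximate Hamilton--Jacobi equation for $\tfrac1n\log Q_n$ whose characteristics are $w\mapsto w+tG_0(w)$, $G_0$ the Cauchy transform of the circular law, reproducing $T_t$.
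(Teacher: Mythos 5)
Your overall strategy is the one the paper actually follows: Theorem~\ref{theo:main_weyl} is proved there as the special case $g(\alpha)=-\tfrac12(\alpha\log\alpha-\alpha)$ of Theorem~\ref{theo:main_general_g}, whose proof runs exactly through your Steps 1--3 (Hermite representation \eqref{eq:heat_flow_applied_to_P_n_with_general_g}, Plancherel--Rotach asymptotics as in Lemma~\ref{lem:exonential_rate_c_k_n}, the variational formula for the limiting potential, and the distributional Laplacian), with the Weyl-specific variational analysis carried out in Section~\ref{Weyl.sec} and the collapse to $\mathsf{sc}_t$ for $t\ge1$ via Theorem~\ref{theo:collapse_to_wigner}. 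Your $\gamma_t(u,z)$ agrees with $f_t(\alpha,z)$ for the Weyl profile, your derivative computation matches Lemma~\ref{lem:f_der_in_alpha}, and your treatment of complex $t$ by rotating $z$ and absorbing the phase into the (still independent, though no longer identically distributed) randomizers is a legitimate variant of what the paper does.

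The one genuine gap is in Step 4, and it is exactly where you locate the difficulty. Isolating a \emph{single} near-optimal index $k^\ast$ and conditioning on $R_n$ gives only
$\P\bigl(|Q_n(z)|<\eee^{n(\Phi_t(z)-\delta)}\bigr)\le Q\bigl(\xi_{k^\ast},\,\eee^{n(\Phi_t(z)-\delta)}/|c_{k^\ast,n}(z)|\bigr)$, and for a non-degenerate but possibly atomic $\xi_0$ the concentration function $Q(\xi_0,r)$ does not tend to $0$ as $r\downarrow0$; it only stays below some constant $\tilde c<1$. That is not enough either for convergence in probability of $\tfrac1n\log|Q_n(z)|$ at fixed $z$ or for the Borel--Cantelli step you invoke afterwards, which needs summable probabilities along a subsequence. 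The repair — which is what the paper's Lemma~\ref{lem:Laplace} does — is to take a whole window $\mathcal J_n$ of $\Theta(n)$ indices on which $|\mathbf c_{k;n}(z)|$ is within $\eee^{-\eps n}$ of the maximum (this uses the continuity of $u\mapsto\gamma_t(u,z)$, so the window has length $\ge\delta n$), and to apply the Kolmogorov--Rogozin inequality to the sum over that window; this yields a bound $O(n^{-1/2})$, which is summable along $m_j>j^3$ and feeds the subsequence argument for convergence in probability. Your remaining concerns in (a)--(b) — degeneracy of the Hermite asymptotics on $[-2\sqrt t,2\sqrt t]$ and uniformity in the fractional degree — are real but are handled in the paper by restricting the pointwise analysis to $z\notin[-2\sqrt t,2\sqrt t]$ (a Lebesgue-null set, harmless for the $L^1_{\mathrm{loc}}$/Laplacian step) and by the uniform small-$\alpha$ estimate in the proof of Lemma~\ref{lem:exonential_rate_c_k_n}; they do not require new ideas.
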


The action of the heat flow on the plane Gaussian analytic function (which is an entire function from which Weyl polynomials with Gaussian coefficients can be obtained by truncation) has been studied in our paper~\cite{GAF-paper}.

\begin{figure}[t]
	\centering
	\includegraphics[width=0.32\columnwidth]{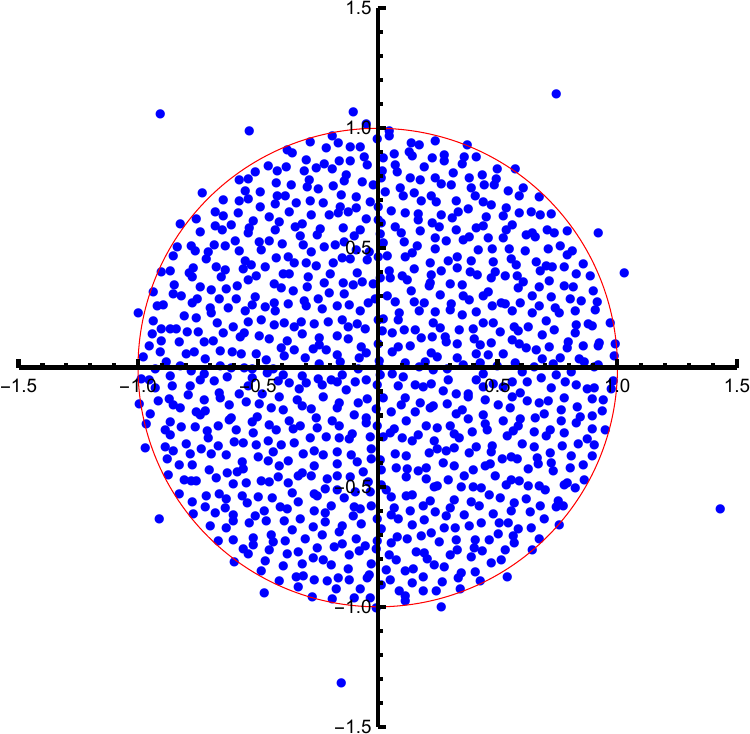}
	\includegraphics[width=0.32\columnwidth]{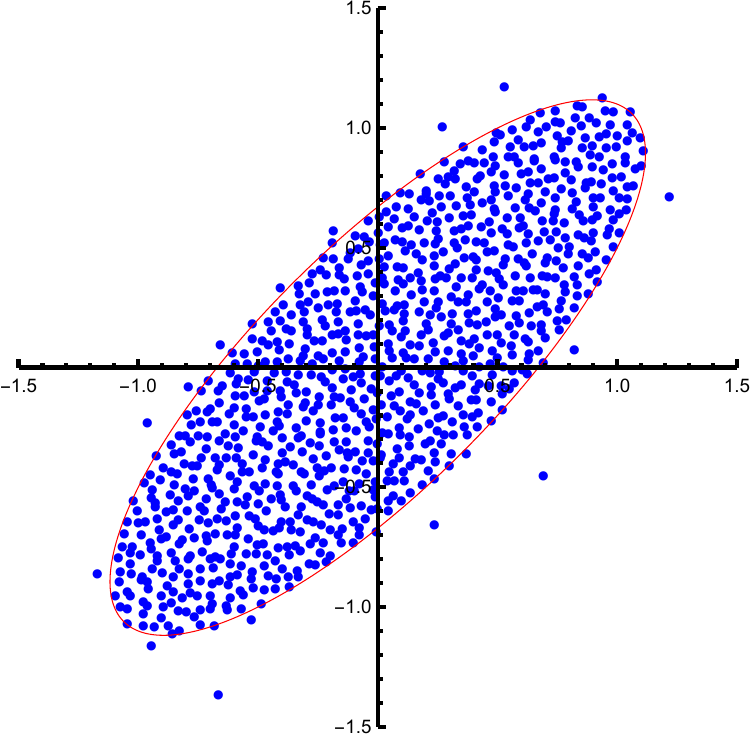}
	\includegraphics[width=0.32\columnwidth]{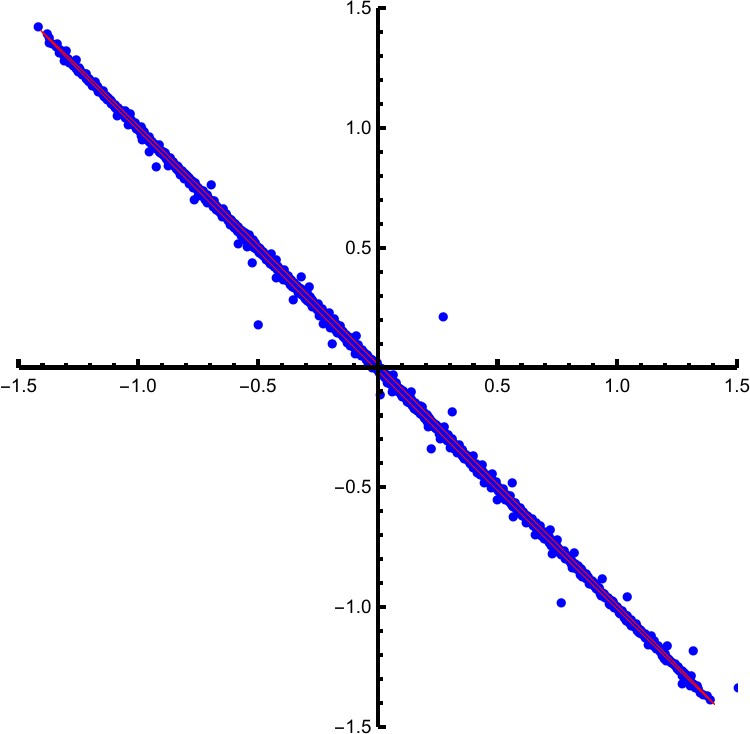}
	\caption{Zeros of $\exp\{-\frac t{2n} \partial_z^2\} W_n$ with $t=0$, $t=\ii/2$, $t=-99/100\cdot \ii$. The degree is $n=1000$.}
\label{fig:weyl_heat_flow}
\end{figure}

\subsection{Polynomials with general rotationally invariant distributions of zeros}\label{subsec:main_sec}
We are now going to study the zeros of heat-evolved polynomials with an arbitrary \textit{rotation invariant} initial distribution of zeros.
To this end, we consider the following general random polynomials
\begin{equation}\label{eq:P_n_general_g_def}
P_n(z)=\sum_{k=0}^n\xi_k a_{k;n} z^k
\end{equation}
which have been studied in \cite[\S 2.5]{KZ14}. From now on we suppose that the following conditions hold:
\begin{enumerate}
\item[(A1)]  \label{cond:A1} $\xi_0,\xi_1,\ldots$ are i.i.d.\ complex-valued, non-degenerate random variables with
\begin{align}\label{eq:condition_A1}
\E \log (1+ |\xi_0|)<\infty.
\end{align}
\item[(A2)] \label{cond:A2} $a_{k;n}$ with  $n\in \N$ and $k\in \{0,\ldots, n\}$  are deterministic complex coefficients such that for some continuous function $g:[0,1] \to \R$
    we have
\begin{equation}\label{eq:condition_A2}
\lim_{n\to\infty} \sup_{k\in \{0,\ldots, n\}} \left|\frac 1n \log |a_{k;n}| - g\left(\frac kn\right)\right|=0
\end{equation}
Moreover, the function $g$ is concave (hence, it has left and right derivatives $g_-'$ and $g_+'$) and satisfies $g_-'(1) > -\infty$.
\end{enumerate}

The function $g$ is the ``exponential profile'' of the coefficients in the sense that  $|a_{k;n}| = \eee^{n g(k/n)+ o(n)}$, as $n\to\infty$, and the error term is uniform in $k\in \{0,\ldots, n\}$.
The next theorem describes the asymptotic distribution of roots of $P_n$, as $n\to\infty$. A somewhat  more general form of this result can be found in~\cite[Theorem~2.8]{KZ14}.

\begin{theorem}[Kabluchko--Zaporozhets]\label{theo:distribution_roots_general_g_time_0}
Let $P_n$ be given by~\eqref{eq:P_n_general_g_def} and suppose that conditions    \hyperref[cond:A1]{(A1)} and    \hyperref[cond:A2]{(A2)} hold with some function $g$. Then, the sequence of empirical measures $\llbracket P_n\rrbracket $ (which are viewed as random elements with values in $\cM(\C)$) converges in probability  to a certain rotationally invariant, compactly supported, deterministic probability measure $\nu_{0}$ which is characterized by its logarithmic potential
\begin{align}\label{eq:nu_g}
U_0(z)=\int_\C\log|z-w|\nu_0(\dint w)=\sup_{\alpha\in[0,1]}\big(g(\alpha)+\alpha\log|z|\big)-g(1),
\qquad
z\in \C\backslash\{0\}.
\end{align}

The radial part of the measure $\nu_0$ is the push-forward of the Lebesgue measure on $[0,1]$ under the map $\alpha \mapsto \eee^{-g'_-(\alpha)}$, that is $\nu_0(\{|z|\leq \eee^{-g'_-(\alpha)}\}) = \alpha$ for all $\alpha \in (0,1]$.  
\end{theorem}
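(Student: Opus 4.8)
The plan is to argue entirely at the level of logarithmic potentials. Set
\[
U_n(z):=\frac1n\log|P_n(z)|=\frac1n\log\Bigl|\sum_{k=0}^n\xi_k a_{k;n}z^k\Bigr|,
\]
a locally integrable function on $\C$. Since $\frac1n\log|P_n|$ coincides with the logarithmic potential of $\lsem P_n\rsem$ up to the additive constant $\frac1n\log|\text{leading coefficient}|$, we have $\lsem P_n\rsem=\frac1{2\pi}\Delta U_n$ in the sense of distributions, and the distributional Laplacian is continuous on $L^1_{\mathrm{loc}}(\C)$. It therefore suffices to prove
\[
U_n\longrightarrow U_\infty:=\sup_{\alpha\in[0,1]}\bigl(g(\alpha)+\alpha\log|z|\bigr)\qquad\text{in }L^1_{\mathrm{loc}}(\C),\ \text{in probability}.
\]
Granting this, $\lsem P_n\rsem\to\mu:=\frac1{2\pi}\Delta U_\infty$ in $\mathcal{D}'(\C)$; since $U_\infty(z)=\log|z|+g(1)+o(1)$ as $|z|\to\infty$ (the supremum being attained at $\alpha=1$, which uses $g'_-(1)>-\infty$), $\mu$ is a probability measure, so the $\lsem P_n\rsem$ are tight and the convergence is in fact weak; $\mu$ is rotationally invariant and supported in $\{|z|\le e^{-g'_-(1)}\}$ because $U_\infty$ is a radial function that is harmonic off that disk; and the logarithmic potential of $\mu$ equals $U_\infty-g(1)=U_0$. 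Hence $\mu=\nu_0$.

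For the $L^1_{\mathrm{loc}}$-convergence, the \emph{upper bound} $\limsup_n U_n\le U_\infty$, almost surely and locally uniformly, is easy: by the triangle inequality and (A2),
\[
|P_n(z)|\le(n+1)\Bigl(\max_{k\le n}|\xi_k|\Bigr)\max_{k\le n}\bigl(|a_{k;n}|\,|z|^k\bigr)\le(n+1)\Bigl(\max_{k\le n}|\xi_k|\Bigr)e^{\,nU_\infty(z)+o(n)},
\]
while the logarithmic-moment condition (A1) gives, by Borel--Cantelli applied to the events $\{|\xi_k|>e^{\varepsilon k}\}$, that $\frac1n\log^+\!\max_{k\le n}|\xi_k|\to0$ almost surely; taking $\frac1n\log$ in the displayed inequality finishes this half.

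The \emph{lower bound} $\liminf_n U_n\ge U_\infty$ in $L^1_{\mathrm{loc}}$ is the main obstacle, and the only step requiring real work, because the hypotheses are weak: nothing beyond a logarithmic moment is assumed and $\xi_0$ may have atoms, so one cannot anti-concentrate the sum $P_n(z)=\sum_k\xi_k\,(a_{k;n}z^k)$ against a single dominant monomial. The route, as carried out in \cite[\S 2]{KZ14}, is to take an arbitrary $L^1_{\mathrm{loc}}$-subsequential limit $u$ of $U_n$ — such limits exist by the compactness of families of subharmonic functions with a common local upper bound, furnished by the previous paragraph — observe that $u$ is subharmonic with $u\le U_\infty$, and pin $u$ down from below through the circular averages $\frac1n\int_0^{2\pi}\log|P_n(re^{\ii\theta})|\,\frac{\dd\theta}{2\pi}$. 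By Jensen's formula these averages equal $\frac1n\log|\text{leading coeff}|+\int\max(\log r,\log|w|)\,\dd\lsem P_n\rsem(w)$ (and dually in terms of the lowest nonvanishing coefficient), which already identifies $u$ on $\{|z|=r\}$ when $\log r\le-g'_+(0)$ and when $\log r\ge-g'_-(1)$; for intermediate radii one additionally uses concavity of $g$ — which forces $\Theta(\delta n)$ of the moduli $|a_{k;n}z^k|$ to lie within $e^{o(n)}$ of their maximum — together with an anti-concentration estimate for sums of many independent terms that does not require finite variance. The outcome is $u=U_\infty$ almost everywhere, hence $U_n\to U_\infty$ in $L^1_{\mathrm{loc}}$.

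Finally, the description of the radial part follows from the potential formula by Legendre duality. Write $U_0(z)=V(\log|z|)$, where $V(\ell):=\sup_{\alpha\in[0,1]}\bigl(g(\alpha)+\alpha\ell\bigr)-g(1)$ is convex and nondecreasing and its right derivative $V'(\ell)$ equals the maximizing $\alpha$ in the supremum. For a rotationally invariant probability measure $\nu$, its logarithmic potential on the circle $\{|z|=\rho\}$ is a function $\psi(\rho)$ with $\rho\,\psi'(\rho)=F(\rho)$, where $F(\rho):=\nu(\{|w|\le\rho\})$; here $\psi(\rho)=V(\log\rho)$, so $F(\rho)=V'(\log\rho)$. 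By concavity of $g$, a point $\alpha$ maximizes $\beta\mapsto g(\beta)+\beta\ell$ exactly when $\ell\in[-g'_-(\alpha),-g'_+(\alpha)]$; taking $\ell=-g'_-(\alpha)$ yields $V'(-g'_-(\alpha))=\alpha$, with the one-sided convention matching that of a distribution function, i.e.\ $\nu_0(\{|z|\le e^{-g'_-(\alpha)}\})=\alpha$ for every $\alpha\in(0,1]$. Equivalently, the radial part of $\nu_0$ is the push-forward of Lebesgue measure on $[0,1]$ under $\alpha\mapsto e^{-g'_-(\alpha)}$, which is the last assertion. The only genuinely hard point in all of this is the control of cancellations in the lower bound.
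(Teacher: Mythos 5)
The paper does not prove this theorem at all --- it is quoted from \cite[Theorem~2.8]{KZ14} --- so the right comparison is with that reference and with the paper's own proof of the heat-evolved generalization (Theorem~\ref{theo:main_general_g}, Section~\ref{sec:proof_main_thm}), which is the $t\neq 0$ analogue of exactly the argument you outline. Your skeleton is correct and matches both: the reduction to $L^1_{\mathrm{loc}}$-convergence of $\frac1n\log|P_n|$, the upper bound from (A2) plus the Borel--Cantelli consequence of the log-moment condition, compactness of locally upper-bounded subharmonic families, the normalization argument identifying $U_\infty-g(1)$ as the logarithmic potential, and the Legendre-duality computation of the radial part are all sound. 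The one step you leave as a black box --- anti-concentration of $\sum_k\xi_k a_{k;n}z^k$ when $\Theta(n)$ of the moduli $|a_{k;n}z^k|$ lie within $\eee^{o(n)}$ of their maximum --- is precisely what the paper supplies for the general theorem as Lemma~\ref{lem:Laplace}, via the Kolmogorov--Rogozin/Esseen concentration-function inequality (Proposition~\ref{prop:KolmRog}); the paper then fixes $z$, gets pointwise convergence in probability, and upgrades to $L^1_{\mathrm{loc}}$ along a.s.\ subsequences with Fubini, rather than identifying the subsequential limit through circular averages and Jensen's formula as you propose. Both routes work and rest on the same anti-concentration input, so I would call this essentially the same proof with an interchangeable final step. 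One small caveat on the part you do carry out in full: the identity $\nu_0(\{|z|\le \eee^{-g'_-(\alpha)}\})=\alpha$ requires $\alpha$ to be the \emph{largest} maximizer at $\ell=-g'_-(\alpha)$, which fails for $\alpha$ interior to an affine piece of $g$ (there the map $\alpha\mapsto \eee^{-g'_-(\alpha)}$ is not injective and $\nu_0$ acquires an atom on a circle); this is an imprecision already present in the statement rather than in your argument, and it disappears under the strict concavity later imposed in (A3).
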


The function $g$ plays the role of a tuning parameter, defined by \eqref{eq:condition_A2}. By choosing a suitable $g$ one can achieve an arbitrary rotationally invariant distribution $\nu_0$ of roots, up to a minor requirement; see~\cite[Theorem~2.9]{KZ14}. The additional assumption $g'(1)\neq -\infty$ in    \hyperref[cond:A2]{(A2)} is technical only, as it ensures $\nu_0$ to have bounded support. The assumptions of~\cite[Theorem~2.8]{KZ14} allow $\nu_0$ to have infinite support and cover even the case when $P_n$ is an infinite Taylor series. To avoid unnecessary technicalities, we prefer to consider only polynomials here. Modifications needed to treat random entire functions will be discussed below; see Section \ref{subsec:entire_functions}.

\begin{example}\label{ex:Kac}
For the \emph{Kac polynomials} $K_n(z) = \sum_{k=0}^n \xi_k z^k$ we have $g(\alpha) \equiv 0$ and $\nu_0$ is the uniform distribution on the unit circle $\{|z|=1\}$. The maximum in~\eqref{eq:nu_g} is attained at  $\alpha_0(z)=\mathbbm 1_{(1,\infty]}(|z|)$ for $|z|\neq 1$. For $|z|=1$, the maximum is attained at all $\alpha \in [0,1]$.
\end{example}
\begin{example}\label{ex:Weyl}
For the \emph{Weyl polynomials}~\eqref{eq:weyl_poly_def} we have $g(\alpha) = -\frac 12 (\alpha \log \alpha - \alpha)$ (with $g(0)\coloneqq0$) and $\nu_0$ is the uniform distribution on the unit disk $\bD$. The maximum in~\eqref{eq:nu_g} is attained at $\alpha_0(z)=\min(|z|^2,1)$, for all $z\in\C$.
\end{example}

\begin{example}\label{ex:LO}
More generally, we can consider \emph{Littlewood--Offord polynomials} whose exponential profile is given by
\begin{equation}\label{eq:litllewoof_offord_exp_profile}
g(\alpha) = -\beta(\alpha \log \alpha - \alpha),
\qquad
0< \alpha \leq 1,
\qquad
g(0)\coloneqq0,
\end{equation}
where $\beta>0$ is a parameter. The maximum in~\eqref{eq:nu_g}  is attained at $\alpha_0(z)=\min \{|z|^{1/\beta}, 1\}$ and it follows that
$$
\nu_{0}(\{|z|\leq r\})
=
\min \{r^{1/\beta}, 1\},
\quad
r>0,
\qquad
\frac{\nu_{0}(\dint z)}{\dint z} = \frac 1 {2\pi \beta} |z|^{\frac 1\beta - 2},
\quad
|z|\leq 1.
$$

Three particular families of random polynomials satisfying    \hyperref[cond:A1]{(A1)} and    \hyperref[cond:A2]{(A2)} with exponential profile~\eqref{eq:litllewoof_offord_exp_profile} are given by
\begin{equation}\label{eq:littlewood_offord_polys}
L_n^{(1)}(z)=\sum_{k=0}^n  \frac{\xi_k}{(k!)^{\beta}} (n^\beta z)^k
,\;\;\;
L_n^{(2)}(z)=\sum_{k=0}^n  \frac{\xi_k}{k^{\beta k}} (n^\beta \eee^\beta z)^k
,\;\;\;
L_n^{(3)}(z)=\sum_{k=0}^n  \frac{\xi_k}{\Gamma(\beta k+1)} (\beta^\beta z)^k.
\end{equation}
The first family has been introduced and studied by Littlewood and Offord~\cite{littlewood_offord1,littlewood_offord2}. The case $\beta= 1/2$ corresponds to the Weyl polynomials.  \emph{Exponential random polynomials} $P_n(z) = \sum_{k=0}^n  \xi_k (n \, z)^k/k! $
correspond to $\beta = 1$. The \emph{absolute values} of their roots are uniformly distributed on $[0,1]$ in the large $n$ limit.
 \end{example}

In the next theorem we describe the asymptotic  distribution of roots of the heat-evolved polynomial
\begin{equation}\label{pnzt}
P_n(z;t)\coloneqq\eee^{-\frac t{2n} \partial_z^2} P_n(z).
\end{equation}

\begin{theorem}[First Main Theorem]\label{theo:main_general_g}
Let $P_n$ be given by~\eqref{eq:P_n_general_g_def} and suppose that conditions    \hyperref[cond:A1]{(A1)} and    \hyperref[cond:A2]{(A2)} hold.
Then, for every $t \in \C\backslash\{0\}$, the empirical measure of zeros of the polynomial $\exp\{-\frac t{2n} \partial_z^2\} P_n$, viewed as a random element with values in $\cM(\C)$, converges in probability to a certain deterministic, compactly supported  probability measure $\nu_{t}$.
The measure $\nu_t$ is uniquely determined by its logarithmic potential, which may be computed as follows. Define
\begin{equation}\label{eq:Psi_def}
\Psi(z) = \frac 14 \Re\left(z^2 - z \sqrt{z^2 - 4}\right)  + \log\left|\frac{z + \sqrt{z^2 - 4}}{2}\right| -\frac 12,
\qquad
z\in \C
\end{equation}
and then
\begin{equation}\label{eq:f_def}
f_t(\alpha,z) = g(\alpha)+ \alpha\log \sqrt{|t|\alpha} + \alpha \Psi\left(\frac{z}{\sqrt{t\alpha}}\right),
\end{equation}
for $\alpha\in (0,1],$ $z\in \C$ and $t\in\C\backslash\{0\}$. In the excluded case $\alpha = 0$, we continuously extend $f_t$ to $f_t(0,z)\coloneqq \lim_{\alpha\downarrow 0} f_t(\alpha, z)
=g(0)$.
Finally, define
\begin{equation}\label{eq:var_identity}
U_{t}(z) \coloneqq \sup_{\alpha \in [0,1]} f_t(\alpha,z) -g(1), \qquad z\in \C.
\end{equation}
Then the logarithmic  potential of $\nu_t$ is computed as
\begin{equation}\label{eq:log_potential_limit}
\int_{\C} \log |z - y| \, \nu_{t}(\dd y) = U_{t}(z), \qquad z\in \C.
\end{equation}

\end{theorem}

\begin{remark}\label{rem:Psi}
In \eqref{eq:Psi_def}, $\sqrt{z^2 - 4}$ is defined to be holomorphic outside a branch cut along the real axis from $-2$ to $2$ and so that $\sqrt{z^2 - 4} \sim z$ as $|z|\to \infty$. We use a similar branch choice in all similar expressions throughout the paper. It is worth noting that the function $\Psi$ is the logarithmic potential of the semicircle law $\mathsf{sc}_1$ and we will comment more on both logarithmic potentials $\Psi$ and $U_t$ in Section \ref{sec:Prop_log_pot} below.
\end{remark}

\begin{remark}\label{rem:Wirtinger_logpot_Stieltjes}
Denoting by $\Delta$ the distributional Laplacian, the logarithmic potential satisfies the distributional Poisson equation $\nu_{t} = \frac 1 {2\pi} \Delta U_{t}$; see~\cite[Theorem~3.7.4]{ransford}. In particular, $U_t$ determines the measure $\nu_t$ uniquely. Furthermore, we may write $\Delta=4\partial_z\partial_{\bar z}$ in terms of the Wirtinger derivatives defined in \eqref{eq:Wirtinger}.

The Stieltjes transform\footnote{Note the convention of the sign.} of $\nu_t$ is given by
$$
m_{t}(z) \coloneqq  \int_{\C} \frac{\nu_{t}(\dint y)}{z - y}   = 2\partial_z U_{t},
$$
where the last equality is understood in distributional sense. Indeed, since the function $1/z$ is locally integrable on $\C$, so are Stieltjes-transforms $m_t$, hence they are Schwartz-distributions with weak derivative $\partial_{\bar z}m_t=\pi\nu_t$.
\end{remark}

\begin{remark}
For $z\neq 0$ we have  $f_0(\alpha,z)\coloneqq \lim_{t\to 0} f_t(\alpha, z) = g(\alpha)+\alpha\log|z|$ and $U_0(z) = \sup_{\alpha\in[0,1]}(g(\alpha)+\alpha\log|z|)-g(1)$, which is consistent with Theorem~\ref{theo:distribution_roots_general_g_time_0}.
\end{remark}

\begin{remark} \label{rem:rotation}
If $t = |t| \eee^{\ii \phi}\in \C$ is complex, then $\nu_{t}$ is the push-forward of $\nu_{|t|}$ under the rotation map $z\mapsto z \eee^{\ii \phi/2}$. Indeed, it is easy to  check that $U_t(z \eee^{\ii \phi/2}) = U_{|t|}(z)$.  Therefore, when studying the properties of $\nu_t$, there is no loss of generality in assuming that  $t>0$ for simplicity. All results in the sequel have natural extensions to complex $t\in\C$, e.g. explicitly mentioned in Remark \ref{rem:push_forward_rotated}. However, for differential equations in Sections \ref{subsec:local_pushforward}, \ref{subsec:PDE_perspective}, and \ref{subsec:proof_PDE} it is necessary to use complex $t$ and thus, we will explicitly distinguish between $t>0$ and complex $t$ throughout the paper.
\end{remark}

\begin{remark}
Although Theorem \ref{theo:main_general_g} is an important result, much of the work in the paper is to extract information from the theorem about the limiting distribution $\nu_t$. Notably, in Theorem \ref{theo:collapse_to_wigner}, we will find that after a certain time $t_{\mathrm{Wig}}$, the measure $\nu_t$ becomes \textit{exactly} equal to the semicircular distribution, and in Section \ref{sec:transport_map}, we will obtain local and global transport theorems relating $\nu_t$ to $\nu_0$.
\end{remark}

In the special case of Weyl polynomials (for which $\nu_0$ is the uniform distribution on the unit disk) we recover Theorem~\ref{theo:main_weyl}; see Section~\ref{Weyl.sec} for a detailed analysis of this case.  On the other hand, even in the simple looking case of the Kac polynomials (for which the initial distribution of zeros is uniform on the unit circle), the structure of the measure $\nu_{t}$ is quite intricate; see Figure~\ref{fig:examples_endroots}. A detailed analysis of the Kac case will be carried out in a separate work.

\begin{figure}[ht]
 \includegraphics[width=.33\textwidth]{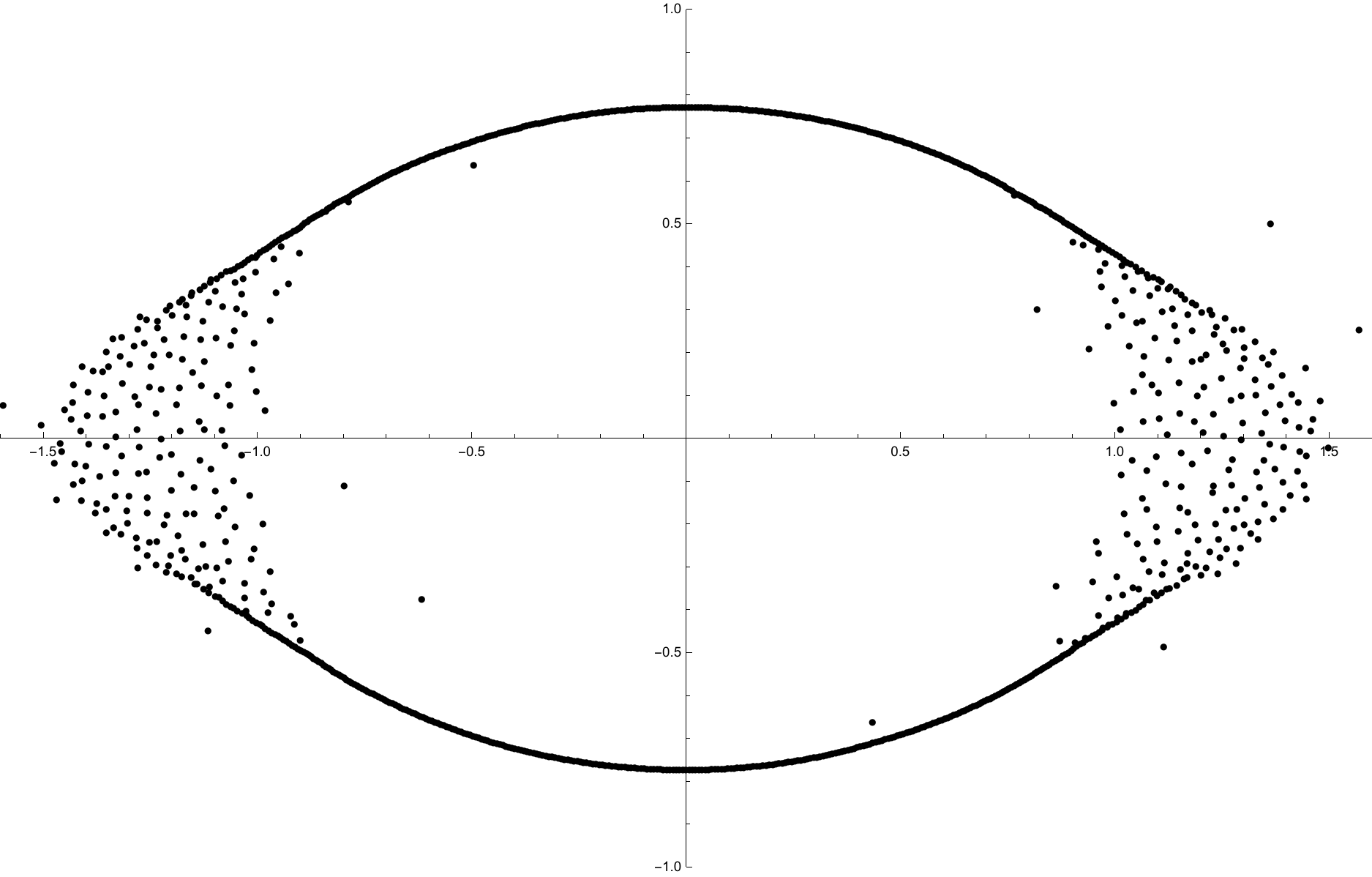}\includegraphics[width=.33\textwidth]{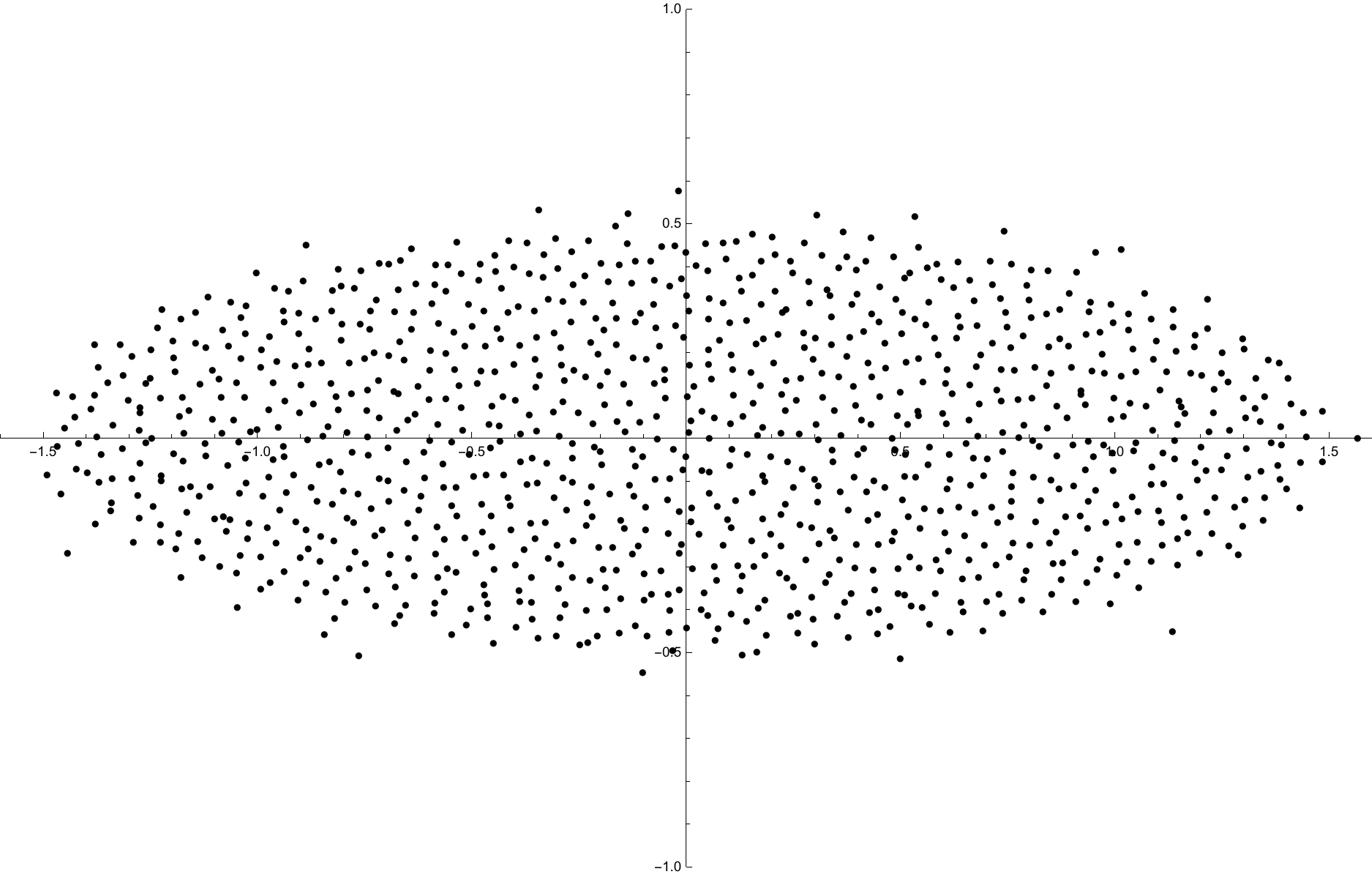}\includegraphics[width=.33\textwidth]{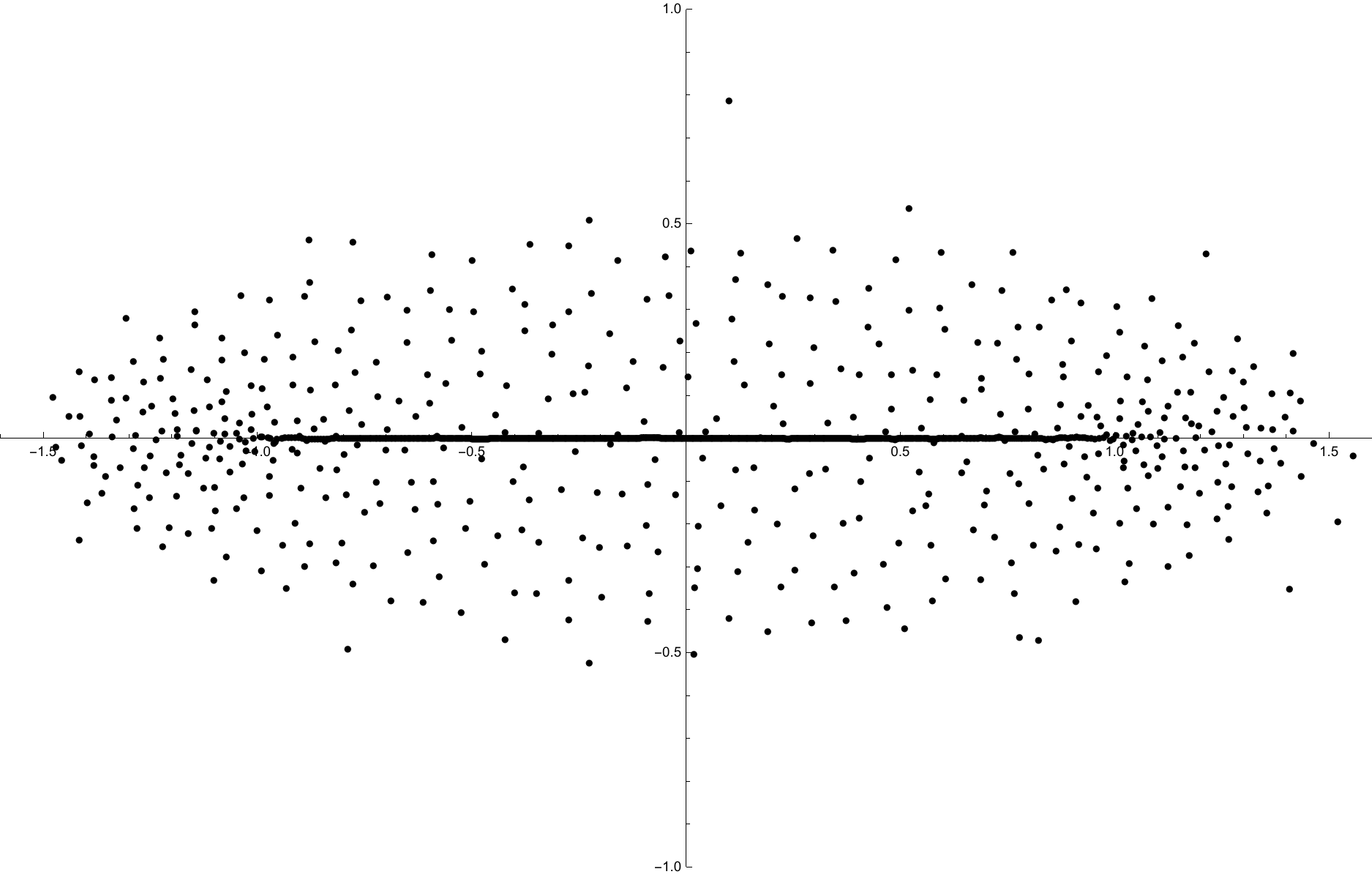}
   \caption{Samples of the empirical root distribution $\llbracket P_n(z;t)\rrbracket $ of heat-evolved random polynomials of degree $n=1000$ after time $t=1/2$. Here, the heat evolution started with Kac polynomials (left) from Example \ref{ex:Kac}, Weyl polynomials (center) from Example \ref{ex:Weyl} and Littlewood--Offord polynomials (right) from Example~\ref{ex:LO}. Theorem \ref{theo:main_general_g} describes the limiting distributions. Animated versions of this figure (among others) can be found in ``poly\_heat\_flow\_animated.pdf'' of the supplementary files on arxiv.org.} \label{fig:examples_endroots}
   \end{figure}

\subsubsection*{Collapse to the Wigner law}
As (real and positive) time $t$ grows, the roots of heat-evolved random polynomials tend to be drawn towards the real line. In fact, there exists a threshold value $t_{\mathrm{Wig}}$ after which $\nu_t$ becomes the semicircle distribution $\textsf{sc}_t$ of variance $t$ on the real line.
\begin{theorem}\label{theo:collapse_to_wigner}
Let $t>0$ and suppose that conditions    \hyperref[cond:A1]{(A1)} and    \hyperref[cond:A2]{(A2)} hold.  Then,  $\nu_t$ is the semicircle distribution $\textsf{sc}_t$ of variance $t$ if and only if
\begin{equation}\label{eq:t_Wig_formula}
t\ge t_{\mathrm{Wig}}\coloneqq\sup_{\alpha\in(0,1)}\exp \left( 2 \cdot \frac{g(\alpha)-g(1)}{1-\alpha}+\frac{\alpha}{1-\alpha}\log\alpha+1\right).
\end{equation}
For $0< t < t_{\mathrm{Wig}}$, the distribution $\nu_t$ is not concentrated on the real line.
\end{theorem}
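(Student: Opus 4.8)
\emph{Strategy.} I would read everything off the formula for the logarithmic potential $U_t$ in Theorem~\ref{theo:main_general_g}, using (Remark~\ref{rem:Psi}) that $\Psi$ is the logarithmic potential of $\mathsf{sc}_1$. Let $U^{\mathsf{sc}_s}$ denote the logarithmic potential of the semicircle law $\mathsf{sc}_s$ of variance $s>0$; by scaling $U^{\mathsf{sc}_s}(z)=\tfrac12\log s+\Psi(z/\sqrt s)$, and since $t>0$, regrouping the three terms in \eqref{eq:f_def} gives the clean identity $f_t(\alpha,z)=g(\alpha)+\alpha\,U^{\mathsf{sc}_{t\alpha}}(z)$ for $\alpha\in(0,1]$, with $f_t(0,z)=g(0)$. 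In particular $f_t(1,z)-g(1)=U^{\mathsf{sc}_t}(z)$, so $U_t(z)=\sup_{\alpha\in[0,1]}f_t(\alpha,z)-g(1)\ge U^{\mathsf{sc}_t}(z)$ for all $z$. Since a compactly supported measure is determined by its logarithmic potential (via $\nu=\tfrac1{2\pi}\Delta U$; if $U_t$ and $U^{\mathsf{sc}_t}$ had equal Laplacians, their difference would be a harmonic function on $\C$ tending to $0$ at infinity, hence $\equiv0$ by Liouville), we obtain the reformulation
\[
\nu_t=\mathsf{sc}_t\iff U_t\equiv U^{\mathsf{sc}_t}\iff \Phi_\alpha(z):=f_t(\alpha,z)-f_t(1,z)\le0\ \ \text{for all}\ z\in\C,\ \alpha\in[0,1].
\]

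\emph{Key lemma.} The heart of the matter is: \emph{for each fixed $\alpha\in(0,1)$, the map $z\mapsto\Phi_\alpha(z)$ is maximized at $z=0$.} I would prove this by a vertical and a horizontal monotonicity. Write $\Phi_\alpha=[g(\alpha)-g(1)]+\alpha U^{\mathsf{sc}_{t\alpha}}-U^{\mathsf{sc}_t}$ and $U^{\mathsf{sc}_s}(x+\ii y)=\int\tfrac12\log\!\big((x-u)^2+y^2\big)\,\mathsf{sc}_s(\dd u)$. The crucial pointwise fact is $\alpha\,\mathsf{sc}_{t\alpha}\le\mathsf{sc}_t$ as measures on $\R$: on $[-2\sqrt{t\alpha},2\sqrt{t\alpha}]$ the density of $\alpha\,\mathsf{sc}_{t\alpha}$ is $\sqrt{4t\alpha-u^2}/(2\pi t)\le\sqrt{4t-u^2}/(2\pi t)$, and $\mathsf{sc}_t$ carries extra mass outside. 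Differentiating in $y^2$,
\[
\frac{\partial\Phi_\alpha}{\partial(y^2)}(x+\ii y)=\tfrac12\int\frac{(\alpha\mathsf{sc}_{t\alpha}-\mathsf{sc}_t)(\dd u)}{(x-u)^2+y^2}<0\qquad(y\ne0),
\]
so $\Phi_\alpha$ is strictly decreasing in $y^2$ and its maximum lies on the real axis. On $\R$, for $|x|\le2\sqrt{t\alpha}$ the formula $U^{\mathsf{sc}_s}(x)=\tfrac12\log s+\tfrac{x^2}{4s}-\tfrac12$ inside the support makes the $x^2$-terms cancel ($\alpha\cdot\tfrac{x^2}{4t\alpha}=\tfrac{x^2}{4t}$), so $\Phi_\alpha\equiv\Phi_\alpha(0)$ on $[-2\sqrt{t\alpha},2\sqrt{t\alpha}]$; for $x>2\sqrt{t\alpha}$, using $(U^{\mathsf{sc}_s})'(x)=\tfrac{x-\sqrt{x^2-4s}}{2s}$ outside and $\tfrac x{2s}$ inside the support, one finds $\Phi_\alpha'(x)=-\tfrac{\sqrt{x^2-4t\alpha}}{2t}$ on $(2\sqrt{t\alpha},2\sqrt t]$ and $\Phi_\alpha'(x)=\tfrac{\sqrt{x^2-4t}-\sqrt{x^2-4t\alpha}}{2t}$ on $(2\sqrt t,\infty)$, both $<0$; $\Phi_\alpha$ is even in $x$. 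Hence $\Phi_\alpha$ is constant on $[-2\sqrt{t\alpha},2\sqrt{t\alpha}]$ and strictly decreasing as $|x|$ leaves it, which proves the lemma.

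\emph{Reading off $t_{\mathrm{Wig}}$.} By the lemma, $\Phi_\alpha\le0$ on $\C$ (for fixed $\alpha\in(0,1)$) iff $\Phi_\alpha(0)\le0$. From $\Psi(0)=-\tfrac12$ one gets $U^{\mathsf{sc}_s}(0)=\tfrac12\log s-\tfrac12$, hence
\[
\Phi_\alpha(0)=g(\alpha)-g(1)+\frac{\alpha-1}{2}\log t+\frac{\alpha}{2}\log\alpha+\frac{1-\alpha}{2},
\]
which, since $\tfrac{1-\alpha}{2}>0$, is $\le0$ iff $\log t\ge\tfrac{2(g(\alpha)-g(1))}{1-\alpha}+\tfrac{\alpha\log\alpha}{1-\alpha}+1$; taking the supremum over $\alpha\in(0,1)$ this becomes $t\ge t_{\mathrm{Wig}}$. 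The endpoint $\alpha=1$ gives $\Phi_1\equiv0$, and $\alpha=0$ gives $\Phi_0=g(0)-g(1)-U^{\mathsf{sc}_t}\le0$ because $\min_{\C}U^{\mathsf{sc}_t}=U^{\mathsf{sc}_t}(0)=\tfrac12\log t-\tfrac12$ and $t\ge t_{\mathrm{Wig}}\ge\exp(2(g(0)-g(1))+1)$ (the $\alpha\downarrow0$ limit of the expression inside the supremum in \eqref{eq:t_Wig_formula}); so the endpoints impose no further condition. Combined with the reformulation, $\nu_t=\mathsf{sc}_t\iff t\ge t_{\mathrm{Wig}}$.

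\emph{The strict statement, and the main obstacle.} For $0<t<t_{\mathrm{Wig}}$ pick $\alpha^\dagger\in(0,1)$ with $\Phi_{\alpha^\dagger}(0)>0$; then $\alpha=1$ is not a maximizer in the supremum defining $U_t(0)$, so $U_t(0)>U^{\mathsf{sc}_t}(0)$ and $\nu_t\ne\mathsf{sc}_t$. To upgrade this to ``$\nu_t$ is not concentrated on $\R$'', suppose for contradiction $\supp\nu_t\subseteq\R$; then $D:=U_t-U^{\mathsf{sc}_t}$ has $\Delta D=2\pi(\nu_t-\mathsf{sc}_t)=0$ off $\R$, i.e.\ $D$ is harmonic on the connected open set $\{\Im z>0\}$. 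A routine estimate (using $U^{\mathsf{sc}_s}(z)=\log|z|+O(|z|^{-2})$ uniformly in $s\in[0,t]$, concavity of $g$, and $g_-'(1)>-\infty$) shows that for $|z|$ large enough $\alpha=1$ is the unique maximizer of $\alpha\mapsto f_t(\alpha,z)$, so $D$ vanishes on a neighbourhood of infinity, in particular on a nonempty open subset of $\{\Im z>0\}$; by the identity principle for harmonic functions (they are real-analytic) $D\equiv0$ there, similarly on $\{\Im z<0\}$, hence on all of $\C$ by continuity — forcing $\nu_t=\mathsf{sc}_t$, a contradiction. I expect the key lemma to be the main difficulty: one must recognize that the worst point is $z=0$, and that this is dictated by the two complementary monotonicities ($\alpha\,\mathsf{sc}_{t\alpha}\le\mathsf{sc}_t$ vertically, exact cancellation on $[-2\sqrt{t\alpha},2\sqrt{t\alpha}]$ horizontally); the remaining steps are bookkeeping.
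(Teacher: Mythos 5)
Your argument is correct, and its skeleton coincides with the paper's: reduce $\nu_t=\mathsf{sc}_t$ to the statement that $\alpha=1$ maximizes $f_t(\cdot,z)$ for every $z$, show that the binding constraint is at $z=0$, and evaluate $f_t(\alpha,0)-f_t(1,0)\le0$ using $\Psi(0)=-\tfrac12$ to read off $t_{\mathrm{Wig}}$. Two sub-steps are carried out by genuinely different mechanisms, both sound. (i) To localize the extremum of $\Phi_\alpha$ (the paper's $h(\cdot;\alpha)$, up to sign and rescaling) to the real axis, the paper notes that $h\to+\infty$ at infinity and computes $\partial_z h=\tfrac14\bigl(\sqrt{z^2-4}-\sqrt{z^2-4\alpha}\bigr)\neq0$ off $[-2,2]$, so the minimum must lie on $[-2,2]$; you instead use the measure domination $\alpha\,\mathsf{sc}_{t\alpha}\le\mathsf{sc}_t$ to get strict monotonicity of $\Phi_\alpha$ in $y^2$. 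Your route is arguably more transparent about \emph{why} the real axis is extremal, and it dispenses with the compactness/"minimum is attained" preamble; the on-axis computation (plateau on $[-2\sqrt{t\alpha},2\sqrt{t\alpha}]$, strict decay outside) is then the same in both proofs. (ii) For the claim that $\nu_t$ is not concentrated on $\R$ when $t<t_{\mathrm{Wig}}$, the paper invokes Proposition~\ref{prop:analytic_moments} together with moment-determinacy of $\mathsf{sc}_t$ among laws on $\R$; you instead observe that $U_t-U^{\mathsf{sc}_t}$ would be harmonic off $\R$, vanishes near infinity by Proposition~\ref{prop:log_potential_large_z}, hence vanishes identically on each half-plane by real-analyticity and on $\C$ by continuity, contradicting $U_t(0)>U^{\mathsf{sc}_t}(0)$. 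Both inputs (the large-$|z|$ form of $U_t$ and the analytic moments) are established in the paper, so either closing argument is available; yours avoids moment determinacy at the cost of the identity principle. Your separate treatment of the endpoint $\alpha=0$ (via $\min_\C U^{\mathsf{sc}_t}=U^{\mathsf{sc}_t}(0)$ and the $\alpha\downarrow0$ limit of the expression defining $t_{\mathrm{Wig}}$) is a detail the paper glosses over but which is worth having made explicit.
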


Note that the function to be maximized in the exponential can be extended to a continuous function on $[0,1]$ and hence the supremum in~\eqref{eq:t_Wig_formula} exists and is finite. In the proof of Theorem~\ref{theo:collapse_to_wigner} we will demonstrate that for $t\ge t_{\mathrm{Wig}}$ and all $z\in\C$ the maximum of the function $f_t(\cdot,z)$ in~\eqref{eq:f_def}  is attained at $\alpha=1$.

\begin{example}\label{ex:collapse_wigner_examples}
A simple calculation shows that $t_{\mathrm{Wig}}=\eee$ for Kac polynomials (the supremum in~\eqref{eq:t_Wig_formula} is attained as $\alpha\downarrow 0$) and $t_{\mathrm{Wig}}=1$ for Weyl polynomials (the function in the exponent vanishes). The latter proves Part~(ii) of Theorem~\ref{theo:main_weyl}.
More generally, for Littlewood--Offord polynomials with $g(\alpha) = - \beta(\alpha \log \alpha - \alpha)$  we have $t_{\mathrm{Wig}}=\exp(1-2\beta)$ for $\beta\le 1/2$  and $t_{\mathrm{Wig}}=1$ for $\beta\ge 1/2$ (in the former case the supremum is attained at $\alpha=0$ and in the latter case as $\alpha \uparrow 1$).

\end{example}

\subsubsection*{Universality conjecture}\label{subsec:conjecture_any_rot_invar_initial_distr}
Given any rotationally invariant probability measure $\nu$ on $\C\backslash\{0\}$ satisfying $\int_{0}^{\eps} \nu (\{|z| < r\}) r^{-1} \dint r <\infty$ for some $\eps>0$, we can find a concave function $g:[0,1]\to \R$ for which $\nu = \nu_0$, that is the zeros of $P_n$ are asymptotically distributed according to $\nu$; see~\cite[Theorem~2.9]{KZ14}. The following is an extended version of the heat-flow conjecture from \cite[\S 2]{hallho}.

\begin{conjecture}\label{conj:Universality}
The conclusion of Theorem~\ref{theo:main_general_g} remains in force if $P_n(z)$ is replaced by any  sequence of polynomials $Q_n(z)$ whose roots are asymptotically distributed according to $\nu= \nu_0$ and are not ``too evenly spaced along a curve.'' For example:
\begin{enumerate}[(a)]
\item Let $Q_n(z) \coloneqq \prod_{k=1}^n (z-X_k)$, where $X_1,X_2,\ldots$ are i.i.d.\ random variables with distribution $\nu=\nu_0$. In particular if $X_k$ are uniformly distributed on $\{|z|\leq 1\}$, then for every $t\in \C$, the empirical distribution $\llbracket \exp\{-\frac t{2n} \partial_z^2\} Q_n\rrbracket $ converges weakly to the uniform measure on the ellipse $\mathcal E_t$ (see also Figure \ref{fig:pairing_roots}).
\item If $Q_n(z)$ is the characteristic polynomial of the Haar random matrix (which is distributed uniformly on the orthogonal group $O(n)$ or the unitary group $U(n)$), then for every $t\in \C$, the empirical distribution $\llbracket \exp\{-\frac t{2n} \partial_z^2\} Q_n\rrbracket $ converges weakly to the same limit $\nu_{t}$ as in the case of Kac polynomials.
\item Let $m\in\N$ and $\{G_{n;k}\}_{n\in\N,k\le m}$ be a family of independent Ginibre $n\times n$-random matrices, defined as non-Hermitian random matrices having independent complex standard normal entries. If $Q_n(z)$ is the characteristic polynomial of the product $G_{n;1}\cdot \ldots \cdot G_{n; m}$ or of $(G_{n;1})^m$, then for every $t\in \C$, the empirical distribution $\llbracket \exp\{-\frac t{2n} \partial_z^2\} Q_n\rrbracket $ converges weakly to the same limit $\nu_{t}$ as in the case of Littlewood--Offord polynomials with $\beta = m/2$.
\end{enumerate}
\end{conjecture}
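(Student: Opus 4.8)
We outline one strategy, valid in principle for all three cases and most directly suited to case~(a), based on the Gaussian representation of the heat flow together with a steepest-descent analysis. For real $s>0$ and any polynomial $f$ one has $\exp\{-\frac s2\partial_z^2\}f(z)=\E\big[f(z+\ii\sqrt s\,N)\big]$ with $N\sim\Normal(0,1)$ (check this on $\eee^{\lambda z}$ and extend by linearity and analytic continuation); with $s=t/n$ and $N=\sqrt{n/t}\,w$ this becomes
\begin{equation*}
\exp\Big\{-\frac t{2n}\partial_z^2\Big\}Q_n(z)=\sqrt{\frac n{2\pi t}}\int_\R Q_n(z+\ii w)\,\eee^{-nw^2/(2t)}\,\dd w .
\end{equation*}
The integrand equals $\eee^{\,n h_n(w)}$ with $h_n(w)=\frac1n\log Q_n(z+\ii w)-\frac{w^2}{2t}$, holomorphic in $w$ off the points $w=-\ii(z-X)$, $Q_n(X)=0$. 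Since $\lsem Q_n\rsem\to\nu_0$ we expect $\frac1n\log Q_n(z+\ii w)\to G_0(z+\ii w)$ locally uniformly off the ``zero cloud'' $\{w:z+\ii w\in\supp\nu_0\}$, where $G_0$ is a holomorphic primitive of the Cauchy transform $m_0$ of $\nu_0$ (so $\Re G_0=U_0$, the potential of Theorem~\ref{theo:distribution_roots_general_g_time_0}). Deforming the contour onto the steepest-descent path through the relevant saddle $w_*=w_*(z)$, which in the limit solves $w_*=\ii t\,m_0(z+\ii w_*)$, one would obtain
\begin{equation*}
\frac1n\log\big|\exp\{-\tfrac t{2n}\partial_z^2\}Q_n(z)\big|\ \toprobab\ \Re\Big(G_0(z+\ii w_*)-\tfrac{w_*^2}{2t}\Big)=:\widetilde U_t(z),
\end{equation*}
a quantity depending on $Q_n$ only through $\nu_0$.

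Upgrading this pointwise-a.e.\ convergence to $L^1_{\mathrm{loc}}(\C)$, in probability (subharmonicity of the log-modulus together with a uniform-in-$n$ upper bound of order $\log^+|z|$) and applying $\tfrac1{2\pi}\Delta$ (Remark~\ref{rem:Wirtinger_logpot_Stieltjes}) gives $\lsem\exp\{-\frac t{2n}\partial_z^2\}Q_n\rsem\to\tfrac1{2\pi}\Delta\widetilde U_t$ weakly in probability. It then remains to identify $\widetilde U_t$ with the potential $U_t$ of Theorem~\ref{theo:main_general_g}; this is purely deterministic, since both $\widetilde U_t$ and $U_t$ (the latter built from~\eqref{eq:f_def}) are obtained from $U_0$ by the same Legendre/Hopf--Lax transform — equivalently, both solve the complex Hamilton--Jacobi equation with datum $U_0$ — which are precisely the identities established in Section~\ref{subsec:PDE_perspective}; in particular $\tfrac1{2\pi}\Delta\widetilde U_t=\nu_t$. (Warning: the \emph{real} Hopf--Lax expression $\sup_{w\in\R}\big(U_0(z+\ii w)-w^2/(2t)\big)$ is \emph{not} $U_t$; the dominant saddle genuinely lies in $\C$, so the oscillation of $\arg Q_n(z+\ii w)$ along $\R$ cannot be discarded.)

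The analytic input $\frac1n\log|Q_n|\to U_0$ (in $L^1_{\mathrm{loc}}$, and locally uniformly off $\supp\nu_0$) holds in each case: in~(a) by the strong law of large numbers for $\frac1n\sum_j\log|z-X_j|$ plus a uniform-integrability bound near the support; in~(b) and~(c) from the known limiting spectral distributions (uniform on the unit circle for Haar-random orthogonal or unitary matrices; the push-forward of the circular law under $z\mapsto z^m$, i.e.\ the Littlewood--Offord profile with $\beta=m/2$, for a product of $m$ independent Ginibre matrices) together with the available concentration estimates for $\log|\det(z-\cdot)|$. The assumption that the roots be ``not too evenly spaced'' is exactly what prevents $\frac1n\log|Q_n|$ from oscillating near $\supp\nu_0$ in a way that would wreck the contour deformation or produce anomalous cancellation in the integral; rigidly spaced deterministic zeros (roots of unity, equally spaced points) are genuine counterexamples to the unrestricted claim. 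A finite-free-probability reformulation (the heat flow amounts to a finite free additive convolution with a rescaled Hermite polynomial) is tempting, but the standard asymptotic-freeness results require real-rooted polynomials, whereas the roots of $Q_n$ here are genuinely complex.

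The main obstacle is making the steepest-descent step rigorous and uniform in $z$. For $z$ in the bulk of $\supp\nu_t$ the steepest-descent path is forced close to the zero cloud $\{w:z+\ii w\in\supp\nu_0\}$, outside which the limiting exponent is holomorphic but on which it is not (there its real part is merely the logarithmic potential $U_0$, and for the Kac profile $m_0$ even jumps across $|\zeta|=1$), so identifying the dominant critical point and justifying the deformation there is delicate; moreover one must control $\arg Q_n(z+\ii w)$ along the contour finely enough to rule out cancellation, which is exactly where a quantitative form of ``not too evenly spaced'' is needed. Consistently with this, the paper already defers the (deterministic) analysis of $\nu_t$ in the Kac case to separate work, so a complete proof of Conjecture~\ref{conj:Universality}(b) would in particular subsume that analysis.
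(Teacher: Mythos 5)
The statement you are addressing is explicitly a \emph{conjecture} in the paper (Conjecture~\ref{conj:Universality}); the authors give no proof, and Section~\ref{subsec:evenly_spaced_roots} even exhibits a counterexample ($Q_n(z)=z^n-r^n$) showing that the unrestricted claim is false, so any argument must inject genuine irregularity of the roots somewhere. Your proposal is therefore not being measured against a proof in the paper but against whether it closes the conjecture --- and it does not. It is an honest and well-motivated strategy sketch, but its central step is left open, as you yourself acknowledge.

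Concretely, the gap is the steepest-descent step. The paper's proof of Theorem~\ref{theo:main_general_g} works because $P_n=\sum_k\xi_k a_{k;n}z^k$ has \emph{independent random coefficients}: the heat flow produces an explicit sum of Hermite terms $\xi_k\mathbf c_{k;n}(z)$, and the Kolmogorov--Rogozin anti-concentration inequality (Lemma~\ref{lem:Laplace}) guarantees that the largest exponential term is not cancelled. For $Q_n$ with i.i.d.\ roots, or for characteristic polynomials of random matrices, no such coefficient independence is available, and your representation $\int_\R Q_n(z+\ii w)\,\eee^{-nw^2/(2t)}\,\dd w$ replaces that input by the need to control $\arg Q_n(z+\ii w)$ along a contour that, for $z$ in the bulk of $\supp\nu_t$, is forced near the set where $Q_n$ actually vanishes and where the limit of $\tfrac1n\log Q_n$ fails to be holomorphic (for the Kac profile $m_0$ even jumps across $|\zeta|=1$). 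You correctly name this as ``the main obstacle,'' but you supply no mechanism --- no anti-concentration estimate, no quantitative non-equidistribution input --- that would rule out the cancellation which demonstrably occurs in the evenly-spaced example. Moreover, even the deterministic identification $\widetilde U_t=U_t$ via a ``Hopf--Lax transform'' is not automatic: the paper's Remark~\ref{rem:Hopf-Lax} points out that the Hamiltonian is non-convex, the Hopf--Lax infimum does not exist, and uniqueness for the Hamilton--Jacobi equation past $t_{\mathrm{sing}}$ is exactly where viscosity-solution subtleties enter. What your sketch does establish is the useful heuristic that the saddle equation $w_*=\ii t\,m_0(z+\ii w_*)$ reproduces the transport map $T_t$ of Definition~\ref{def:transport_map}, consistent with Section~\ref{subsec:PDE_perspective}; the conjecture itself remains open.
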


Note that without heat flow, i.e.\ for~$t=0$, the limiting empirical distributions of $Q_n$ are known. Indeed,  part~(a) is trivial, for part~(b) we refer to \cite{forrester2010log, Mehta,dyson1962statistical,diaconis1994eigenvalues}\footnote{Let us remark at this point that Kac polynomials are closely related to  the eigenvalues of a ``truncated'' Haar-unitary matrix (CUE) obtained by deleting one row and column from the Haar matrix, see \cite{krishnapur2009random,poplavskyi2018exact, forrester2019generalisation}.} and for part~(c) see for instance \cite{ake, burda,GT10,GKT,jalowy,kopel,nemish}.

An example with roots ``evenly spaced along a curve'' violating the conjecture will be discussed in Section~\ref{subsec:evenly_spaced_roots}. Roots forming a \textit{two-dimensional} lattice are \emph{not} considered evenly spaced in the sense of this conjecture and should satisfy it; see Figure~\ref{fig:pairing_roots}. The two-dimensional lattice is remarkably stable under the heat flow. This is related to the fact that the Weierstrass $\sigma$-function (which is the ``simplest'' entire function whose zeros form a lattice) is essentially invariant under the heat flow; see~\cite{GAF-paper}.

\subsection{Properties of the logarithmic potential}\label{sec:Prop_log_pot}
Let us make some comments on the function $\Psi(z)$.
 With the convention $\sqrt{z^2 - 4} \sim z$ as $|z|\to \infty$ of Remark \ref{rem:Psi}, the inverse of the Joukowsky map $z\mapsto z + \frac 1z$ has two branches:  the function $z\mapsto (z + \sqrt{z^2-4})/2$ which is a conformal map between $\C \backslash[-2,2]$ and $\{|z|>1\}$, and the function $z\mapsto (z - \sqrt{z^2-4})/2$ which is a conformal map between $\C \backslash[-2,2]$ and $\{0<|z|<1\}$. Knowing this, it  is easy to check that the function $\Psi(z)$ is well-defined and continuous on the whole of $\C$, including the interval $[-2,2]$, where it does not matter how to choose the sign of the square roots in~\eqref{eq:Psi_def}.

\begin{figure}[ht]
 \includegraphics[width=.33\textwidth]{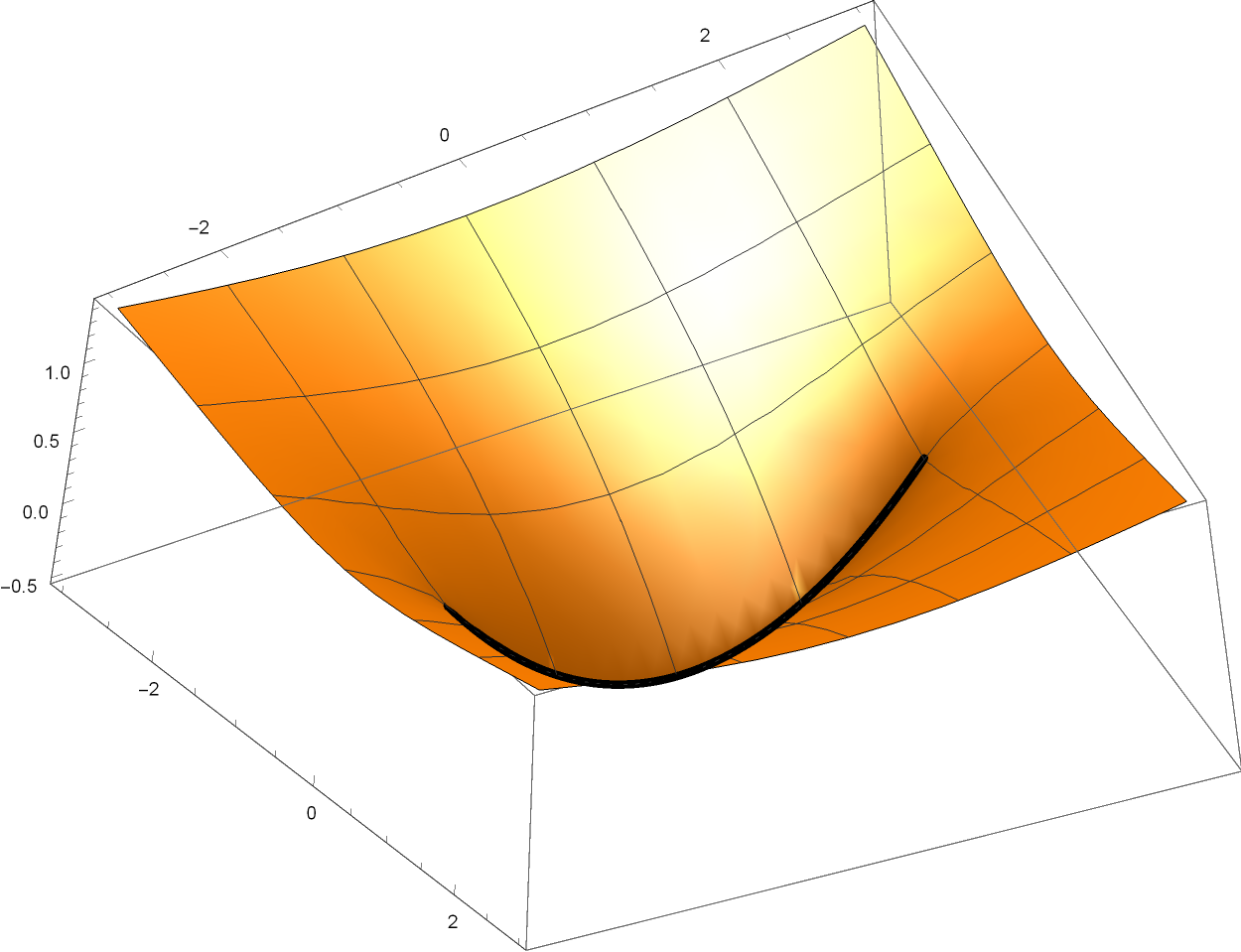}\includegraphics[width=.33\textwidth]{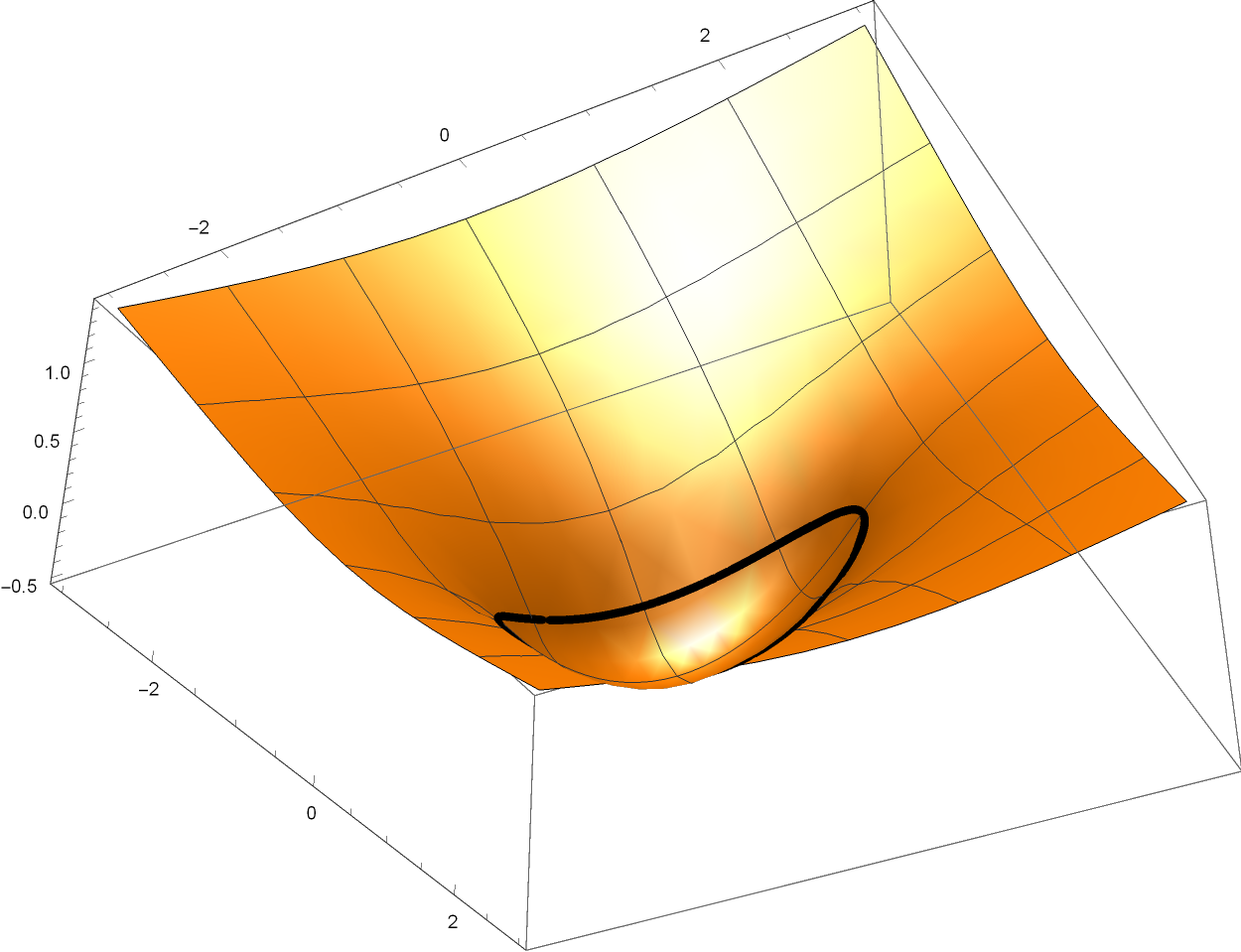}\includegraphics[width=.33\textwidth]{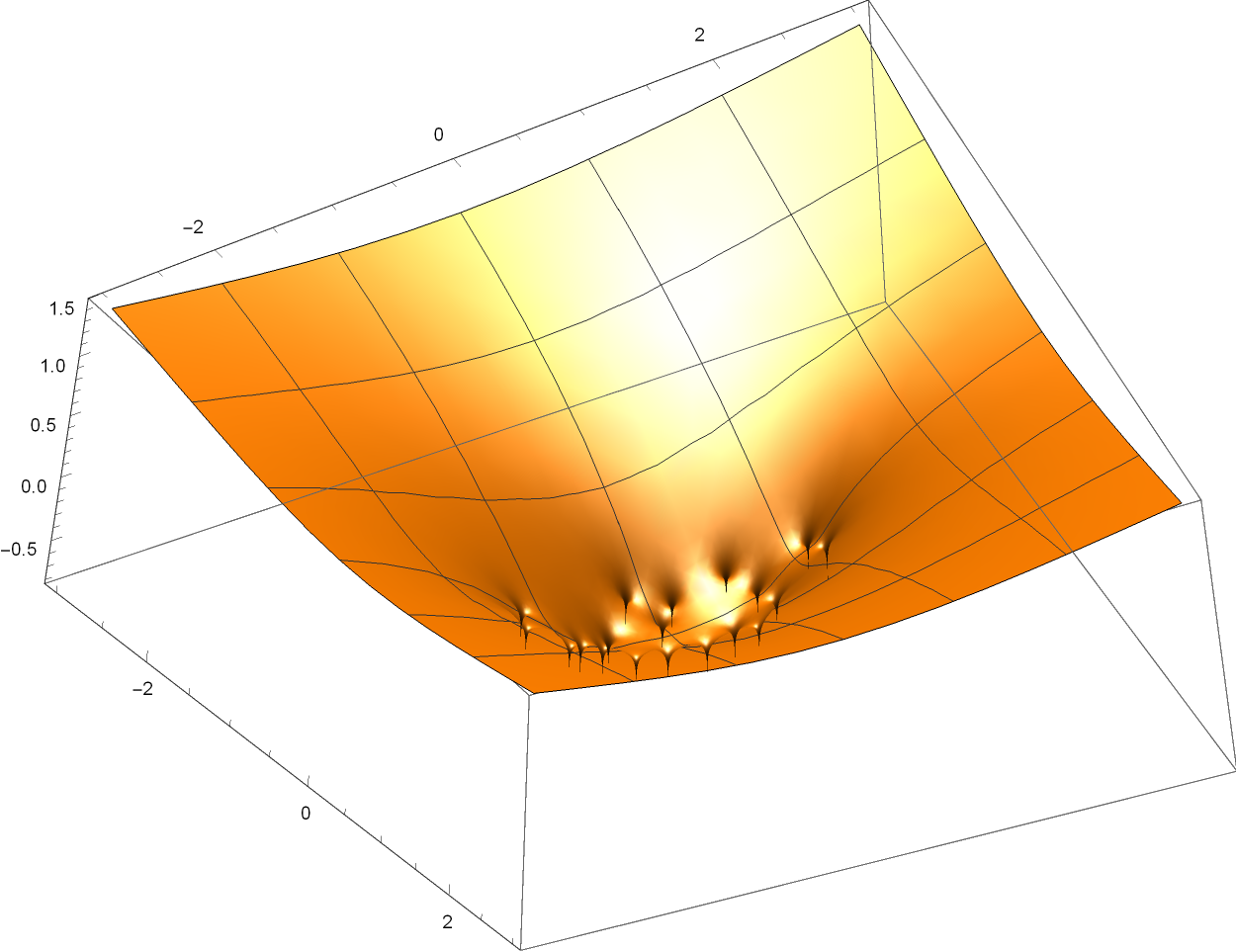}
   \caption{The logarithmic potential $\Psi$ of the semicircle law (left), the logarithmic potential $U_t$ of the elliptic law (center) for $t=1/2$, and its approximation (see Theorem~\ref{theo:main_weyl}) for finite $n=20$, i.e. the logarithmic potential of $\llbracket P_{n}(z;t)\rrbracket $ (right). In the proof of Theorem \ref{theo:main_general_g} it will be crucial that the logarithmic potentials at finite $n$ approximate the limiting logarithmic potentials very well, except at the positions of their singularities.} \label{fig:log_pots}
   \end{figure}

It is well known~\cite[Theorem~5.1 on p.~240]{saff_totik_book} that the logarithmic potential of the semicircle distribution $\mathsf{sc}_{t}$ on $[-2\sqrt t,2\sqrt t]$ is given by
\begin{equation}\label{eq:wigner_log_potential}
\int_{-2\sqrt t }^{2\sqrt t } \log |z-y| \, \mathsf{sc}_{t} (\dd y) =  \frac 12 \log t+ \Psi\left(\frac{z}{\sqrt t}\right),
\qquad z\in \C.
\end{equation}
Moreover, by~\cite[Ex.~3.1.1 on p.~94]{hiai_petz_book}, the Stieltjes transform of $\mathsf{sc}_{t}$ is
\begin{equation}\label{eq:wigner_stieltjes}
\int_{-2\sqrt t }^{2\sqrt t} \frac{\mathsf{sc}_{t}(\dd y)}{z-y} = \frac 1{2t} \left(z - \sqrt{z^2 - 4t}\right), \qquad z\in \C \backslash{[-2\sqrt t ,2 \sqrt t]}.
\end{equation}
In particular, the function $\Psi(z)$ is harmonic outside of $[-2,2]$. 
Observe that the function $z\mapsto \log\sqrt{t\alpha}+\Psi(z/\sqrt{t\alpha})$ appearing in~\eqref{eq:f_def} is the logarithmic potential of the semicircle law $\textsf{sc}_{t\alpha}$ supported on $[-2\sqrt {t \alpha}, +2\sqrt {t \alpha}]$, where $t>0$ and $\alpha\in (0,1]$.

\begin{proposition}\label{prop:log_potential_U_t_cont}
For all $t\in \C \backslash\{0\}$,  the function $U_t(z)$ is continuous and subharmonic  in $z\in \C$.
\end{proposition}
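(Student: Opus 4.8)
The plan is to establish continuity and subharmonicity of $U_t$ by exploiting the structure of $U_t$ as a supremum of a family of subharmonic functions, together with uniform control near the troublesome point $\alpha=0$. First I would record that, for each fixed $\alpha\in(0,1]$ and $t\in\C\setminus\{0\}$, the function $z\mapsto f_t(\alpha,z)=g(\alpha)+\alpha\log\sqrt{|t|\alpha}+\alpha\Psi(z/\sqrt{t\alpha})$ is continuous and subharmonic on $\C$. This follows from Remark~\ref{rem:Psi} and equation~\eqref{eq:wigner_log_potential}: the map $z\mapsto \log\sqrt{t\alpha}+\Psi(z/\sqrt{t\alpha})$ is the logarithmic potential of the semicircle law $\mathsf{sc}_{t\alpha}$ (after the rotation $z\mapsto e^{-\ii\phi/2}z$ when $t$ is complex, using Remark~\ref{rem:rotation}), hence is subharmonic as the logarithmic potential of a compactly supported probability measure, and is continuous because $\Psi$ is continuous on all of $\C$ including $[-2,2]$. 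Multiplying by the constant $\alpha>0$ and adding the constant $g(\alpha)$ preserves both properties.

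Next I would deal with the supremum over the noncompact-at-the-endpoint index set $[0,1]$. The issue is that a supremum of subharmonic functions need not be subharmonic unless it is upper semicontinuous (and then one takes the usc regularization), and one must also ensure the sup is finite and that $\alpha=0$ does not cause a discontinuity. The key estimate is the asymptotic behavior of $f_t(\alpha,z)$ as $\alpha\downarrow 0$: since $\Psi(w)=\frac14\Re w^2-\frac12+o(1)$ as $|w|\to\infty$ (indeed $\Psi$ is the logarithmic potential of a probability measure with support of size $O(1)$, so $\Psi(w)=\log|w|-\frac12+o(1)$, but more precisely one checks from~\eqref{eq:Psi_def} that the $\frac14\Re(z^2-z\sqrt{z^2-4})$ term contributes), one computes $\alpha\Psi(z/\sqrt{t\alpha})=\alpha\log|z/\sqrt{t\alpha}|-\frac\alpha2+o(\alpha)$, whence $f_t(\alpha,z)=g(\alpha)+\alpha\log|z|-\frac\alpha2\log|t|\cdot 0+\dots\to g(0)$ as $\alpha\downarrow0$, uniformly for $z$ in compact subsets of $\C$. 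Combined with the continuity of $(\alpha,z)\mapsto f_t(\alpha,z)$ on $(0,1]\times\C$ and the extension $f_t(0,z):=g(0)$, this shows $f_t$ is continuous on the compact set $[0,1]\times\C$ restricted to $[0,1]\times K$ for any compact $K$; therefore $U_t(z)+g(1)=\sup_{\alpha\in[0,1]}f_t(\alpha,z)$ is the supremum of a jointly continuous function over a compact index set, which is automatically finite, attained, and continuous in $z$ (a standard compactness argument: locally uniform continuity in $z$ of the family plus compactness of $[0,1]$).

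Then subharmonicity follows from the general principle that the upper semicontinuous supremum of a family of subharmonic functions is subharmonic: here the supremum is already continuous, so no regularization is needed, and $U_t$ is subharmonic on $\C$. Concretely one verifies the sub-mean-value inequality $U_t(z)\le \frac1{2\pi}\int_0^{2\pi}U_t(z+re^{\ii\theta})\,\dd\theta$ for small $r$, which holds because $f_t(\alpha,z)\le \frac1{2\pi}\int_0^{2\pi}f_t(\alpha,z+re^{\ii\theta})\,\dd\theta\le \frac1{2\pi}\int_0^{2\pi}U_t(z+re^{\ii\theta})\,\dd\theta+g(1)$ for each $\alpha$, and then take the supremum over $\alpha$ on the left. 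One should also note $U_t\not\equiv-\infty$ (clear, since $f_t(1,z)$ is a genuine logarithmic potential) so $U_t$ is a bona fide subharmonic function and not the degenerate one.

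The main obstacle I anticipate is the uniform-in-$z$-on-compacts control of $f_t(\alpha,z)$ as $\alpha\downarrow 0$, i.e.\ confirming that the advertised continuous extension $f_t(0,z)=g(0)$ is genuinely attained uniformly on compacts and that no ``mass escapes to $\alpha=0$'' in a way that would break upper semicontinuity of the supremum. This requires the precise large-argument expansion of $\Psi$ from~\eqref{eq:Psi_def} — one must check that $\frac14\Re(w^2-w\sqrt{w^2-4})+\log|(w+\sqrt{w^2-4})/2|-\frac12 = \log|w|-\frac12+O(1/|w|^2)$ as $|w|\to\infty$, so that after multiplying by $\alpha$ and substituting $w=z/\sqrt{t\alpha}$ the singular $\alpha\log(1/\sqrt\alpha)$ terms cancel against $\alpha\log\sqrt{|t|\alpha}$ and one is left with $\alpha\log|z|+o(1)\to 0$. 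Everything else is soft: continuity of $\Psi$ (already granted in the text before the Proposition), subharmonicity of logarithmic potentials, and the compactness argument for suprema.
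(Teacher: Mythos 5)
Your proof is correct, and its overall skeleton (each $f_t(\alpha,\cdot)$ is continuous and subharmonic; the supremum over $\alpha$ is continuous; a continuous supremum of subharmonic functions is subharmonic, with no need for usc regularization) coincides with the paper's. The continuity step, however, goes through a genuinely different mechanism. The paper never analyzes the endpoint $\alpha=0$: it shows that the family $(f_t(\alpha,\cdot))_{\alpha\in[0,1]}$ is \emph{equicontinuous} on compacts, using that $\Psi$ is Lipschitz (because $\partial_z\Psi=\tfrac14\bigl(z-\sqrt{z^2-4}\bigr)$ is bounded, cf.\ \eqref{eq:Stieltjes_semicircle}), which yields $|f_t(\alpha,z_1)-f_t(\alpha,z_2)|\le C_K\,\alpha^{1/2}|t|^{-1/2}|z_1-z_2|$ uniformly in $\alpha$; continuity of the supremum is then immediate. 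You instead prove joint continuity of $f_t$ on $[0,1]\times K$, which forces you to control the limit $\alpha\downarrow0$, and then take the supremum of a jointly continuous function over a compact index set. Both routes are valid; the paper's is shorter because the Lipschitz bound bypasses the endpoint entirely, while yours has the side benefit of actually verifying the continuous extension $f_t(0,z)=g(0)$ asserted in Theorem~\ref{theo:main_general_g}.

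Two small points to tighten. First, the expansion of $\Psi$ at infinity should read $\Psi(w)=\log|w|+O(1/|w|^2)$: the term $\tfrac14\Re\bigl(w^2-w\sqrt{w^2-4}\bigr)$ tends to $+\tfrac12$ and cancels the $-\tfrac12$ in \eqref{eq:Psi_def}, so the extra $-\tfrac12$ you carry is incorrect as written (though harmless for the conclusion). Second, and more substantively, that expansion only covers the regime $|z|/\sqrt{|t|\alpha}\to\infty$; if the compact set $K$ contains the origin and, say, $|z|\asymp\sqrt{\alpha}$, the argument of $\Psi$ stays bounded and the asymptotic expansion does not apply. The uniform claim $\alpha\,\Psi(z/\sqrt{t\alpha})\to0$ on compacts is still true, but it should be derived from the global two-sided bound $-\tfrac12\le\Psi(w)\le\log(|w|+2)$ (lower bound since $\inf\Psi=\Psi(0)=-\tfrac12$, upper bound since $\Psi$ is the logarithmic potential of a probability measure on $[-2,2]$), which gives $-\alpha/2\le \alpha\Psi(z/\sqrt{t\alpha})\le\alpha\log\bigl(C/\sqrt{|t|\alpha}+2\bigr)\to0$. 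With that one-line repair your argument is complete.
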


Even though the proofs of this and the following proposition are short and elementary (in the sense that they do not rely on Theorem \ref{theo:main_general_g}), we postpone them to Section \ref{logPotProp.sec}. 

\begin{proposition}\label{prop:log_potential_large_z}
Fix $t>0$. Then, for all sufficiently large  $|z|>C_{g,t}$ (where $C_{g,t}$ is a constant depending on $g$ and $t$ only), the logarithmic potential of $\nu_{t}$ is given by
$$
U_{t}(z) = \frac 12 \log t + \Psi\left(\frac{z}{\sqrt t}\right),
\qquad |z|> C_{g,t},
$$
which coincides with the logarithmic potential of the Wigner law on $[-2\sqrt {t}, +2\sqrt {t}]$. Furthermore, the analytic moments of $\nu_t$ coincide for all $t$ with the moments of the semicircular distribution of variance $t$:
\[
\int_\mathbb{C} z^k \,\nu_t(dz) = \int_\mathbb{R} x^k \,\operatorname{sc}_t(dx), \quad k\in\mathbb{N}.
\]
\end{proposition}
\begin{remark}
Let $t>0$.
Taking $\alpha = 1$ in~\eqref{eq:log_potential_limit} and~\eqref{eq:f_def} we obtain the following lower bound which is valid for all $z\in \C$:
$$
U_{t}(z) \geq  \frac 12 \log t + \Psi\left(\frac{z}{\sqrt t}\right).
$$
\end{remark}

\section{The transport map and push-forward theorems}
\label{sec:transport_map}

In this section we introduce a \textquotedblleft transport
map\textquotedblright\ $T_t$ that encodes the way we expect the zeros of
heat-evolved polynomials to move in time. Also, this map will help us to identify maximizers $\alpha$ for $f_t(\alpha,z)$ and 
it will simplify the Stieltjes transform $m_t$. 
Most importantly,  it gives rise to a global push-forward theorem  (see Theorem \ref{theo:pushforward} below), stating that
\begin{align*}
\nu_t=(T_t)_\#\nu_0 \text{ if }t<t_{\mathrm{sing}},
\end{align*}
for some explicit critical time $t_{\mathrm{sing}}$ at which singularities start to develop. Moreover, for general $t$ we will prove a local push-forward result near regular (non-singular) points, see Theorem \ref{theo:local_push_forward}.

\subsection{Heuristics}\label{subsec:heuristics}
Let us motivate the definition of the transport map.\footnote{Another motivation from the point of view of Hamilton--Jacobi equations can be found in Section \ref{subsec:PDE_perspective}.}
Consider any polynomial $P_n(z)$ of degree $n$ and set $P_n(z;t)=\eee^{-\frac t{2n} \partial_z^2} P_n(z)$. Suppose that for some $t_0$, the roots of $P_n(z;t_0)$ are distinct. Then, by the implicit function theorem, it is possible to label the zeros of $P_n(z;t)$, for $t$ near $t_0$, as $z_1(t), \ldots, z_n(t)$ so that these zeros are distinct and depend analytically on $t$. Then the analytic functions $z_1(t),\ldots, z_n(t)$ satisfy the following system of ODE's:
\begin{equation}\label{eq:ODE_for_poly}
\partial_t z_j(t) =  \frac 1 n\sum_{\substack{k=1,\ldots, n:\\ k\neq j}} \frac{1}{z_j(t) - z_k (t)},
\qquad
j\in \{1,\ldots, n\},
\end{equation}
for $t$ near $t_0$\footnote{
Observe that the system~\eqref{eq:ODE_for_poly} need not have a well-defined solution for \emph{all} $t\in \C$ because some of the $z_j(t)$'s may collide.}. Recall that  if we use the notation $\partial_t$, it means that the time $t$ is considered to be complex, whereas $\frac{\partial}{\partial t}$ denotes the partial derivative in the real variable $t$.
These equations are well-known; see \cite{tao_blog1,tao_blog2,csordas_smith_varga,hallho,MarcusFPP,GAF-paper,menon_miracles_burgers,menon_complex_burgers}. See also~\cite{rodgers_tao}, for a fascinating connection to the Riemann hypothesis, and \cite{hallho}, where it is also observed that, after differentiating, \eqref{eq:ODE_for_poly} becomes the much studied (rational) \emph{Calogero--Moser system}
\begin{equation}
\partial_t^{2}z_{j}(t)=-\frac{2}{n^{2}}\sum_{\substack{k=1,\ldots, n:\\ k\neq j}}\frac{1}
{(z_{j}(t)-z_{k}(t))^{3}}.\label{eq:secondDeriv}%
\end{equation}

Assume now that the nearest neighbors of $z_{j}(t)$ are at the
typical\ distance of order $1/\sqrt n$ from $z_{j}(t)$ (as it is the case for i.i.d.\ points from  a bounded two-dimensional  density). Then, passing to the formal hydrodynamic limit $n\to\infty$ in \eqref{eq:secondDeriv}, we expect to have $\partial_t^2z_j(t)\approx0$.
After all, the points $z_{k}(t)$ that are far from $z_{j}(t)$ make a small
contribution to the second derivative because the sum in \eqref{eq:secondDeriv}
is normalized by a factor of $1/n^{2}$. The points
$z_{k}(t)$ that are near $z_{j}(t)$ also make a small contribution because
$1/n^{2}$ is small compared to the inverse cube of the nearest-neighbor distance $1/\sqrt{n}.$

We conclude that as long as the zero $z_{j}(t)$ remains in
a region where $\nu_{t}$ has a bounded two-dimensional  density, 
it should move approximately linear in $t$ and applying \eqref{eq:ODE_for_poly} at $t=0,$ we see that the velocity is given by the Stieltjes transform of the initial distribution.
More precisely, we expect
\begin{equation}
z_{j}(t)\approx z_{j}(0)+t\int_{\mathbb{C}}\frac{1}{z_{j}(0)-w}~\nu_0
(\dint w)=z_j(0)+tm_0(z_j(0)),\label{eq:zjApprox}%
\end{equation}
see Figure \ref{fig:dynamics_roots}  (center and right panels).

\begin{figure}[ht]
\includegraphics[width=.33\textwidth]{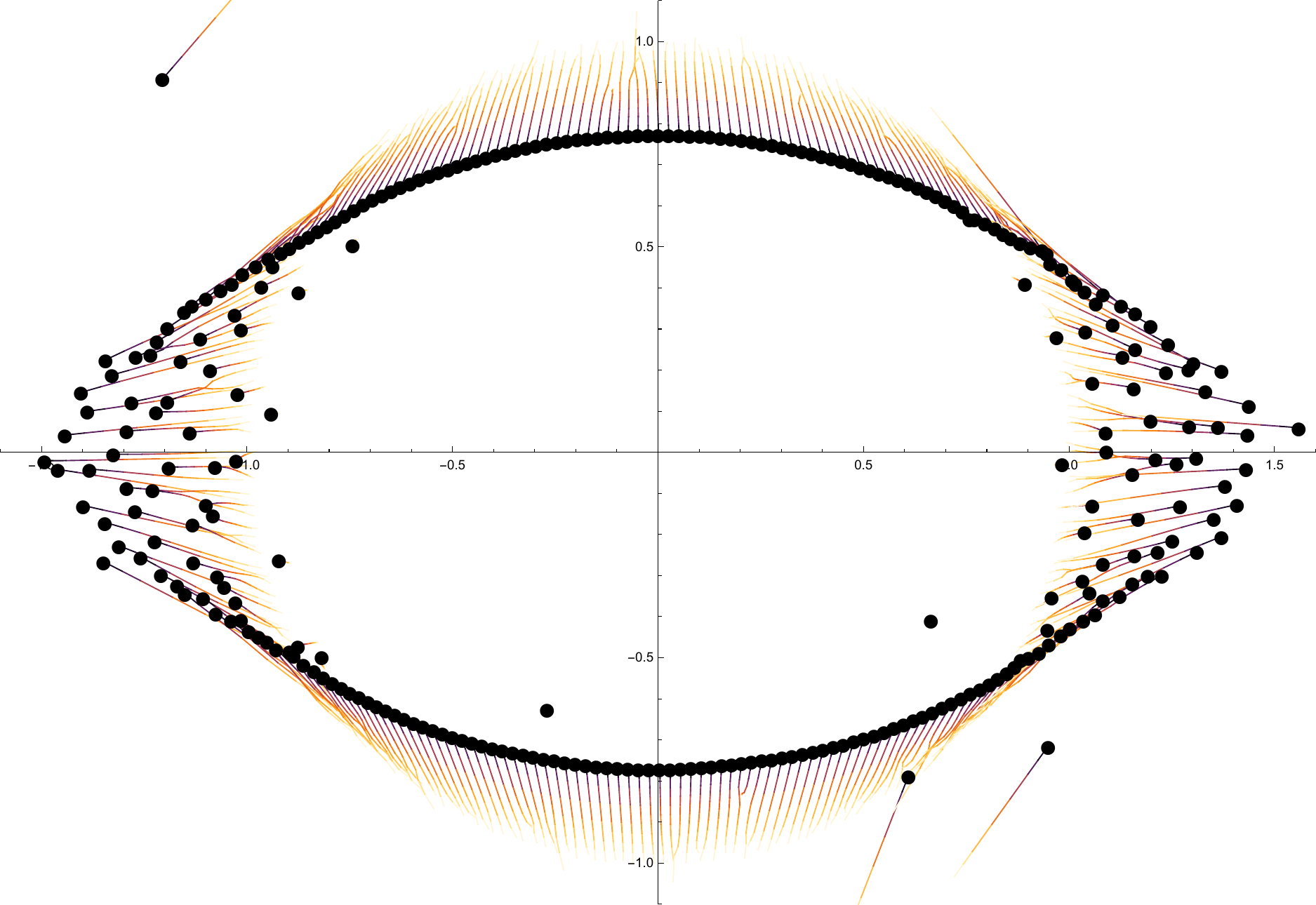}\includegraphics[width=.33\textwidth]{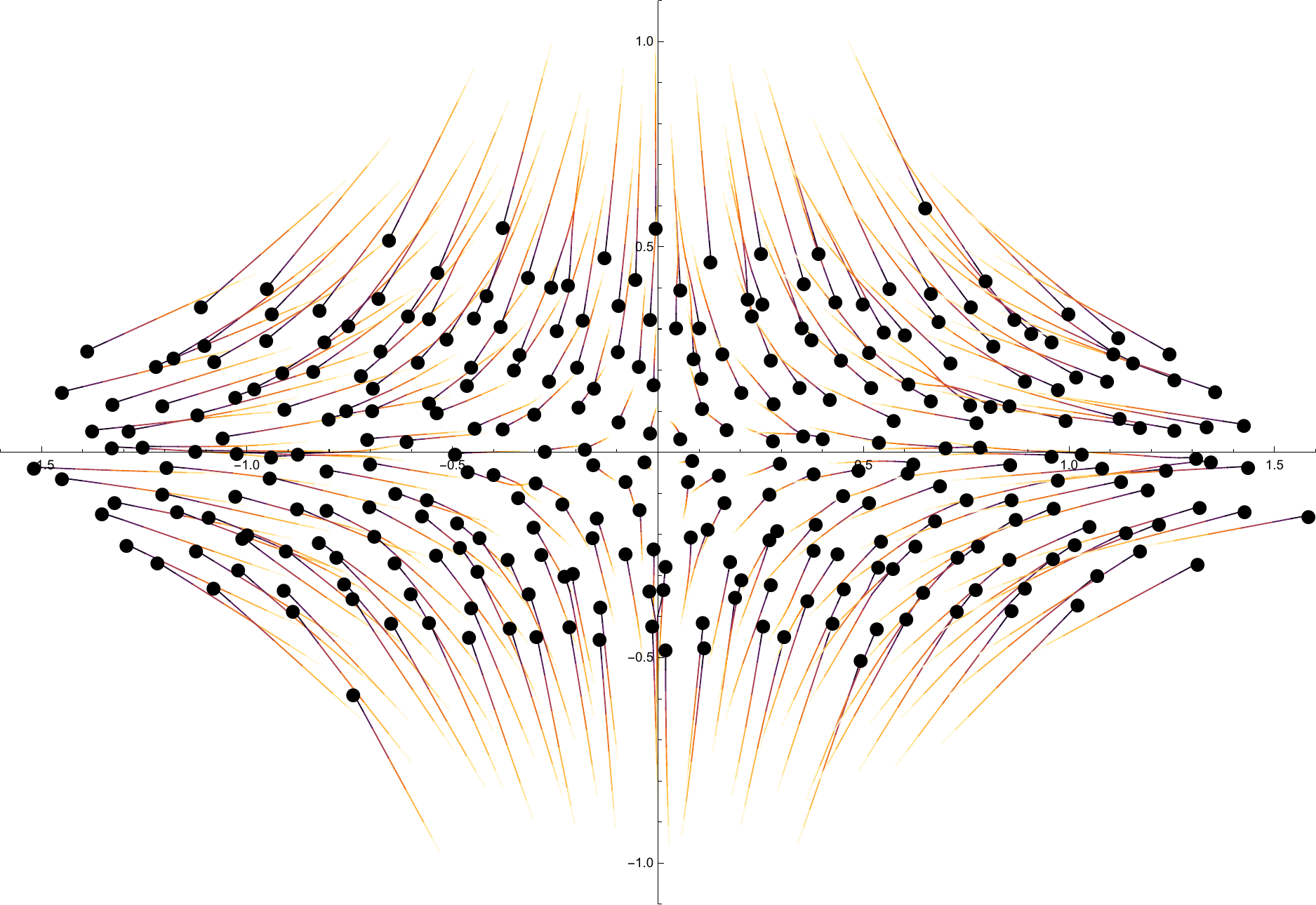}\includegraphics[width=.33\textwidth]{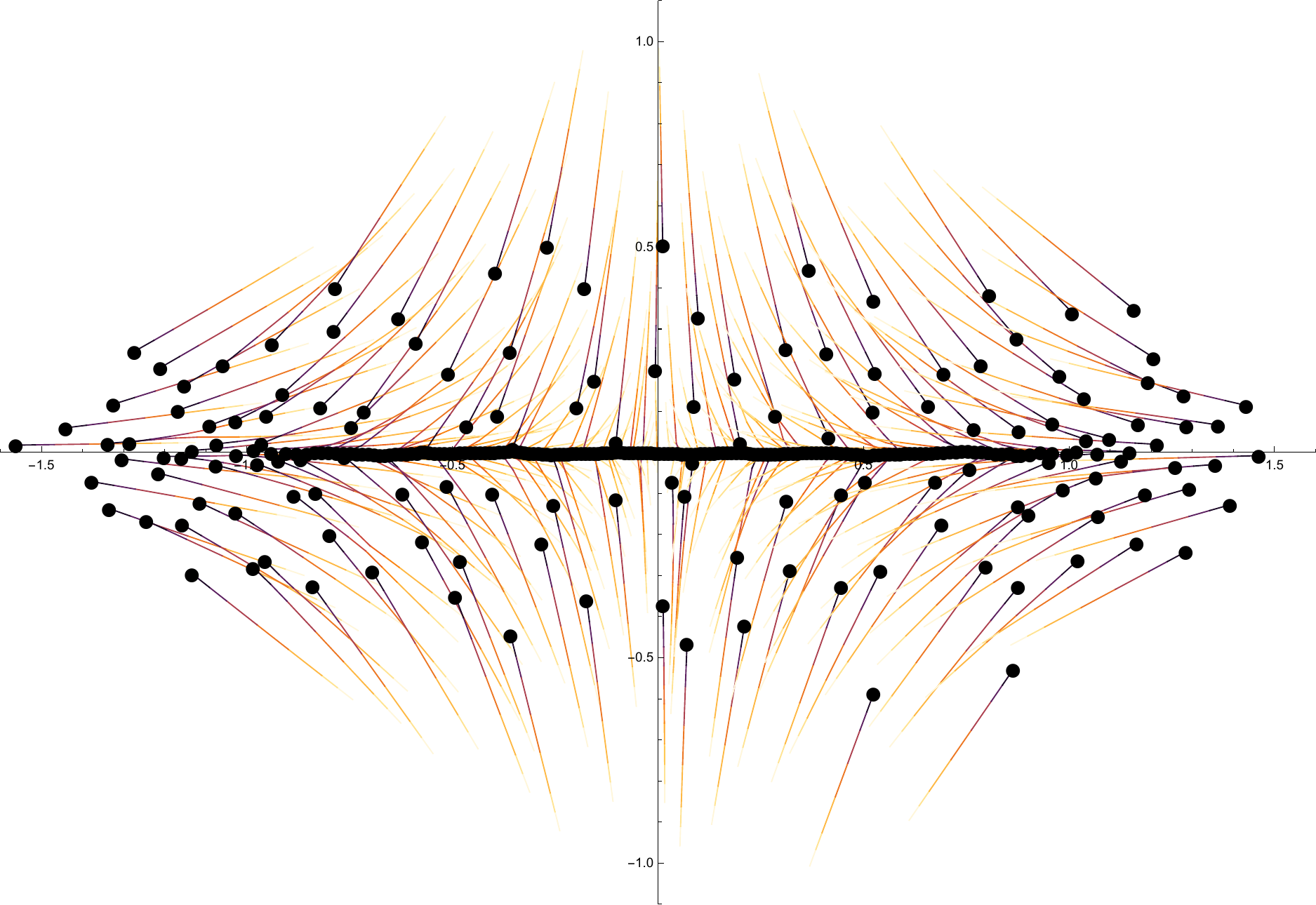}
\caption{Dynamics of the roots of the heat-evolved random polynomial $P_n(z;t)$ degree $n=300$ from $t=0$ until $t=1/2$. Again, we compare Kac polynomials (left) from Example \ref{ex:Kac}, Weyl polynomials (center) from Example \ref{ex:Weyl} and Littlewood--Offord polynomials (right) from Example \ref{ex:LO}. For Weyl polynomials, Theorem \ref{theo:pushforward} explains that the limiting distribution are push-forwards under $T_t$, which according point into the direction of the Stieltjes transform. } \label{fig:dynamics_roots}
   \end{figure}

We emphasize, however, that this behavior is not expected to persist once
$z_{j}(t)$ runs into a region where $\nu_{t}$ fails to have a density with
respect to the Lebesgue measure. If, for example, a positive fraction of the
zeros are concentrating onto a curve in the plane, then the typical spacing
along this curve will be order $1/n$ rather than $1/\sqrt{n}$ and the above
argument is no longer valid if $z_{j}(t)$ lies in the curve; see Figure~\ref{fig:dynamics_roots} (left). We shall exclude singular components of $\nu_t$ by assuming $t<t_{\mathrm{sing}}$ or by introducing  the notion of regular points below.

Passing to the formal hydrodynamic (large $n$) limit suggests that the velocity of a zero located at position $z$ at time $t$ should be given by the Stieltjes transform $m_t(z)$, the limit of the right-hand side of~\eqref{eq:ODE_for_poly}.
In general, we conjecture that if at time $0$ some zero was located near $w$, then at time $t>0$ this zero will  be located near $T_t(w)$ for certain deterministic transport maps $T_t :\C \to \C$. The measure $\nu_t$ is then the push-forward of $\nu_0$ under the map $T_t$. According to the previous arguments,  the velocity of zeros turns out to be constant in time as long as $t$ does not exceed a certain critical time $t_{\mathrm{sing}}$ after which singularities or shock waves start to develop.

\subsection{Definition and properties of the transport map}\label{subsec:transport_map}
Let us now introduce a family of maps $T_t$, which we believe to be these transport maps for $t< t_{\mathrm{sing}}$. 
We suppose that    \hyperref[cond:A1]{(A1)} and    \hyperref[cond:A2]{(A2)} hold together with the following additional condition:
\begin{itemize}
\item[(A3)] \label{cond:A3} The function $g$ is infinitely  differentiable on $(0,1)$ and $g''(\alpha)<0$ for all $\alpha \in (0,1)$.
If $g'(0)=+\infty$, we additionally assume that $\lim_{\alpha\downarrow 0} \alpha \eee^{g'(\alpha)} = 0$.
\end{itemize}
Under    \hyperref[cond:A1]{(A1)}--\hyperref[cond:A3]{(A3)}, the probability measure $\nu_0$ can be described as follows. For every $z\in \mathbb C$, the function $\alpha \mapsto g(\alpha) + \alpha \log |z|$, defined for $\alpha\in [0,1]$, has a unique maximizer  which we call $\alpha_0(z)$. It is given by
\begin{align}\label{eq:alpha_0}
\alpha_0(z)
=
\begin{cases}
0, & \text{ if } |z| \leq \eee^{- g'(0)},\\
(g')^{-1}(-\log |z|) & \text{ if } \eee^{- g'(0)} < |z| <  \eee^{- g'(1)},\\
1, & \text{ if } |z| \geq  \eee^{- g'(1)}.\\
\end{cases}
\end{align}
Here, $g'(0) = \lim_{\alpha \downarrow 0} g'(\alpha)$ (which may be $\infty$) and $g'(1) =\lim_{\alpha \uparrow 1} g'(\alpha)$  (which we excluded to be $-\infty$) are understood as one-sided derivatives.
The probability measure $\nu_0$ is supported on the annulus
\begin{equation}\label{eq:R_0_def}
\mathcal R_0 \coloneqq \{z\in \C: \eee^{- g'(0)} < |z| < \eee^{- g'(1)}\}.
\end{equation}
Note that, for $z\in  \mathcal R_0$, $\alpha_0(z)$ is the unique solution to $\eee^{-g'(\alpha)} = |z|$ in $(0,1)$.
From this equation and the last statement of Theorem~\ref{theo:distribution_roots_general_g_time_0} it follows that
\begin{equation}\label{eq:alpha_0_measure}
\alpha_0(r)=\nu_{0}\left(\left\{w\in\C: |w| < \eee^{-g'(\alpha_0(r))}\right\}\right)
=\nu_0(B_{r}(0)),
\end{equation}
for all $r>0$. Here, $B_r(0) = \{z\in \C: |z|<r\}$.  Thus, $\alpha_0$ is the distribution function of the radial part of the probability measure $\nu_0$. Also, let us stress that the Stieltjes transform of $\nu_0$ is given by
$$
m_0(z) :=  \int_{\C} \frac{\nu_{0}(\dint y)}{z - y} =  \frac{\alpha_0(z)}{z}, \qquad z\in \C\backslash \{0\}.
$$
This follows from the fact that the Stieltjes transform of the uniform distribution on the circle $\{|z| = r\}$ equals $1/z$ outside the circle and vanishes inside the circle.

\begin{definition}\label{def:transport_map}
Fix $t \geq 0$. The \emph{transport map} $T_t:\C\backslash  \{0\}\to \C$ is defined by
\begin{align}\label{eq:T}
T_t(w)=w+t\frac{\alpha_0(w)}{w} = w + t m_0(w).
\end{align}
\end{definition}

We emphasize that this definition is in accordance with the heuristic \eqref{eq:zjApprox}. Also note that $T_t$ equals the Joukowsky-type function $w \mapsto w + t/w$ for $|w|> \eee^{-g'(1)}$, but in contrast to the Joukowsky map, $T_t$ need not be conformal on $\mathcal R_0$ because of the presence of the term $\alpha_0(w)$ in~\eqref{eq:T}. Furthermore, $T_t$ transports towards the real axis and away from the imaginary axis, since
$$
\Im (T_t(w))=\left(1-\frac {t\alpha_0(w)}{|w|^2}\right)\Im w 
\quad
\text{ and }
\quad
\Re (T_t(w))=\left(1+\frac {t\alpha_0(w)}{|w|^2}\right)\Re w. 
$$
If $g'(0)=+\infty$, the condition $\lim_{\alpha\downarrow 0} \alpha \eee^{g'(\alpha)} = 0$ which appears in    \hyperref[cond:A3]{(A3)} is equivalent to  $\lim_{s\downarrow 0}\alpha_0(s)/s = 0$, which means that we can define $T_t(0) = 0$ by continuity. The next proposition lists some elementary properties of the maps $T_t$.
In particular, as long as the time $t$ is sufficiently small, $T_t$ is a bijection which maps circles to ellipses.

\begin{proposition}\label{prop:T_t_bijective}
Suppose that conditions    \hyperref[cond:A1]{(A1)},    \hyperref[cond:A2]{(A2)},    \hyperref[cond:A3]{(A3)} hold. Let $t$ be real and such that
\begin{align}
0<t< t_{\mathrm{sing}}
\coloneqq\left(\sup_{s \in  (\eee^{- g'(0)}, \eee^{- g'(1)})} \left\lvert \left(\frac {\alpha_0(s)}{s}\right)' \right\rvert\right)^{-1}
=\inf_{\alpha\in(0,1)}\left|\frac{g''(\alpha)\eee^{-2g'(\alpha)}}{1+\alpha g''(\alpha)}\right|
.\label{eq:t_sing}
\end{align}
Then, the following hold.
\begin{enumerate}[(a)]
\item The map $T_t:\C\backslash\{ 0\}\to\C\backslash\{0\}$ is bijective.
\item For every $s>0$, $T_t$ maps the circle $\{|w|=s\}$ to the ellipse $\partial \mathcal E_{s,t}$, where $\partial A$ denotes the boundary of a set $A$ and
\begin{equation}\label{eq:def_E_r_mathcal_ellipse_0}
\mathcal E_{s,t}\coloneqq\left\{z=x+iy\in\C:\frac{x^2}{\left(s+t\frac{\alpha_0(s)}{s}\right)^2}+\frac{y^2}{\left(s-t\frac{\alpha_0(s)}{s}\right)^2}\leq 1\right\}.
\end{equation}
\item The ellipses $\mathcal E_{s,t}$ are nested, that is for $0< s < s'< \infty$ we have $\mathcal E_{s,t} \subset \mathcal E_{s',t}$. Their boundaries $\partial \mathcal E_{s,t}$ are disjoint  and foliate $\C\backslash\{0\}$ meaning that $\cup_{s>0} \partial \mathcal E_{s,t} = \C\backslash\{0\}$.
\item $T_t$ is a bijection between the annulus $\mathcal R_0$, see~\eqref{eq:R_0_def},  and the domain
\begin{align*}
\mathcal R_t
&\coloneqq
\left\{z=x+iy\in\C: |z| > \eee^{- g'(0)}, \frac{x^2}{\left(\eee^{-g'(1)} + t \eee^{g'(1)}\right)^2}+\frac{y^2}{\left(\eee^{- g'(1)}-t\eee^{g'(1)}\right)^2} <  1\right\}\\
&=
\bigcup_{s\in (\eee^{- g'(0)},  \eee^{- g'(1)})} \partial \mathcal E_{s,t}.
\end{align*}
\end{enumerate}
\end{proposition}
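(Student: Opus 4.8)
The plan is to exploit the ``polar'' structure of $T_t$. Since $\alpha_0(w)$ depends only on $s:=|w|$, writing $w=s\eee^{\ii\theta}$ turns~\eqref{eq:T} into
$$
T_t(s\eee^{\ii\theta})=a(s)\cos\theta+\ii\,b(s)\sin\theta,
\qquad
a(s):=s+t\frac{\alpha_0(s)}{s},
\qquad
b(s):=s-t\frac{\alpha_0(s)}{s},
$$
so the whole proposition is controlled by the two real functions $a,b$ on $(0,\infty)$. Part~(b) is then immediate once we know $a(s),b(s)>0$: the image of the circle $\{|w|=s\}$ is the curve $u^2/a(s)^2+v^2/b(s)^2=1$, which is $\partial\mathcal E_{s,t}$ by~\eqref{eq:def_E_r_mathcal_ellipse_0}. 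Hence the core of the proof is the following claim, which I would isolate and prove first: \emph{for $0<t<t_{\mathrm{sing}}$, both $a$ and $b$ extend continuously to $[0,\infty)$ with $a(0)=b(0)=0$, are strictly increasing on $[0,\infty)$, and tend to $+\infty$; in particular they are increasing homeomorphisms of $[0,\infty)$.}

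To prove the claim I would split $(0,\infty)$ into the three regions from~\eqref{eq:alpha_0}. On $(0,\eee^{-g'(0)}]$ we have $\alpha_0\equiv0$, so $a(s)=b(s)=s$. On $[\eee^{-g'(1)},\infty)$ we have $\alpha_0\equiv1$, so $a(s)=s+t/s$ and $b(s)=s-t/s$; here $b'(s)=1+t/s^2>0$ always, while $a'(s)=1-t/s^2>0$ throughout this region precisely when $t<\eee^{-2g'(1)}$. On the annulus $(\eee^{-g'(0)},\eee^{-g'(1)})$ one substitutes $s=\eee^{-g'(\alpha)}$ --- a smooth, increasing reparametrisation by $\alpha\in(0,1)$ since $g''<0$ on $(0,1)$ by~\hyperref[cond:A3]{(A3)} --- and differentiates in $\alpha$: a short computation shows $\frac{\dd a}{\dd\alpha}>0$ holds automatically when $1+\alpha g''(\alpha)\ge0$ and is equivalent to $t<\bigl|g''(\alpha)\eee^{-2g'(\alpha)}/(1+\alpha g''(\alpha))\bigr|$ when $1+\alpha g''(\alpha)<0$, and symmetrically $\frac{\dd b}{\dd\alpha}>0$ holds automatically when $1+\alpha g''(\alpha)\le0$ and is equivalent to the same bound when $1+\alpha g''(\alpha)>0$; both are guaranteed by $t<t_{\mathrm{sing}}=\inf_{\alpha\in(0,1)}\bigl|g''(\alpha)\eee^{-2g'(\alpha)}/(1+\alpha g''(\alpha))\bigr|$. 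Since $\alpha_0$ is continuous, $a$ and $b$ match continuously at the junction radii $\eee^{-g'(0)},\eee^{-g'(1)}$, hence are strictly increasing on all of $(0,\infty)$; they tend to $+\infty$ because $a(s),b(s)\ge s-t/s$; and $a(0^+)=b(0^+)=0$, trivially if $g'(0)<\infty$, and by the extra hypothesis $\lim_{\alpha\downarrow0}\alpha\eee^{g'(\alpha)}=0$ of~\hyperref[cond:A3]{(A3)} (which gives $\alpha_0(s)/s\to0$) when $g'(0)=+\infty$. In particular $a(s),b(s)>0$ for all $s>0$.

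The one genuinely non-obvious point --- and the step I expect to be the main obstacle --- is that the outer-region requirement $t<\eee^{-2g'(1)}$ is \emph{implied} by $t<t_{\mathrm{sing}}$, i.e.\ that $t_{\mathrm{sing}}\le\eee^{-2g'(1)}$. I would obtain this from the mean value theorem applied to $h(s):=\alpha_0(s)/s$ on the closed annulus $[\eee^{-g'(0)},\eee^{-g'(1)}]$: since $h(\eee^{-g'(0)})=0$ and $h(\eee^{-g'(1)})=\eee^{g'(1)}$, some interior point $s_*$ satisfies $|h'(s_*)|=\eee^{g'(1)}/(\eee^{-g'(1)}-\eee^{-g'(0)})\ge\eee^{g'(1)}/\eee^{-g'(1)}=\eee^{2g'(1)}$; as $t_{\mathrm{sing}}^{-1}$ is by definition the supremum of $|h'|$ over the annulus (see~\eqref{eq:t_sing}), this yields $t_{\mathrm{sing}}\le\eee^{-2g'(1)}$. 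The same bookkeeping shows $\sup_{s>0}|h'(s)|$ equals this annulus supremum --- indeed $h'\equiv0$ on $(0,\eee^{-g'(0)})$ and $|h'(s)|=1/s^2<\eee^{2g'(1)}$ on $(\eee^{-g'(1)},\infty)$ --- and the substitution $s=\eee^{-g'(\alpha)}$ converts it to the $\inf_\alpha$ expression, which is how the two formulas in~\eqref{eq:t_sing} are reconciled.

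It then remains to harvest (a), (c), (d) from the claim. For~(a): given $z=u+\ii v\in\C\backslash\{0\}$, the map $s\mapsto u^2/a(s)^2+v^2/b(s)^2$ is continuous and strictly decreasing (since $1/a^2,1/b^2$ are), running from $+\infty$ as $s\downarrow0$ to $0$ as $s\to\infty$, so there is a unique $s=s(z)>0$ with $z\in\partial\mathcal E_{s(z),t}$; as $a(s(z)),b(s(z))>0$, the parametrisation $\theta\mapsto(a(s(z))\cos\theta,\,b(s(z))\sin\theta)$ is a bijection of $[0,2\pi)$ onto $\partial\mathcal E_{s(z),t}$, so $z$ has exactly one preimage $w=s(z)\eee^{\ii\theta}$ (its modulus is forced to be $s(z)$ by part~(b)), and $T_t(w)\ne0$ because $0\notin\partial\mathcal E_{s,t}$; thus $T_t$ is a bijection of $\C\backslash\{0\}$. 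Part~(c) is the monotonicity repackaged: $0<s<s'$ gives $a(s)<a(s')$, $b(s)<b(s')$, hence $\mathcal E_{s,t}\subset\mathcal E_{s',t}$; the boundaries are pairwise disjoint by uniqueness of $s(z)$ and exhaust $\C\backslash\{0\}$ by its existence. For~(d): by~(b) the image of $\mathcal R_0$ is $\bigcup_{s\in(\eee^{-g'(0)},\eee^{-g'(1)})}\partial\mathcal E_{s,t}$, which by the nesting in~(c) is exactly the open region outside $\partial\mathcal E_{\eee^{-g'(0)},t}$ and inside $\partial\mathcal E_{\eee^{-g'(1)},t}$; since $\alpha_0(\eee^{-g'(0)})=0$ makes $\partial\mathcal E_{\eee^{-g'(0)},t}$ the circle $\{|z|=\eee^{-g'(0)}\}$ and $\alpha_0(\eee^{-g'(1)})=1$ makes $\partial\mathcal E_{\eee^{-g'(1)},t}$ the ellipse with semi-axes $\eee^{-g'(1)}\pm t\eee^{g'(1)}$, this region is precisely $\mathcal R_t$, and bijectivity of $T_t|_{\mathcal R_0}$ onto $\mathcal R_t$ is inherited from~(a).
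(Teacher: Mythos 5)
Your proof is correct and follows essentially the same route as the paper's: everything is reduced to the strict monotonicity of the semi-axis functions $a(s)=s+t\alpha_0(s)/s$ and $b(s)=s-t\alpha_0(s)/s$ on $(0,\infty)$, established by splitting into the three regions of \eqref{eq:alpha_0} and invoking $t<t_{\mathrm{sing}}$ (the paper obtains the needed outer-region bound $t<\eee^{-2g'(1)}$ by integrating $(\alpha_0(s)/s)'$ over the annulus, which is exactly the integral form of your mean-value-theorem step), after which (a)--(d) are read off from the resulting nested foliation by ellipses. The only cosmetic differences are that the paper packages the circle-to-ellipse bijections in a separate Joukowsky-map lemma and checks monotonicity directly in the variable $s$, whereas you parametrize the ellipses explicitly and work in the variable $\alpha$ via $s=\eee^{-g'(\alpha)}$.
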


The definition of $t_{\mathrm{sing}}$ in \eqref{eq:t_sing} guarantees that the semi-axes $s\pm t\alpha_0(s)/s$ in \eqref{eq:def_E_r_mathcal_ellipse_0} will have positive derivative with respect to $s$, for $t<t_{\mathrm{sing}}$.

\subsection{The global push-forward theorem}

With the above properties at hand, we can state the main result of this section, which provides a fairly complete description of the probability measures $\nu_t$ for $0<t< t_{\mathrm{sing}}$ (before the singularities start to develop).

\begin{theorem}\label{theo:pushforward}
Suppose that conditions    \hyperref[cond:A1]{(A1)},    \hyperref[cond:A2]{(A2)},    \hyperref[cond:A3]{(A3)} hold and let $0<t< t_{\mathrm{sing}}$ as defined in \eqref{eq:t_sing}. Then, the following hold.
\begin{enumerate}[(a)]
\item The probability  measure $\nu_t$ is the push-forward of $\nu_0$ under $T_t$:
\begin{equation}\label{eq:push_forward}
(T_t)_\#\nu_{0}=\nu_{t}.
\end{equation}
In particular, $\nu_t$ is concentrated on the domain $\mathcal R_t$. As another consequence,  we have
$$
\nu_t(\mathcal E_{s,t}) = \alpha_0(s),
\qquad s>0.
$$
\item The distribution $\nu_t$ has a globally bounded Lebesgue density $p_t$.
\item The Stieltjes transform $m_{t}(z) = \int_{\C} \frac{1}{z - y} \nu_{t}(\dint y)$ of $\nu_t$ is continuous on $\C$ and satisfies
\begin{align}\label{eq:Stieltjes_pushforward}
m_{t}(T_t(w)) = m_0(w) = \frac{\alpha_0(w)}{w}= \frac{\partial}{\partial t} T_t(w), \quad \text{ for all } w\in
\C\backslash\{0\}.
\end{align}
\item For every $z= T_t(w) \in \mathcal R_t$, the function $\alpha \mapsto f_t(\alpha, z)$ has a unique maximizer $\alpha_t(z) \in (0,1)$.  Moreover, we have
\begin{align}\label{eq:alpha_t}
\alpha_t(T_t(w))=\alpha_0(w),
\quad
\text{ for all } w\in \C.
\end{align}
\item The logarithmic potential $U_t$ of $\nu_{t}$ belongs to the class  $C^1$ (that is, it is real continuously differentiable on $\C$) and satisfies
\begin{align}\label{eq:Ut(Ttw)}
U_{t}(T_t(w))=U_{0}(w)+\frac{\alpha_0(w)^2}{2}\Re\left(\frac{t}{w^2}\right),
\quad
\text{ for all }
w\in \C\backslash\{0\}.
\end{align}
\end{enumerate}
\end{theorem}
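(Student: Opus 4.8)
The linchpin of the argument is part~(d): once the maximiser $\alpha_t(z)$ in the definition of $U_t$ is shown to equal $\alpha_0\big(T_t^{-1}(z)\big)$, the remaining parts follow by substitution and differentiation. By Remark~\ref{rem:rotation} we may assume $t>0$. A short preliminary computation establishes the inequality $t_{\mathrm{sing}}\le\inf_{\alpha\in(0,1)}\eee^{-2g'(\alpha)}/\alpha$ --- indeed, at an interior minimiser $\alpha^*$ of $\alpha\mapsto\eee^{-2g'(\alpha)}/\alpha$ one has $\alpha^*g''(\alpha^*)=-\tfrac12$, and at that point $|g''(\alpha^*)\eee^{-2g'(\alpha^*)}/(1+\alpha^*g''(\alpha^*))|$ coincides with $\eee^{-2g'(\alpha^*)}/\alpha^*$, while the endpoint cases are checked directly --- so that $|w|^2>t\alpha_0(w)$ for every $w\in\mathcal R_0$ whenever $t<t_{\mathrm{sing}}$; this will be used repeatedly to pin down branches.

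\emph{Step 1 (part (d)).} Fix $w\in\mathcal R_0$ and put $z=T_t(w)\in\mathcal R_t$, which is legitimate by Proposition~\ref{prop:T_t_bijective}(d). Differentiating $f_t$ in~\eqref{eq:f_def} in $\alpha$, writing $v=z/\sqrt{t\alpha}$, and using that $\Psi$ is the logarithmic potential of $\mathsf{sc}_1$ so that $\partial_v\Psi(v)=\tfrac14(v-\sqrt{v^2-4})$ (Remark~\ref{rem:Psi},~\eqref{eq:wigner_stieltjes}) together with the closed form~\eqref{eq:Psi_def}, the three terms telescope and one obtains
\[
\partial_\alpha f_t(\alpha,z)=g'(\alpha)+\log\Big|\tfrac{z+\sqrt{z^2-4t\alpha}}{2}\Big|.
\]
Hence the critical-point equation is $g'(\alpha)=-\log|(z+\sqrt{z^2-4t\alpha})/2|$. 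The algebraic identity $z^2-4t\alpha_0(w)=(w-t\alpha_0(w)/w)^2$, the branch normalisation of Remark~\ref{rem:Psi}, and the preliminary inequality $|w|^2>t\alpha_0(w)$ (which forces $w$ to be the root of $\zeta+t\alpha_0(w)/\zeta=z$ of larger modulus, i.e.\ the one the branch selects) yield $(z+\sqrt{z^2-4t\alpha_0(w)})/2=w$; combined with $g'(\alpha_0(w))=-\log|w|$ this shows $\alpha=\alpha_0(w)$ solves the critical-point equation. For uniqueness and maximality one checks that at any interior critical point $\partial_\alpha^2f_t(\alpha,z)=g''(\alpha)+t(t\alpha-\Re\zeta^2)/|\zeta^2-t\alpha|^2$, with $\zeta$ the root selected above; since there $|\zeta|^2=\eee^{-2g'(\alpha)}>t\alpha$, the worst case over $\arg\zeta$ is $\zeta$ purely imaginary, giving $\partial_\alpha^2f_t\le g''(\alpha)+t/(\eee^{-2g'(\alpha)}+t\alpha)$, which is strictly negative precisely when $t<t_{\mathrm{sing}}$. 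Thus $f_t(\cdot,z)$ is strictly concave near every critical point (the regime $v\in[-2,2]$, which occurs only for real $z$, is handled separately: it contains no critical point, again because $t<t_{\mathrm{sing}}$), so there is a unique interior critical point; inspecting the behaviour of $f_t(\cdot,z)$ as $\alpha\downarrow0$ (using $|z|>\eee^{-g'(0)}$ on $\mathcal R_t$, and $\alpha\eee^{g'(\alpha)}\to0$ from (A3) when $g'(0)=+\infty$) and as $\alpha\uparrow1$ (using $g'(1)>-\infty$ and that $z$ lies inside $\partial\mathcal E_{\eee^{-g'(1)},t}$) rules out endpoint maxima. This gives $\alpha_t(T_t(w))=\alpha_0(w)$ for $w\in\mathcal R_0$, which is~\eqref{eq:alpha_t}; for $w\notin\mathcal R_0$ the same identity follows from the explicit form of $U_t$ outside $\mathcal R_t$.

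\emph{Steps 2 and 3 (parts (e),(c),(a),(b)).} Substituting $\alpha=\alpha_0(w)$ into~\eqref{eq:f_def}, using $(z+\sqrt{z^2-4t\alpha_0(w)})/2=w$ to express $\log|(v+\sqrt{v^2-4})/2|$ and $\Re(v^2-v\sqrt{v^2-4})$ in terms of $w$, the three summands of $f_t(\alpha_0(w),T_t(w))$ collapse to $g(\alpha_0(w))+\alpha_0(w)\log|w|+\tfrac{\alpha_0(w)^2}{2}\Re(t/w^2)$; subtracting $g(1)$ and recognising $g(\alpha_0(w))+\alpha_0(w)\log|w|=U_0(w)+g(1)$ (Theorem~\ref{theo:distribution_roots_general_g_time_0}) gives~\eqref{eq:Ut(Ttw)}. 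Since $\alpha_t$ is $C^1$ on $\mathcal R_t$ by the implicit function theorem (using $\partial_\alpha^2f_t\ne0$ from Step~1), the envelope theorem gives $m_t(z)=2\partial_zU_t(z)=2(\partial_zf_t)(\alpha_t(z),z)=\tfrac1{2t}(z-\sqrt{z^2-4t\alpha_t(z)})$ on $\mathcal R_t$; evaluating at $z=T_t(w)$ and using once more $z^2-4t\alpha_0(w)=(w-t\alpha_0(w)/w)^2$ gives $m_t(T_t(w))=\alpha_0(w)/w=m_0(w)=\tfrac{\partial}{\partial t}T_t(w)$, i.e.~\eqref{eq:Stieltjes_pushforward}. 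Continuity of $m_t$ on $\C$ and $C^1$-regularity of $U_t$ then follow by patching this formula on $\mathcal R_t$ with Proposition~\ref{prop:log_potential_large_z} on the unbounded complementary region and harmonicity of $U_t$ on the inner hole $\{|z|<\eee^{-g'(0)}\}$, and checking that the gradient matches across the smooth curve $\partial\mathcal R_t$. For the push-forward: $\nu_0$ is supported in $\overline{\mathcal R_0}$, so $(T_t)_\#\nu_0$ is supported in $\overline{\mathcal R_t}$, while $\nu_t=\tfrac1{2\pi}\Delta U_t$ is also supported in $\overline{\mathcal R_t}$ by the harmonicity just mentioned; on $\mathcal R_t$, where $U_t$ is smooth by~\eqref{eq:Ut(Ttw)} and Proposition~\ref{prop:T_t_bijective}, one computes $\tfrac1{2\pi}\Delta U_t$, performs the change of variables $z=T_t(w)$, and matches it against the density of $\nu_0$ furnished by the last statement of Theorem~\ref{theo:distribution_roots_general_g_time_0}, the $C^1$-regularity ruling out a singular part on $\partial\mathcal R_t$. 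This proves $(T_t)_\#\nu_0=\nu_t$, whence $\nu_t(\mathcal E_{s,t})=\nu_0(B_s(0))=\alpha_0(s)$ by Proposition~\ref{prop:T_t_bijective}(b); and since $\nu_0$ has a bounded density ($\alpha_0\in C^1$ under (A3)) while $T_t$ is a bi-Lipschitz diffeomorphism with Jacobian bounded away from $0$, the push-forward $\nu_t$ has a globally bounded density.

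\emph{Main obstacle.} The crux is Step~1: although exhibiting $\alpha_0(w)$ as a critical point is short, proving it is the \emph{global} maximiser on $[0,1]$ requires careful bookkeeping of the branch of $\sqrt{z^2-4t\alpha}$ as $\alpha$ varies --- in particular across the value where $\pm2\sqrt{t\alpha}$ meets a real $z$, where $f_t(\cdot,z)$ is not given by a single analytic formula --- and it is precisely in the two places (the identity $(z+\sqrt{z^2-4t\alpha_0(w)})/2=w$ and the strict negativity of $\partial_\alpha^2f_t$ at critical points) that the exact threshold $t_{\mathrm{sing}}$ of~\eqref{eq:t_sing}, through the preliminary inequality $|w|^2>t\alpha_0(w)$, gets used. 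A secondary technical point is the $C^1$-matching of $U_t$ across $\partial\mathcal R_t$ in Steps~2--3, which is what excludes a singular component of $\nu_t$ on that curve.
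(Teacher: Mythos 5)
Your overall architecture matches the paper's --- establish part (d) first, then obtain (e), (c), (a), (b) by substitution and differentiation --- but two of your sub-arguments are genuinely different. For the uniqueness of the maximiser in (d), the paper never touches $\partial_\alpha^2 f_t$: it shows that $\partial_\alpha f_t(\alpha,z)=g'(\alpha)+\log|y_+|$ is positive, zero or negative according as $z$ lies outside, on, or inside the ellipse $\partial\mathcal E_{\eee^{-g'(\alpha)},t}$, and then uses the nesting of these ellipses (Proposition~\ref{prop:T_t_bijective}) to see that the derivative changes sign exactly once, at $\alpha=\alpha_0(w)$. Your second-derivative bound $\partial_\alpha^2 f_t\le g''(\alpha)+t/(\eee^{-2g'(\alpha)}+t\alpha)$ at critical points is correct (the optimisation over $\arg\zeta$ checks out, and negativity does follow from $t<t_{\mathrm{sing}}$ via \eqref{eq:t_sing}, the case $1+\alpha g''(\alpha)\le 0$ being automatic); it is more quantitative, at the price of the separate treatment of the regime $z\in[-2\sqrt{t\alpha},2\sqrt{t\alpha}]$ which the geometric argument absorbs for free. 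For part (a), the paper computes the Stieltjes transform of $(T_t)_\#\nu_0$ directly, turning the angular integral into a contour integral and evaluating it by residues (Lemma~\ref{lem:roots_y_-_y_+} decides which pole lies inside the circle), and then matches it with $m_t$ from part (c); your density-matching under the change of variables $z=T_t(w)$ is essentially the method the paper uses only for the local push-forward Theorem~\ref{theo:local_push_forward}. It does work globally here, but it obliges you to prove the $C^1$-regularity of $U_t$ across $\partial\mathcal R_t$ \emph{before} (a), in order to exclude a singular component on that curve, whereas the paper's residue route gets (a) without any regularity input.

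Three points are under-justified and should be tightened. First, your preliminary inequality $t_{\mathrm{sing}}\le\inf_{\alpha}\eee^{-2g'(\alpha)}/\alpha$ is true, but your argument only covers the case where the infimum is attained at an interior critical point; the endpoint behaviour as $\alpha\uparrow1$ genuinely needs an argument (the paper instead proves $\eee^{-g'(\alpha)}>\sqrt{t\alpha}$ by integrating $(\alpha_0(s)/s)'$ from $\eee^{-g'(0)}$ to $\eee^{-g'(\alpha)}$ and using $1/t>\sup_s|(\alpha_0(s)/s)'|$, which covers all cases at once). Second, for (b), ``$\alpha_0\in C^1$'' does not yield a globally bounded density of $\nu_0$: when $g'(0)=+\infty$ the density $\alpha_0'(s)/(2\pi s)$ may blow up at the origin, and the bound $\sup_s\alpha_0'(s)/s\le 1/t_{\mathrm{sing}}+1/t$ must be extracted from $t<t_{\mathrm{sing}}$ together with Corollary~\ref{cor:lem:ineq_e_g_prime_sqrt_t_alpha}, as the paper does. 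Third, the lower bound on the Jacobian of $T_t$ is asserted but not derived; it is not part of Proposition~\ref{prop:T_t_bijective} and requires the computation leading to \eqref{eq:DT_t>0}, which again consumes the full strength of $t<t_{\mathrm{sing}}$. All three are fixable without changing your strategy.
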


\begin{remark}\label{rem:push_forward_rotated}
According to Remark \ref{rem:rotation}, for complex $t=|t|\eee^{i\phi}\in\C$ with $|t|<t_{\mathrm{sing}}$, Equation~\eqref{eq:push_forward} continues to hold for the rotated transport map $T_t(w)\coloneqq\eee^{i\phi/2} T_{|t|}(w)$ and the rotated annulus $\mathcal R_t\coloneqq\eee^{i\phi/2}\mathcal R_{|t|}$. The remaining statements can also be easily adjusted.
\end{remark}

Although the formal derivations presented in Section \ref{subsec:heuristics} serve as motivation for Theorem \ref{theo:pushforward}, rigorously verifying such heuristics or following the proposed proof method outlined in \cite{hallho} poses considerable challenges. Instead, Theorem \ref{theo:main_general_g} introduces a novel approach to the problem and our proofs will be based on a detailed (complex) analysis of the variational identity \eqref{eq:var_identity} while making use of newly available tools (such as the explicit functions $\alpha_0$ and $\Psi$). The proofs will be pushed forward to Section \ref{subsec:proof_local}, after we are familiar with the technicalities of the previous sections.

\begin{remark}
The condition $\lim_{s\downarrow 0}\alpha_0(s)/s = 0$ in    \hyperref[cond:A3]{(A3)} does not follow from $g'(0) = +\infty$ as the example $g(\alpha) = \alpha - \alpha \log \alpha$ shows. In this case, we have $\alpha_0(s)=s$ for $s\in (0,1)$ and $\lim_{s\downarrow 0}\alpha_0(s)/s = 1$ even though  $g'(0) = +\infty$. The same example shows that Theorem~\ref{theo:pushforward} becomes incorrect without this condition. Indeed, formally applying \eqref{eq:t_sing} in this case would give $t_{\mathrm{sing}}= +\infty$, suggesting that \eqref{eq:push_forward} holds for all $t>0$. But this claim is incorrect, since Example~\ref{ex:collapse_wigner_examples} shows that $\nu_t$ is a Wigner law on $\mathbb R$ for $t\geq 1$, whereas $T_t$ does not map to $\mathbb R$. Actually, we shall see in Proposition~\ref{prop:littlewood_offord_push_forward_beta_greater_one_half} that $t_{\mathrm{sing}} = 0$, in the sense that \eqref{eq:push_forward} does not hold for any $t>0$.
\end{remark}

\begin{remark}\label{rem:pushforward_Burgers}
In Section \ref{subsec:PDE_perspective} below, we will see that the Stieltjes transform of $\nu_t$ satisfies an inviscid  Burgers' equation. The discontinuities of the Stieltjes transform correspond to the singularities (i.e.\ one-dimensional singular components) of the measure $\nu_t$. These singularities are analogous to the shock waves in the Burgers' equation.  In particular, \eqref{eq:t_sing} is similar to the formula for earliest critical time in Burgers' equation; see~\cite[Eq.~(2.41) on p.~37]{olver_book_PDE}, \cite[pp.~136-139]{evans_book_PDE}, \cite[Theorem~8.1]{craig_book}. See also Remark \ref{rem:Hopf-Lax} for an interpretation of \eqref{eq:Ut(Ttw)} from a PDE-perspective.
\end{remark}

Consider particles moving (with constant velocities and without any interaction) in the complex plane such that at time $0$ the velocity of particles at position $w$ equals $\alpha_0(w)/w$. If some particle starts at $w$, its position at time $t$ is $T_t(w)$. The particles which started on the circle $\{|z|=s\}$ will be located on the ellipse $\partial \mathcal E_{s,t}$ at time $t$. For $0<t< t_{\mathrm{sing}}$, these ellipses do not intersect and form a foliation of $\C\backslash\{0\}$; see Proposition \ref{prop:T_t_bijective}. At time $t_{\mathrm{sing}}$, two different ellipses intersect for the first time, where particles stick together and singularities start to develop. Note that \eqref{eq:Stieltjes_pushforward} is the hydrodynamic limit of \eqref{eq:ODE_for_poly} and it states that the velocity of particles at location $z=T_t(w)$ and time $t$ is given by the Stieltjes transform $m_t(z)$. This suggests the following conjecture, which is supported also by the push-forward property, see also Figure~\ref{fig:dynamics_roots} and Figure~\ref{fig:pairing_roots}.

\begin{conjecture}
In the hydrodynamic limit, the motion of any individual root under the heat flow is described by the maps $T_t$ for $0<t< t_{\mathrm{sing}}$. That is to say, when $n$ is large, the zeros $z_j(t)$ will, with high probability, stay close to the curves $T_t(z_j(0))$. In particular, the roots should, to good approximation, move with constant velocity along straight lines.
\end{conjecture}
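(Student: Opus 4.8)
Since the statement is a conjecture, what follows is a plan of attack rather than a proof; the essential missing ingredient is flagged at the end. The plan is to work with the finite-$n$ root dynamics \eqref{eq:ODE_for_poly} for the zeros $z_1(t),\dots,z_n(t)$ of $P_n(\cdot;t)$ (valid as long as the zeros remain distinct) and to compare it, by a Gr\"onwall argument, with the deterministic flow generated by $T_t$. Introduce the comparison curves $\zeta_j(t):=T_t(z_j(0))=z_j(0)+t\,m_0(z_j(0))$, which are straight lines traversed at constant velocity, and set $e_j(t):=z_j(t)-\zeta_j(t)$. Because $T_t(w)=w+t\,m_0(w)$ is linear in the real variable $t$, Theorem~\ref{theo:pushforward}(c) gives $\dot\zeta_j(t)=m_0(z_j(0))=m_t\big(T_t(z_j(0))\big)=m_t(\zeta_j(t))$, so that
\[
\dot e_j(t)=\underbrace{\Big(\tfrac1n\!\sum_{k\neq j}\tfrac{1}{z_j(t)-z_k(t)}-m_t(z_j(t))\Big)}_{A_j(t)}+\underbrace{\big(m_t(z_j(t))-m_t(\zeta_j(t))\big)}_{B_j(t)}.
\]
The goal is then, for each fixed $T<t_{\mathrm{sing}}$, to prove $\max_j\sup_{t\le T}|A_j(t)|\to0$ in probability, to bound $|B_j(t)|\le L\,|e_j(t)|$ with $L=L(g,T)<\infty$, and to close the estimate with Gr\"onwall's inequality, obtaining $\max_j\sup_{t\le T}|e_j(t)|\le e^{LT}\max_j\sup_{t\le T}|A_j(t)|\to0$. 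Since each $\zeta_j$ is linear in $t$, this simultaneously gives the straight-line/constant-velocity assertion.

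The bound on $B_j$ is the easy part. For $t<t_{\mathrm{sing}}$, Proposition~\ref{prop:T_t_bijective} shows that $T_t$ is a bi-Lipschitz bijection of $\mathcal R_0$ onto $\mathcal R_t$, and by Theorem~\ref{theo:pushforward}(c) the Stieltjes transform $m_t=m_0\circ T_t^{-1}$ is continuous on $\C$; on the compact region swept out by the curves $\zeta_j$ for $t\le T$ it is then Lipschitz. The degenerate behaviour of the flow near the origin and near $\partial\mathcal R_0$, where $\alpha_0$ flattens out, has to be treated separately using (A3) and the explicit formula \eqref{eq:alpha_0} for $\alpha_0$, but this is routine bookkeeping.

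The term $A_j$ is the crux: controlling it uniformly in $j$ and $t\le T$ amounts to a \emph{local law} for the zeros of heat-evolved random polynomials. Two ingredients are needed. First, an a priori ``no-clustering'' bound --- with high probability the zeros remain spread out, with minimal and typical nearest-neighbour spacing of order $n^{-1/2}$ (rather than $n^{-1}$, the order that occurs when zeros accumulate on a curve) --- so that no single term $\tfrac1n\cdot\tfrac{1}{z_j-z_k}$ and no near-diagonal block of the sum defining $A_j$ is large. Second, an equidistribution estimate $\int_{\C}\frac{\lsem P_n(\cdot;t)\rsem(\dint w)}{z-w}\to m_t(z)$ uniformly on compact sets, which should follow by upgrading the weak convergence $\lsem P_n(\cdot;t)\rsem\to\nu_t$ of Theorem~\ref{theo:main_general_g}, using that for $t<t_{\mathrm{sing}}$ the measure $\nu_t$ has a globally bounded density (Theorem~\ref{theo:pushforward}(b)) and a continuous Stieltjes transform (Theorem~\ref{theo:pushforward}(c)). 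At time $0$ the no-clustering bound is immediate when the initial zeros are i.i.d.\ (Conjecture~\ref{conj:Universality}(a)) and is the natural starting point for the coefficient models considered here; the genuine obstacle --- and the reason the statement is only a conjecture --- is to propagate it forward in time, i.e.\ to show that the ``repulsive-in-the-wrong-direction'' Calogero--Moser interaction \eqref{eq:secondDeriv} cannot drive the zeros into lower-dimensional clusters before time $t_{\mathrm{sing}}$. (One could instead try to bound the acceleration $\ddot z_j$ directly from \eqref{eq:secondDeriv} and integrate twice, but this still requires the same local control of nearby zeros together with a cancellation estimate for the inverse-cube interaction.)

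Finally, a purely ``global'' strategy that avoids the ODE --- for instance deducing the pathwise statement from quantitative stability of the logarithmic potential $U_t$ under perturbations of the profile $g$ --- does not appear to suffice, since potential-theoretic inputs only control the empirical measure and give back the measure-level identity $(T_t)_\#\nu_0=\nu_t$ of Theorem~\ref{theo:pushforward}, not the tracking of labelled roots along the curves $T_t(z_j(0))$. Thus the ODE-plus-local-law route seems to be the right one, with propagation of the no-clustering estimate as the bottleneck.
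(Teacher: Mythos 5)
The statement you are addressing is stated in the paper as a \emph{conjecture}, and the paper offers no proof of it; what it provides is only the heuristic derivation via the zero ODE \eqref{eq:ODE_for_poly}/\eqref{eq:secondDeriv}, the observation that it is consistent with the measure-level push-forward of Theorem~\ref{theo:pushforward}, numerical ``pairing'' evidence (Figure~\ref{fig:pairing_roots}), and a pointer to the analogous established result for the planar GAF in \cite{GAF-paper}. You correctly recognize this, and your submission is a plan of attack rather than a proof.

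As a plan, your outline is sound and essentially the route one would expect: compare $z_j(t)$ with the characteristic curve $\zeta_j(t)=T_t(z_j(0))$, write $\dot e_j = A_j + B_j$ with $A_j$ the deviation of the empirical Stieltjes transform from $m_t$ and $B_j$ the Lipschitz term $m_t(z_j)-m_t(\zeta_j)$, and close with Gr\"onwall. The identity $\dot\zeta_j(t)=m_0(z_j(0))=m_t(\zeta_j(t))$ is indeed exactly Theorem~\ref{theo:pushforward}(c), and local Lipschitzness of $m_t$ for $t<t_{\mathrm{sing}}$ is plausible from parts (b) and (c) together with the smoothness of $\alpha_t$ and $T_t^{-1}$ away from $\partial\mathcal R_t$. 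Your diagnosis of the bottleneck is also the right one and matches what the paper itself signals: controlling $A_j$ is a \emph{local law} for zeros of heat-evolved random polynomials, and the genuinely hard part is propagating forward in time an a~priori $n^{-1/2}$ nearest-neighbour separation under the Calogero--Moser dynamics \eqref{eq:secondDeriv}, which is what the paper's ``we believe the errors should be of order $1/\sqrt n$'' discussion implicitly presupposes. Two refinements you should keep in mind if you develop this: (i) the paper's random polynomial models do not have i.i.d.\ zeros, so even the time-$0$ anti-clustering statement is not ``immediate'' as you say for Conjecture~\ref{conj:Universality}(a) --- it must come from the coefficient model, and this deserves a separate argument; and (ii) the boundary regions where $\alpha_0$ flattens ($|w|\approx \eee^{-g'(0)}$ or $\eee^{-g'(1)}$) and the neighbourhood of $0$ are where the Lipschitz constant of $m_t$ can degenerate, so the ``routine bookkeeping'' remark is optimistic and this step needs to be done carefully, possibly by restricting attention to zeros starting in a fixed compact subset of $\mathcal R_0$. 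Since the paper has no proof, there is nothing to contradict; your plan is consistent with the paper's heuristics and is a reasonable formalization of them, with the honest caveat that it is not a proof and you have identified the missing ingredient correctly.
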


\begin{figure}[ht]
\includegraphics[width=.49\textwidth]{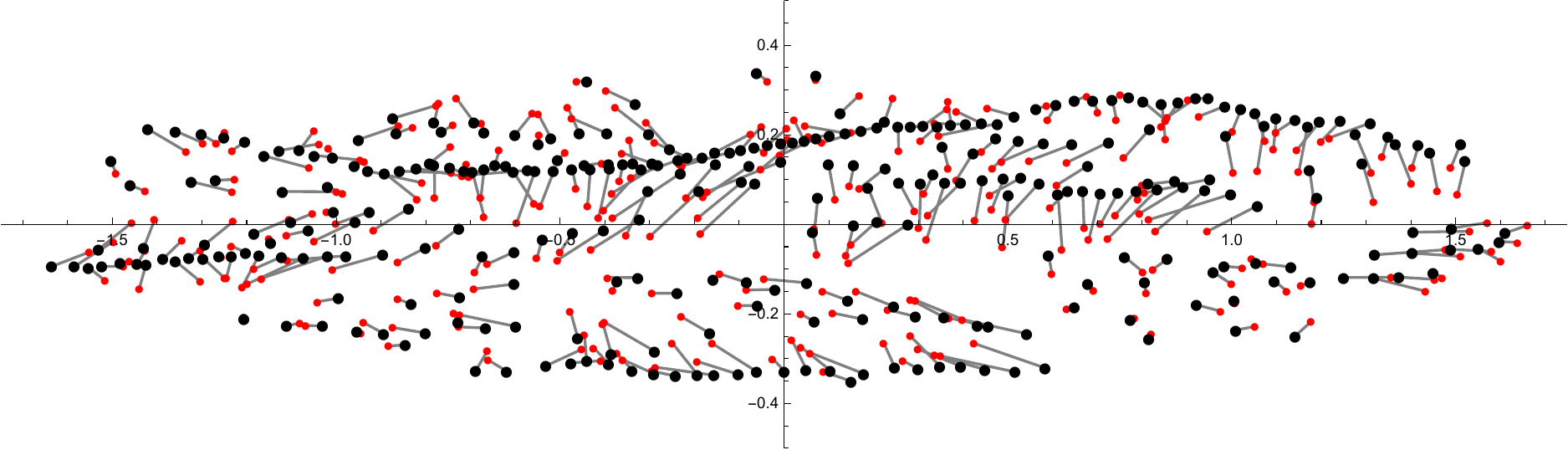}
\includegraphics[width=.49\textwidth]{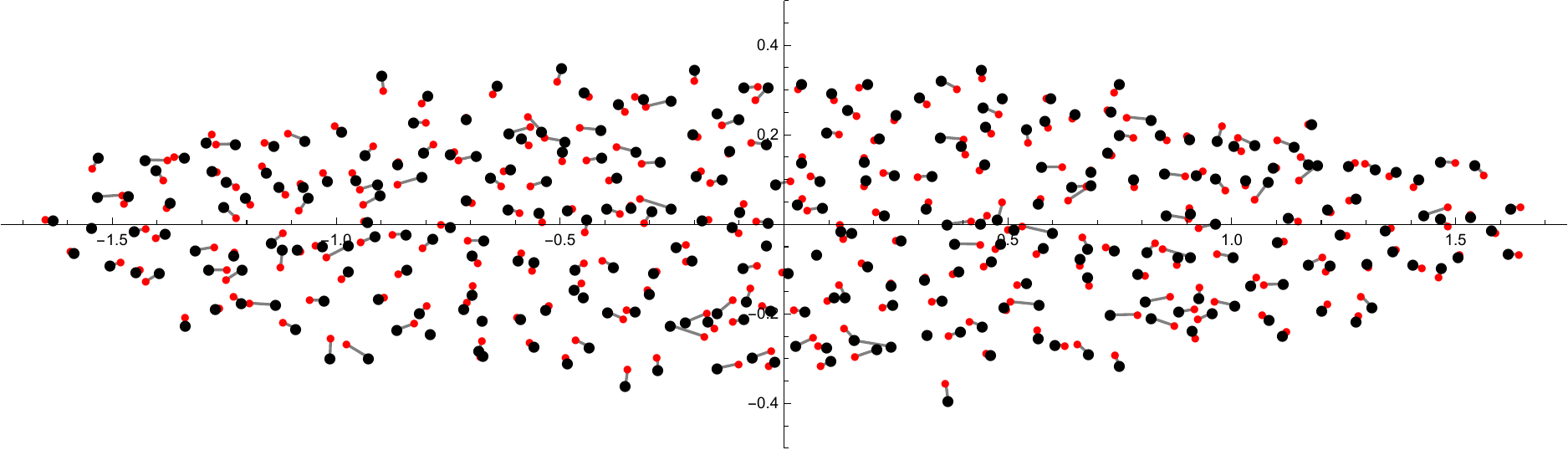}
\includegraphics[width=.49\textwidth]{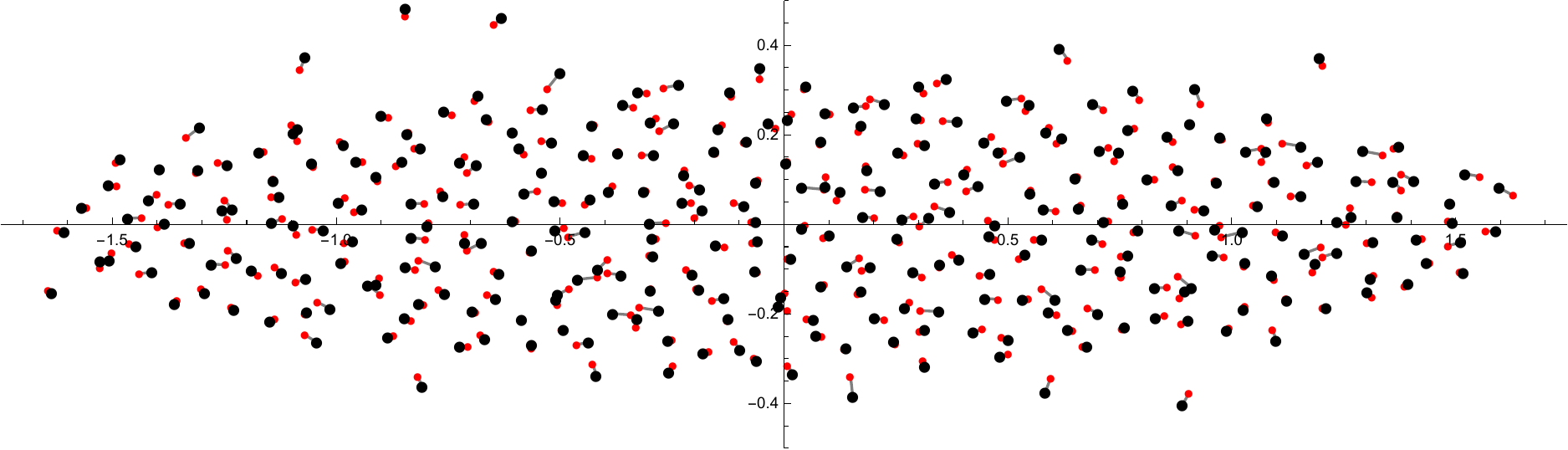}
\includegraphics[width=.49\textwidth]{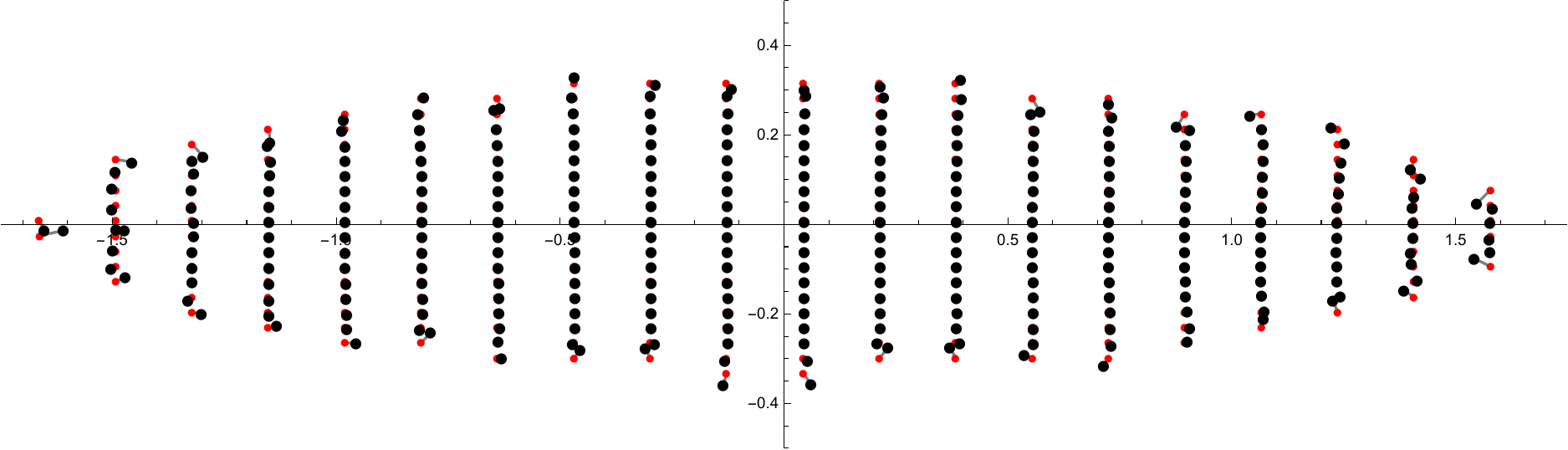}
\caption{``Pairing'' of the roots $z_j(t)$ (black dots) with the predicted locations of the transport map $T_t(z_j(0))$ (red dots) after time $t=2/3$. For $n=300$, we compare the heat evolution of a polynomial with i.i.d.\ roots (top left), the characteristic polynomial of Ginibre matrices (top right), Weyl polynomial (bottom left) and regular lattice in the unit disk (bottom right). The pairing appears to improve as the point process at time $0$ gets more organized. Note the curious line structures which appear for i.i.d.~zeros are also barely visible for eigenvalues (top right), but not for Weyl polynomials.} \label{fig:pairing_roots}
   \end{figure}

An interesting question is how to make the conjecture precise. That is, how close are the roots $z_j(t)$ for finite $n$ to the pushed-forward positions $T_t(z_j(0))$? We believe that the errors should be of order $1/\sqrt{n}$, which is the typical distance between $z_j(t)$ and the root closest to it. Supporting this claim,  it is shown  in \cite[Theorem 1.3]{GAF-paper} that a zero $z(t)$ of the heat-evolved Gaussian analytic function is essentially moving according to $T_t(z(0))$ up to a random error of order $1$. If we approximate the GAF by its Taylor polynomials and rescale appropriately, we obtain the Weyl polynomials. Taking the rescaling into account, the order 1 errors for the GAF case should  translate into order $1/\sqrt{n}$ errors in the case of the Weyl polynomials. Figure~\ref{fig:pairing_roots} indicates that the quality of pairing may depend on the local structure of the point process formed by the roots at time $0$.

We also note that although the transport map $T_t$ is defined for all $t$, the push-forward result in Theorem \ref{theo:pushforward} definitely does not hold for all $t$. In the case of the Weyl polynomials, for example,
$(T_t)_\#\nu_{0}$ will be the uniform measure on an ellipse for all $t>0$ except for $t=1$. By contrast,
the limiting root distribution $\nu_t$ for the Weyl polynomials collapses to the Wigner semicircle law on the real axis for all $t>1$. (See Section \ref{Weyl.sec}.) It is then an interesting question as to whether we can define a
\textit{different} map $\tilde T_t$ for $t>t_\mathrm{sing}$ for which a push-forward theorem does hold.

\begin{openproblem}
Define natural transport maps $\tilde T_t$ for all $t>0$ such that $\nu_t = (\tilde T_t)_\#\nu_{0}$.
\end{openproblem}

Such a map should reflect the way we expect the individual zeros to move for arbitrary times, and this
will not be by straight-line motion once $t>t_{\mathrm{sing}}$. Although this problem
is open in general, we will construct such a map in the case of Littlewood--Offord polynomials in Proposition \ref{prop:littlewood_offord_push_forward_beta_greater_one_half}.

For every initial distribution $\nu_0$ we have $t_{\mathrm{sing}}\le t_{\mathrm{Wig}}$. For (rescaled) Weyl polynomials we have $t_{\mathrm{sing}} = t_{\mathrm{Wig}}$.  The next proposition asserts that in all other cases the inequality is strict meaning that the singularities appear strictly before the collapse to the semicircle law.

\begin{proposition}\label{prop:t_sing<t_wig}
Assume    \hyperref[cond:A1]{(A1)}-\hyperref[cond:A3]{(A3)}. We have $t_{\mathrm{sing}}=t_{\mathrm{Wig}}$ if and only if $\nu_0$ is the uniform measure on the disk $B_{r}(0)$ for some $r>0$.
\end{proposition}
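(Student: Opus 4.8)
The plan is to establish the two implications separately, directly from the explicit formulas \eqref{eq:t_Wig_formula} and \eqref{eq:t_sing}. Throughout I will write $r_0:=\eee^{-g'(0)}\in[0,\infty)$ and $r_1:=\eee^{-g'(1)}\in(0,\infty)$ for the inner and outer radii of the support annulus $\mathcal R_0$ (here $r_1<\infty$ by \hyperref[cond:A2]{(A2)}), and set $\varrho(s):=\alpha_0(s)/s$, so that $s\varrho(s)=\alpha_0(s)=\nu_0(B_s(0))$ is the radial distribution function of $\nu_0$ by \eqref{eq:alpha_0_measure}; under \hyperref[cond:A3]{(A3)} the map $\varrho$ is smooth on $(r_0,r_1)$, since $\alpha_0(s)=(g')^{-1}(-\log s)$ there.

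For the \emph{``if''} direction, suppose $\nu_0$ is the uniform measure on $B_r(0)$. Then $\alpha_0(s)=s^2/r^2$ for $s\in(0,r)$, so by Theorem~\ref{theo:distribution_roots_general_g_time_0} one has $\eee^{-g'(\alpha)}=r\sqrt\alpha$, i.e.\ $g'(\alpha)=-\log r-\tfrac12\log\alpha$ and $g(\alpha)=-\alpha\log r-\tfrac12(\alpha\log\alpha-\alpha)+c$; in particular $r_0=0$, $r_1=r$, and \hyperref[cond:A3]{(A3)} holds since $\lim_{\alpha\downarrow0}\alpha\eee^{g'(\alpha)}=\lim_{\alpha\downarrow0}\sqrt\alpha/r=0$. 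Here $\varrho(s)=s/r^2$ on $(0,r)$, so $\sup_{(0,r)}|\varrho'|=1/r^2$ and \eqref{eq:t_sing} gives $t_{\mathrm{sing}}=r^2$. Plugging this $g$ into the exponent of \eqref{eq:t_Wig_formula}, the two occurrences of $\alpha\log\alpha/(1-\alpha)$ cancel and the exponent equals the constant $2\log r$ for every $\alpha$, so $t_{\mathrm{Wig}}=r^2=t_{\mathrm{sing}}$. This part is a routine computation.

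For the \emph{``only if''} direction I would first record two one-sided estimates. (i) The function in the exponent of \eqref{eq:t_Wig_formula} extends continuously to $\alpha=1$ with value $-2g'(1)$, because $\frac{\alpha\log\alpha}{1-\alpha}\to-1$ cancels the $+1$ and $\frac{g(\alpha)-g(1)}{1-\alpha}\to-g'(1)$; hence $t_{\mathrm{Wig}}\ge\eee^{-2g'(1)}=r_1^2$. (ii) The function $\varrho$ satisfies $\varrho(s)\to1/r_1$ as $s\uparrow r_1$ and $\varrho(s)\to0$ as $s\downarrow r_0$ --- for $r_0>0$ because $\alpha_0(s)\to0$, and for $r_0=0$ by exactly the extra hypothesis $\lim_{\alpha\downarrow0}\alpha\eee^{g'(\alpha)}=0$ in \hyperref[cond:A3]{(A3)}, which is stated there to be equivalent to $\lim_{s\downarrow0}\varrho(s)=0$. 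Now assume $t_{\mathrm{sing}}=t_{\mathrm{Wig}}=:\tau$. By (i), $\tau\ge r_1^2>0$, so by \eqref{eq:t_sing} we have $|\varrho'|\le1/\tau$ on $(r_0,r_1)$; thus $\varrho$ is Lipschitz there, extends absolutely continuously to $[r_0,r_1]$, and using (ii),
$$
\frac1{r_1}=\varrho(r_1)-\varrho(r_0)=\int_{r_0}^{r_1}\varrho'(s)\,ds\le\frac{r_1-r_0}{\tau}.
$$
Hence $\tau\le r_1(r_1-r_0)\le r_1^2$, and together with $\tau\ge r_1^2$ this forces $\tau=r_1^2$ and $r_0=0$. (In particular this chain reproves the inequality $t_{\mathrm{sing}}\le t_{\mathrm{Wig}}$ mentioned before the proposition.)

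Finally, with $\tau=r_1^2$ and $r_0=0$ in hand, \eqref{eq:t_sing} gives $\varrho'(s)\le1/r_1^2$ for all $s\in(0,r_1)$, while the displayed identity above becomes $\int_0^{r_1}\big(1/r_1^2-\varrho'(s)\big)\,ds=0$ with a nonnegative continuous integrand; therefore $\varrho'\equiv1/r_1^2$, so $\varrho(s)=s/r_1^2$ and $\alpha_0(s)=s^2/r_1^2$ for $s\in(0,r_1)$ (and $\alpha_0(s)=1$ for $s\ge r_1$). Since $\nu_0$ is rotationally invariant and $\nu_0(B_s(0))=\alpha_0(s)$, this is precisely the statement that $\nu_0$ is the uniform measure on $B_{r_1}(0)$. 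I expect the main difficulty to lie in the ``only if'' direction --- specifically in the careful treatment of the boundary behaviour of $\varrho$ at the inner radius (where, when $r_0=0$, the extra condition in \hyperref[cond:A3]{(A3)} is essential) and in justifying the fundamental theorem of calculus on the closed interval; once the chain $r_1^2\le t_{\mathrm{Wig}}=t_{\mathrm{sing}}\le r_1(r_1-r_0)\le r_1^2$ is in place, the rigidity conclusion is immediate.
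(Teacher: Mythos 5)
Your proof is correct and follows essentially the same route as the paper's: the chain $t_{\mathrm{Wig}}\ge \eee^{-2g'(1)}$ from the $\alpha\uparrow1$ limit, the integral bound on $\varrho=\alpha_0(s)/s$ coming from the definition of $t_{\mathrm{sing}}$, the forced conclusion $r_0=0$, and the rigidity step showing $\varrho'$ is constant are exactly the paper's steps, merely phrased via an integral of a nonnegative function rather than a contradiction from a strict inequality on an open set. The only additions are the explicit computation of the ``if'' direction (which the paper delegates to its preceding remark on Weyl polynomials) and the observation that the same chain reproves $t_{\mathrm{sing}}\le t_{\mathrm{Wig}}$; both are correct.
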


\subsection{The local push-forward theorem and the density at regular points}\label{subsec:local_pushforward}

If $t<t_{\mathrm{sing}}$, then Proposition \ref{prop:T_t_bijective} shows that $T_t$ is globally invertible and using \eqref{eq:alpha_t} it is easy to check that the inverse is given by
\begin{align}\label{eq:T_inverse}
T_t^{-1}(z)=\frac{z+\sqrt{z^2-4t\alpha_t(z)}}{2},
\end{align}
where the root is chosen such that it is positive for $z>2\sqrt{t\alpha_t(z)}$ (see also Section \ref{sec:Prop_log_pot}).
Even if $t\ge t_{\mathrm{sing}}$, this inverse is still well-defined locally near so called \textquotedblleft regular points\textquotedblright. (See, for example, the case of the Littlewood--Offord polynomials in Section \ref{LO.sec}.) In this section, we will show that the measure $\nu_{t}$ near a regular point can be expressed \textit{locally} as the push-forward of $\nu_{0}$ under the transport map and we present a simple formula for its density.

\begin{definition}\label{def:regular_points}
Let $\mathcal D\subset \C\times \C$ be a connected open set and let $\alpha_t(z)$ be a smooth function defined on $\mathcal D$ such that that $z/\sqrt{t \alpha_t(z)}\notin [-2,2]$ for all $(z,t)\in \mathcal D$ and one of the following cases occurs:
\begin{itemize}
\item[(D0)] For all $(z,t)\in \mathcal D$, the supremum in~\eqref{eq:var_identity} is attained at $\alpha_t(z) = 0$ only, or
\item[(D1)] For all $(z,t)\in \mathcal D$, the supremum in~\eqref{eq:var_identity} is attained at $\alpha_t(z) = 1$ only, or
\item[(D2)] For all $(z,t)\in \mathcal D$, the supremum in~\eqref{eq:var_identity} is attained at a unique $\alpha_t(z)\in (0,1)$.
\end{itemize}
A point $(z,t)\in\mathbb{C}\times\C$ is called a \emph{regular point}, if there exists a domain $\mathcal D\ni(z,t)$ as above.
\end{definition}

For regular points $(z,t)$, we define $T_t^{-1}(z)$ via \eqref{eq:T_inverse}. The restriction $z/\sqrt{t \alpha_t(z)}\notin [-2,2]$ ensures that the branch of the complex root of \eqref{eq:T_inverse} is unique. 
Moreover, Theorem \ref{theo:pushforward} (d) implies that $\mathcal D=\{(z,t):z\in\mathcal R _t, |t|<t_{\mathrm{sing}}\}$ are regular points.

\begin{proposition}\label{prop:Stieltjes}
Assume    \hyperref[cond:A1]{(A1)} and    \hyperref[cond:A2]{(A2)}. The Stieltjes transform of $\nu_t$ at a regular point $(z,t) \in \mathcal D$ is given by
\begin{equation}\label{eq:m_t_as_function_of_alpha_t}
m_t(z) \coloneqq  \int_{\C} \frac{\nu_{t}(\dint y)}{z - y} =  \frac 1 {2t} \left(z - \sqrt{z^2 - 4t\alpha_t(z) }\right)=\frac{\alpha_t(z)}{T_t^{-1}(z)}.
\end{equation}
For $t=0$, the right-hand side is defined by continuity as $\alpha_0(z)/z$.
Conversely, we have
\begin{equation}\label{eq:alpha_t_as_function_of_m_t}
\alpha_t(z) = m_t(z) (z - t m_t(z)), \qquad (z,t) \in \mathcal D.
\end{equation}
If we additionally assume    \hyperref[cond:A3]{(A3)}, then $\alpha_0$ is well defined, $\alpha_t(z)=\alpha_0(T_t^{-1}(z))$ and analogously to \eqref{eq:Stieltjes_pushforward} it holds
\begin{align}\label{eq:m_t_alpha_0}
m_t(z)=\frac {\alpha_0(T_t^{-1}(z))}{T_t^{-1}(z)}=m_0(T_t^{-1}(z)).
\end{align}
\end{proposition}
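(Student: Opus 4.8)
The plan is to obtain the Stieltjes transform by differentiating the logarithmic potential via the identity $m_t=2\partial_z U_t$ together with an envelope-type argument, and then to read off the remaining identities by elementary algebra with the Joukowsky-type branch of $\sqrt{z^2-4t\alpha}$.

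\textbf{Step 1 (from distributional to pointwise, and the envelope formula).} By Remark~\ref{rem:Wirtinger_logpot_Stieltjes}, $m_t=2\partial_z U_t$ in the distributional sense, and by Theorem~\ref{theo:main_general_g}, $U_t(z)=\sup_{\alpha\in[0,1]}f_t(\alpha,z)-g(1)$. Near a regular point $(z,t)\in\mathcal D$ the supremum is attained at the unique point $\alpha_t(z)$, which is smooth in $(z,t)$. I would first check that $U_t$ is genuinely $C^1$ there: in cases (D0) and (D1) it coincides locally with the constant $f_t(0,\cdot)-g(1)$ resp.\ with $\tfrac12\log|t|+\Psi(\cdot/\sqrt t)-g(1)$, both real-analytic on $\{z/\sqrt{t\alpha_t(z)}\notin[-2,2]\}$ by Remark~\ref{rem:Psi}; in case (D2) this follows from Danskin's theorem, which also gives $\partial_z U_t(z)=(\partial_z f_t)(\alpha_t(z),z)$ with $\alpha$ frozen, the $\partial_\alpha$-term dropping out because $\alpha_t(z)$ is an interior critical point. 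Since $m_t$ is locally integrable, this upgrades the distributional identity to the pointwise one $m_t(z)=2(\partial_z f_t)(\alpha_t(z),z)$ in all three cases.

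\textbf{Step 2 (differentiating $f_t$ in $z$; the Stieltjes formula).} In~\eqref{eq:f_def} only the term $\alpha\Psi(z/\sqrt{t\alpha})$ depends on $z$, and since $z\mapsto(t\alpha)^{-1/2}$ is constant the Wirtinger chain rule gives $(\partial_z f_t)(\alpha,z)=\sqrt{\alpha/t}\,(\partial_w\Psi)(z/\sqrt{t\alpha})$. Because $\Psi$ is the logarithmic potential of $\mathsf{sc}_1$ (Remark~\ref{rem:Psi}), \eqref{eq:wigner_stieltjes} with $t=1$ and Remark~\ref{rem:Wirtinger_logpot_Stieltjes} give $2\partial_w\Psi(w)=\tfrac12(w-\sqrt{w^2-4})$ for $w\notin[-2,2]$, which is exactly the range of $w=z/\sqrt{t\alpha_t(z)}$ guaranteed by Definition~\ref{def:regular_points}. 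Substituting and pulling $\sqrt{t\alpha_t(z)}$ out of the root (using the branch convention of Remark~\ref{rem:Psi}) yields $m_t(z)=\tfrac1{2t}\bigl(z-\sqrt{z^2-4t\alpha_t(z)}\bigr)$; multiplying numerator and denominator by the conjugate and recalling~\eqref{eq:T_inverse} turns this into $m_t(z)=\tfrac{2\alpha_t(z)}{z+\sqrt{z^2-4t\alpha_t(z)}}=\alpha_t(z)/T_t^{-1}(z)$, and letting $t\to0$ recovers $\alpha_0(z)/z$. Isolating the root in $2tm_t(z)=z-\sqrt{z^2-4t\alpha_t(z)}$ and squaring gives the converse $\alpha_t(z)=m_t(z)(z-tm_t(z))$.

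\textbf{Step 3 (the identification with $\alpha_0\circ T_t^{-1}$ under (A3)).} By Remark~\ref{rem:rotation} I may take $t>0$. In case (D2) I would differentiate~\eqref{eq:f_def} in $\alpha$: using $\tfrac{d}{d\alpha}(z/\sqrt{t\alpha})=-\tfrac1{2\alpha}(z/\sqrt{t\alpha})$ together with $\Psi$ and $\partial_w\Psi$ from Step 2 and~\eqref{eq:Psi_def}, the $\tfrac12$-terms cancel and one is left with $\partial_\alpha f_t(\alpha,z)=g'(\alpha)+\tfrac12\log(t\alpha)+\log\bigl|\tfrac{w+\sqrt{w^2-4}}{2}\bigr|$ with $w=z/\sqrt{t\alpha}$. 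Setting $\alpha=\alpha_t(z)$ and using $\sqrt{t\alpha}\cdot\tfrac{w+\sqrt{w^2-4}}{2}=\tfrac{z+\sqrt{z^2-4t\alpha}}{2}=T_t^{-1}(z)$ reduces this to $g'(\alpha_t(z))=-\log|T_t^{-1}(z)|$; since $\alpha_t(z)\in(0,1)$ places $-\log|T_t^{-1}(z)|$ in $(g'(1),g'(0))$, formula~\eqref{eq:alpha_0} yields $\alpha_t(z)=\alpha_0(T_t^{-1}(z))$. In case (D0), $\alpha_t(z)=0$ gives $m_t(z)=0$ and $T_t^{-1}(z)=z$, while the optimality condition $\partial_\alpha f_t(0^+,z)\le0$ reads $g'(0)+\log|z|\le0$, so $\alpha_0(z)=0$ by~\eqref{eq:alpha_0}; case (D1) is symmetric with $\alpha_t(z)=1$ and $|T_t^{-1}(z)|\ge\eee^{-g'(1)}$. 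Combining with Step 2 and $m_0(w)=\alpha_0(w)/w$ gives $m_t(z)=m_0(T_t^{-1}(z))$.

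\textbf{Expected main difficulty.} The only genuinely delicate point is Step 1: upgrading $m_t=2\partial_z U_t$ to a pointwise statement and justifying the envelope formula at regular points. Under (A1)--(A2) alone $g$ is merely concave, so $f_t(\cdot,z)$ need not be smooth in $\alpha$, and one must argue that the smoothness and uniqueness of the maximizer $\alpha_t$ built into the definition of a regular point already force $U_t\in C^1$ and the vanishing of the $\partial_\alpha$-contribution. This is not an issue for the (A3)-part used in Step 3, where $g\in C^\infty(0,1)$; everything else is bookkeeping with the branch cut of $\sqrt{z^2-4t\alpha}$.
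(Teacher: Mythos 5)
Your proposal is correct and follows essentially the same route as the paper: the identity $m_t=2\partial_z U_t$ combined with the envelope argument (the $\partial_\alpha$-contribution vanishing either because $\alpha_t$ is locally constant in cases (D0)/(D1) or because it is an interior critical point in case (D2)), the computation of $\partial_z f_t$ via $\partial_z\Psi(w)=\tfrac14(w-\sqrt{w^2-4})$, and the identification $\alpha_t(z)=\alpha_0(T_t^{-1}(z))$ from the critical-point equation $g'(\alpha)+\log|T_t^{-1}(z)|=0$ of Lemma~\ref{lem:f_der_in_alpha}. Your Step~1 merely spells out more explicitly (via Danskin and local integrability) the pointwise upgrade of the distributional relation that the paper applies implicitly.
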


In particular, \eqref{eq:alpha_t_as_function_of_m_t} shows that the number $m_t(z) (z - t m_t(z))$ is real. Note that $m_t(z)$ vanishes if $\alpha_t(z) \equiv 0$ on $\mathcal D$, while for $\alpha_t(z)>0$ it coincides,  at every fixed $(z,t)\in \mathcal D$, with the Stieltjes transform of the Wigner law on $[-2\sqrt{\alpha_t(z) t}, 2\sqrt{\alpha_t(z) t}]$ multiplied by $\alpha_t(z)$. With the help of Remark \ref{rem:Wirtinger_logpot_Stieltjes}, it is now easy to compute the density of $\nu_t$ in terms of the function $\alpha_t(z)$ by differentiating with $\partial_{\bar z}$.
\begin{corollary}\label{cor:density}
Assume    \hyperref[cond:A1]{(A1)} and    \hyperref[cond:A2]{(A2)} and let $t\in \C$. The restriction of the measure $\nu_{t}$ to the domain $\cD_t \coloneqq \{z\in \C: (z,t) \in \cD\}$ has a Lebesgue density given by
$$
p_t(z) =\frac{1}{\pi}\partial_{\bar{z}}\frac{\alpha_{t}(z)}{T_t^{-1}(z)}= \frac{1}{\pi} \frac{\partial_{\overline{z}} \, \alpha_t(z)}{\sqrt{z^2 - 4 t\alpha_t(z)}}.
$$
\end{corollary}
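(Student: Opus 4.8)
The plan is to read the density of $\nu_t$ off directly from the Stieltjes transform $m_t$, which was already computed on regular points in Proposition~\ref{prop:Stieltjes}. The only external input needed is the distributional Cauchy--Riemann identity $\pi\nu_t=\partial_{\bar z}m_t$ from Remark~\ref{rem:Wirtinger_logpot_Stieltjes} (valid globally on $\C$, since $m_t\in L^1_{\mathrm{loc}}$). The whole argument then reduces to two points: (i) on the open set $\cD_t$ the function $m_t$ is not merely locally integrable but actually smooth, so that the distributional identity says precisely that $\nu_t$ restricted to $\cD_t$ is absolutely continuous with density $\tfrac1\pi\partial_{\bar z}m_t$; and (ii) $\partial_{\bar z}m_t$ is evaluated by a one-line chain rule from the formula $m_t=\tfrac1{2t}(z-\sqrt{z^2-4t\alpha_t(z)})$ in~\eqref{eq:m_t_as_function_of_alpha_t}.

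First I would verify the regularity in (i). On $\cD$ the function $\alpha_t(z)$ is smooth by Definition~\ref{def:regular_points}, and the standing restriction $z/\sqrt{t\alpha_t(z)}\notin[-2,2]$ ensures, after writing $\sqrt{z^2-4t\alpha_t(z)}=\sqrt{t\alpha_t(z)}\,\sqrt{(z/\sqrt{t\alpha_t(z)})^2-4}$ with the branch of Remark~\ref{rem:Psi}, that the argument of the outer root stays off the cut $[-2,2]$ and away from the branch points $\pm2$; in particular $z^2-4t\alpha_t(z)\neq0$ throughout $\cD_t$. Hence $z\mapsto\sqrt{z^2-4t\alpha_t(z)}$ is smooth and non-vanishing on $\cD_t$, and by~\eqref{eq:m_t_as_function_of_alpha_t} so is $m_t$. (In the flat cases (D0) and (D1) of Definition~\ref{def:regular_points} the function $\alpha_t$ is constant and $m_t$ is even holomorphic on $\cD_t$; for the boundary value $t=0$ one works with $m_0(z)=\alpha_0(z)/z$, which is smooth away from the origin.) With this, restricting $\pi\nu_t=\partial_{\bar z}m_t$ to $\cD_t$ turns the right-hand side into an honest smooth function, so $\nu_t|_{\cD_t}$ has Lebesgue density $p_t=\tfrac1\pi\partial_{\bar z}m_t$.

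Then I would do the computation. Using $\partial_{\bar z}z=0$ and the chain rule applied to $m_t(z)=\tfrac1{2t}\bigl(z-\sqrt{z^2-4t\alpha_t(z)}\bigr)$, one gets $\partial_{\bar z}m_t(z)=-\tfrac1{2t}\cdot\tfrac{-4t\,\partial_{\bar z}\alpha_t(z)}{2\sqrt{z^2-4t\alpha_t(z)}}=\tfrac{\partial_{\bar z}\alpha_t(z)}{\sqrt{z^2-4t\alpha_t(z)}}$, which yields the second expression in the statement. The first expression is the same quantity rewritten: by Proposition~\ref{prop:Stieltjes} we have $m_t=\alpha_t/T_t^{-1}$, so $\partial_{\bar z}\bigl(\alpha_t(z)/T_t^{-1}(z)\bigr)=\partial_{\bar z}m_t(z)=\pi p_t(z)$. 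For $t=0$ one instead differentiates $m_0(z)=\alpha_0(z)/z$ and uses $\partial_{\bar z}(1/z)=0$ on $\C\setminus\{0\}$.

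I expect the only genuinely non-cosmetic step to be the regularity in (i): showing that $m_t$ is truly smooth, not merely $L^1_{\mathrm{loc}}$, on $\cD_t$. This is exactly what the branch-cut hypothesis $z/\sqrt{t\alpha_t(z)}\notin[-2,2]$ in Definition~\ref{def:regular_points} is designed to provide — it keeps $z^2-4t\alpha_t(z)$ off the cut of $w\mapsto\sqrt{w^2-4}$ (after rescaling) and, in particular, nonzero — while the smoothness of $\alpha_t$ is assumed in the definition of a regular point. Everything else is bookkeeping layered on top of Proposition~\ref{prop:Stieltjes} and Remark~\ref{rem:Wirtinger_logpot_Stieltjes}.
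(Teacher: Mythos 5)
Your proposal is correct and follows the same route the paper intends: the corollary is presented there as an immediate consequence of Remark~\ref{rem:Wirtinger_logpot_Stieltjes} ($\pi\nu_t=\partial_{\bar z}m_t$) combined with the explicit formula for $m_t$ on regular points from Proposition~\ref{prop:Stieltjes}, exactly as you do. Your added care about smoothness of $m_t$ on $\cD_t$ (via the smoothness of $\alpha_t$ and the branch-cut condition $z/\sqrt{t\alpha_t(z)}\notin[-2,2]$, which forces $z^2-4t\alpha_t(z)\neq0$) is a correct and welcome filling-in of a detail the paper leaves implicit.
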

Note that the fact that this function is real positive is a property of $\alpha_t$ and at least for $t=0$ it is easy to see that $p_0(w)=\frac 1 \pi \partial_{\bar w}\alpha_0(w)/w=\frac{1}{2\pi|w|}\alpha_0'(|w|)\ge 0$. Under (D0) or (D1), we have $p_t(z) = 0$, which is consistent with the harmonicity of  $U_{t}$ on  $\mathcal D_t$.
We finish this section with the aforementioned local push-forward theorem.

\begin{theorem}
 \label{theo:local_push_forward} If    \hyperref[cond:A1]{(A1)},    \hyperref[cond:A2]{(A2)},    \hyperref[cond:A3]{(A3)} hold, then the restriction of $\nu_{t}$ to $\cD_t = \{z\in \C: (z,t) \in \cD\}$ is the
push-forward of the restriction of $\nu_{0}$ to $T_t^{-1}(\cD_t)$, more precisely
\[
\left.  \nu_{t}\right\vert _{\cD_t}=(T_{t})_\#(\left.  \nu_{0}\right\vert _{T_t^{-1}(\cD_t)}).
\]

\end{theorem}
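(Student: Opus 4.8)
The plan is to establish the identity by a direct change of variables: we show that on the open set $\cD_t=\{z\in\C:(z,t)\in\cD\}$ both measures $\nu_t|_{\cD_t}$ and $(T_t)_\#(\nu_0|_{T_t^{-1}(\cD_t)})$ are absolutely continuous and carry the same Lebesgue density. All the genuinely analytic input is already available. On the one hand, by Corollary~\ref{cor:density} together with Remark~\ref{rem:Wirtinger_logpot_Stieltjes}, the restriction $\nu_t|_{\cD_t}$ has the smooth density $p_t=\tfrac{1}{\pi}\,\partial_{\bar z}m_t$, where $m_t$ is the Stieltjes transform and, by \eqref{eq:m_t_as_function_of_alpha_t}, $m_t=\alpha_t/T_t^{-1}$ on $\cD_t$. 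On the other hand, since we assume \hyperref[cond:A3]{(A3)}, Proposition~\ref{prop:Stieltjes} supplies the crucial relations $\alpha_t=\alpha_0\circ T_t^{-1}$ and $m_t=m_0\circ T_t^{-1}$ on $\cD_t$, with $T_t^{-1}$ the branch of \eqref{eq:T_inverse} and $T_t=\mathrm{id}+t\,m_0$.

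The first step is to clear away the degenerate regimes of Definition~\ref{def:regular_points} and then set up the change of variables in the main regime. In case (D0), $\alpha_t\equiv 0$, so $m_t=\alpha_t/T_t^{-1}\equiv 0$ and $p_t\equiv 0$ on $\cD_t$, while $T_t^{-1}(\cD_t)$ lies in the central disk $\{|w|\le\eee^{-g'(0)}\}$, which is $\nu_0$-null under \hyperref[cond:A3]{(A3)} by the radial description in Theorem~\ref{theo:distribution_roots_general_g_time_0}; hence both sides vanish. In case (D1), $\alpha_t\equiv 1$, so $T_t^{-1}(z)=(z+\sqrt{z^2-4t})/2$ is holomorphic on $\cD_t$ (the standing constraint $z/\sqrt{t\alpha_t(z)}\notin[-2,2]$ avoids the branch cut), whence $m_t=1/T_t^{-1}$ is holomorphic and $p_t\equiv 0$, while $T_t^{-1}(\cD_t)$ lies in the exterior $\{|w|\ge\eee^{-g'(1)}\}$, again $\nu_0$-null; so both sides vanish. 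The substance of the theorem is case (D2), where $\alpha_t(z)\in(0,1)$ for all $z\in\cD_t$, so $\alpha_0\circ T_t^{-1}\in(0,1)$ pins $T_t^{-1}(\cD_t)$ inside the open annulus $\mathcal R_0$, on which $\alpha_0$, and hence $m_0(w)=\alpha_0(w)/w$, is $C^\infty$ by \hyperref[cond:A3]{(A3)} ($g'$ being a $C^\infty$ diffeomorphism there). One then checks that $T_t=\mathrm{id}+t\,m_0$ restricts to a $C^\infty$ diffeomorphism of $T_t^{-1}(\cD_t)$ onto $\cD_t$, with smooth inverse the branch $T_t^{-1}$ of \eqref{eq:T_inverse}: indeed $T_t^{-1}$ is $C^\infty$ on $\cD_t$ because $\alpha_t$ is smooth and $z^2-4t\alpha_t(z)$ stays off the branch cut; writing $w=T_t^{-1}(z)$, the quadratic $w^2-zw+t\alpha_t(z)=0$ rearranges, using $\alpha_t(z)=\alpha_0(w)$, to $z=w+t\,\alpha_0(w)/w=T_t(w)$, so $T_t\circ T_t^{-1}=\mathrm{id}$ on $\cD_t$; differentiating this identity shows $D(T_t^{-1})$ is everywhere invertible, so $T_t^{-1}$ is an open embedding and $T_t$ is its smooth inverse on the open set $T_t^{-1}(\cD_t)$.

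The second step — short and computational — is to match the densities in regime (D2). By the above, $\nu_0|_{T_t^{-1}(\cD_t)}$ has density $p_0=\tfrac{1}{\pi}\,\partial_{\bar w}m_0$, and $p_t=\tfrac{1}{\pi}\,\partial_{\bar z}(m_0\circ T_t^{-1})$. We differentiate via the Wirtinger chain rule and the inverse-function identities $\partial_z(T_t^{-1})=\overline{\partial_w T_t}/\mathcal J$ and $\partial_{\bar z}(T_t^{-1})=-\partial_{\bar w}T_t/\mathcal J$, where $\mathcal J:=|\partial_w T_t|^2-|\partial_{\bar w}T_t|^2$ is the real Jacobian determinant of $T_t$, all evaluated at $w=T_t^{-1}(z)$. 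Because $T_t=\mathrm{id}+t\,m_0$ we have $\partial_w T_t=1+t\,\partial_w m_0$ and $\partial_{\bar w}T_t=t\,\partial_{\bar w}m_0$, so the numerator produced by the chain rule telescopes:
\[
(\partial_{\bar w}m_0)\,(\partial_w T_t)-(\partial_w m_0)\,(\partial_{\bar w}T_t)=\partial_{\bar w}m_0,
\qquad\text{hence}\qquad
\partial_{\bar z}m_t(z)=\frac{(\partial_{\bar w}m_0)(w)}{\mathcal J(w)}.
\]
Equivalently, $p_t(T_t(w))\,\mathcal J(w)=p_0(w)$ for all $w\in T_t^{-1}(\cD_t)$. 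Since $p_t$ and $p_0$ are nonnegative while $\mathcal J$ is nowhere zero on $T_t^{-1}(\cD_t)$ (as $T_t$ is a local diffeomorphism there), this forces $\mathcal J>0$ wherever $p_0>0$ and makes both sides vanish wherever $p_0=0$, so $p_t(T_t(w))=p_0(w)/|\mathcal J(w)|$ identically. By the change-of-variables theorem for the diffeomorphism $T_t$, the right-hand side is exactly the Lebesgue density of $(T_t)_\#(\nu_0|_{T_t^{-1}(\cD_t)})$, and the asserted equality $\nu_t|_{\cD_t}=(T_t)_\#(\nu_0|_{T_t^{-1}(\cD_t)})$ follows.

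The main obstacle, I expect, is not any estimate but the geometric bookkeeping: separating the three regimes (D0)--(D2) and confirming in each that the branch $T_t^{-1}$ of \eqref{eq:T_inverse} really is a smooth local inverse of $\mathrm{id}+t\,m_0$, which is precisely where the constraint $z/\sqrt{t\alpha_t(z)}\notin[-2,2]$ and the relation $\alpha_t=\alpha_0\circ T_t^{-1}$ from Proposition~\ref{prop:Stieltjes} are used, and (in regime (D2)) where \hyperref[cond:A3]{(A3)} enters to make $m_0$ smooth on $\mathcal R_0$. Once that set-up is secured, the coincidence of densities is the telescoping Wirtinger computation above, whose only real mechanism is the affine dependence $T_t=\mathrm{id}+t\,m_0$, which forces $\partial_{\bar w}T_t=t\,\partial_{\bar w}m_0$; no new inequalities are required.
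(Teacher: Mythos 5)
Your proof is correct and follows essentially the same route as the paper: compute $\partial_{\bar z}m_t=\partial_{\bar z}(m_0\circ T_t^{-1})$ via the Wirtinger chain rule and the inverse-Jacobian identities, observe that $T_t=\mathrm{id}+t\,m_0$ makes the numerator telescope to $\partial_{\bar w}m_0$, and conclude by the change-of-variables theorem. Your additional bookkeeping (explicitly disposing of the degenerate regimes (D0)--(D1) and verifying that the branch \eqref{eq:T_inverse} really inverts $T_t$) is a harmless elaboration of what the paper leaves implicit.
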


\section{Examples}\label{sec:examples}
In this section, we will analyze several examples. In the case of the Littlewood--Offord polynomials, which include the Weyl polynomials as a special case, we compute explicitly the times $t_{\mathrm{sing}}$ and $t_{\mathrm{Wig}}$. In the Littlewood--Offord case with $\beta > 1/2$, we will also construct an extended transport map $\tilde T_t$ which gives a push-forward theorem for $t\ge t_{\mathrm{sing}}$. We also look at an extension of our results to random analytic functions,  at a family of polynomials whose zeros concentrate onto an annulus and at polynomials with evenly spaced roots.

\subsection{Weyl polynomials}\label{Weyl.sec}
Let us look more carefully at the special case $\beta = 1/2$ which corresponds to Weyl polynomials and is especially explicit. For Weyl polynomials, we have $g(\alpha) = -\frac 12 (\alpha \log \alpha - \alpha)$ for all $\alpha \in (0,1]$. It follows from~\eqref{eq:alpha_0} that  $\alpha_0(r)=\min(r^2,1)$, for all $r>0$, and that the initial distribution $\nu_0$ is the uniform distribution on $\{|z|\leq 1\}$.  The transport maps $T_t$, $t\geq 0$,  are given by
$$
T_t(w)
=
\begin{cases}
w+t\bar w=(1+t)\Re w +\ii (1-t)\Im w, &\text{ if } |w|\leq 1,\\
w+t/w, &\text{ if } |w|\geq  1.
\end{cases}
$$
In the following we assume that Condition~\eqref{eq:t_sing} holds, which in our case takes the form $0 < t < 1$. Note that the push-forward of $\nu_0$ under $T_t$ (which is a linear self-map of $\R^2$) is the uniform distribution on the ellipse $\mathcal E_t$ defined in~\eqref{eq:ellipse_E_t_def}.
The inverse of $T_t$ is given by
\begin{equation}\label{eq:T_t_inverse_weyl_polys}
T_t^{-1}(z)
=
\begin{cases}
\frac{\Re z}{1+t}+\ii \frac{\Im z}{1-t}, &\text{ if }z\in \mathcal E_t,\\
\frac 1 2(z+\sqrt{z^2-4t}), &\text{ if }z\notin \mathcal E_t,
\end{cases}
\qquad
0 < t < 1.
\end{equation}
The branch of the root is again chosen such that $T_t^{-1}(z)\to\infty$ as $z\to\infty$. An application of Theorem~\ref{theo:pushforward}, Part~(d), implies that the unique maximizer of $\alpha\mapsto f_t(\alpha,z)$ is given by
$$
\alpha_t(z)=
\begin{cases}\Big(\frac{\Re z }{1+t}\Big)^2+\Big(\frac{\Im z}{1-t}\Big)^2, &\text{ if }z\in \mathcal E_t,\\
1, &\text{ if }z\notin \mathcal E_t,
\end{cases}
\qquad
0 < t < 1.
$$

The logarithmic potential and the Stieltjes transform of the circular law are given by
$$
U_0(z)=
\begin{cases}
\frac 1 2 |z|^2-\frac 1 2, &\text{ if }|z|\le 1,\\
\log|z|, &\text{ if }|z|\ge 1,
\end{cases}
\qquad
m_0(z)=
\begin{cases}
\bar z, &\text{ if }|z|\le 1,\\
1/z, &\text{ if }|z|\ge 1.
\end{cases}
$$
Parts~(c) and~(e) of Theorem~\ref{theo:pushforward} yield the following known formulas~(see~\cite[p.~689]{girko_elliptic} and~\cite[Lemma~5.3.12]{hiai_petz_book} for the case $z\in \mathcal E_t$; the complementary case is difficult to locate in the literature).

\begin{lemma}\label{lem:log_pot_ellipse}
For every $t\in (0,1)$, the logarithmic potential and the Stieltjes transform of the uniform distribution on the ellipse $\mathcal E_t$ defined in~\eqref{eq:ellipse_E_t_def} are given by
\begin{align*}
U_{t}(z)
&=
\frac{1}{\pi (1-t^2)} \int_{\mathcal E_t} \log|z-y| \dd y
=
\begin{cases}
\frac 12 \left(\frac{(\Re z)^2}{1+t} + \frac{(\Im z)^2}{1-t}\right) - \frac 12  &\text{ if } z \in \mathcal E_t,\\
\frac 12 \log t + \Psi\left(\frac{z}{\sqrt t}\right) &\text{ if } z\notin \mathcal E_t,
\end{cases}
\\
m_t(z)
&=
\frac{1}{\pi (1-t^2)} \int_{\mathcal E_t} \frac {\dint y} {z-y}
=
\frac{\alpha_0(T_t^{-1}(z))}{T_t^{-1}(z)}
=
\begin{cases}
\frac{\Re z}{1 + t} - \ii \frac{\Im z}{1-t}, &\text{ if } z\in \mathcal E_t,\\
\frac 1{2 t} ( z - \sqrt{z^2 - 4t}), & \text{ if } z\notin \mathcal E_t.
\end{cases}
\end{align*}
Note that for $z\notin \mathcal E_t$ the logarithmic potential and the Stieltjes transform are the same as for the Wigner law supported on $[-2\sqrt t, +2\sqrt t]$.
\end{lemma}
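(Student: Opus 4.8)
Here is the plan I would follow. Everything is extracted from Theorem~\ref{theo:pushforward} applied to the Weyl profile $g(\alpha)=-\tfrac12(\alpha\log\alpha-\alpha)$, for which $t_{\mathrm{sing}}=1$, together with the explicit formulas for $\alpha_0,\ m_0,\ U_0,\ T_t$ and $T_t^{-1}$ recorded just before the statement.

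First I would pin down the measure $\nu_t$ itself. For $0<t<1$ Theorem~\ref{theo:pushforward}(a) gives $\nu_t=(T_t)_\#\nu_0$, and $\nu_0$ is normalized Lebesgue measure on $\{|w|\le 1\}$. On that disk $T_t$ acts as the invertible real-linear map $w\mapsto(1+t)\Re w+\ii(1-t)\Im w$, whose Jacobian is $1-t^2$; hence the push-forward has constant density $\tfrac{1}{\pi(1-t^2)}$ on the image $\mathcal E_t$ of the disk, and since $|\mathcal E_t|=\pi(1-t^2)$ this is exactly the uniform law on $\mathcal E_t$. This identifies $\nu_t$ and immediately yields the first (integral) expressions for $U_t$ and $m_t$ in the lemma, by definition of the logarithmic potential and of the Stieltjes transform.

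Next I would compute the Stieltjes transform from Theorem~\ref{theo:pushforward}(c), which reads $m_t(T_t(w))=m_0(w)$, i.e.\ $m_t(z)=m_0(T_t^{-1}(z))$. Plug in the two branches of $T_t^{-1}$ from~\eqref{eq:T_t_inverse_weyl_polys}. For $z\in\mathcal E_t$, $T_t^{-1}(z)=\tfrac{\Re z}{1+t}+\ii\tfrac{\Im z}{1-t}$ has modulus $\le 1$, so $m_t(z)=\overline{T_t^{-1}(z)}=\tfrac{\Re z}{1+t}-\ii\tfrac{\Im z}{1-t}$. For $z\notin\mathcal E_t$, $T_t^{-1}(z)=\tfrac12(z+\sqrt{z^2-4t})$ has modulus $\ge 1$, so $m_t(z)=1/T_t^{-1}(z)=\tfrac{2}{z+\sqrt{z^2-4t}}=\tfrac1{2t}(z-\sqrt{z^2-4t})$ after multiplying by the conjugate surd; by~\eqref{eq:wigner_stieltjes} this is precisely the Stieltjes transform of $\mathsf{sc}_t$. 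Continuity of $m_t$ on $\C$ is part of Theorem~\ref{theo:pushforward}(c), so no separate gluing argument is needed. Throughout, the branch of $\sqrt{z^2-4t}$ is the one with $T_t^{-1}(z)\to\infty$ as $z\to\infty$, consistent with Remark~\ref{rem:Psi}.

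Finally the logarithmic potential, via Theorem~\ref{theo:pushforward}(e): $U_t(T_t(w))=U_0(w)+\tfrac{\alpha_0(w)^2}{2}\Re(t/w^2)$. For $z\in\mathcal E_t$, writing $w=T_t^{-1}(z)$ with $|w|\le 1$ and $\alpha_0(w)=|w|^2$, one has $U_0(w)=\tfrac12|w|^2-\tfrac12$ and $\tfrac{|w|^4}{2}\Re(t/w^2)=\tfrac{t}{2}\bigl((\Re w)^2-(\Im w)^2\bigr)$; substituting $\Re w=\tfrac{\Re z}{1+t}$, $\Im w=\tfrac{\Im z}{1-t}$ and collecting terms produces $\tfrac12\bigl(\tfrac{(\Re z)^2}{1+t}+\tfrac{(\Im z)^2}{1-t}\bigr)-\tfrac12$, as claimed. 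For $z\notin\mathcal E_t$ I would avoid the (more painful) direct substitution with $\alpha_0(w)=1$ and argue by uniqueness instead: since $\nu_t$ is supported in $\mathcal E_t$, $U_t$ is harmonic on the connected open set $\C\setminus\mathcal E_t$; by the previous step $2\partial_z U_t=m_t$ agrees there with $2\partial_z$ of the logarithmic potential $z\mapsto\tfrac12\log t+\Psi(z/\sqrt t)$ of $\mathsf{sc}_t$ (harmonic on $\C\setminus[-2\sqrt t,2\sqrt t]\supset\C\setminus\mathcal E_t$, cf.~\eqref{eq:wigner_log_potential}); two real-valued harmonic functions with equal $\partial_z$-derivative on a connected domain differ by a real constant, and the constant vanishes because both are $\log|z|+o(1)$ as $|z|\to\infty$. (Alternatively one may invoke Proposition~\ref{prop:log_potential_large_z} near infinity and propagate the identity by harmonicity.) The coincidence with the Wigner law for $z\notin\mathcal E_t$ asserted at the end of the lemma is then just~\eqref{eq:wigner_log_potential} and~\eqref{eq:wigner_stieltjes}. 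The only real point of care is this exterior identification; it is not a genuine obstacle, as the whole lemma is bookkeeping on top of Theorem~\ref{theo:pushforward}.
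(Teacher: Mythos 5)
Your proposal is correct and follows essentially the same route as the paper, which obtains the lemma directly from Parts~(c) and~(e) of Theorem~\ref{theo:pushforward} combined with the explicit Weyl-case formulas for $\alpha_0$, $U_0$, $m_0$, $T_t$ and $T_t^{-1}$ recorded in Section~\ref{Weyl.sec}. The only (harmless) deviation is your harmonicity-plus-normalization argument for $U_t$ outside $\mathcal E_t$; the more direct route is to substitute $\alpha_0(w)=1$ and $U_0(w)=\log|w|$ into \eqref{eq:Ut(Ttw)} and use the identity $\Psi\left(u+\tfrac1u\right)=\tfrac12\Re\left(\tfrac1{u^2}\right)+\log|u|$ for $|u|>1$, which is exactly how the paper verifies Part~(e) in general.
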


In the above analysis we assumed that $0<t<1$. For $t\geq 1$, the measure $\nu_t$ becomes one-dimensional; see  Theorem \ref{theo:main_weyl}. At $t=1$, the push-forward property still holds (note that $T_1(w) = 2\Re w$ maps the uniform distribution on the unit disk to the Wigner law on $[-2,2]$), but  for $t>1$ the distribution $\nu_t$  does not coincide with the push-forward $(T_t)_\#\nu_{0}$, at least if we define $T_t$ in the above way. The following definition seems more appropriate in view of Theorem~\ref{theo:main_weyl}:
$$
\tilde T_t(w) = 2 (\Re w) \sqrt t,
\qquad
t>1.
$$

\subsection{Littlewood--Offord polynomials}\label{LO.sec}
In this section we  analyze the Littlewood--Offord polynomials from Example~\ref{ex:LO}. We recall that $g(x)=-\beta (x\log x - x)$ for all $x\in (0,1]$ and $g(0)=0$, where $\beta>0$ is a parameter of the model.  The initial distribution $\nu_0$ is rotationally invariant and satisfies $\nu_{0}(\{|z|\leq r\}) =  \alpha_0(r) = \min \{r^{1/\beta}, 1\}$, for all $r>0$. Among other things, the next proposition shows that $t_{\mathrm{sing}} = 0$ is possible meaning that the singularities may start to develop immediately at time $0$.

\begin{proposition}\label{prop:littlewood_offord_t_sing}
The following assertions hold for the Littlewood--Offord model.
\begin{itemize}
\item[(i)] If $0<\beta \leq 1/2$, then $t_{\mathrm{sing}} = \beta/(1-\beta)$ and $t_{\mathrm{Wig}}=\exp(1-2\beta)$.  In particular,  for all $t\in (0, \beta/(1-\beta))$, Theorem~\ref{theo:pushforward} implies that $\nu_{t}$ is the push-forward of $\nu_{0}$ under the map
$$
T_t(w) = w + t|w|^{1/\beta}/w,  \qquad |w|\leq 1.
$$
\item[(ii)] For $1/2  < \beta < 1$, we have $t_{\mathrm{sing}} = 0$ and $t_{\mathrm{Wig}}=1$. (This case will be studied in Proposition~\ref{prop:littlewood_offord_push_forward_beta_greater_one_half}).
\end{itemize}
\end{proposition}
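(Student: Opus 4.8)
The statement follows by evaluating the closed-form expressions \eqref{eq:t_sing} and \eqref{eq:t_Wig_formula} for the Littlewood--Offord profile $g(\alpha)=-\beta(\alpha\log\alpha-\alpha)$ and then invoking Theorem~\ref{theo:pushforward}. The plan is first to record the elementary data: $g'(\alpha)=-\beta\log\alpha$, $g''(\alpha)=-\beta/\alpha<0$, so $g$ is smooth and strictly concave on $(0,1)$ with $g'(1)=0>-\infty$ and $g'(0)=+\infty$; since $\alpha\eee^{g'(\alpha)}=\alpha^{1-\beta}\to0$ as $\alpha\downarrow0$ exactly because $\beta<1$, condition (A3) holds, while (A1) and (A2) hold for this model by Example~\ref{ex:LO}. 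From $\eee^{-g'(\alpha)}=\alpha^{\beta}$ and \eqref{eq:alpha_0} one recovers $\alpha_0(s)=\min\{s^{1/\beta},1\}$, so that $\alpha_0(s)/s=s^{1/\beta-1}$ on $(0,1)$.

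For $t_{\mathrm{sing}}$ I would substitute into the second expression in \eqref{eq:t_sing}: using $g''(\alpha)=-\beta/\alpha$, $\eee^{-2g'(\alpha)}=\alpha^{2\beta}$ and $1+\alpha g''(\alpha)=1-\beta$ (which is positive since $\beta<1$) gives
\[
\left|\frac{g''(\alpha)\eee^{-2g'(\alpha)}}{1+\alpha g''(\alpha)}\right|=\frac{\beta}{1-\beta}\,\alpha^{2\beta-1},
\qquad\text{hence}\qquad
t_{\mathrm{sing}}=\frac{\beta}{1-\beta}\,\inf_{\alpha\in(0,1)}\alpha^{2\beta-1}.
\]
For $\beta\le1/2$ the exponent $2\beta-1$ is non-positive, $\alpha^{2\beta-1}$ is non-increasing on $(0,1)$ with infimum $1$ approached as $\alpha\uparrow1$, so $t_{\mathrm{sing}}=\beta/(1-\beta)$; for $\beta>1/2$ the exponent is positive, $\alpha^{2\beta-1}\to0$ as $\alpha\downarrow0$, so $t_{\mathrm{sing}}=0$. (The same answer comes out of the first expression in \eqref{eq:t_sing} via $(\alpha_0(s)/s)'=(1/\beta-1)s^{1/\beta-2}$.)

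For $t_{\mathrm{Wig}}$ I would plug $g(1)=\beta$ and $g(\alpha)-g(1)=-\beta\alpha\log\alpha-\beta(1-\alpha)$ into \eqref{eq:t_Wig_formula} and simplify the argument of the exponential to
\[
2\cdot\frac{g(\alpha)-g(1)}{1-\alpha}+\frac{\alpha}{1-\alpha}\log\alpha+1
=(1-2\beta)\left(\frac{\alpha\log\alpha}{1-\alpha}+1\right)=:(1-2\beta)\,h(\alpha).
\]
Writing the numerator of $h$ as $\phi(\alpha)=\alpha\log\alpha+1-\alpha$, one has $\phi(1)=0$ and $\phi'(\alpha)=\log\alpha<0$ on $(0,1)$, hence $\phi>0$ there; together with $\alpha\log\alpha<0$ this gives $0<h(\alpha)<1$ on $(0,1)$, while $h(\alpha)\to1$ as $\alpha\downarrow0$ and $h(\alpha)\to0$ as $\alpha\uparrow1$ (the latter because $\frac{\alpha\log\alpha}{1-\alpha}\to-1$). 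Therefore $\sup_{(0,1)}h=1$, $\inf_{(0,1)}h=0$, and taking the supremum in \eqref{eq:t_Wig_formula} according to the sign of $1-2\beta$ yields $t_{\mathrm{Wig}}=\exp(1-2\beta)$ for $\beta\le1/2$ (supremum approached as $\alpha\downarrow0$) and $t_{\mathrm{Wig}}=\eee^{0}=1$ for $\beta>1/2$ (supremum approached as $\alpha\uparrow1$).

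It then remains to read off the push-forward description in case (i): for $0<t<t_{\mathrm{sing}}=\beta/(1-\beta)$ conditions (A1)--(A3) are in force, so Theorem~\ref{theo:pushforward} gives $\nu_t=(T_t)_\#\nu_0$ with $T_t(w)=w+t\,\alpha_0(w)/w$, and since $\nu_0$ is supported on $\{|w|\le1\}$, where $\alpha_0(w)=|w|^{1/\beta}$, the relevant map is $T_t(w)=w+t|w|^{1/\beta}/w$ for $|w|\le1$, as asserted. In case (ii) nothing more is needed here, since $t_{\mathrm{sing}}=0$ makes Theorem~\ref{theo:pushforward} vacuous and a substitute transport map valid for $t\ge t_{\mathrm{sing}}$ is constructed in Proposition~\ref{prop:littlewood_offord_push_forward_beta_greater_one_half}. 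The computation is entirely elementary; the only points needing care are the endpoint asymptotics of $\alpha^{2\beta-1}$ and of $h(\alpha)$ as $\alpha\downarrow0$ and $\alpha\uparrow1$, and the observation that the sign of $1-2\beta$ changes exactly at the Weyl value $\beta=1/2$ — the common boundary case of (i), where one checks that both displayed formulas agree and give $t_{\mathrm{sing}}=t_{\mathrm{Wig}}=1$.
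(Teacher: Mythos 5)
Your proposal is correct and follows essentially the same route as the paper: a direct evaluation of the explicit formulas \eqref{eq:t_sing} and \eqref{eq:t_Wig_formula} for the profile $g(\alpha)=-\beta(\alpha\log\alpha-\alpha)$, followed by an appeal to Theorem~\ref{theo:pushforward}. The only (welcome) difference is that you carry out the $t_{\mathrm{Wig}}$ optimization in full detail, whereas the paper simply defers that computation to Example~\ref{ex:collapse_wigner_examples}; your verification of (A3) and the endpoint analysis of $\alpha^{2\beta-1}$ and $h(\alpha)$ are all accurate.
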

\begin{proof}
For $s\in (0,1]$ we have $\alpha_0(s)/s = s^{(1/\beta) - 1}$. For $\beta\neq 1$, the derivative of this function is $(1-\beta)/\beta \cdot s^{(1/\beta)-2}$, while for $\beta=1$ the derivative vanishes. Note that $g'(0) = +\infty$ and the condition $\lim_{s\downarrow 0}\alpha_0(s)/s = 0$ is satisfied if and only if $0 < \beta < 1$. For $0<\beta \leq 1/2$, the supremum of the derivative is $(1-\beta)/\beta$ meaning that $t_{\mathrm{sing}} = \beta/(1-\beta)$. This means that we can apply Theorem~\ref{theo:pushforward} if and only if $0 < \beta\leq  1/2$.  For $\beta\in  (1/2, 1)$, the supremum is infinite and hence $t_{\mathrm{sing}} = 0$. For $\beta\geq 1$ we cannot apply Theorem~\ref{theo:pushforward}. For claims about $t_{\mathrm{Wig}}$ see Example~\ref{ex:collapse_wigner_examples}.
\end{proof}

Note that for $\beta >1/2$, the singularities start to develop immediately at time $t_{\mathrm{sing}} = 0$. Still, it is possible to obtain a complete description of the measure $\nu_t$. Recall that $t_{\mathrm{Wig}}=1$ meaning that $\nu_t$ is the Wigner law $\textsf{sc}_t$ if $t\geq 1$. In the following result, we describe $\nu_t$ for $0<t<1$; see also Figure~\ref{fig:littlewood_offord_heat_flow_beta_greater_frac12}.

\begin{proposition}\label{prop:littlewood_offord_push_forward_beta_greater_one_half}
Let $g(x)\coloneqq-\beta (x\log x - x)$ for some parameter $\beta>1/2$. Then, for every $t\in (0,1)$, the probability measure $\nu_t$ is the push-forward of the probability measure $\nu_0$ (which satisfies $\nu_{0}(\{|z|\leq r\}) = r^{1/\beta}$, for $0<r<1$) under the modified transport map
$$
\tilde T_t(w) \coloneqq
\begin{cases}
\sqrt t \cdot  (2\Re w)|w|^{(1-2\beta)/(2\beta)}, & \text{ if } |w|<t^{\beta/(2\beta-1)},\\
w + t |w|^{1/\beta}/w, &\text{ if } t^{\beta/(2\beta-1)}\leq |w| \leq 1,\\
w + t/w, &\text{ if } |w|>1.
\end{cases}
$$
The measure $\nu_t$ has a singular part which is $t^{1/(2\beta-1)}$ times the Wigner semicircle law on the interval $[-2t^{\beta/(2\beta-1)}, 2t^{\beta/(2\beta-1)}]$. The absolutely continuous part of $\nu_t$ is the push-forward of the restriction of $\nu_0$ to the annulus $\{t^{\beta/(2\beta-1)}<|w|<1\}$ by the standard push-forward map $w\mapsto w + t |w|^{1/\beta}/w$.
\end{proposition}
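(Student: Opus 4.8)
The plan is to solve completely the variational problem of Theorem~\ref{theo:main_general_g}, namely to locate the maximiser $\alpha_t(z)$ of $\alpha\mapsto f_t(\alpha,z)$ for each $z\in\C$, and then to read off the absolutely continuous part of $\nu_t$ from the local push-forward Theorem~\ref{theo:local_push_forward} and the singular part from the Stieltjes transform. One may restrict to $0<t<1$, the range $t\ge1$ being covered by Example~\ref{ex:collapse_wigner_examples}. Put $R:=t^{\beta/(2\beta-1)}$, $\alpha_*:=t^{1/(2\beta-1)}=R^{1/\beta}\in(0,1)$ (so $t\alpha_*=R^2$), $I:=[-2R,2R]=[-2\sqrt{t\alpha_*},2\sqrt{t\alpha_*}]$, and let $\mathcal E_t$ be the closed ellipse of \eqref{eq:ellipse_E_t_def} with semi-axes $1\pm t$. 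Since $\tfrac\alpha2\log(t\alpha)+\alpha\Psi(z/\sqrt{t\alpha})$ equals $\alpha$ times the logarithmic potential of $\mathsf{sc}_{t\alpha}$ by \eqref{eq:wigner_log_potential}, differentiation using \eqref{eq:wigner_stieltjes} and $g'(\alpha)=-\beta\log\alpha$ yields, with $\zeta:=z/\sqrt{t\alpha}$ and $\omega:=\tfrac12(\zeta+\sqrt{\zeta^2-4})$ (the inverse-Joukowsky branch with $|\omega|\ge1$),
\[
\frac{\partial f_t}{\partial\alpha}(\alpha,z)=\bigl(\tfrac12-\beta\bigr)\log\alpha+\tfrac12\log t+\log|\omega|,\qquad \frac{\partial^2 f_t}{\partial\alpha^2}(\alpha,z)=\frac1\alpha\Bigl(\tfrac12-\beta-\tfrac12\Re\tfrac{\omega^2+1}{\omega^2-1}\Bigr)<0,
\]
the sign following from $\beta>\tfrac12$ and $\Re\tfrac{\eta+1}{\eta-1}>0$ for $|\eta|>1$; moreover $\partial_\alpha f_t\to+\infty$ as $\alpha\downarrow0$. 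Hence $f_t(\cdot,z)$ is strictly concave and $\alpha_t(z)$ is either the unique interior zero of $\partial_\alpha f_t$ or $\alpha_t(z)=1$. Setting $w:=\sqrt{t\alpha_t(z)}\,\omega$ and solving yields three regimes that match continuously along $\{|w|=1\}$ and $\{|w|=R\}$ and exhaust $\C$: $\alpha_t(z)=1$ for $z=w+t/w$ with $|w|>1$, i.e.\ $z\in\C\setminus\mathcal E_t$; $\alpha_t(z)=|w|^{1/\beta}\in(\alpha_*,1)$ with $z/\sqrt{t\alpha_t(z)}\notin[-2,2]$ for $z=w+t|w|^{1/\beta}/w$ with $R<|w|<1$, i.e.\ $z\in\Int\mathcal E_t\setminus I$ (these $z$ foliate $\Int\mathcal E_t\setminus I$ by the nested ellipses with semi-axes $|w|\pm t|w|^{1/\beta-1}$, which are genuine ellipses exactly for $|w|>R$ and degenerate to $I$ as $|w|\downarrow R$); and the frozen value $\alpha_t(z)=\alpha_*$ with $z/\sqrt{t\alpha_*}=z/R\in[-2,2]$ for $z=x\in I$.

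For the absolutely continuous part, note that \hyperref[cond:A1]{(A1)}--\hyperref[cond:A3]{(A3)} hold ($g''<0$, and although $g'(0)=+\infty$ one has $\alpha e^{g'(\alpha)}=\alpha^{1-\beta}\to0$ since $\beta<1$). By the above, the connected open sets $\Int\mathcal E_t\setminus I$ (case (D2)) and $\C\setminus\mathcal E_t$ (case (D1)) consist of regular points, with $\tilde T_t^{-1}$ as in \eqref{eq:T_inverse}; the relevant branch of the root is well defined on $\{R<|w|<1\}$ because there $s\mapsto s-ts^{1/\beta-1}$ is positive and increasing (using $\beta>\tfrac12$). Theorem~\ref{theo:local_push_forward} then gives $\nu_t|_{\Int\mathcal E_t\setminus I}=(\tilde T_t)_\#\bigl(\nu_0|_{\{R<|w|<1\}}\bigr)$ and $\nu_t|_{\C\setminus\mathcal E_t}=(\tilde T_t)_\#\bigl(\nu_0|_{\{|w|>1\}}\bigr)=0$, while $m_t$ is continuous across $\partial\mathcal E_t$ (both branches yield $\tfrac1{2t}(z-\sqrt{z^2-4t})$), so $\nu_t(\partial\mathcal E_t)=0$. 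Because $\nu_0(\{|w|<R\})=R^{1/\beta}=\alpha_*$ and $\nu_0(\{|w|\ge1\})=0$, this part of $\nu_t$ carries mass $1-\alpha_*$ and is precisely the push-forward of $\nu_0|_{\{R<|w|<1\}}$ by $w\mapsto w+t|w|^{1/\beta}/w$, as asserted.

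For the singular part, $m_t(z)=\tfrac1{2t}(z-\sqrt{z^2-4t\alpha_t(z)})$ on $\Int\mathcal E_t\setminus I$ by Proposition~\ref{prop:Stieltjes}, and $\alpha_t(z)\to\alpha_*$ as $z\to I$; hence $h(z):=U_t(z)-\alpha_*\bigl(\tfrac12\log(t\alpha_*)+\Psi(z/\sqrt{t\alpha_*})\bigr)$ satisfies $2\partial_z h(z)=\tfrac1{2t}\bigl(\sqrt{z^2-4t\alpha_*}-\sqrt{z^2-4t\alpha_t(z)}\bigr)\to0$ as $z\to(-2R,2R)$, so $\nabla h$ extends continuously by $0$ across that interval and $h\in C^1$ near it. Thus $\nu_t=\tfrac1{2\pi}\Delta U_t$ has, near $I$, no component singular with respect to planar Lebesgue measure beyond $\alpha_*\mathsf{sc}_{t\alpha_*}$, which is supported on $[-2\sqrt{t\alpha_*},2\sqrt{t\alpha_*}]=I$ — that is, $t^{1/(2\beta-1)}$ times the Wigner law on $I$. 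Counting mass forces $\nu_t=\nu_t|_{\C\setminus I}+\alpha_*\mathsf{sc}_{t\alpha_*}$. Lastly $\alpha_*\mathsf{sc}_{t\alpha_*}=(\tilde T_t)_\#\bigl(\nu_0|_{\{|w|<R\}}\bigr)$: in polar coordinates $w=\rho e^{i\theta}$ the first branch of $\tilde T_t$ acts by $w\mapsto2\sqrt t\,\rho^{1/(2\beta)}\cos\theta$, and the substitution $u=\rho^{1/(2\beta)}$ turns $\nu_0|_{\{\rho<R\}}$ into $\tfrac1\pi$ times area measure on the disk $\{u<t^{1/(2(2\beta-1))}\}$, whose orthogonal projection to the real axis, rescaled by $2\sqrt t$, is exactly the density $\tfrac1{2\pi t}\sqrt{4t\alpha_*-x^2}$ of $\alpha_*\mathsf{sc}_{t\alpha_*}$. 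Adding the contributions of the two branches gives $\nu_t=(\tilde T_t)_\#\nu_0$.

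The main obstacle is the variational step: analysing $\sup_{\alpha\in[0,1]}f_t(\alpha,z)$ uniformly in $z\in\C$, and in particular establishing that the maximiser is frozen at $\alpha=\alpha_*$ precisely for $z\in I$ (with $z/\sqrt{t\alpha_*}\in[-2,2]$, which is what produces the one-dimensional singular component), together with the continuous matching of the three regimes along $\{|w|=1\}$ and $\{|w|=R\}$; a secondary technical point is that Theorem~\ref{theo:local_push_forward} still applies on the annulus $\{R<|w|<1\}$ even though the global threshold is $t_{\mathrm{sing}}=0$.
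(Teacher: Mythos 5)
Your argument is correct, but it reaches the conclusion by a genuinely different route from the paper. The paper's proof has three ingredients: (i) a direct disintegration of $\nu_0$ on the disk $\{|w|<t^{\beta/(2\beta-1)}\}$ into infinitesimal annuli, each pushed to an arcsine law and integrated to give $t^{1/(2\beta-1)}\mathsf{sc}_{t\alpha_*}$; (ii) a rerun of the proof of Theorem~\ref{theo:pushforward} restricted to the annulus $\{t^{\beta/(2\beta-1)}<|w|<1\}$, resting on the monotonicity of the semi-axes $r\mapsto r\pm t r^{1/\beta-1}$ there; and (iii) an identification of the limit measure by computing the Stieltjes transform of $(\tilde T_t)_\#\nu_0$ via residue calculus and matching it with $m_t$. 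You instead (a) solve the variational problem of Theorem~\ref{theo:main_general_g} head-on, observing that $f_t(\cdot,z)$ is \emph{strictly concave} in $\alpha$ precisely because $\beta>\tfrac12$ — a structural fact the paper never isolates — which cleanly produces the three regimes including the ``frozen'' maximiser $\alpha_*$ on $I$; (b) obtain the absolutely continuous part from the local push-forward Theorem~\ref{theo:local_push_forward} rather than from residue computations; and (c) identify the singular part by a potential-theoretic regularity argument ($U_t$ minus the potential of $\alpha_*\mathsf{sc}_{t\alpha_*}$ is $C^1$ across $I$, so its Laplacian cannot charge the slit), rather than by Stieltjes-transform matching. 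Your concavity analysis buys a transparent explanation of \emph{why} the singular component appears (the maximiser saturates at $\alpha_*$ exactly on $I$), and your disk computation (change of variables $u=\rho^{1/(2\beta)}$ turning $\nu_0$ into normalized area measure, then projecting) is an elegant equivalent of the paper's arcsine disintegration. The $C^1$ step should be fleshed out slightly — continuity of $\nabla h$ across $I$ together with smoothness of $h$ off $I$ lets you integrate by parts separately in the upper and lower half-neighbourhoods with cancelling boundary terms, which is what actually excludes a singular Laplacian on the slit — but this is routine.

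One caveat on scope: you justify \hyperref[cond:A3]{(A3)} by noting $\alpha\,\eee^{g'(\alpha)}=\alpha^{1-\beta}\to0$ ``since $\beta<1$'', but the proposition is stated for all $\beta>1/2$, and for $\beta\ge1$ this condition fails, so Theorem~\ref{theo:local_push_forward} cannot be cited as stated. This is easily repaired — the condition in \hyperref[cond:A3]{(A3)} only governs the behaviour of $T_t$ at the origin, while your application takes place on the annulus $\{t^{\beta/(2\beta-1)}<|w|<1\}$, where the proof of Theorem~\ref{theo:local_push_forward} is purely local and goes through verbatim — but as written your argument covers only $1/2<\beta<1$ and this restriction should be removed explicitly.
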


We conjecture that, in the hydrodynamic limit, the modified transport maps $\tilde T_t$ describe the motion of individual zeros. That is, a zero which starts at some point $w$ in the unit disk moves linearly with constant in time speed $|w|^{1/\beta}/w$ until it hits\footnote{This is a statement about the $n\to\infty$ limit only. For finite $n$, the zeros need not become exactly real.} the real axis at time $t(w)= |w|^{(2\beta-1)/\beta}$. For example, zeros starting close to the origin hit the real axis at small times since $\beta>1/2$. Note that the zeros having smaller absolute value than $|w|$ have already reached the real axis at time $t(w)$.  The position at which the zero that started at $w$ hits the real axis is  $2\Re w$. Then, the zero moves along the real axis; a detailed description of this motion will be provided below. The conclusions of Proposition \ref{prop:T_t_bijective} and Theorem~\ref{theo:pushforward} remain in force when restricted to the annulus $\{t^{\beta/(2\beta-1)}< |w| < 1\}$. For example, points $w$ with fixed $|w| = r \in (t^{\beta/(2\beta-1)}, 1)$ are mapped by $\tilde T_t$ to the ellipse $\partial \mathcal E_{r,t}$ with half-axes $r + t r^{(1/\beta)-1}$ and $r - t r^{(1/\beta)-1}$. These ellipses foliate the ellipse $\mathcal E_t$ with half-axes $1+t$ and $1-t$ with exception of the interval $[-2t^{\beta/(2\beta-1)}, +2t^{\beta/(2\beta-1)}]$. This slitted ellipse supports the absolutely continuous part of $\nu_t$; see Figure~\ref{fig:littlewood_offord_heat_flow_beta_greater_frac12}.  For every $w$ from the annulus  $\{t^{\beta/(2\beta-1)}< |w| < 1\}$ we have $\alpha_t(\tilde T_t(w)) = \alpha_0(w)= |w|^{1/\beta}$ and the Stieltjes transform  of $\nu_t$ at $\tilde T_t(w)$ is given by
$$
m_t(\tilde T_t(w)) = \alpha_0(w)/w = |w|^{1/\beta}/w.
$$

\begin{figure}[t]
	\centering
	\includegraphics[width=0.35\columnwidth]{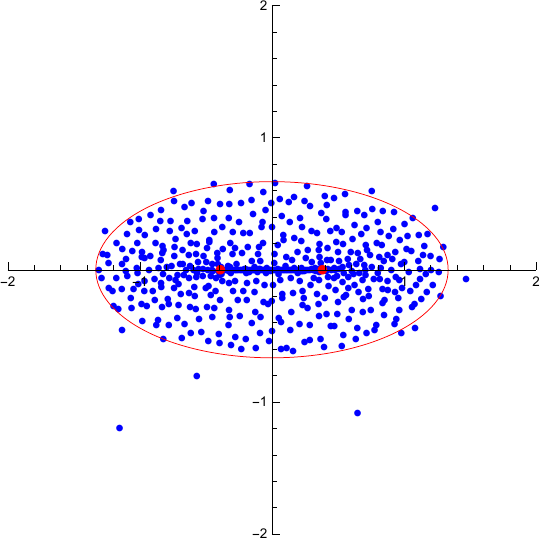}
\includegraphics[width=0.35\columnwidth]{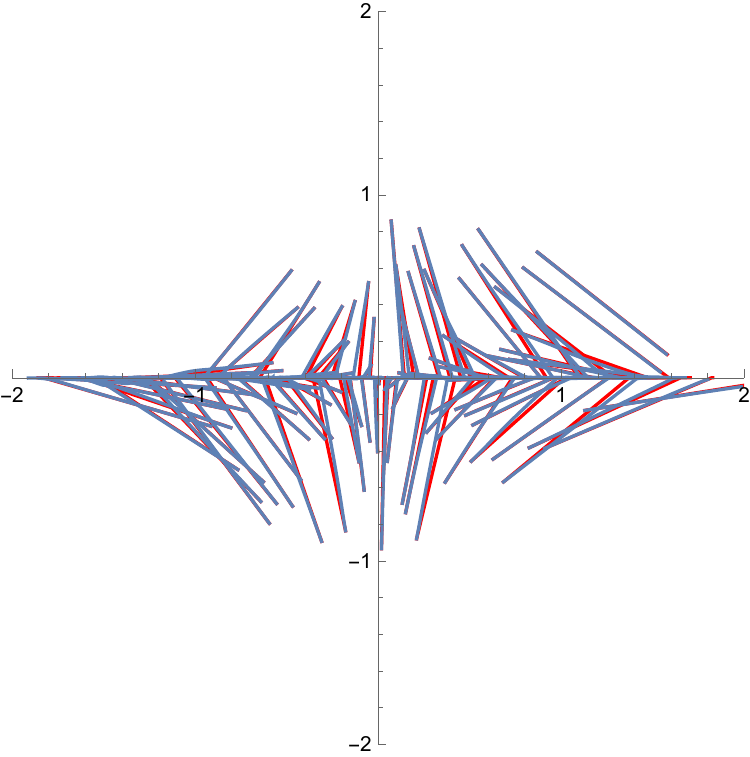}
\caption{Left: Zeros of a heat-evolved Littlewood--Offord polynomial
with $\beta= 3/4>1/2$ and degree is $n=500$  at time $t=1/3$. Right: Trajectories of selected roots under the heat flow (grey)  and under transport maps (red), for all $t\in [0,1]$.}
\label{fig:littlewood_offord_heat_flow_beta_greater_frac12}
\end{figure}

Note that the Stieltjes transform of $\nu_t$ is defined on the slitted complex plane and has a jump discontinuity along the interval $[-2t^{\beta/(2\beta-1)}, 2t^{\beta/(2\beta-1)}]$. Given some $a$ in the interior of this  interval, denote the preimages of $a + \ii 0$ and $a - \ii 0$ under $\tilde T_t$ by $w_+$ and $w_-$, where $\Im w_+>0$ and $\Im w_-<0$.  These preimages are complex-conjugate numbers satisfying $|w_+| = |w_-| = t^{\beta/(2\beta-1)}$ and $a= 2\Re w_+ = 2\Re w_-$ (since the zeros that started at $w_+$ and $w_-$ hit the real axis at time $t$ and position $a$). The limits of the Stieltjes transform at $a$ are given by
\begin{align*}
m_t(a + \ii 0) &= |w_+|^{1/\beta}/w_+ = t^{1/(2\beta-1)}/w_+ = w_-/t,
\\
m_t(a - \ii 0) &= |w_-|^{1/\beta}/w_- = t^{1/(2\beta-1)}/w_-= w_+/t.
\end{align*}
Using the Stieltjes inversion formula, we can compute the density of the singular part of $\nu_t$ at $a$ as follows:
$$
p_t(a) = - \frac 1 \pi \Im m_t(a + \ii 0) = \frac 1{\pi t} \Im w_+ = \frac {\sqrt{4t^{2\beta/(2\beta-1)} -a^2}} {2\pi t^{2\beta/(2\beta-1)}} \cdot t^{1/(2\beta-1)}.
$$
This agrees with the density of the Wigner law on $[-2t^{\beta/(2\beta-1)}, 2t^{\beta/(2\beta-1)}]$ multiplied by $t^{1/(2\beta-1)}$.

We are now in position to explain how the zeros move after hitting the real line.
Consider some root located at position $a\in \R$ at time $t\in (0,1)$, where with $|a| < 2t^{\beta/(2\beta-1)}$. By symmetry, we assume $a\geq 0$.  We believe that the velocity of this root is given by the principal (Cauchy) value of the Stieltjes transform of $\nu_t$ at $a$, which, by the Sokhotski–Plemelj theorem, is
$$
v_t(a) = \frac{m_t(a + \ii 0) + m_t(a - \ii 0)}{2} = \Re m_t(a + \ii 0) = (\Re w_-) /t = a/(2t).
$$
So, the speed is $(a/2t)$. This is quite natural in view of the following observation. The roots which started near $w_+$ arrive at time $t$ at position $a$ with speed $|w_+|^{1/\beta}/w_+ = w_-/t$. Similarly, the roots that started at $w_-$ arrive at $a$ with speed $w_+/t$. Recall that $w_-/t$ and $w_+/t$ are complex-conjugate numbers with common real part $a/(2t)$. It is therefore quite natural that the roots which are already at $a$ (and have become real at times $<t$) move with speed $a/(2t)$.

Now, let $z(t)$ denote the position of some root that started at $z(0) = w$.  After the root becomes real, it satisfies the ODE $z'(t) = z(t)/ (2t)$. The general solution is given by  $z(t) = C \sqrt t$. The constant $C$ can be determined from the condition $z(|w|^{(2\beta-1)/\beta}) = 2\Re w$. This gives
$$
z(t) = \sqrt t \cdot  (2\Re w)|w|^{(1-2\beta)/(2\beta)} = : \tilde T_t(w) \quad \text{ for all } \quad  t\in [|w|^{(2\beta-1)/\beta},1].
$$
This derivation is non-rigorous but it agrees with numerical experiments. It turns out that the push-forward of $\nu_0$ restricted to the disk $\{|w|< t^{\beta/(2\beta-1)}\}$ agrees with the Wigner law on the interval $[-2t^{\beta/(2\beta-1)}, 2t^{\beta/(2\beta-1)}]$ multiplied by $t^{1/(2\beta-1)}$. This fact will be verified in the proof of Proposition~\ref{prop:littlewood_offord_push_forward_beta_greater_one_half} which will be given in Section~\ref{subsec:proof_pushforward}.

At time $t=1$, all zeros become real and their distribution is the Wigner law on $[-2,2]$. (Remember: we are talking about the infinite $n$ limit only, for finite $n$ the zeros are approximately real). The zero which started at $w$ is located at $(2\Re w)|w|^{(1-2\beta)/(2\beta)}$ at time $t=1$. Therefore, for $t>1$, we conjecture that $\tilde T_t(w) = \sqrt t \cdot (2\Re w)|w|^{(1-2\beta)/(2\beta)}$  for all $w$ in the unit disk.

In the above discussion we omitted the regime when $0<\beta \leq 1/2$ and $\beta/(1-\beta) < t < \exp(1-2\beta)$.  Numerical simulations show that in this regime the complexity of $\nu_t$ is similar to that of Kac polynomials (corresponding formally to $\beta = 0$) in the regime $0< t < \eee$.

\subsection{Random entire functions}\label{subsec:entire_functions}
We expect that the main results of the present paper, including Theorem~\ref{theo:main_general_g}, extend in a straightforward way to random entire functions of the form $F_n(z)=\sum_{k=0}^\infty \xi_k a_{k;n} z^k$ satisfying appropriate modifications of the Assumptions    \hyperref[cond:A1]{(A1)}--   \hyperref[cond:A3]{(A3)} (as in~\cite{KZ14}). Most importantly, the analogue of    \hyperref[cond:A2]{(A2)} requires now that for every $c>0$,
\begin{equation}\label{eq:condition_coefficients_infinite_taylor_analogue}
\lim_{n\to\infty} \sup_{k\in \{0,\ldots, [cn]\}} \left|\frac 1n \log |a_{k;n}| - g\left(\frac kn\right)\right|=0,
\end{equation}
for some concave function  $g:[0,\infty) \to\R$. Note that in this setting, $n$ loses its role as the degree; now it is just a parameter appearing in~\eqref{eq:condition_coefficients_infinite_taylor_analogue}.

We then define the empirical distribution of zeros by assigning to each zero of $F_n$ the same weight $1/n$; the result is then a locally finite measure on $\C$.  The space of such measures is endowed with the vague topology. The analogue of Theorem~\ref{theo:main_general_g} states that the empirical distribution of zeros of $\exp\{-\frac t{2n} \partial_z^2\} F_n$ converges to certain locally finite measure $\nu_t = \frac 1 {2\pi} \Delta U_{t}$, where $U_t$ is given by~\eqref{eq:var_identity} and~\eqref{eq:f_def}, but now the supremum in~\eqref{eq:var_identity} has to be taken over $\alpha > 0$. One important difference to the polynomial case is that now $U_t$ loses its role as the logarithmic potential of $\nu_t$ (which is infinite, in general). That is to say, $U_t$ cannot be computed from $\nu_t$ as a convolution against the log function.

A family of examples covered in this way is the case $g(x)=-\beta(x\log x -x)$, $x>0$, with parameter $\beta\ge 1/2$. For each $n$, the resulting entire functions are scaled versions of the Littlewood--Offord entire functions, with
the case $\beta = 1/2$ giving a scaled version of the plane Gaussian analytic function (GAF). In the GAF case, the limiting empirical measure $\nu_0$ is $1/\pi$ times the Lebesgue measure on the plane. The limiting empirical measure of the heat-evolved functions is then $1/(\pi(1-t^2))$ times the Lebesgue measure, for $0\leq t <1$, with the restriction on $t$ needed to ensure that the heat flow is defined; see \cite{GAF-paper}.

For $\beta >1/2$, the limiting empirical measure at time zero satisfies $\nu_{0}(\{|z|\leq r\}) = r^{1/\beta}$ for all $r>0$. A push-forward theorem, analogous to Proposition~\ref{prop:littlewood_offord_push_forward_beta_greater_one_half} in the polynomial case,
holds for all $t>0$, with the definition of the extended transport map $\tilde T_t$ modified as follows:
$$
\tilde T_t(w) \coloneqq
\begin{cases}
\sqrt t \cdot  (2\Re w)|w|^{(1-2\beta)/(2\beta)}, & \text{ if } |w|<t^{\beta/(2\beta-1)},\\
w + t |w|^{1/\beta}/w, &\text{ if } t^{\beta/(2\beta-1)}\leq |w| <\infty.
\end{cases}
$$
(That is, there is no longer a region in which $\tilde T_t(w)$ is equal to $w+t/w$.)

\subsection{One more example}\label{martin.sec}
Let us finish this section with an example of random polynomials whose limiting zero distribution is supported on an annulus and where $0<t_{\mathrm{sing}}<t_{\mathrm{Wig}}$.

\begin{figure}[ht]\label{fig:Martin}
\includegraphics[width=.33\textwidth]{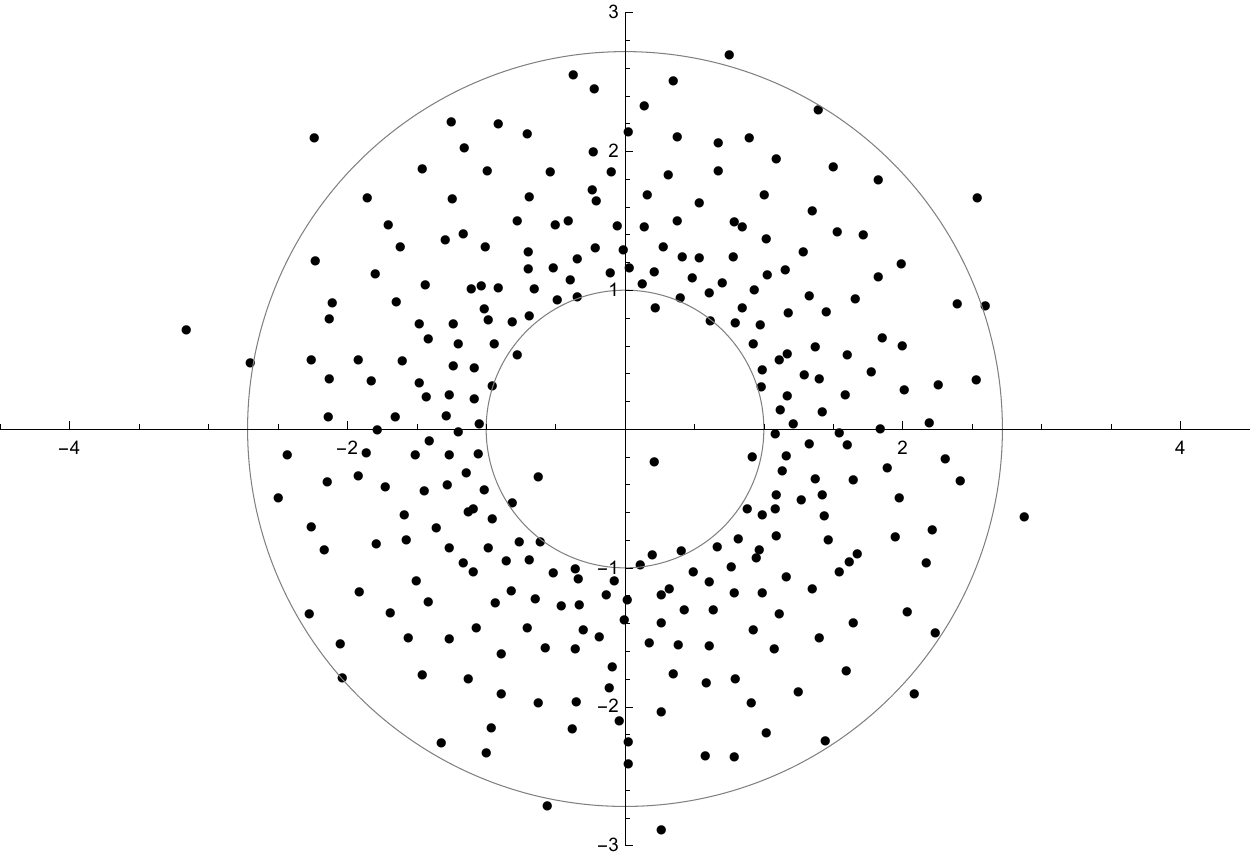}\includegraphics[width=.33\textwidth]{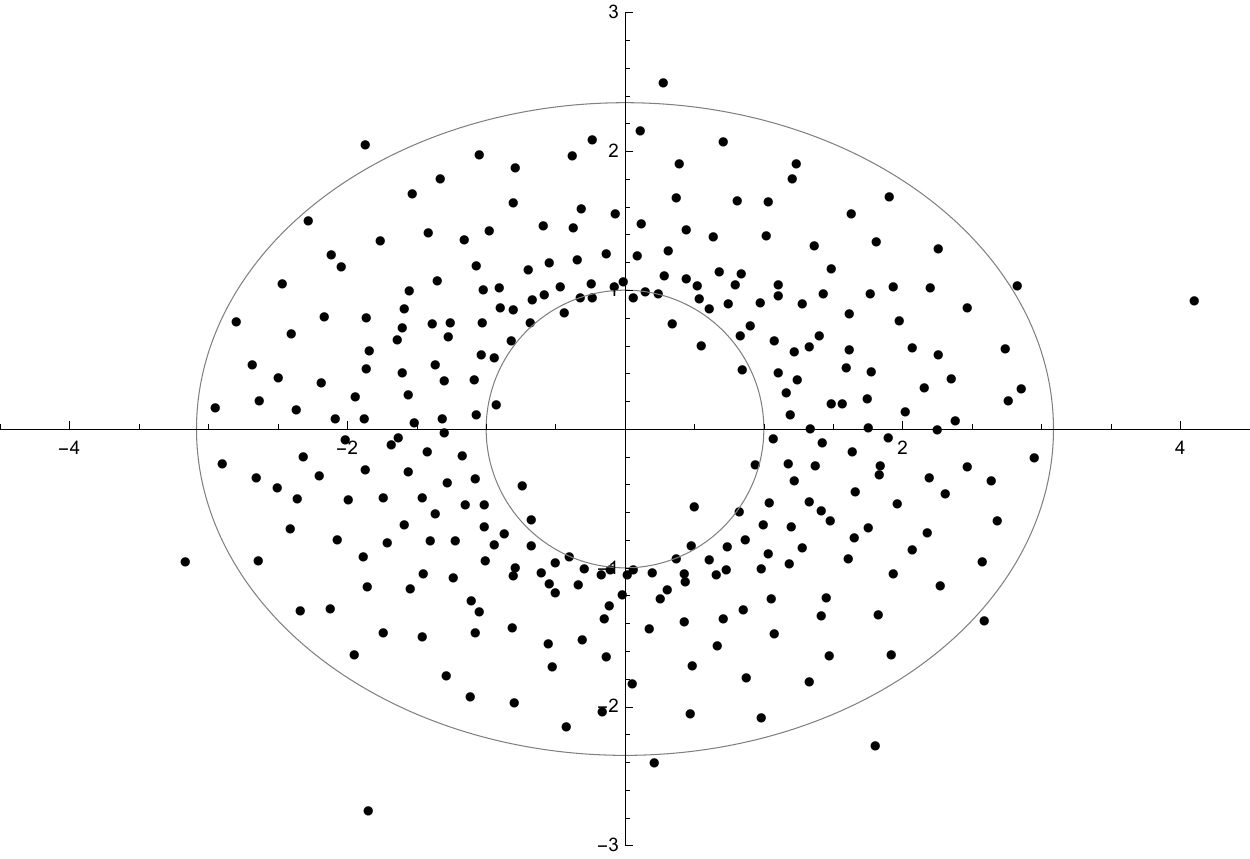}\includegraphics[width=.33\textwidth]{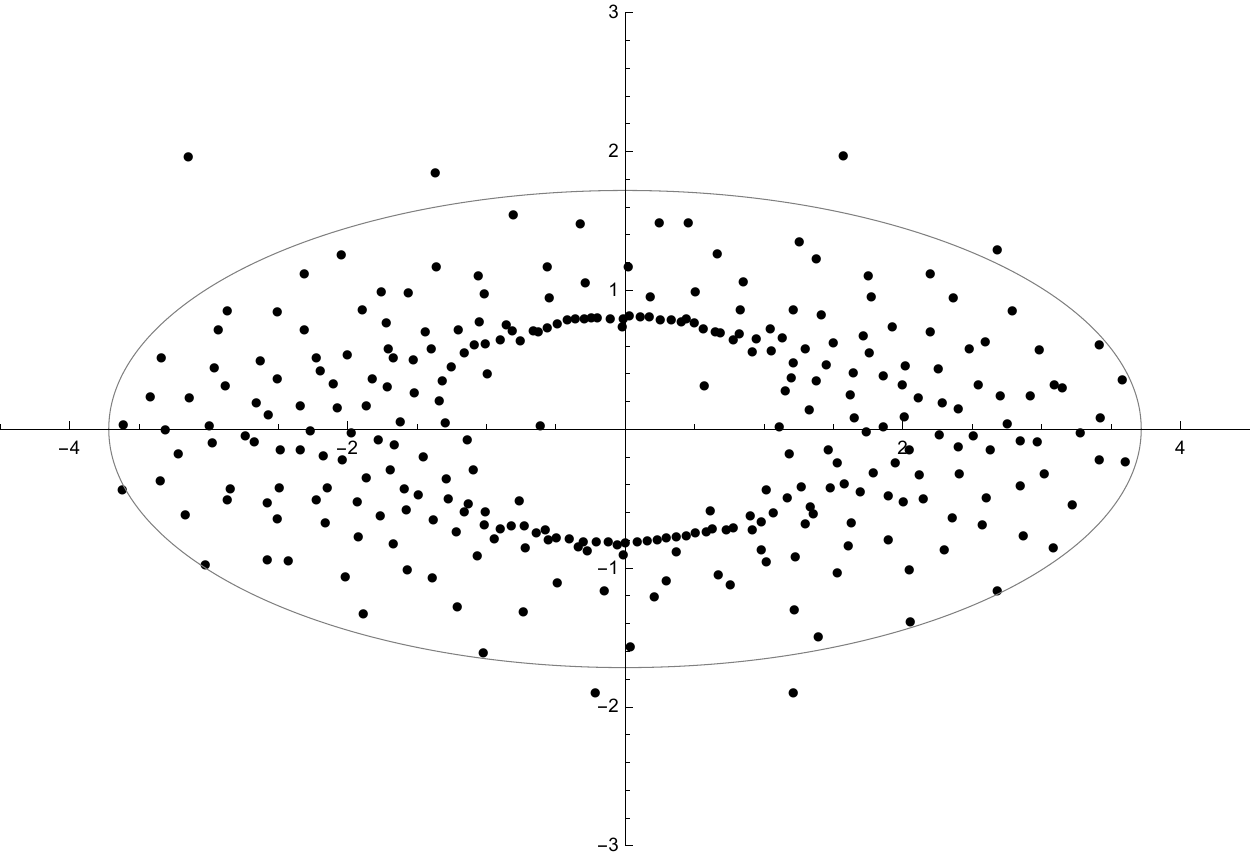}
\caption{Zeros of polynomials from Section \ref{martin.sec} undergoing the heat flow for $t=0,t=t_{\mathrm{sing}}=1$ and $t=\sqrt{t_{\mathrm{Wig}}}=\eee$.} \label{fig:zeros_on_annulus}
   \end{figure}

Consider a random polynomial $P_n(z)$ that is of the form~\eqref{eq:P_n_general_g_def} for the strictly concave function $g(\alpha)=-\alpha^2/2$, $\alpha \in [0,1]$, for example
$$
P_n(z) = \sum_{k=0}^n \xi_k \eee^{-k^2/(2n)} z^k.
$$
A similar infinite series has been studied in~\cite[\S 2.4]{KZ14}. A simple analysis using Theorem~\ref{theo:collapse_to_wigner} shows that $t_{\mathrm{Wig}}=\eee^2$. Moreover, from~\eqref{eq:alpha_0} it follows that $\alpha_0(z) = \log|z|$ for $z\in\mathcal R_0$, where $\mathcal R_0$ is the annulus $\{z\in \C: 1 < |z| < \eee\}$. By~\eqref{eq:alpha_0_measure}, $\nu_0$ is supported on $\mathcal R_0$ and its Lebesgue density is  $z\mapsto 1/(2\pi |z|^2)$.  It follows from~\eqref{eq:t_sing} that $t_{\mathrm{sing}}=1$. Theorem \ref{theo:pushforward} applies and yields the support
$$
\mathcal R_t=\left\{z=u+iv:u^2+v^2>1,\frac {u^2}{\eee^1+t\eee^{-1}}+\frac{v^2}{\eee^1-t\eee^{-1}}<1\right\}
$$
of the push-forward $\nu_t=(T_t)_{\#} \nu_0$ for $0<t<1$. Then, for $1<t<\eee^2$ singularities form at parts of the inner boundary, see Figure \ref{fig:Martin}, until the distribution collapses to the semicircle law $\mathsf{sc}_{\eee^2}$ at $t= \eee^2$.

\subsection{Evenly spaced roots}\label{subsec:evenly_spaced_roots}
Consider the polynomials  $Q_n(z)= z^n - r^n$, with $r>0$ being constant. The roots of  $Q_n$ are asymptotically uniformly distributed on the circle $\{|z|= r\}$. Nevertheless, as we will show in the next proposition, the asymptotic distribution of zeros of $\exp\{-\frac t{2n} \partial_z^2\} Q_n$ is quite different from what we know for Kac polynomials; compare Figures~\ref{fig:examples_endroots} and~\ref{fig:evenly_spaced_example}.  Intuitively, the reason is that the roots of $Q_n$ are ``evenly spaced'' along a curve (namely, a circle). That is, for a fixed $n$, the spacings between the zeros are extremely regular (actually all exactly equal in this case). This example shows that certain conditions excluding evenly spaced roots are necessary in Conjecture~\ref{conj:Universality}.
\begin{figure}[ht]
	\centering
	\includegraphics[width=0.4\columnwidth, trim={0 40 0 40},clip]{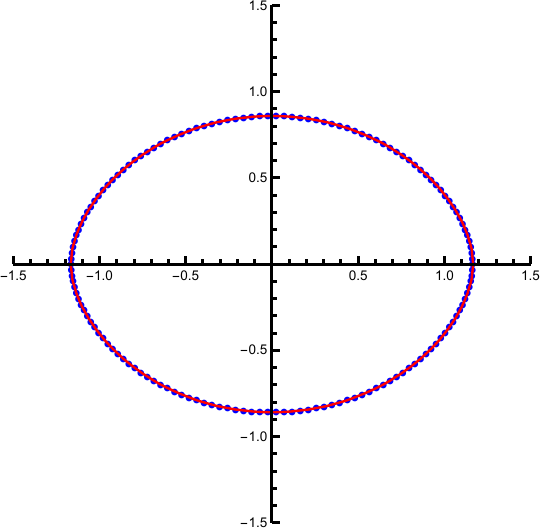}
	\includegraphics[width=0.4\columnwidth, trim={0 40 0 40},clip]{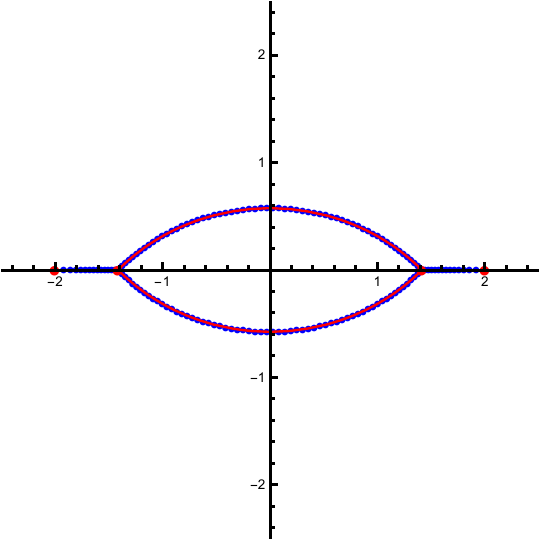}
	\caption{Zeros of $\eee^{-\frac t{2n} \partial_z^2} (z^n - 1)$ for $t<1/\eee$ (left) and $t=1\in (1/\eee,\eee)$ (right).  The level set $L_{1,t}$ is shown in red.   The degree is $n=150$.}
\label{fig:evenly_spaced_example}
\end{figure}

\begin{proposition}\label{prop:evenly_spaced_roots}
Fix some $r>0$ and $t\in \C\backslash\{0\}$. Then, as $n\to\infty$, the probability measure $\llbracket \eee^{-\frac t{2n} \partial_z^2} (z^n - r^n)\rrbracket $ converges weakly to the probability measure $\mu_{r,t}$ on $\C$ given by the distributional Laplacian
$$
\mu_{r,t} = \frac 1 {2\pi} \Delta \max\left\{\Psi\left(\frac z {\sqrt t}\right),\log \frac {r}{\sqrt{|t|}}\right\}.
$$
\begin{itemize}
\item[(a)] If $0< t\leq r^2/\eee$, then $\mu_{r,t}$ is supported on the level set
\begin{equation}\label{eq:def_level_set_L_r_t_evenly_spaced}
L_{r,t} \coloneqq \{z\in \C: \Psi(z/\sqrt t) = \log (r/\sqrt t)\}.
\end{equation}
\item[(b)] If $r^2/\eee < t < r^2 \eee$, then $\mu_{r,t}$ is supported on the union of the level set $L_{r,t}$ and the non-empty intervals $[\sqrt{2 t \log (r^2 \eee /t)}, 2\sqrt{t}]$ and $[- 2\sqrt{t}, - \sqrt{2 t \log (r^2 \eee /t)}]$.
\item[(c)] If $t\geq r^2 \eee$, then $\mu_{r,t} = \mathsf{sc}_{t}$ is the Wigner law supported on the interval $[-2\sqrt {t}, +2\sqrt{t}]$.
\item[(d)] If $t = |t| \eee^{\ii \phi}\neq 0$ is complex, then $\mu_{r, t}$ is the push-forward of $\mu_{r, |t|}$ under the rotation  map $z\mapsto z \eee^{\ii \phi/2}$.
\end{itemize}
\end{proposition}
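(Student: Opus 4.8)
\emph{The plan} is to reduce the heat-evolved polynomial to an explicit combination of a Hermite polynomial and a constant, and then to invoke a potential-theoretic ``balance'' principle for sums of two polynomials.

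\emph{Set-up and reduction to $t>0$.} Since $\partial_z^2 r^n=0$, we have $\eee^{-\frac t{2n}\partial_z^2}(z^n-r^n)=H_n(z)-r^n$, where $H_n(z):=\eee^{-\frac t{2n}\partial_z^2}z^n=(t/n)^{n/2}\He_n(\sqrt{n/t}\,z)$; as $\He_n$ is monic of degree $n$, so are $H_n$ and $H_n-r^n$. Writing $t=|t|\eee^{\ii\phi}$ and using the elementary operator identity $\eee^{-\frac t{2n}\partial_z^2}[P(\eee^{-\ii\phi/2}z)]=(\eee^{-\frac{|t|}{2n}\partial^2}P)(\eee^{-\ii\phi/2}z)$ (from the chain rule, since $\eee^{-\ii\phi}\cdot t=|t|$) with $P(w)=w^n-\rho^n$, one checks that $\eee^{-\frac t{2n}\partial_z^2}(z^n-r^n)$ equals $\eee^{\ii n\phi/2}$ times $\widetilde H_n(\eee^{-\ii\phi/2}z)-\eee^{-\ii n\phi/2}r^n$, where $\widetilde H_n$ is the \emph{real}-time counterpart $\eee^{-\frac{|t|}{2n}\partial^2}w^n$. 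Hence the zero set of the heat-evolved polynomial is $\eee^{\ii\phi/2}$ times the zero set of $\widetilde H_n-c_n$ with $|c_n|=r^n$, so it suffices to study $\lsem\widetilde H_n-c_n\rsem$ for real time $|t|>0$ and constants $c_n\in\C$ with $\frac1n\log|c_n|\to\log r$; pushing forward under $z\mapsto\eee^{\ii\phi/2}z$ then gives the general case, in particular part~(d), and (since only $|c_n|=r^n$ and the real-time potential enter) it explains the appearance of $\log(r/\sqrt{|t|})$ in the stated distributional-Laplacian formula. From now on I fix $t>0$ and set $h:=\tfrac12\log t+\Psi(\cdot/\sqrt t)$, the logarithmic potential of $\mathsf{sc}_t$ by \eqref{eq:wigner_log_potential}.

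\emph{The balance lemma.} The heart of the argument is: if $P_n$ are monic of degree $n$ with uniformly bounded zeros, $\frac1n\log|P_n|\to p$ in $L^1_{\mathrm{loc}}(\C)$ for some non-constant subharmonic $p$ with $\{p=\gamma\}$ Lebesgue-null, and $c_n\in\C$ with $\frac1n\log|c_n|\to\gamma$, then $\lsem P_n-c_n\rsem\to\frac1{2\pi}\Delta\max\{p,\gamma\}$ weakly. I would apply this with $P_n=\widetilde H_n$ and $p=h$, using $\lsem\widetilde H_n\rsem\to\mathsf{sc}_t$ (recalled in the introduction; this yields $\frac1n\log|\widetilde H_n|\to h$ in $L^1_{\mathrm{loc}}$ and locally uniformly on $\C\setminus[-2\sqrt t,2\sqrt t]$) and $\gamma=\log r$. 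To prove the lemma: the zeros of $P_n-c_n$ stay uniformly bounded by Rouch\'e on a large circle where $|P_n|\gg|c_n|$ (as $p(z)=\log|z|+o(1)>\gamma$ there), so weak subsequential limits $\mu$ exist, and by standard continuity of logarithmic potentials under weak convergence of measures with a common compact support, $U_\mu=\lim_k\frac1{n_k}\log|P_{n_k}-c_{n_k}|$ in $L^1_{\mathrm{loc}}$ along that subsequence. On $\{p>\gamma\}$ write $P_n-c_n=P_n(1-c_n/P_n)$; since $|c_n/P_n|=\eee^{n(\frac1n\log|c_n|-\frac1n\log|P_n|)}\to0$ off a set of vanishing area (where $\frac1n\log|P_n|$ drops below $\gamma$), $\frac1n\log|P_n-c_n|\to p$ in measure, hence in $L^1_{\mathrm{loc}}$, hence $U_\mu=p$ a.e. there; symmetrically $U_\mu=\gamma$ a.e. on $\{p<\gamma\}$. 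As $\{p=\gamma\}$ is null, $U_\mu=\max\{p,\gamma\}$ a.e., so $U_\mu=\max\{p,\gamma\}$ everywhere (subharmonic functions agreeing a.e. agree) and $\mu=\frac1{2\pi}\Delta\max\{p,\gamma\}$, independent of the subsequence. Then: $\max\{h,\log r\}=\tfrac12\log t+\max\{\Psi(\cdot/\sqrt t),\log(r/\sqrt t)\}$ and $\Delta$ kills the constant, giving $\mu_{r,t}=\frac1{2\pi}\Delta\max\{\Psi(z/\sqrt t),\log(r/\sqrt t)\}$, i.e.\ the asserted formula. For the support, on $\C\setminus[-2\sqrt t,2\sqrt t]$ both $\Psi(\cdot/\sqrt t)$ and the constant are harmonic, so $\Delta$ of their maximum is carried by the interface $L_{r,t}$; on the interval, where $\Psi(x/\sqrt t)=x^2/(4t)-\tfrac12$, one additionally picks up $\mathsf{sc}_t$ restricted to $\{x:x^2/(4t)-\tfrac12>\log(r/\sqrt t)\}$. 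Since $\Psi$ has range $[-\tfrac12,\tfrac12]$ on $[-2,2]$: if $t\le r^2/\eee$ the interval contributes nothing and $L_{r,t}$ is a closed curve around $[-2\sqrt t,2\sqrt t]$ (part~(a)); if $t\ge r^2\eee$ then $\Psi(z/\sqrt t)\ge\log(r/\sqrt t)$ everywhere and $\mu_{r,t}=\mathsf{sc}_t$ (part~(c)); and for $r^2/\eee<t<r^2\eee$ solving the quadratic gives $L_{r,t}$ together with the intervals $\{x:\sqrt{2t\log(r^2\eee/t)}<|x|<2\sqrt t\}$ (part~(b)). Part~(d) is the rotation push-forward of the first paragraph.

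\textbf{Main obstacle.} I expect the balance lemma to be the only real difficulty, and within it the delicate point is controlling $\frac1n\log|\widetilde H_n-c_n|$ near the oscillatory interval $[-2\sqrt t,2\sqrt t]$, where $\widetilde H_n$ has $\asymp n$ real zeros and $\frac1n\log|\widetilde H_n|$ does \emph{not} converge locally uniformly. The resolution is to argue with $L^1_{\mathrm{loc}}$ (equivalently convergence in measure plus uniform integrability of logarithmic potentials of probability measures with a fixed compact support) rather than pointwise, noting that the ``bad'' region where $\widetilde H_n$ is comparable in size to $c_n$ has area tending to $0$. The only other point needing care is tightness (no escape of zeros), settled by Rouch\'e against a large circle.
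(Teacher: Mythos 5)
Your proposal is correct and follows essentially the same route as the paper: reduce to a monic heat-evolved monomial (a rescaled Hermite polynomial) minus a constant of modulus $r^n$, show that $\frac 1n\log|\cdot|$ converges in $L^1_{\mathrm{loc}}$ to the maximum of the two exponential rates (the paper does this via the locally uniform Plancherel--Rotach asymptotics off $[-2\sqrt t,2\sqrt t]$ together with a local $L^2$ bound giving uniform integrability, which is exactly the content of your balance lemma), and then carry out the identical case analysis of where the maximum switches, with the same rotation trick for complex $t$. The only point you elide is that part (c) requires $\inf_{\C}\Psi=\Psi(0)=-\tfrac12$ and not merely the range of $\Psi$ on $[-2,2]$; this follows from the minimum principle because $\Psi$ is continuous, harmonic off $[-2,2]$, and tends to $+\infty$ at infinity, and the paper verifies it explicitly.
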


\section{Further perspectives}
\label{sec:Further_perspective}

\subsection{A PDE perspective}\label{subsec:PDE_perspective}

\subsubsection{Heuristics}\label{subsub:PDEheuristic}

Let us present a different argument for how the heuristic \eqref{eq:zjApprox} explains the definition of the transport map, that is: $t\mapsto T_t(w)$ can be seen as the characteristic curve of a Hamilton--Jacobi PDE.

Let $P_n(z;t)$ be \textit{any} polynomial (not necessarily a random polynomial) evolving according to the heat flow, as in \eqref{pnzt}.
As a simple consequence of the system of ODE's \eqref{eq:ODE_for_poly}, we can derive PDE's satisfied by the logarithmic potential $U^{(n)}_t$ and the Stieltjes transform $m^{(n)}_t$ of $\llbracket P_n(z;t)\rrbracket $, defined by
$$
U_{t}^{(n)}(z)\coloneqq \frac 1n \sum_{j=1}^n \log |z-z_j(t)|,
\qquad
m_{t}^{(n)}(z)\coloneqq \frac 1n \sum_{j=1}^n \frac 1 {z-z_j(t)}
=
2 \partial_z U_{t}^{(n)}(z).
$$
\begin{theorem}\label{theo:ODE_stieltjes_log_potential}
We have
\begin{align}
\partial_t U_{t}^{(n)}(z) + (\partial_z U_{t}^{(n)}(z))^2
&=
- \frac 1 {2n} \partial_z^2 U_{t}^{(n)}(z)
= \frac 1 {4n^2}\sum_{j=1}^n\frac{1}{(z-z_j(t))^2}, \label{eq:PDE_finite_n}\\
\partial_t m_{t}^{(n)}(z) + \frac12  \partial_z [(m_{t}^{(n)}(z))^2]
&=
- \frac 1 {2n} \partial_z^2  m_{t}^{(n)}(z)
= -\frac 1 {4n^2}\sum_{j=1}^n \frac{1}{(z-z_j(t))^3}, \label{eq:PDE_finite_n_stieltjes}
\end{align}
where $\partial_t$ and $\partial _z$ are the Wirtinger derivatives in \emph{complex} variables $t$ and $z$.
\end{theorem}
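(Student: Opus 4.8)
The plan is to read both identities off the root dynamics \eqref{eq:ODE_for_poly}: I would prove \eqref{eq:PDE_finite_n} directly and then deduce \eqref{eq:PDE_finite_n_stieltjes} from it by differentiating in $z$. Since $P_n(z;t)=\exp\{-\tfrac{t}{2n}\partial_z^2\}P_n(z)$ is a polynomial in $(z,t)$, at any $t_0$ for which the roots of $P_n(\cdot\,;t_0)$ are simple the holomorphic implicit function theorem yields local holomorphic branches $z_1(t),\dots,z_n(t)$ near $t_0$ solving \eqref{eq:ODE_for_poly}; as the set of $t$ producing a multiple root is a proper analytic subset, it suffices to argue on its complement. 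The one point of care is that $U_t^{(n)}$ is not holomorphic: writing $\log|z-z_j(t)|=\tfrac12\log(z-z_j(t))+\tfrac12\log(\overline{z-z_j(t)})$ and using that $z_j(t)$ is holomorphic in the \emph{complex} variable $t$, so that $\overline{z_j(t)}$ is antiholomorphic in $t$, the Wirtinger derivatives see only the holomorphic summand:
\[
\partial_z\log|z-z_j(t)|=\frac12\,\frac{1}{z-z_j(t)},\qquad
\partial_t\log|z-z_j(t)|=-\frac12\,\frac{z_j'(t)}{z-z_j(t)},
\]
where $z_j'(t):=\partial_t z_j(t)$. In particular $m_t^{(n)}=2\partial_z U_t^{(n)}$, and $U_t^{(n)}$ is real-analytic off the collision set, so $\partial_t$ and $\partial_z$ commute there.

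\textbf{Proof of \eqref{eq:PDE_finite_n}.} Combining the previous display with \eqref{eq:ODE_for_poly} (and abbreviating $z_j=z_j(t)$),
\[
\partial_t U_t^{(n)}(z)=-\frac{1}{2n}\sum_{j}\frac{z_j'(t)}{z-z_j}=-\frac{1}{2n^2}\sum_{j\ne k}\frac{1}{(z-z_j)(z_j-z_k)},
\]
while $\bigl(\partial_z U_t^{(n)}(z)\bigr)^2=\tfrac{1}{4n^2}\bigl(\sum_j(z-z_j)^{-2}+\sum_{j\ne k}(z-z_j)^{-1}(z-z_k)^{-1}\bigr)$. The key step is the partial-fraction identity $\tfrac{1}{(z-z_j)(z-z_k)}=\tfrac{1}{z_j-z_k}\bigl(\tfrac1{z-z_j}-\tfrac1{z-z_k}\bigr)$ together with the symmetry of the summand under $j\leftrightarrow k$, which yields $\sum_{j\ne k}(z-z_j)^{-1}(z-z_k)^{-1}=2\sum_{j\ne k}(z-z_j)^{-1}(z_j-z_k)^{-1}$. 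Substituting this and adding the two expressions, the double sums cancel exactly and one is left with $\partial_t U_t^{(n)}+(\partial_z U_t^{(n)})^2=\tfrac{1}{4n^2}\sum_j(z-z_j)^{-2}$; differentiating $U_t^{(n)}$ twice in $z$ shows this equals $-\tfrac1{2n}\partial_z^2U_t^{(n)}$, which is \eqref{eq:PDE_finite_n}.

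\textbf{Proof of \eqref{eq:PDE_finite_n_stieltjes}.} Apply $2\partial_z$ to \eqref{eq:PDE_finite_n}. Using $m_t^{(n)}=2\partial_z U_t^{(n)}$ and the commutation of $\partial_z$ and $\partial_t$: the term $2\partial_t\partial_z U_t^{(n)}$ becomes $\partial_t m_t^{(n)}$; the term $2\partial_z[(\partial_z U_t^{(n)})^2]=4(\partial_z U_t^{(n)})(\partial_z^2 U_t^{(n)})$ becomes $m_t^{(n)}\partial_z m_t^{(n)}=\tfrac12\partial_z[(m_t^{(n)})^2]$; and the right-hand side $-\tfrac1{2n}\partial_z^2 U_t^{(n)}$ becomes $-\tfrac1{2n}\partial_z^2 m_t^{(n)}$. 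Evaluating this last expression on $m_t^{(n)}=\tfrac1n\sum_j(z-z_j)^{-1}$ gives the claimed sum over $(z-z_j)^{-3}$. Equivalently, one may simply repeat the computation of the previous paragraph verbatim with $1/(z-z_j)$ in place of $\log|z-z_j|$, which produces \eqref{eq:PDE_finite_n_stieltjes} directly.

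\textbf{Where the work is.} There is essentially no analytic obstacle: the whole content is the combinatorial rearrangement of the double sum through the partial-fraction plus $j\leftrightarrow k$-symmetry trick, which is precisely the mechanism by which the nonlinear term $(\partial_z U_t^{(n)})^2$ recombines with the transport term $\partial_t U_t^{(n)}$ to leave only the ``viscosity'' term $-\tfrac1{2n}\partial_z^2 U_t^{(n)}$. The two things deserving a line of justification are the local holomorphic parametrization of the roots — needed to differentiate term by term and to invoke \eqref{eq:ODE_for_poly} — and the Wirtinger bookkeeping noted at the outset, namely that the antiholomorphic part of $\log|z-z_j(t)|$ contributes nothing to either $\partial_z$ or $\partial_t$.
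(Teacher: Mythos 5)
Your proof is correct. It takes a slightly different route from the paper's: the paper never invokes the root ODE \eqref{eq:ODE_for_poly} for this computation, but instead differentiates $\log|P_n(z;t)|$ directly using the backward heat equation $\partial_t P_n=-\tfrac{1}{2n}\partial_z^2P_n$, so that $\partial_t U^{(n)}_t=-\tfrac{1}{4n^2}\,\partial_z^2P_n/P_n=-\tfrac{1}{4n^2}\sum_{j\neq k}(z-z_j)^{-1}(z-z_k)^{-1}$ appears immediately as the off-diagonal double sum, and the cancellation against the full double sum in $(\partial_z U^{(n)}_t)^2$ is then one line, with no partial-fraction manipulation needed. You instead route through the root dynamics, which forces the extra step of converting $\sum_{j\neq k}(z-z_j)^{-1}(z_j-z_k)^{-1}$ back into the off-diagonal double sum via the partial-fraction identity and the $j\leftrightarrow k$ symmetrization; your identity and the resulting cancellation check out. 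The paper's version is marginally cleaner and, for the $U$-equation itself, does not require the roots to be simple (only $z\neq z_j(t)$), whereas your term-by-term differentiation genuinely needs the local holomorphic branches and hence the simplicity of the roots — your remark that the collision set is a proper analytic subset of $t$-space covers this. Your Wirtinger bookkeeping (the antiholomorphic part of $\log|z-z_j(t)|$ contributing nothing to $\partial_z$ or $\partial_t$) matches the paper's ``complex chain rule with zero antiholomorphic part,'' and your derivation of \eqref{eq:PDE_finite_n_stieltjes} by applying $2\partial_z$ is exactly the paper's.
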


Passing formally to the $n\to\infty$ limit (cf.~\cite{hallho}), we arrive at the following PDE's for the logarithmic potential $U_t(z)$ and the Stieltjes transform  $m_t(z)$ of the measure $\nu_t$:
\begin{align}\label{eq:PDE_formal}
\partial_t U_t(z)
&=
-\left(\partial_z U_t(z)\right)^2,
\\
\partial_t m_t(z)
&=
- \frac 12\partial_z (m_t^2(z)) = - m_t(z)\partial_z m_t(z).
\label{eq:PDE_for_stieltjes_burgers_formal}
\end{align}
Let us stress that the time $t=\Re t  + \ii \Im t$ is considered as a \textit{complex} variable and that
$$
\partial_t  = \frac 12 \left(\frac{\partial}{\partial \Re t} - \ii \frac{\partial}{\partial \Im t}\right),
\qquad
\partial_{\bar{t}}  = \frac 12 \left(\frac{\partial}{\partial \Re t} + \ii \frac{\partial}{\partial \Im t}\right)
$$
are the Wirtinger derivatives in $t$.
Writing $z= x+ \ii y$ and taking the real and imaginary parts of the first  PDE gives
$$
\frac{\partial}{\partial \Re t} U_t(z) = -\frac 12  \left(\frac{\partial}{\partial x} U_t(z)\right)^2 +  \frac 12\left( \frac{\partial}{\partial y} U_t(z)\right)^2,
\qquad
\frac{\partial}{\partial \Im t} U_t(z) = -  \frac{\partial}{\partial x} U_t(z) \cdot \frac{\partial}{\partial y} U_t(z).
$$
If we are interested in real-valued times $t\in \R$ only, we obtain the Hamilton--Jacobi PDE
\begin{align}\label{eq:Re(PDE)}
\frac{\partial}{\partial t} U_t(z) = -\frac 12  \left(\frac{\partial}{\partial x} U_t(z)\right)^2 + \frac 12 \left(\frac{\partial}{\partial y} U_t(z)\right)^2,
\end{align}
matching the reasoning in \cite{hallho}.
%

We denote the corresponding Hamiltonian by
\begin{equation}
H(x,y,p_{x},p_{y})=\frac{1}{2}p_{x}^{2}-\frac{1}{2}p_{y}^{2}%
\label{theHamiltonian}%
\end{equation}
and analyze this PDE by the method of characteristics, see \cite{evans_book_PDE, DHK22}. Hamilton's equations are given by %
\begin{align}
\frac{\partial x}{\partial t} &  =\frac{\partial H}{\partial p_{x}}=p_{x};\quad\frac{\partial y}%
{\partial t}=\frac{\partial H}{\partial p_{y}}=-p_{y};\nonumber\\
\frac{\partial p_{x}}{\partial t} &  =-\frac{\partial H}{\partial x}=0;\quad\frac{\partial p_{y}}%
{\partial t}=-\frac{\partial H}{\partial y}=0.\label{eq:Hamilton}%
\end{align}
Then $p_{x}$ and $p_{y}$ are constants of motion and
\begin{equation}
x(t)=x(0)+tp_{x}(0);\quad y(t)=y(0)-tp_{y}(0).\label{xtyt}%
\end{equation}
We consider specifically solutions of the system \eqref{eq:Hamilton} in which the
initial momenta are chosen as $p_{x}(0)=\frac{\partial}{\partial x}U_{0}$ and $ p_{y}(0)=\frac{\partial
}{\partial y}U_{0}$. Under this assumption, the second Hamilton--Jacobi equation tells us that, as
long as the solution remains smooth, we have %
\[
\frac{\partial U_{t}}{\partial x}(z(t))=p_{x}(t);\quad\frac{\partial
U_{t}}{\partial y}(z(t))=p_{y}(t).
\]
where we denoted $z(t)=x(t)+iy(t)$. Thus,%
\begin{equation}
2\partial_z U_{t}(z(t))=p_{x}(t)-ip_{y}(t)=\frac{\partial}{\partial t}z(t).
\label{secondHJ}%
\end{equation}

Meanwhile, by \eqref{eq:ODE_for_poly}, we expect that the zeros of $P_{n}(z,t)$ should
move approximately along solutions of the ODE%
\begin{equation}
\frac{\partial }{\partial t}z(t)=\int_{\mathbb{C}}\frac{1}{z(t)-w}~\nu_{t}(\dint w)=2
\partial_zU_{t}(z)\vert _{z=z(t)}. \label{zeroODE}%
\end{equation}
But (\ref{secondHJ}) says precisely that the characteristic curves
$z(t)$ solve this equation. Thus, using (\ref{xtyt}), we
find that the solutions to (\ref{zeroODE}) are given by
\[
z(t)=z(0)+2t\partial_zU_{t}(z)\vert _{z=z(t)}=T_t(z(0)),
\]
which agrees with the curves in idea \eqref{eq:zjApprox} and Definition \ref{def:transport_map} of the transport map.

\subsubsection{Rigorous results}

The remainder on the right-hand side of \eqref{eq:PDE_finite_n} is hard to control directly. Thus, at a rigorous level, we will take a different approach, using Theorem \ref{theo:main_general_g} and the notion of regular points from Definition \ref{def:regular_points}, to prove the following PDE results. A key idea is that the function $f_t(\alpha,z)$ appearing in Theorem \ref{theo:main_general_g} satisfies the PDE \eqref{eq:PDE_formal} for each fixed value of $\alpha$, which may be verified by direct calculation.

\begin{theorem}\label{theo:PDE}
Suppose that    \hyperref[cond:A1]{(A1)} and    \hyperref[cond:A2]{(A2)} hold and let $ \mathcal D$ be a domain of regular points. Then, 
\begin{enumerate}[(a)]
\item
The logarithmic potential $U_{t}$ of the probability measure  $\nu_{t}$  satisfies the PDE
\begin{align}\label{eq:PDE}
\partial_t U_t(z)=-\left(\partial_z U_t(z)\right)^2, \qquad (z,t)\in \mathcal D.
\end{align}
\item
The Stieltjes transform $m_t(z)$ satisfies the (complex) inviscid Burgers PDE's
\begin{align}
\partial_t m_t(z)
&=
- \frac 12\partial_z (m_t^2(z)) = - m_t(z)\partial_z m_t(z), \qquad (z,t)\in \mathcal D, \label{eq:PDE_for_stieltjes_burgers}
\\
\partial_{\overline{t}}  m_t(z)
&=
- \frac 12  \partial_z [\overline{m_{t}^2(z)}]
=
- \overline{m_t(z)} \partial_z \overline{m_t(z)},
\qquad (z,t)\in \mathcal D.
\label{eq:PDE_for_stieltjes_burgers_conjugate}
\end{align}

\item The density $p_t$ of $\nu_t$ satisfies the continuity equation
\begin{align}\label{eq:cont_eqn}
\partial_t p_t(z) = -2\partial_z [(\partial_z U_t(z))p_t(z)],\qquad (z,t)\in \mathcal D.
\end{align}

\item If    \hyperref[cond:A1]{(A1)}--   \hyperref[cond:A3]{(A3)} hold, then $U_t$ is a ``global'' $\mathcal C^1$-solution to
\begin{align}\label{eq:PDE_global}
\partial_t U_t(z)=-\left(\partial_z U_t(z)\right)^2\quad \text{ for }(z,t)\in\C\times B_{t_{\mathrm{sing}}}(0).
\end{align}
\end{enumerate}
\end{theorem}

Recall that Assumptions  \hyperref[cond:A1]{(A1)} and    \hyperref[cond:A2]{(A2)} have been defined in Section \ref{subsec:main_sec} and Assumption    \hyperref[cond:A3]{(A3)} has been added in Section \ref{subsec:transport_map}. The proofs will be given in Section \ref{subsec:proof_PDE}.

Solutions to the inviscid Burgers' equation \eqref{eq:PDE_for_stieltjes_burgers} are known to develop so-called shock waves (discontinuities), which correspond to singularities in $\nu_t$ for $t\geq t_{\mathrm{sing}}$, see Remark \ref{rem:pushforward_Burgers}. 
Interestingly, the complex Burgers' equation~\eqref{eq:PDE_for_stieltjes_burgers} with initial datum being a Stieltjes transform has recently been observed as the limiting equation that describes the growth of multiple SLE's, see \cite{del_monaco, hotta_katori,hotta_schleissinger}.

\begin{remark}
Let us again suppose that the time $t$ is real and write down the PDE's satisfied by $m_t(z)$. Let $x = \Re z$, $y= \Im z$ and $u_t(z) = \Re m_t(z)$, $v_t(z) = \Im m_t(z)$.  First of all, we claim that $m_t(z)$ satisfies the second Cauchy--Riemann equation, or, which is the same, that the vector field $(u_t(z), -v_t(z))$ is rotation-free:
\begin{equation}\label{eq:m_second_cauchy_riemann}
\frac{\partial u_t}{\partial y}   +  \frac{\partial v_t}{\partial x} = 0.
\end{equation}
Indeed, since $m_t(z)  = 2\partial_z U_t = (\partial/\partial x) U_t - \ii (\partial/\partial y)U_t$ and $U_t$ is real-valued, it follows that $u_t = (\partial/\partial x) U_t$ and $v_t=  - (\partial/\partial y)U_t$, which yields~\eqref{eq:m_second_cauchy_riemann}. Taking the sum of~\eqref{eq:PDE_for_stieltjes_burgers} and~\eqref{eq:PDE_for_stieltjes_burgers_conjugate}, and using the identity $\partial_{t} + \partial_{\overline{t}}  = (\partial/\partial t)$, we obtain
$$
\frac{\partial}{\partial t} m_t(z)
=
- \frac 12  \partial_z [m_{t}^2(z)]
- \frac 12  \partial_z [\overline{m_{t}^2(z)}]
=
-\partial_z \Re [m_{t}^2(z)].
$$
Expanding the right-hand side and using~\eqref{eq:m_second_cauchy_riemann} gives
\begin{align}
\frac{\partial u_t}{\partial t}
&=
- u_t \frac{\partial u_t}{\partial x} + v_t \frac{\partial v_t}{\partial x}
=
- u_t \frac{\partial u_t}{\partial x} - v_t \frac{\partial u_t}{\partial y}
 ,\label{eq:PDE_m_t_Re}\\
\frac{\partial v_t}{\partial t}
&=
+ u_t \frac{\partial u_t}{\partial y} - v_t \frac{\partial v_t}{\partial y}
=
- u_t \frac{\partial v_t}{\partial x} - v_t \frac{\partial v_t}{\partial y}.\label{eq:PDE_m_t_Im}
\end{align}
These equations are the usual Burgers' equations in dimension $2$. In order to derive the classical Hopf--Lax solution of the  Burgers' equation, see~\cite{burgers_book,vergassola,she_aurell_frisch}, one  assumes that $(u_t, v_t)$ is a gradient of some potential. This is not true in our case: instead, $(u_t, -v_t)$ is the gradient of $U_t$.

\end{remark}

\begin{remark}[Relation to the Hopf--Lax solution]\label{rem:Hopf-Lax}
Recall that \eqref{eq:Ut(Ttw)} states
\begin{align*}
U_{t}(T_t(w))&=U_{0}(w)+\frac{\alpha_0(w)^2}{2}\Re\left(\frac{t}{w^2}\right)\\
&=U_0(w) + \frac{\Re(t)} 2 \left( \left(\frac{\partial}{\partial x} U_0(w)\right)^2 - \left(\frac{\partial}{\partial y}U_0(w)\right)^2 \right)  ,
\end{align*} which can be interpreted as the first Hamilton--Jacobi formula for solutions of Hamilton--Jacobi equations with quadratic Hamiltonian, see \eqref{theHamiltonian}.
Viscosity solutions of Hamilton--Jacobi equations with convex Hamiltonians are given by the Hopf--Lax formula, see \cite[10.3.4]{evans_book_PDE}, or by the Hopf formula for convex initial value, see \cite[\S~3.3]{Tran}. In our setting, the real part of \eqref{eq:PDE}, that is \eqref{eq:Re(PDE)}, belongs to the non-convex Hamiltonian $H$ and $U_0$ is non-convex due to its logarithmic asymptotic, hence the theory does not apply. Formally however, the Hopf--Lax formula in the coordinates $z=T_t(w)$ would give $U_t(T_t(w))=\inf_{\xi\in\C}(U_0(\xi)+\frac 1 {2t}\Re(\xi-T_t(w))^2)$, where the infimum does not exist due to non-convexity. Interestingly, its only saddle point is $\xi=w$, hence \eqref{eq:Ut(Ttw)} can be seen as the analogue of the Hopf--Lax formula. To the best of our knowledge, even the theory for non-convex Hamiltonians does not apply, since the Hamiltonian does not diverge to $+\infty$ (which would be necessary for optimal control solutions), our initial data $U_0$ is neither bounded nor has bounded gradient and the result \cite{Evans14} would only hold almost everywhere. Viscosity solutions to the PDE \eqref{eq:PDE} lie out of the scope of this paper and will be discussed in a forthcoming work.
\end{remark}

\subsection{An optimal transport perspective}\label{subsec:OT}

Let us define the \textit{logarithmic energy} of a collection of $n$ distinct
points $z_1,\ldots,z_n$ in $\mathbb{C}$ as
\[
E(z_{1},\ldots,z_{n})=\frac{1}{n^{2}}\sum_{k,l:~k\neq l}\log\left\vert
z_{k}-z_{l}\right\vert .
\]
Then an elementary calculation shows that the equation of motion \eqref{eq:ODE_for_poly} for
the roots of a polynomial under the heat flow can be rewritten as
\[
\partial_{t}z_{j}(t)=\frac{1}{n}(\partial_{z_{j}}E)(z_{1}(t),\ldots,z_{n}(t)).
\]
That is, the heat evolution of the roots is a sort of gradient flow with
potential energy given by the logarithmic energy and the Wirtinger derivatives
$\partial_{z_{j}}$ playing the role of the gradient. 

This calculation suggests that the evolution of the measures $\nu_{t}$ could
be seen as a sort of gradient flow on the Wasserstein space with respect to
the continuous version of the logarithmic energy $\iint\log\left\vert
z-y\right\vert ~\nu(dy)~\nu(dz).$ (This energy function is also known as the
free entropy \cite{Voiculescu93} in the context of free probability theory.)
A classical result of Jordan-Kinderlehrer-Otto \cite{JKO} states that the heat flow on the set of absolutely
continuous measures can be interpreted as the Wasserstein gradient flow with respect to the relative
entropy functional, see also \cite{Otto,erbar,Villani}. After all, such gradient flows are given by solutions to the continuity equation---the classical counterpart to what we have shown in \eqref{eq:cont_eqn}.

The preceding discussion leads naturally to questions about $t\mapsto\nu_{t}$
seen as a curve of distributions in the Wasserstein space, which we
investigate in the remainder of this section. 

It is well known in optimal transport that push-forwards under a map $T_t=(1-t)\mathrm{Id}+tT$ are most natural candidates for optimal transport maps between their corresponding marginals. In the following, we shall study $\nu_{t}=(T_t)_{\#}\nu_0$, $t\in[0,1]$, as a curve of distributions in the Wasserstein space. Let us recall some notions of optimal transport, see  \cite{AmbrosioGigli,Villani} for more information.

The Wasserstein distance $W_2$ between two compactly supported distributions $\mu$ and $\nu$ on $\C$ is defined via the optimal transport problem
\begin{align}\label{eq:Wasserstein}
 W^2_2(\mu,\nu)=\inf_{q\in\Pi}\int_{\C^2} |z-w|^2 q(\dint z, \dint w),
\end{align}
where $\Pi$ is the set of all couplings (or, transport plans) between $\mu$ and $\nu$.
If there exists an optimal coupling achieving the infimum in \eqref{eq:Wasserstein} and it is of the form $q_T=(\mathrm{Id},T)_\#\mu$, then we call $T$ an \emph{optimal transport map}. 
Existence and uniqueness follow from Brenier's Theorem \cite[Theorem 1.26]{AmbrosioGigli}.
\begin{theorem}[Brenier]\label{thm:Brenier}
Let $\mu,\nu$ be distributions on $\R^d$ with finite variance and let $\mu$ be regular, i.e. it does not charge hypersurfaces. Then, there exists a unique optimal transport map $T$ between $\mu$ and $\nu$, and $T$ is the gradient of a convex function $\Phi$.
\end{theorem}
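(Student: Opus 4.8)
The plan is to follow the classical route through Kantorovich duality and Rockafellar's characterization of cyclically monotone sets; in the present paper the statement is quoted verbatim from \cite[Theorem~1.26]{AmbrosioGigli}, so strictly speaking one only needs to cite it, but here is the outline of what lies behind it. First I would pass from the Monge problem to the relaxed Kantorovich problem of minimizing $\int_{\R^d\times\R^d}|x-y|^2\,q(\dint x,\dint y)$ over all couplings $q\in\Pi$ of $\mu$ and $\nu$. Since $\mu,\nu$ have finite variance, $\Pi$ is nonempty (it contains $\mu\otimes\nu$) and tight, hence weakly compact by Prokhorov's theorem; the cost is lower semicontinuous and bounded below, so a minimizer $q_*$ exists. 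Expanding $|x-y|^2=|x|^2-2\langle x,y\rangle+|y|^2$ and using that the marginals are fixed, minimizing the cost is equivalent to maximizing $\int\langle x,y\rangle\,q(\dint x,\dint y)$.

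Next I would use the structural fact that the support of an optimal plan for a continuous cost is $c$-cyclically monotone; for the quadratic cost this means $\supp(q_*)$ is a cyclically monotone subset of $\R^d\times\R^d$ in the convex-analytic sense. By Rockafellar's theorem, such a set is contained in the graph of the subdifferential $\partial\Phi$ of a proper lower semicontinuous convex function $\Phi\colon\R^d\to\R\cup\{+\infty\}$, so $q_*$ is concentrated on $\{(x,y):y\in\partial\Phi(x)\}$. The measure-theoretic heart of the argument is then to upgrade $\partial\Phi$ to a map: a finite convex function is differentiable outside a Borel set that is countably $(d-1)$-rectifiable (hence a countable union of hypersurfaces), and $\partial\Phi$ is single-valued precisely at points of differentiability. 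Since by hypothesis $\mu$ does not charge hypersurfaces, it assigns zero mass to that set, so $y=\nabla\Phi(x)$ for $q_*$-a.e.\ $(x,y)$; therefore $q_*=(\mathrm{Id},\nabla\Phi)_\#\mu$ and $T:=\nabla\Phi$ is an optimal transport map, by construction a gradient of a convex function.

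For uniqueness I would observe that if $T_1,T_2$ are both optimal, then $\tfrac12(q_{T_1}+q_{T_2})$ is again an optimal plan, hence concentrated on the graph of $\partial\tilde\Phi$ for a single convex $\tilde\Phi$; single-valuedness $\mu$-a.e.\ then forces $T_1=T_2$ $\mu$-a.e. The main obstacle in this program is exactly the combination I flagged above: the passage from "optimal plan" to "cyclically monotone support'', together with Rockafellar's theorem producing the convex potential, and then the regularity step guaranteeing $\mu$-a.e.\ differentiability of that potential. None of this is reproved in the paper; the theorem is invoked as a black box from \cite[Theorem~1.26]{AmbrosioGigli}, with the context of $\C=\R^2$ in mind, and that is what I would do as well, the sketch above indicating the provenance.
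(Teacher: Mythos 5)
The paper does not prove this statement; it is quoted as a black box from \cite[Theorem~1.26]{AmbrosioGigli}, which is exactly what you propose to do, and your accompanying sketch of the standard Kantorovich--Rockafellar argument behind it is accurate. So your proposal is correct and takes essentially the same approach as the paper.
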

A curve of distributions $(\mu_t)_{t\in[0,1]}$ is called (constant speed) Wasserstein geodesic if $W_2(\mu_s,\mu_t)=(t-s)W_2(\mu_0,\mu_1)$ for all $s,t\in [0,1]$. In fact, if $T$ is an optimal transport map between $\mu_0$ and $\mu_1$, then $\mu_t=((1-t)\mathrm{Id}+tT)_\# \mu_0$, $t\in[0,1]$, is a Wasserstein geodesic, see \cite[Theorem 2.10]{AmbrosioGigli} and Remark 2.13 therein. The following theorem shows that the heat flow gives rise to a transport map $T_t$ that transports the limiting root distribution of random polynomials to the semicircle law in an optimal way if and only if we started with Weyl polynomials (or those having the same root distribution).


\begin{theorem}\label{theo:OT}
Let $t\in\C$ and $\nu_t$ be given as in Theorem \ref{theo:pushforward}. The transport map $T_t$ is optimal between $\nu_0$ and $\nu_t$ if and only if $\nu_0$ is the uniform measure on some centered disk, in which case the curve $(\nu_{t})_{t\in[0,1]}$ (in the space of probability measures) is a Wasserstein geodesic.
\end{theorem}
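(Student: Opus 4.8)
The plan is to reduce everything to real $t$ and then treat the two implications via Brenier's theorem (Theorem~\ref{thm:Brenier}). Since $T_0=\mathrm{Id}$ is optimal for any $\nu_0$, the statement is really about $t\neq 0$, and $\nu_t=(T_t)_\#\nu_0$ holds only for $|t|<t_{\mathrm{sing}}$ (Theorem~\ref{theo:pushforward}), so we work in that range. For the ``if'' direction, suppose $\nu_0=\unif(\bD)$, so $t_{\mathrm{sing}}=t_{\mathrm{Wig}}=1$ and $t\in(0,1)$. On the support $\bD$ one has $T_t(x+\ii y)=(1+t)x+\ii(1-t)y=\nabla\Phi_t$ with $\Phi_t(x,y)=\tfrac{1+t}{2}x^2+\tfrac{1-t}{2}y^2$, which is convex because $0\le t\le 1$; since $\nu_0$ is absolutely continuous it charges no hypersurface, so Brenier's theorem identifies $T_t$ as the unique optimal transport map between $\nu_0$ and $(T_t)_\#\nu_0=\nu_t$. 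For the geodesic claim I would write $\Phi_t=(1-t)\tfrac{|w|^2}{2}+t\,\Phi_1$ with $\Phi_1(x,y)=x^2$, note that $T_1=\nabla\Phi_1\colon w\mapsto 2\Re w$ pushes $\nu_0$ forward to $\nu_1=\mathsf{sc}_1$, and conclude that $\nu_t=\big((1-t)\mathrm{Id}+tT_1\big)_\#\nu_0$ is exactly the McCann interpolation between $\nu_0$ and $\nu_1$, hence a constant-speed $W_2$-geodesic by the geodesic form of Brenier's theorem \cite[Theorem~2.10]{AmbrosioGigli}.

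For the ``only if'' direction, assume (A1)--(A3) and that $T_t$ is the optimal transport map between $\nu_0$ and $\nu_t$ for some $t\in(0,t_{\mathrm{sing}})$. Writing $w=x+\ii y$, $s=|w|$, $a(s):=\nu_0(B_s(0))=\alpha_0(w)$, one has $m_0(w)=a(s)\bar w/s^2$ and hence $T_t(w)=(1+\phi(s))x+\ii(1-\phi(s))y$ with $\phi(s):=t\,a(s)/s^2$. By Brenier, $T_t$ agrees $\nu_0$-a.e.\ on $\mathcal R_0$ with $\nabla\Phi$ for a convex $\Phi$; since $\nu_0$ has positive density on the connected open set $\mathcal R_0$ and $T_t$ is smooth there (by (A3)), continuity of both $T_t$ and of $\nabla\Phi$ (where defined) upgrades this to $T_t=\nabla\Phi$ on all of $\mathcal R_0$ with $\Phi\in C^2(\mathcal R_0)$, so the Jacobian $DT_t$ is symmetric on $\mathcal R_0$. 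The off-diagonal entries of $DT_t$ are $\partial_y\big[(1+\phi(s))x\big]=\phi'(s)\,xy/s$ and $\partial_x\big[(1-\phi(s))y\big]=-\phi'(s)\,xy/s$, so symmetry at every point of $\mathcal R_0$ — which contains points with $xy\neq 0$ at every radius — forces $\phi'\equiv 0$, i.e.\ $a(s)=\kappa s^2$ for a constant $\kappa>0$ on the radius-interval of $\mathcal R_0$. Letting $s$ decrease to the inner radius $\eee^{-g'(0)}$ and using $a\big(\eee^{-g'(0)}\big)=0$ forces $\eee^{-g'(0)}=0$, while $a\big(\eee^{-g'(1)}\big)=1$ gives $\kappa=\eee^{2g'(1)}$; hence $\nu_0(B_s(0))=\big(s\,\eee^{g'(1)}\big)^2$, i.e.\ $\nu_0$ is the uniform measure on the centered disk $B_{\eee^{-g'(1)}}(0)$ — which, after the rescaling of $\C$ normalizing this disk to $\bD$ (the Weyl case, for which $t_{\mathrm{sing}}=1$), is $\nu_0=\unif(\bD)$. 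Combining the two directions gives the equivalence.

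For complex $t=|t|\eee^{\ii\phi}$ I would invoke Remark~\ref{rem:push_forward_rotated}: $T_t=\eee^{\ii\phi/2}T_{|t|}$ and $\nu_t=(R_{\phi/2})_\#\nu_{|t|}$ with $R_\theta(w)=\eee^{\ii\theta}w$, so $DT_t=R_{\phi/2}\,DT_{|t|}$; multiplying the symmetric Jacobian of $T_{|t|}$ (in particular $\mathrm{diag}(1+|t|,1-|t|)$ on a disk) by the nontrivial rotation $R_{\phi/2}$ destroys symmetry unless $\phi\in 2\pi\Z$, so by Brenier $T_t$ can be optimal only for $t>0$, and the claim reduces to the real case already treated. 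The step I expect to be the genuine obstacle is the passage, in the converse, from ``$T_t$ is an optimal transport map'' (an almost-everywhere statement) to ``$DT_t$ is symmetric on $\mathcal R_0$'': this rests on the uniqueness in Brenier's theorem (so $T_t=\nabla\Phi$ really holds $\nu_0$-a.e.), on $\nu_0$ having full support on the connected open set $\mathcal R_0$, and on the smoothness of $T_t$ there from (A3), after which continuity of $T_t$ and of $\nabla\Phi$ forces $\Phi\in C^2(\mathcal R_0)$. Everything else — the Jacobian computation, the extraction of $\nu_0$, and the geodesic structure — is then routine given Brenier's and McCann's theorems.
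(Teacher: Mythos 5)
Your proof is correct and follows essentially the same route as the paper's: Brenier's theorem combined with convexity of $\Phi_t(x,y)=\tfrac{1+t}{2}x^2+\tfrac{1-t}{2}y^2$ for the ``if'' direction, and symmetry of the mixed partials of the Brenier potential forcing the radial profile $\alpha_0(s)/s^2$ to be constant for the ``only if'' direction, with the geodesic claim reduced to the standard McCann-interpolation fact already quoted before Theorem~\ref{theo:OT}. Your treatment is in places slightly more careful than the paper's (upgrading the $\nu_0$-a.e.\ identity $T_t=\nabla\Phi$ to an identity on all of $\mathcal R_0$ via continuity, rather than invoking Alexandrov's a.e.\ second differentiability, and addressing non-real $t$ explicitly), but the underlying argument is the same.
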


The proofs will be given in Section \ref{subsec:proof_OT}.

\begin{remark}
It would be interesting to see if replacing the cost functions $c(z,w)=|z-w|^2$ in the optimal transport problem could lead to an interpretation of  $\nu_{t}$ (with general isotropic $\nu_0$) as a geodesic  with respect to a different optimization problem.
In view of the negative sign of $\partial_z$ in imaginary direction, one might be inclined to change the sign of the metric, or cost function, in the imaginary direction as well. The resulting cost function would be the Lorentz (or Minkowski) metric $\Re\big((z-w)^2\big)=\Re(z-w)^2-\Im(z-w)^2$.
This suggestion is in accordance with the fact that the Brenier map can be expressed as $T(w)=\nabla\Phi(w)=\nabla\big(\tfrac 1 2|w|^2-\phi(w)\big)$ for some $c$-concave Kantorovich potential $\phi$, while our transport map analogously reads $T_1(w)=2\partial_w\big(\tfrac 1 2\Re(w^2)+U_0(w)\big)$.
Then, our PDE \eqref{eq:PDE_formal} for the logarithmic potential is a Lorentzian version of the geodesic PDE for the Kantorovich potential, see \cite{lott,OttoVillani,Villani}.

However, in all our examples the Lorentz cost would only decrease if we replace the map $w\mapsto T_t(w)$ by its reflected version  $w\mapsto \overline{T_t(w)}$. Therefore, $T_t$ can only be optimal with respect to the Lorentz cost function if the target measure is concentrated on the real line (in which case, the optimal transport plans are given by the optimal transport plans of the Wasserstein space with transport costs differing only by a constant).\footnote{Even though optimality fails, it is still possible to follow the idea of duality (see \cite[\S 1.6.4]{Santa}), with the only difference of replacing the infimum in the definition of the $c$-conjugate \cite[Definition 1.10]{Santa} with a saddle point. We omit the details.}
\end{remark}

\section{A free probability interpretation of the heat flow}
\label{sec:free_prob}

In this section, we take $t>0$ for simplicity. In light of Theorem
\ref{theo:main_weyl}(iii) there is no real loss of generality in making this assumption.

\subsection{Background}

We continue to let $\nu_{0}$ denote the limiting root distribution of the
random polynomials $P_{n}$ and to let $\nu_{t}$ denote the limiting root
distribution of the heat-evolved polynomials $P_{n}(z,t).$ In this section, we
will give a free probability interpretation of $\nu_{t}$ as the Brown measure
of a certain element in a tracial von Neumann algebra, assuming that $\nu_{0}$
has a certain form.

Let us begin with an overview of recent developments on the evolution of zeros
of polynomials under the action of differential operators---and the free
probability interpretation of such results. (The history of this research
direction can be traced back to at least the Gauss--Lucas Theorem
\cite[Theorem (6,1)]{Marden}, stating that the roots of the derivative of any
polynomial are in the convex hull of the roots of the original polynomial.)
Let us now take a random polynomial $Q_{n}(z)=\prod(z-z_{j})$ with
i.i.d.~roots $z_{j}\sim\mu_{0}.$ In that case, the empirical measure of the
roots converges to $\mu_{0}$. It was shown in~\cite{K15} that also the
empirical distribution of roots of $Q_{n}^{\prime}$ converges to $\mu_{0}$.
This result was extended to higher derivatives of fixed order in
\cite{Byun,MV23} and negligibly growing order in \cite{MV22}, see also
\cite{Sub12,PR13,O16,OW19,Angst}.

The limiting distribution, however, changes in an interesting way when the
order of differentiation is comparable to the degree of the polynomial. For
every $n\in\mathbb{N}$ consider a \emph{real-rooted} polynomial $Q_{n}(z)$ of
degree $n$ such that $\llbracket Q_{n}\rrbracket\Rightarrow\mu_{0}$ for some
probability measure $\mu_{0}$ on $\mathbb{R}$ and such that all zeros of all
$Q_{n}$'s are in some fixed interval $[-C,C]$. It was shown
in~\cite{Steiner21,HK21,arizmendi_garza_vargas_perales} that for the
differentiation of order $k\sim tn$ with $0<t<1$ it holds that
$\llbracket\partial_{z}^{k}Q_{n}\rrbracket$ converges to $\mu_{0}%
^{\boxplus\frac{1}{1-t}}(\frac{\cdot}{1-t})$ in probability. Here,
$\mu\boxplus\nu$ denotes the free (additive) convolution of the probability
distributions $\mu$ and $\nu$; see~\cite{voiculescu_nica_dykema_book}. We then
write $\mu^{\boxplus\ell}=\mu\boxplus\dots\boxplus\mu$ for the free
convolution of $\ell\in\mathbb{N}$ identical distributions, which is known to
have an extension to arbitrary $\ell\in\mathbb{R}$ with $\ell\geq1.$

Repeated differentiation of order $k\sim tn$ of random polynomials $P_{n}$
satisfying \hyperref[cond:A1]{(A1)} and \hyperref[cond:A2]{(A2)}---which, of
course, have complex roots---has been studied in several papers
\cite{FengYao19,OSteiner,HK21,diff-paper}. Recently, a free probability
interpretation of these results has been given by Campbell, O'Rourke, and
Renfrew \cite{COR23}. Specifically, let the limiting root distribution of the
``squared'' polynomial $z\mapsto P_{n}(z^{2})$ be $\mu_{0}.$ Then the limiting
root distribution $\mu_{t}$ of the squared polynomial $(\partial_z^{tn}P_{n})(z^{2})$ is equal 
to $\mu_{0}^{\oplus\frac{1}{1-t}}( \frac{\cdot}{1-t}) ,$
where $\oplus$ refers to the (fractional) free convolution of radial measures
on $\mathbb{C},$ which comes from the operation of adding freely independent
$R$-diagonal elements. (For repeated differentiation of polynomials with
complex roots restricted to a fixed finite set of points we refer
to~\cite{bogvad_etal}.)

Let us now turn to the action of the heat-flow operator, which comprises
\emph{all} powers of $\partial_{z}^{2}$, while giving weights $1/j!$ to the
$j$-th power. The zero distribution of heat-evolved polynomials is the subject
of \cite{tao_blog1,tao_blog2,csordas_smith_varga,
MarcusFPP,GAF-paper,rodgers_tao,hallho,ZakharLeeYang}, most of which we
addressed already in detail. If all roots of all $Q_{n}$'s are real and
contained in some interval $[-C,C]$, it follows from \cite[Theorem
2.11]{ZakharLeeYang} and \cite[Theorem 1.1]{VW22} that for all $t>0$ the
empirical distribution of zeros of the heat-evolved polynomials $e^{-\frac
{t}{2n}\partial_{z}^{2}}Q_{n}(z)$ converges weakly to the free additive
convolution $\mu_{0}\boxplus\mathsf{sc}_{t}$ of $\mu_{0}$ and the Wigner law
on the interval $[-2\sqrt{t},2\sqrt{t}].$\footnote{The free convolution with the Wigner distribution also appears as the
hydrodynamic limit of Dyson's Brownian motion with randomness, see
\cite[\S ~4.3.1, 4.3.2]{AGZ} and \cite{VW22,tao_blog1,ZakharLeeYang}.} See also \cite{Marcus21, Mirabelli}
for a connection to finite free probability: The heat-evolved polynomial is
just the finite free convolution of the original polynomial with a Hermite polynomial.

We further note that, by a result of Voiculescu~\cite[Equation after
(3.18)]{voiculescu_nica_dykema_book}, the Stieltjes transform of $\mu
_{0}\boxplus\mathsf{sc}_{t}$ solves the so called \emph{free heat equation}
\begin{equation}
\frac{\partial}{\partial t}m_{\mu_{0}\boxplus\mathrm{sc}_{t}}(z)=-m_{\mu
_{0}\boxplus\mathrm{sc}_{t}}(z)\partial_{z}m_{\mu_{0}\boxplus\mathrm{sc}_{t}%
}(z),\quad t\geq0, \label{eq:free_heat_equation}%
\end{equation}
for $z$ outside the support of $\mu_{0}\boxplus\mathsf{sc}_{t}$; see also
\cite{Biane,craig_book,RogersShi,Uchiyama, voiculescu_nica_dykema_book}. The
name \textquotedblleft free heat equation\textquotedblright\ comes from the
fact that the semicircle distribution is the free probability analogue of the
Gaussian distribution, whose classical convolution with any initial real
distribution satisfies the classical heat equation.\footnote{Recall Section \ref{subsec:OT}, where we argued from a different perspective how the heat evolution of roots is connected to the free counterpart of the heat equation: a `gradient' flow with respect to the \emph{free} entropy.}

We now compare Voiculescu's PDE (\ref{eq:free_heat_equation}) to the PDE
satisfied by the Stieltjes transform $m_{t}(z)$ of $\nu_{t}.$ We note that
$m_{t}$ is holomorphic in $z$ outside the support of $\nu_{t}$ (since $\nu
_{t}$ is the $\bar{z}$-derivative of $m_{t}$). Then by
\eqref{eq:PDE_for_stieltjes_burgers_conjugate}, $m_{t}(z)$ is also holomorphic
in $t$ outside the support of $\nu_{t}.$ It then follows from
\eqref{eq:PDE_for_stieltjes_burgers} that if we restrict to $t>0,$ the
function $m_{t}(z)$ will satisfy Voiculescu's PDE (\ref{eq:free_heat_equation}%
) outside the support of $\nu_{t}.$ What is notable about
\eqref{eq:PDE_for_stieltjes_burgers} and
\eqref{eq:PDE_for_stieltjes_burgers_conjugate}, however, is that they apply
also to (regular) points \textit{inside} the support of $\nu_{t}.$

\subsection{Free probability interpretation of the evolution of $\nu_{t}$}

In general, the Brown measure of a noncommutative random variable does not
determine the $\ast$-distribution of that element. On the other hand, there is
natural class of noncommutative random variables, known as $R$-diagonal
elements, that have radial Brown measures and for which
the $\ast$-distribution \textit{is} determined by the Brown measure. (See the original article
of Nica and Speicher \cite{nicaspeicher} or Lecture 15 of their book \cite{nicaspeicherbook}.) 
In light
of the preceding discussion of the behavior of real-rooted polynomials under
heat flow, one might make the following guess for the behavior of random
polynomials: If the limiting root distribution $\nu_{0}$ of $P_{n}$ is the
Brown measure of an $R$-diagonal element $a_0$, then the limiting root
distribution $\nu_{t}$ of the heat evolved polynomial $P_{n}(z,t)$ will be the Brown measure of $a_0+x_{t},$
where $x_{t}$ is a semicircular element of variance $t$ that is freely
independent of $a_0$:%
\begin{equation}
\nu_{t}\overset{??}{=}\mathrm{Brown}(a_0+x_{t})\quad\text{(initial guess).}
\label{guess}%
\end{equation}
But this guess is not correct, even in the case of the Weyl polynomials, in
which case the $R$-diagonal element $a_0$ may be taken to be circular of
variance 1. In this case, both the limiting root distribution of $P_{n}(z,t)$
and the Brown measure of $a_0+x_{t}$ are both uniform on an ellipse---but not the
same ellipse. (See Example \ref{FreeWeyl.example} below.)

There is, nevertheless, a similar statement that can be made, as follows.

\begin{theorem}
\label{freeInterpret.thm} Let $\nu_{0}$ be the limiting root
distribution of the random polynomials $P_{n}$ satisfying \hyperref[cond:A1]{(A1)} and \hyperref[cond:A2]{(A2)}. Suppose there is some $s>0$ such that $\nu_{0}$ is the Brown
measure of an element $a_{s,0}$ of the form%
\begin{equation}
a_{s,0}:=b+x_{s/2}+iy_{s/2},\label{as0}%
\end{equation}
where $b$ is $R$-diagonal, $x_{s/2}$ and $y_{s/2}$ are semicircular with
variance $s/2,$ and where $b,$ $x_{s/2},$ and $y_{s/2}$ are freely independent. Then condition
\hyperref[cond:A3]{(A3)} also holds, the quantity $t_{\mathrm{sing}}$ in Proposition \ref{prop:T_t_bijective} is at
least $s$, and for all $t$ with $0<t<s,$ the measure $\nu_{t}$ is the Brown
measure of the element%
\begin{equation}
a_{s,t}=b+x_{(s+t)/2}+iy_{(s-t)/2},\label{zst}%
\end{equation}
where $x_{(s+t)/2}$ and $y_{(s-t)/2}$ are semicircular with the indicated
variances, assumed to be freely independent of each other and $b.$
Furthermore, if we are given an element $a_{s,0}$ of the form in (\ref{as0}), we can
choose the polynomials $P_{n}$ so that $\nu_{0}$ equals the Brown measure of
$a_{s,0}$.
\end{theorem}

The proof of this result is given in Section \ref{proof61thm}.

In what follows, we make use of the following result \cite[Example
5.3]{BianeLehner}: if $x_{\alpha}$ and $y_{\beta}$ are freely independent
semicircular elements with variances $\alpha$ and $\beta,$ the Brown measure
of $x_{\alpha}+iy_{\beta}$ is uniform on the
ellipse centered at the origin with semi-axes $2\alpha/\sqrt{\alpha+\beta}$
and $2\beta/\sqrt{\alpha+\beta}.$ We also note that the quantity
$x_{s/2}+iy_{s/2}$ in (\ref{as0}) is a circular element of variance $s,$ which
is $R$-diagonal and has a Brown measure that is uniform on a disk of radius
$\sqrt{s}.$ Since $a_{s,0}$ is the sum of two freely independent $R$-diagonal
elements, it is $R$-diagonal and its Brown measure can be computed by a result
of Haagerup--Larsen \cite[Proposition 3.5]{haageruplarsen}. Meanwhile, $x_{(s+t)/2}+iy_{(s-t)/2}$
is an elliptic element whose Brown measure is uniform on the ellipse with
semi-axes $(s+t)/\sqrt{s}$ and $(s-t)/\sqrt{s}.$

\begin{example}
\label{FreeWeyl.example}We consider the case of the Weyl polynomials and
compare the results of Theorem \ref{freeInterpret.thm} to our initial guess in
\eqref{guess}. In the Weyl polynomial case, we may take $b=0$ and $s=1$ in
(\ref{as0}). By Theorem \ref{theo:main_weyl}, the limiting root
distribution of $P_{n}(z,t)$ is, for $0<t<1,$ uniform on an ellipse with
semi-axes $1+t$ and $1-t.$ By contrast, in our initial guess (\ref{guess}), we
would take the element $a_0$ to be circular; that is, $a_0=x_{1/2}+iy_{1/2}$. Then
$a_0+x_{t}$ has the same distribution (after combining the \textquotedblleft%
$x$\textquotedblright\ terms) as $x_{1/2+t}+iy_{1/2}.$ Thus, the Brown measure
of $a_0+x_{t}$ is uniform on the ellipse with semi-axes $(1+2t)/\sqrt{1+t}$ and
$1/\sqrt{1+t}.$ In particular, the limiting root distribution of $P_{n}(z,t)$
collapses onto the $x$-axis as $t$ approaches 1, while the Brown measure of
$a_0+x_{t}$ only collapses onto the $x$-axis asymptotically as $t\rightarrow
\infty.$
\end{example}

In general, we may compare our initial guess in (\ref{guess}) to the result of
Theorem \ref{freeInterpret.thm}, if we assume that the element $a_0$ in
(\ref{guess}) has the form
\[
a_0=a_{s,0}=b+x_{s/2}+iy_{s/2}.
\]
In this case, $a_0+x_{t}$ has the same $\ast$-distribution as $b+x_{s/2+t}%
+iy_{s/2}.$ We are, therefore, led to compare the following two elements:%
\begin{align}
a_{s,t} &  =b+x_{(s+t)/2}+iy_{(s-t)/2}\label{element1}\\
b_{s,t} &  =b+x_{s/2+t}+iy_{s/2}.\label{element2}%
\end{align}
Although these elements do not have the same Brown measure, they are related
as follows.

\begin{proposition}
\label{twoElements.prop}For any element $b$ that is freely independent of $x_\cdot$
and $y_\cdot$ (but not necessarily $R$-diagonal), the analytic moments of $a_{s,t}$
equal the analytic moments of $b_{s,t}$: $\tau(a_{s,t}^{k})=\tau(b_{s,t}%
^{k}),$ for all $0<t<s$, where $\tau$ is the trace on the relevant von Neumann algebra. 
If, in addition, $b$ is $R$-diagonal, these analytic
moments are equal to the moments of the semicircular distribution of variance
$t$. That is,
\begin{equation}
\tau(a_{s,t}^{k})=\tau(b_{s,t}^{k})=\int_{\mathbb{R}}u^{k}~d\mathsf{sc}%
_{t}(u)=t^{k/2}\int_{\mathbb{R}}u^{k}~d\mathsf{sc}_{1}(u).\label{momentAnswer}%
\end{equation}
In this case,
both elements have the same Stieltjes transform near infinity as
$\mathsf{sc}_{t}$ and therefore both Stieltjes transforms satisfy Voiculescu's
PDE (\ref{eq:free_heat_equation}) near infinity.
\end{proposition}

\begin{proof}
If two elements $A$ and $B$ are freely independent, the analytic moments of
$A+B$ can be expressed as a polynomial function of the analytic moments of $A$
and of $B.$ Thus, to prove that $\tau(a_{s,t}^{k})=\tau(b_{s,t}^{k}),$ it
suffices to prove that $c=x_{(s+t)/2}+iy_{(s-t)/2}$ and $\,\,d=x_{s/2+t}%
+iy_{s/2}$ have the same analytic moments. To prove this, we will show that
the analytic moments of $x_{\alpha}+iy_{\beta}$ for $\alpha\geq\beta\geq0$ are
the same as the those of $x_{\alpha-\beta}.$ Once this is established, we will
simply note that $\alpha-\beta$ has the same value (namely $t$) for both $c$
and $d.$

To compute the analytic moments of $x_{\alpha}+iy_{\beta},$ we note that%
\begin{equation}
x_{\alpha}+iy_{\beta}\overset{d}{=}x_{\beta}+iy_{\beta}+u_{\alpha-\beta}%
\label{ellipticAndCircular}%
\end{equation}
where $x_{\beta},$ $y_{\beta},$ $u_{\alpha-\beta}$ are freely independent
semicircular elements with the indicated variances and $\overset{d}{=}$
denotes equality in $\ast$-distribution. Then we note that $x_{\beta
}+iy_{\beta}$ is circular, so that its analytic moments are the same as those
of the zero element. Thus, by the observation at the beginning of this proof,
the circular term on the right-hand side of (\ref{ellipticAndCircular}) does
not affect the analytic moments of $x_{\alpha}+iy_{\beta}.$

Finally, if $b$ is $R$-diagonal, its analytic moments are zero and the $b$
term does not affect the analytic moments of $a_{s,t}$ or $b_{s,t}.$ The
analytic moments of the remaining two terms is computed (for $t>0$) by the
result in the first paragraph of the proof as the moments of $x_{t}$, as claimed.
\end{proof}

\subsection{A random matrix model}

\begin{figure}[ptb]
\centering
\includegraphics[scale=0.33]
{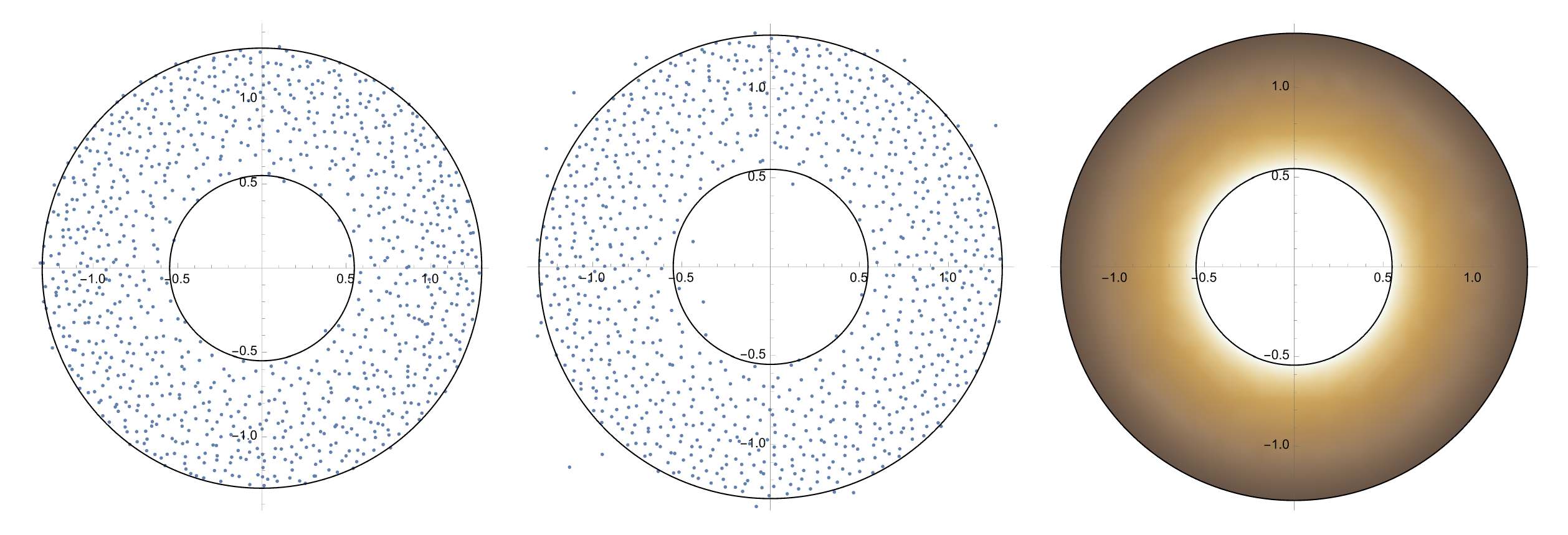}\caption{The eigenvalues of $A_{n,s,0}$ (left), the roots of
$P_{n}$ (middle), and the Brown measure of $a_{s,0}$ (right), with $B_n$ Haar
unitary and $s=0.7.$}%
\label{triptych1.fig}%
\end{figure}

Using results of \'{S}niady \cite{Sniady}, one can show that the Brown measure
of the element $a_{s,t}$ in (\ref{zst}) is the almost-sure limiting eigenvalue
distribution of a random matrix model of the form%
\begin{equation}
A_{n,s,t}=B_{n}+\sqrt{(s+t)/2}\,X_{n}+i\sqrt{(s-t)/2}\,Y_{n},\quad0<t<s,
\label{Znst}%
\end{equation}
where $B_{n}$ is an $n\times n$ bi-unitarily invariant random matrix that
converges almost surely in $\ast$-moments to $b$ and
where $X_{n}$ and $Y_{n}$ are GUEs, with $B_{n},$ $X_{n}$, and $Y_{n}$ taken
to be independent. (The elliptic matrix $\sqrt{(s+t)/2}\,X_{n}+i\sqrt
{(s-t)/2}\,Y_{n}$ can be expressed as the sum of a Ginibre matrix and another
elliptic matrix independent of the Ginibre matrix. Thus, Theorem 6 in
\cite{Sniady} is applicable.)

The heat flow conjecture of \cite[Conjecture 2.13]{hallho}---see also Conjecture \ref{conj:Universality} in the present paper---asserts that
applying the time-$t$ heat flow to the characteristic polynomial of
$A_{n,s,0}$ should give a new polynomial whose limiting root distribution
coincides with the limiting eigenvalue distribution of $A_{n,s,t},$ for
$-s<t<s.$ As a special case, when $B_{n}=0$ and $s=1,$ applying the time-$t$
heat flow to the characteristic polynomial of a Ginibre matrix should give a
polynomial whose roots are asymptotically uniform on the ellipse with
semi-axes $1+t$ and $1-t,$ for $0<t<1.$ While we currently do not know how to
prove this conjecture, Theorem \ref{freeInterpret.thm} provides a rigorous
random polynomial analogue of it. That is to say, we may choose the
polynomials $P_{n}$ so that their limiting root distribution equals the
limiting eigenvalue distribution of $A_{n,s,0}$ and then Theorem
\ref{freeInterpret.thm} says that the limiting root distribution of
$P_{n}(z,t)$ will equal the limiting eigenvalue distribution of $A_{n,s,t},$
for $0<t<s.$ See Figures \ref{triptych1.fig} and \ref{triptych2.fig}.

\begin{figure}[ptb]
\centering
\includegraphics[scale=0.32]{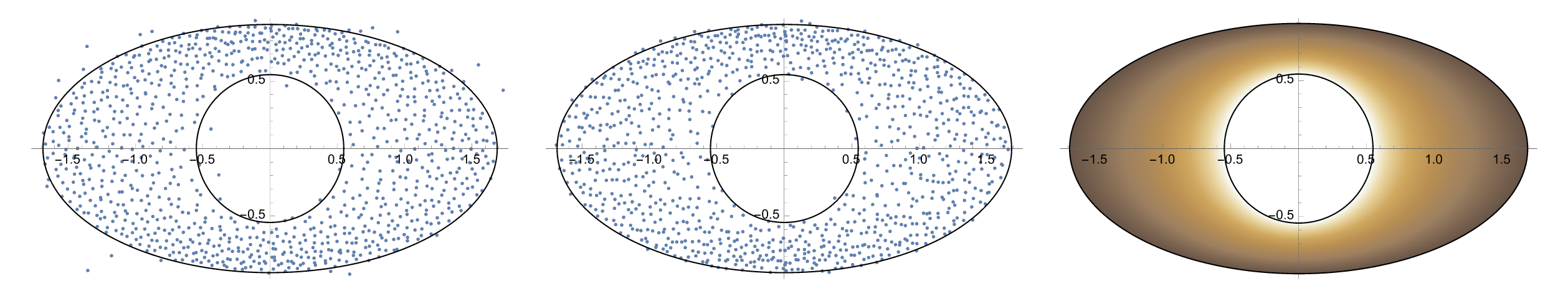}\caption{The eigenvalues of
$A_{n,s,t}$ (left), the roots of $P_{n}(z,t)$ (middle), and the Brown measure
of $a_{s,t}$ (right), with $B_n$ Haar
unitary, $s=0.7$, and $t=0.5.$}%
\label{triptych2.fig}%
\end{figure}

\subsection{The Haar unitary case\label{HaarUnitaryCase.sec}}

We now consider the example in which the element $b$ in (\ref{as0}) is a Haar unitary $u.$ In that case,
the Brown measure $\mu^{s}$ of $a_{s,0}$ has been computed explicitly by Zhong in
Section 8 of \cite{Zhong}. The measure $\mu^{s}$ is supported on the annulus
with inner radius $r_{1}=\sqrt{\max({0,1-s)}}$ and outer radius $r_{2}%
=\sqrt{1+s}$ and the radial cumulative distribution function of $\mu^{s}$ is
given in \cite[Theorem 8.8]{Zhong} as%
\begin{equation}
\mu^{s}(\{z\in\C: |z|\le r\})=\frac{r^{2}}{s}+\frac{1}{2}-\frac{\sqrt{4r^{2}+s^{2}}}{2s}\label{HaarCDF}
\end{equation}
for $r_{1}\leq r\leq r_{2}.$ We can invert the radial CDF to find the radial
quantile function $Q^{s}$ of $\mu^{s}$ as%
\[
Q^{s}(\alpha)=\frac{1}{\sqrt{2}}\sqrt{1-s+2s\alpha+\sqrt{1-2s+s^{2}+4s\alpha}%
}.
\]
Then, in accordance with \eqref{eq:alpha_0}, the \textit{derivative} of the exponential profile $g_{s}$ that produces
the measure $\mu^{s}$ is the negative of the logarithm of $Q^{s}$:
\[
g_{s}^{\prime}(\alpha)=-\frac{1}{2}\log\left[  1+s(2\alpha-1)+\sqrt
{(s-1)^{2}+4s\alpha}\right]  +\frac{1}{2}\log2.
\]
We can then integrate to obtain $g_{s}(\alpha)$ itself by first integrating by
parts (with $u=g_{s}^{\prime}(\alpha)$ and $dv=d\alpha$) and then making the
substitution $y=\sqrt{(s-1)^{2}+4s\alpha}$.
The result is
\begin{align*}
g_{s}(\alpha)  &  =\frac{1}{8s}\Big(4s\alpha-2y-y^{2}\log\left[
\frac{1}{2}\left(  1+y+s(2\alpha-1\right)  \right] \\
&  +(s-1)^{2}\log(1-s+y)+(s+1)^{2}\log(1+s+y)\Big){\Big .}.
\end{align*}

The limiting root distribution of the polynomials $P_{n}$ computed with this
exponential profile is, by construction, the Brown measure $\mu^{s}$ of the
element $a_0$ in (\ref{as0}), with $b$ taken to be a Haar unitary. The measure
$\mu^{s}$ is also the limiting eigenvalue distribution of the random matrix
model in (\ref{Znst}), with $t=1$ and $B_{n}$ taken to be a Haar-distributed
unitary matrix. See Figure \ref{triptych1.fig}.

When $u$ is a Haar unitary, the Brown measure $\mu_{t}^{s}$ of the element $a_{s,t}$ in (\ref{zst}) is
the push-forward of $\mu^{s}$ under a transport map $T_{t}^{s},$ given
explicitly as%
\[
T_{t}^{s}(w)=w+\frac{t}{s}\bar{w}\left(  1-\frac{\sqrt{4\left\vert
w\right\vert ^{2}+s^{2}}-s}{2\left\vert w\right\vert ^{2}}\right)  .
\]
(Apply Eq. (8.19) in \cite{Zhong} with $\alpha=0,$ $t=s,$ $\gamma=t,$ and
$s=\left\vert w\right\vert ^{2}-1.$) Theorem \ref{freeInterpret.thm} says that
if $P_{n}$ is chosen with exponential profile as above, then the limiting root
distribution of $P_{n}(z,t)$ will coincide with $\mu_{t}^{s}.$ Furthermore, as
noted above, the limiting eigenvalue distribution of the random matrix model
(\ref{Znst}) (with $B_{n}$ taken to be a Haar-distributed unitary matrix) will
also coincide with $\mu_{t}^{s}.$ Thus, the limiting root distribution of the
heat evolution of $P_{n}$ agrees with the limiting eigenvalue distribution of
$A_{n,s,t},$ giving a rigorous random-polynomial version of the heat flow
conjecture in \cite[Conjecture 2.13]{hallho}. See Figure \ref{triptych2.fig}.

Finally, using the formula in \eqref{HaarCDF} for $\alpha_0(r)$, it is not hard to compute
$t_{\mathrm{sing}}$ in this setting as
\[
t_{\mathrm{sing}}=s+\frac{s^2}{(s+1)^2+1}.
\]
Note that in this case, the value of $s$ in Theorem \ref{freeInterpret.thm} is maximal, in the sense that 
a Haar unitary cannot be decomposed as the free sum of an $R$-diagonal element and a circular element. 
(The Brown measure of such a sum is absolutely continuous on the plane.) Nevertheless, $t_{\mathrm{sing}}$ is strictly
greater than $s$ in this case.

\subsection{Proof of Theorem \ref{freeInterpret.thm}}\label{proof61thm}

To prove Theorem \ref{freeInterpret.thm}, we will use results of Zhong
\cite{Zhong} (drawing on ideas in \cite{HHfamily}). In Section 7 of
\cite{Zhong}, Zhong shows that the Brown measure of $a_{s,t}$ is the
push-forward of the Brown measure of $a_{s,0}$ under a certain map $\Phi
_{s,t}$. (Here we adapt Zhong's results to our notation; what we call $s$ is
denoted $t$ in Zhong, while what we call $t$ is denoted $\gamma$ in Zhong.)
Comparing Equation (7.8) in \cite{Zhong} to Definition \ref{def:transport_map}
in the present paper
we see that Zhong's map $\Phi_{s,t}$ coincides with our
transport map $T_{t}^{s}.$ Thus, by Theorem C in \cite{Zhong} and Theorem
\ref{theo:pushforward} in the present paper, the limiting root distribution
$\nu_{t}$ of $P_{n}(z,t)$ equals the Brown measure of $a_{s,t},$ for $0\leq
t<\min(t_{\mathrm{sing}},s)$---because both measures equal the push-forward of $\nu_0$ under the map $\Phi_{s,t}$.

First, we show that \hyperref[cond:A3]{(A3)} holds. According to \cite[Theorem B]{Zhong}, the Brown measure $\nu_0$ of $a_{s,0}$ has no atom 
at zero. Then by \cite[Theorem 4.4(iii)]{haageruplarsen}, 
\[
\nu_0\{\left\vert z\right\vert\leq [S_{\left\vert a_{s,0} \right\vert^2}(\alpha-1)]^{-1/2} \}=\alpha
\]
for $0<\alpha<1$. According to \cite[Theorem 4.4(ii)]{haageruplarsen}, $S_{\left\vert a_{s,0} \right\vert^2}(\alpha-1)$ is analytic (and positive) for $\alpha\in(0,1)$.  
Thus, we can find an analytic function $g$ such that
\begin{equation}
e^{-g'(\alpha)}=[S_{\left\vert a_{s,0} \right\vert^2}(\alpha-1)]^{-1/2}.\label{gfroms}
\end{equation}
Then we compute from \eqref{gfroms} that
\[
g''(\alpha)=\frac{1}{2}\frac{S'_{\left\vert a_{s,0} \right\vert^2}(\alpha-1)}{S_{\left\vert a_{s,0} \right\vert^2}(\alpha-1)}.
\]
Since, by \cite[Theorem 4.4(ii)]{haageruplarsen}, $S'_{\left\vert a_{s,0} \right\vert^2}(\alpha-1)<0$ on $(0,1)$, we find that $g$ is analytic with $g''<0$ on $(0,1)$.

It remains only to show that $\alpha e^{g'(\alpha)}$ tends to zero as $\alpha$ tends to zero. By \eqref{eq:alpha_0_measure}, this claim amounts to showing that $\alpha_0(r)/r$ tends to zero as $r$ tends to zero. But by \cite[Theorem 13]{BYZ}, the density of $\nu_0$ with respect to the Lebesgue measure on the plane is bounded above by $1/(\pi s)$, so that $\alpha_0(r)$ is bounded above by $r^2/s$ and the result follows.

Second, we show that $t_{\mathrm{sing}}$ is at least $s.$ In general, the transport map $T_t$ maps
the circle of radius $r$ to the ellipse with semi-axes $\operatorname{ax}_+$ and  $\operatorname{ax}_-$ given 
by
\[
\mathrm{ax}_{\pm}=r\pm t\frac{\alpha_{0}(r)}{r}.
\]
Restating the definition of $t_{\mathrm{sing}}$ from Proposition \ref{prop:T_t_bijective}, changing $s$
to $r$ in to avoid a notational collision, we have
\[
t_{\mathrm{sing}}=\left( \sup_{r \in(\eee^{- g^{\prime}(0)}, \eee^{-
g^{\prime}(1)})} \left|  \left( \frac{\alpha_{0}(r)}{r}\right) ^{\prime
}\right| \right) ^{-1}.
\]
The definition of $t_\mathrm{sing}$ ensures that $d(\operatorname{ax}_+)/dr$ and $d(\operatorname{ax}_-)/dr$ are positive for $t<t_\mathrm{sing}$. 

Assume that $\nu_{0}$ is the Brown measure of an element of the form in
(\ref{as0}), for some $s>0$ and let $T_{t}^{s}$ be the associated transport
map. Then $T_{t}^{s}$ maps the circle of radius $r$ to an ellipse with
semi-axes%
\[
\mathrm{ax}_{\pm}(s,t,r)=r\pm t\frac{\alpha_{0}^{s}(r)}{r}.
\]
Now, Zhong shows \cite[Theorem 7.8]{Zhong} that even when $t=s,$ the semi-axes
$\mathrm{ax}_{\pm}(s,s,r)$ are weakly increasing functions of $r.$ But then we
note that
\begin{align*}
\mathrm{ax}_{\pm}(s,t,r)  &  =\frac{t}{s}\left(  \frac{s-t}{t}r+r\pm
s\frac{\alpha_{0}^{s}(r)}{r}\right) \\
&  =\frac{s-t}{s}r+\frac{t}{s}\mathrm{ax}_{\pm}(s,s,r).
\end{align*}
Since $\mathrm{ax}_{\pm}(s,s,r)$ is weakly increasing in $r$, we see that
$\frac{\partial}{\partial r}\mathrm{ax}_{\pm}(s,t,r)>0$ for all $0<t<s,$ so that
\[
-\frac{1}{t}<\left(  \frac{\alpha_{0}^{s}(r)}{r}\right)  ^{\prime}<\frac{1}{t}
\]
for $0<t<s$, showing that $t_{\mathrm{sing}}$ is at least $s$.

Third, we prove that the measure $\nu_t$ is, for $0<t<s$, the Brown measure of the element $a_{s,t}$ in \eqref{zst}. This result holds by the argument given in the first paragraph of this subsection.

Last, we show that if $\nu_0$ is the Brown measure of an element of the form $a_{s,0}$ in \eqref{as0}, then we can construct a 
random polynomial satisfying \hyperref[cond:A1]{(A1)} and \hyperref[cond:A2]{(A2)} (and then, automatically, \hyperref[cond:A3]{(A3)}, as shown above) with limiting root distribution $\nu_0$. Using the bound on the density of $\nu_0$ from \cite[Theorem 13]{BYZ}, we see that \cite[Theorem 2.9]{KZ14} applies, and the desired result holds. 

\section{Proof of Theorem~\ref{theo:main_general_g}}
\label{sec:proof_main_thm}
We begin with the proof of our main theorem. For simplicity, we will assume that $\P[\xi_0=0] = 0$, so that the degree of $P_n$ is exactly $n$. Without this assumption, the degree of $P_n$ is $n$ minus a geometric random variable, and essentially the same proofs apply with more complicated notation.

\subsection{Proof of Theorem~\ref{theo:main_general_g}}
Consider first the case when $t>0$.  Let $\Delta$ be the Laplace operator understood in the distributional sense; see, e.g., \cite[Section~3.7]{ransford}.
The empirical distribution of zeros of $\eee^{-\frac t{2n} \partial_z^2} P_n$ can be written as
\begin{align}\label{eq:distr_Poisson_finite_n}
\llbracket \eee^{-\frac t{2n} \partial_z^2} P_n\rrbracket  = \frac 1 {2\pi n} \Delta \log \left|\eee^{-\frac t{2n} \partial_z^2} P_n\right|;
\end{align}
see~\cite[Theorem~3.7.8]{ransford}.
Our main task is to compute the limit of this expression as $n\to\infty$. More precisely, we will show that (with a mode of convergence to be made precise below)
\begin{equation}\label{eq:proof_main_theo_g_aim}
\frac 1 {n} \log \left|\eee^{-\frac t{2n} \partial_z^2} P_n(z)\right|
\ton
\sup_{\alpha \in [0,1]} f_t(\alpha,z)
=
U_{t}(z) + g(1)
\end{equation}
where $f_t$ is defined as in~\eqref{eq:f_def}. Then, we will argue that the  Laplacian can be interchanged with the large $n$ limit, which would imply that $\llbracket \eee^{-\frac t{2n} \partial_z^2} P_n\rrbracket $ converges to $\nu_t\coloneqq \frac 1{2\pi} \Delta U_t$.
Finally, we will show that $U_t$ is the logarithmic potential of $\nu_t$.  The term $g(1)$ appears in~\eqref{eq:proof_main_theo_g_aim} because the polynomial $P_n$ need not be monic and its leading coefficient is $a_{n,n} = \xi_n \eee^{n g(1) + o(n)}$.

Let us now be more precise. By the formula
\begin{equation}\label{eq:heat_flow_z_n_hermite_poly}
\exp\left\{-\frac{s}{2} \partial_z^2\right\} z^n =  s^{n/2} \He_n\left(\frac{z}{\sqrt s}\right),
\qquad
s>0,
\end{equation}
we have
\begin{equation}\label{eq:heat_flow_applied_to_P_n_with_general_g}
\mathbf G_n(z)
\coloneqq
\eee^{-\frac t{2n} \partial_z^2} P_n (z)
=\sum_{k=0}^n \xi_k a_{k;n} \left(\frac t n\right)^{k/2} \He_k\left(\frac {\sqrt n z} {\sqrt t}\right)
=
\sum_{k=0}^n \xi_k \mathbf c_{k;n}(z),
\end{equation}
where
\begin{equation}\label{eq:c_k_n_def}
\mathbf c_{k;n}(z)
\coloneqq
a_{k;n}\left(\frac t n \right)^{k/2} \He_k\left(\frac{\sqrt n z}{\sqrt t}\right)
=
\eee^{n g(k/n)+o(n)}\left(\frac t n \right)^{k/2} \He_k\left(\frac{\sqrt n z}{\sqrt t}\right)
\end{equation}
with an $o(n)$-term which is uniform in $k\in \{0,\ldots, n\}$. Let us determine the rate of exponential growth of the term $\mathbf c_{k;n}(z)$ having in mind the regime in which $k$ grows linearly with $n$.
\begin{lemma}\label{lem:exonential_rate_c_k_n}
Fix some compact set $K\subset \C\backslash[-2\sqrt{t},2\sqrt{ t}]$. Write $\alpha \coloneqq k/n$. Then, as $n\to\infty$,
\begin{align}\label{eq:summand_asymptotic}
\frac 1n \log \left|\mathbf c_{k;n}(z)\right|
=
g(\alpha) + \alpha \log \sqrt{\alpha t}+ \alpha \Psi\Big(\frac{z}{\sqrt{t\alpha}}\Big)+o(1)
=
f_t(\alpha, z) + o(1)
\end{align}
uniformly in $k\in \{0,\ldots, n\}$ and $z\in K$. Here,  $f_t(\alpha, z)$ and $\Psi(z)$ are defined as in~\eqref{eq:f_def} and~\eqref{eq:Psi_def}.
\end{lemma}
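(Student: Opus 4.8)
The plan is to take logarithms in the factorisation~\eqref{eq:c_k_n_def} and treat the three factors separately, writing $\alpha=k/n$ throughout. The coefficient factor is handled by \hyperref[cond:A2]{(A2)}: $\frac1n\log|a_{k;n}|=g(\alpha)+o(1)$ uniformly in $k$. The purely numerical factor contributes $\frac{k}{2n}\log(t/n)=\frac\alpha2\log t-\frac\alpha2\log n$. Everything then reduces to the uniform asymptotics of $\frac1n\log|\He_k(\sqrt{n/t}\,z)|$. Here I would use that $\He_k$ is monic with real zeros $\zeta^{(k)}_1,\dots,\zeta^{(k)}_k$, together with the identity $\sqrt{n/t}\,z=\sqrt k\,w$ for $w=w(z,\alpha):=z/\sqrt{t\alpha}$, to write (for $k\ge1$; the case $k=0$ is trivial since $\mathbf c_{0;n}(z)=a_{0;n}$)
\[
\tfrac1n\log\bigl|\He_k(\sqrt{n/t}\,z)\bigr|=\alpha\cdot\tfrac1k\log\bigl|\He_k(\sqrt k\,w)\bigr|=\tfrac\alpha2\log k+\alpha\int_{\R}\log|w-y|\,\tilde\mu_k(\dd y),
\]
where $\tilde\mu_k:=\lsem\He_k(\sqrt k\,\cdot)\rsem$ is the empirical zero distribution of the rescaled Hermite polynomial. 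By the cited Hermite asymptotics (the case $t=1$ of $\lsem\He_n(\sqrt{n/t}\,\cdot)\rsem\Rightarrow\mathsf{sc}_t$) we have $\tilde\mu_k\Rightarrow\mathsf{sc}_1$, and $\int_\R\log|w-y|\,\mathsf{sc}_1(\dd y)=\Psi(w)$ for $w\notin[-2,2]$ by~\eqref{eq:wigner_log_potential}. Granting the uniform convergence of the integral to $\Psi(w)$ (discussed next), one assembles the three pieces: the two $\log n$ contributions cancel and $\frac\alpha2\log k-\frac\alpha2\log n=\frac\alpha2\log\alpha$, so $\frac1n\log|\mathbf c_{k;n}(z)|=g(\alpha)+\frac\alpha2\log(t\alpha)+\alpha\Psi(z/\sqrt{t\alpha})+o(1)=f_t(\alpha,z)+o(1)$, which is the claim; note that as $\alpha\downarrow0$ all three terms tend to $g(0)=f_t(0,z)$, so the estimate is uniform down to $k=0$.

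The crux is the uniform convergence $\sup\bigl|\int_{\R}\log|w-y|\,\tilde\mu_k(\dd y)-\Psi(w)\bigr|\to0$ as $n\to\infty$, the supremum being over all $z\in K$ and $1\le k\le n$, hence over all $w=z/\sqrt{t\alpha}$ with $z\in K$, $\alpha\in(0,1]$. The difficulty is that this set of $w$'s is not contained in a compact subset of $\C\setminus[-2,2]$, since $|w|\to\infty$ as $\alpha\downarrow0$. I would split according to a large cutoff $R$. When $|w|\ge R$: since every $\tilde\mu_k$ is supported in the fixed interval $\{y:|y|\le\sqrt{4+2/k}\}\subset[-3,3]$ (the classical bound on the zeros of $\He_k$) and $\mathsf{sc}_1$ in $[-2,2]$, both $\int_{\R}\log|w-y|\,\tilde\mu_k(\dd y)$ and $\Psi(w)$ equal $\log|w|+O(1/R)$ uniformly in $k$, so their difference is $O(1/R)$. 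When $|w|\le R$: since $\dist(K,[-2\sqrt t,2\sqrt t])>0$ one checks that $\alpha$ is bounded below, so $k\to\infty$ and $w$ ranges over a fixed compact set at a positive distance $\eta$ from $[-2,2]$; using that the extreme zeros of $\He_k$ satisfy $\zeta^{(k)}_{\max}/\sqrt k\to2$, the supports of $\tilde\mu_k$ eventually lie in $[-2-\tfrac\eta2,2+\tfrac\eta2]$, on which the family $\{\,y\mapsto\log|w-y|:|w|\le R,\ \dist(w,[-2,2])\ge\eta\,\}$ is uniformly bounded and equicontinuous, so weak convergence $\tilde\mu_k\Rightarrow\mathsf{sc}_1$ upgrades to uniform convergence of the integrals. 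Letting $R\to\infty$ finishes the bound; along the way I would record that for $n$ large (depending only on $K$ and $t$) the point $\sqrt{n/t}\,z$ avoids every real zero of every $\He_k$, $k\le n$, so that all the logarithms above are finite.

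The step I expect to be the genuine obstacle is exactly this uniform estimate for $\int_{\R}\log|w-y|\,\tilde\mu_k(\dd y)$: the logarithmic singularity forces one to localise the supports of the $\tilde\mu_k$ near $[-2,2]$ (hence the need for the edge behaviour $\zeta^{(k)}_{\max}\sim2\sqrt k$, not merely the crude $\sqrt{4k+2}$ bound), while the non-compactness of the $w$-range near $\alpha=0$ forces the separate large-$|w|$ argument. An alternative, more self-contained route would be to represent $\He_k(\sqrt{n/t}\,z)$ by the contour integral coming from the generating function $e^{xu-u^2/2}=\sum_k\He_k(x)u^k/k!$, substitute $u=\sqrt n\,v$ to obtain $\frac{k!}{n^{k/2}}\cdot\frac1{2\pi\ii}\oint\exp\{n(\frac{z}{\sqrt t}v-\frac{v^2}{2}-\alpha\log v)\}\frac{\dd v}{v}$, and apply Laplace's method together with Stirling's formula for $k!/n^{k/2}$; the saddle $v_\ast$ solves $v^2-\frac{z}{\sqrt t}v+\alpha=0$, it is non-degenerate because $z\in K$ keeps the discriminant away from $0$, and its critical value recovers $\frac12\log k+\Psi(z/\sqrt{t\alpha})$. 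This trades the weak-convergence bookkeeping for steepest-descent bookkeeping, with the same confluence issue as $\alpha\downarrow0$ (the saddle drifting into the singularity of $\log v$), so I would still prefer the first approach.
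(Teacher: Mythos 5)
Your proof is correct and follows essentially the same strategy as the paper's: decompose $\log|\mathbf c_{k;n}(z)|$ via~\eqref{eq:c_k_n_def}, reduce the Hermite factor to the logarithmic potential of the rescaled zero-counting measures and invoke their weak convergence to $\mathsf{sc}_1$ (this is exactly the content and proof of the paper's Lemma~\ref{lem:hermite_asymptotics}), and treat the degenerate regime $\alpha\downarrow 0$ separately using the localization of the Hermite zeros in a fixed interval. The paper handles that second regime by showing both sides of~\eqref{eq:summand_asymptotic} are uniformly small for $k\le\eps n$, whereas you compare both logarithmic potentials to $\log|w|$ with an $O(1/R)$ error; these are trivial variants of the same estimate (and note the crude bound $|\zeta^{(k)}_{\max}|\le\sqrt{4k+2}$ already gives $\zeta^{(k)}_{\max}/\sqrt k\to 2$, so the finer edge asymptotics you worry about are not actually needed).
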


For the proof, we need the following well-known Plancherel--Rotach result for the asymptotics of the Hermite polynomials 
outside the ``oscillatory'' interval $[-2,2]$.
\begin{lemma}\label{lem:hermite_asymptotics}
Then, for every $z\in \C\backslash[-2,2]$ we have
\begin{equation}\label{eq:hermite_poly_asymptotics}
\lim_{k\to\infty} \frac 1k \log \left|\frac{\He_k(\sqrt k\, z)}{k^{k/2}}\right| =  \Psi(z).
\end{equation}
Moreover, the convergence is uniform as long as $z$ stays in any compact subset of $\C\backslash[-2,2]$.
\end{lemma}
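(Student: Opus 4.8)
The plan is to avoid carrying out a Plancherel--Rotach saddle-point analysis and instead identify the limit with the logarithmic potential of the semicircle law, exploiting the formula~\eqref{eq:wigner_log_potential} already recorded above. Since $\He_k$ is monic of degree $k$ with real zeros $x_{1,k},\dots,x_{k,k}$, I would first factor $\He_k(\sqrt k\,z)=k^{k/2}\prod_{j=1}^{k}\big(z-x_{j,k}/\sqrt k\big)$, so that
\[
\frac1k\log\left|\frac{\He_k(\sqrt k\,z)}{k^{k/2}}\right|
=\int_{\R}\log|z-w|\,\mu_k(\dint w),
\qquad
\mu_k:=\frac1k\sum_{j=1}^{k}\delta_{x_{j,k}/\sqrt k}.
\]
The argument then rests on two classical inputs about Hermite polynomials: that $\mu_k$ — the empirical zero distribution of $z\mapsto\He_k(\sqrt k\,z)$ — converges weakly to the semicircle law $\mathsf{sc}_1$ on $[-2,2]$ (the $t=1$ case of the fact recalled after~\eqref{eq:empirical_distr_zeros_def}; see~\cite{gawronski_strong_asymptotics}), and that all zeros of $\He_k$ lie in $\big(-\sqrt{4k+2},\sqrt{4k+2}\big)$, so that $\supp\mu_k\subseteq\big(-\sqrt{4+2/k},\sqrt{4+2/k}\big)$, a family of intervals shrinking to $[-2,2]$. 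By~\eqref{eq:wigner_log_potential} with $t=1$ the logarithmic potential of $\mathsf{sc}_1$ is exactly $\Psi$, so the lemma reduces to proving that $\int_{\R}\log|z-w|\,\mu_k(\dint w)\to\Psi(z)$ uniformly for $z$ in a compact set $K\subseteq\C\setminus[-2,2]$.

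For that last step I would fix $\delta:=\dist(K,[-2,2])>0$ and observe that, for all sufficiently large $k$, both $\supp\mu_k$ and $\supp\mathsf{sc}_1$ are contained in the fixed compact set $N:=\{w\in\C:\dist(w,[-2,2])\le\delta/2\}$, on which the family of functions $\{w\mapsto\log|z-w|:z\in K\}$ is uniformly bounded and equicontinuous (since $|z-w|$ stays between $\delta/2$ and a fixed upper bound there). Testing the weak convergence $\mu_k\Rightarrow\mathsf{sc}_1$ of probability measures supported in $N$ against such a family yields convergence of the integrals \emph{uniformly} over the family: one partitions $N$ into finitely many cells of small diameter with $\mathsf{sc}_1$-null boundaries, approximates each $\log|z-\cdot|$ uniformly in $z\in K$ by the associated step function, and uses $\mu_k(\mathrm{cell})\to\mathsf{sc}_1(\mathrm{cell})$ for the finitely many cells. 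This delivers precisely the required uniform convergence.

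The one genuinely delicate point is this upgrade from weak convergence to \emph{uniform} convergence of logarithmic potentials off $[-2,2]$, and it hinges on the \emph{a priori} confinement of $\supp\mu_k$ to a bounded neighborhood of $[-2,2]$: without it, $\log|z-\cdot|$ is not a bounded test function and mass escaping to infinity could corrupt the limit. I expect this to be the main, though still routine, obstacle. A self-contained alternative is Laplace's method applied to the entire integrand in the Gaussian representation $\He_k(\sqrt k\,z)=\tfrac{\sqrt k}{\sqrt{2\pi}}\,k^{k/2}\int_{\R}e^{k(\log(z+\ii v)-v^2/2)}\,\dint v$: the saddle points solve $v^2-\ii z v-1=0$, i.e.\ $v_\pm=\tfrac{\ii}{2}(z\pm\sqrt{z^2-4})$, the governing one being $v_-$, for which $z+\ii v_-=\tfrac12(z+\sqrt{z^2-4})$ and a short computation gives $\Re\big(\log(z+\ii v_-)-v_-^2/2\big)=\Psi(z)$. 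The subtle part of this second route is to verify that $v_-$ (and not $v_+$) governs the integral and to justify the contour deformation uniformly on compact subsets of $\C\setminus[-2,2]$; since this only re-derives the classical Plancherel--Rotach asymptotics, one could equally invoke the latter from the literature and then read off the value of the limit from~\eqref{eq:wigner_log_potential}.
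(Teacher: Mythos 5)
Your proposal follows essentially the same route as the paper's proof: both rewrite the quantity as the logarithmic potential of the empirical zero measure of $z\mapsto\He_k(\sqrt k\,z)$, use the confinement of the zeros to $[-\sqrt{4+2/k},\sqrt{4+2/k}]$, and deduce the (locally uniform) limit from the weak convergence of that measure to $\mathsf{sc}_1$, whose potential is $\Psi$ by~\eqref{eq:wigner_log_potential}. The only difference is that you spell out the upgrade from weak convergence to uniform convergence of the potentials (which the paper delegates to the cited references), and your argument for that step is correct.
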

\begin{proof}
See~\cite[Theorem~8.22.9 (b)]{szegoe_book} for the real $z$ case and~\cite[p.~622]{van_assche_some_results} for the proof of local uniformity on $\C\backslash[-2,2]$. Denoting by $z_{1;k},\ldots, z_{k;k}$ the zeros of $\He_k(\sqrt k \, z)$, which  are real and satisfy $|z_{j,k}| \leq \sqrt{4 + (2/k)}$, we have
\begin{equation}\label{eq:hermite_poly_asymptotics_proof}
\frac 1k \log \left|\frac{\He_k(\sqrt k\, x)}{k^{k/2}}\right| = \frac 1k \sum_{j=1}^k \log |x-z_{j;k}|,
\end{equation}
and the claim can be deduced from the well-known fact~\cite[p.~620]{van_assche_some_results} that $\frac 1k \sum_{j=1}^k \delta_{z_{j;k}}$ converges weakly to the Wigner distribution on $[-2,2]$ whose logarithmic potential is $\Psi(z)$.
Related results can be found in~\cite[Lemma~3]{gawronski_strong_asymptotics}, \cite[Eq.~(4.8)]{gawronski_van_assche}, \cite{gawronski}.
\end{proof}

\begin{proof}[Proof of Lemma~\ref{lem:exonential_rate_c_k_n}]
Fix $\eps\in (0,1)$ and let first $\alpha = k/n \in [\eps, 1]$ and $z\in K$.  Write the argument of the Hermite polynomial in~\eqref{eq:c_k_n_def} as  $\sqrt n \, z/ \sqrt t = \sqrt k \cdot \sqrt{n/(kt)}\, z$ and observe that $\sqrt{n/(kt)}\, z$ stays in some compact subset of $\C\backslash[-2,2]$.
Letting $n\to\infty$ and utilizing Lemma~\ref{lem:hermite_asymptotics}, we obtain
\begin{align*}
\frac 1n \log \left|\left(\frac t n \right)^{k/2} \He_k\left(\frac{\sqrt n z}{\sqrt t}\right)\right|
&=
\frac {k}{2n} \log\frac t n + \frac{k\log k}{2n} +  \frac {k}{n} \Psi\left(\sqrt{\frac{n} {kt}}\, z\right) + o(1)\\
&=
\alpha \log \sqrt{\alpha t}+ \alpha \Psi\left(\frac{z}{\sqrt{t\alpha}}\right)+o(1).
\end{align*}
Thus, \eqref{eq:summand_asymptotic} holds uniformly in $z\in K$ and $\alpha = k/ n \in [\eps, 1]$.

Let now $\alpha = k/n \in [0,\eps]$ and $z\in K$. Since~\eqref{eq:summand_asymptotic} trivially holds for $k=0$, we will assume $k\geq 1$.  It follows from~\eqref{eq:hermite_poly_asymptotics_proof} and the fact that all zeros of $\He_k(\sqrt k \, z)$ are located in $[-3,3]$ that, for sufficiently large $|x| > C_0$ and all $k\in \N$,
$$
\frac 1k \log \left|\frac{\He_k(\sqrt k\, x)}{k^{k/2}}\right| \leq  \log |x| + 1.
$$
If $\eps<\eps_0(t, K)$ is sufficiently small, we have $|\sqrt{n/(kt)}\, z| > C_0$ and it follows that
$$
\frac 1n \log \left|\left(\frac t n \right)^{k/2} \He_k\left(\frac{\sqrt n z}{\sqrt t}\right)\right|
\le
\frac {k}{2n} \log\frac t n + \frac{k\log k}{2n} +  \frac {k}{n} \left(\log \left|\sqrt{\frac{n} {kt}}\, z\right| + 1\right)
=
\frac kn \log |z| + \frac kn.
$$
For a given $\delta>0$ we can find $\eps>0$ such that the absolute value of the right-hand side is $\leq \delta$, for all $z\in K$ and $1\leq k \leq \eps n$.  On the other hand, $f_t(\alpha,z)-g(0)\to 0$ as $\alpha \downarrow 0$ (uniformly in $z\in K$), so that it also becomes $\leq \delta$ for all $\alpha \in [0,\eps]$ if $\eps$ is sufficiently small.  It follows that~\eqref{eq:summand_asymptotic} holds uniformly in $z\in K$ and $\alpha\in [0, 1]$.
\end{proof}

We now  claim that the largest exponent wins in~\eqref{eq:heat_flow_applied_to_P_n_with_general_g} and  that, in particular, the random factors $\xi_k$ \emph{prevent} the exponentially growing terms $\mathbf c_{k;n}(z)$ from canceling each other ``too much'' in  the sense that
\begin{align}\label{eq:stoch_convergence}
\frac 1 {n} \log \left|\mathbf G_n(z)\right|
=
\frac 1 {n} \log \left| \sum_{k=0}^n \xi_k \mathbf c_{k;n}(z)\right|
\toprobab  \sup_{\alpha\in [0,1]} f_t(\alpha, z)
\end{align}
for each fixed $z\in \C\backslash[-2\sqrt{t},2\sqrt{ t}]$.
Note that the right-hand side is finite since $f_t(\alpha, z)$ is continuous in $\alpha \in [0,1]$.
More precisely, \eqref{eq:stoch_convergence} follows from  Lemma \ref{lem:Laplace} below in which we choose $\mathbf c_{k;n}\coloneqq\mathbf c_{k;n}(z)$ and $f(\alpha)=f_t(\alpha,z)$. Note that its condition~\eqref{eq:lem_Laplace_ass} corresponds to~\eqref{eq:summand_asymptotic}.

It remains to show that the pointwise convergence \eqref{eq:stoch_convergence} can be leveraged to  $L^1_{\mathrm{loc}}$ convergence, implying weak convergence of their Laplacians.
By stochastic convergence \eqref{eq:stoch_convergence} and Remark~\ref{rem:lem}, for each subsequence $n_j^{-1} \log |\mathbf G_{n_j}(z)|$, there exists a subsubsequence, which we denote by $m_j^{-1} \log |\mathbf G_{m_j}(z)|$, for which \eqref{eq:stoch_convergence} holds $\P$-almost surely.
In other words, for all $z\in \C\backslash [-2\sqrt{t},2\sqrt{ t}]$ there exists an event $\Omega^{(z)}$ with $\P(\Omega^{(z)})=1$ such that for all $\omega\in\Omega^{(z)}$ the above subsequential $\P$-a.s.~ convergence holds (along a subsequence which does not  depend on $z$). By Fubini's theorem applied to the indicator function of $\{(z,\omega)\in\C\times\Omega: \lim_{j} \frac 1 {m_j} \log \left|\mathbf G_{m_j}(z)\right|\neq  U_t(z) + g(1)\}$, this implies  the existence of an event $\Omega_0$ with $\P(\Omega_0)=1$ such that for all $\omega\in\Omega_0$ there exists a Lebesgue negligible set $V_\omega\subset \C$ such that
\begin{align}\label{eq:logpot_as_convergence}
\lim_{j\to\infty} \frac 1 {m_j} \log \left|\mathbf G_{m_j}(z)\right|= \sup_{\alpha\in [0,1]} f_t(\alpha, z) = U_t(z) + g(1)\quad\text{ for }z\in V_\omega^c.
\end{align}
Since the coefficients $\xi_k$ are non-degenerate, there exists $\Omega_1$ with $\P(\Omega_1)=1$ such that for all $\omega\in\Omega_1$ it holds $\mathbf G_n(z)\not\equiv 0$ for $n$ sufficiently large, hence $\frac 1 n \log |\mathbf G_n(z)|$ is a subharmonic function.

Moreover, by \cite[Lemma 4.4]{KZ14}, the existence of the logarithmic moment is equivalent to sub-exponential growth of i.i.d.\ random variables. More precisely, for any $\varepsilon>0$ there is an a.s.\ finite random variable $M$ such that $|\xi_k|\le M \eee^{\varepsilon k}$ for all $k\in \N_0$.
Together with \eqref{eq:summand_asymptotic} it follows there exists an event $\Omega_2$ with $\P(\Omega_2)=1$ such that
\begin{align}
\vert \mathbf G_{n}(z) \vert\le M \sum_{k=0}^{n} \eee^{\varepsilon k}|\mathbf c _{k;n}(z) |
&\le M\eee^{\varepsilon n} \sum_{k=0}^{n }\exp(n f_t(k/n,z )+\varepsilon n)\nonumber\\
&\le
M\exp(nU_t(z)+ng(1) +3\varepsilon n)\label{eq:log_pot_upperbound}
\end{align}
for all $\omega\in \Omega_2$ and uniformly in $n>n_0$ (where $n_0$ is sufficiently large) and  in $z$ on compact subsets of $\C\backslash [-2\sqrt t ,+2\sqrt t ]$.
From now on, we work with a fixed $\omega\in\Omega_0\cap \Omega_1\cap \Omega_2$. 
The family $(m_j^{-1} \log |\mathbf G_{m_j}(z)|)_{j\in \N}$ is a family of subharmonic functions on  $\C\backslash [-2\sqrt t ,+2\sqrt t ]$ bounded from above by $U_{t}(\cdot)+g(1)+4\varepsilon$ \footnote{Note that due to the logarithmic singularities, the functions are not bounded from below, but this does not harm the argument that will follow. Alternatively, one can deal with these singularities and leverage the pointwise convergence to weak convergence by approximating the integral over test functions by a randomly shifted lattice (see \cite[Lemma 3.2]{jalowy}) or Monte Carlo approximation (see \cite[\S 4]{GJ21}).}
 and by \cite[Theorem 4.1.9]{hormander1}, any family of upper-bounded subharmonic functions is either converging to $-\infty$ or contains a further subsequence converging in  $L^1_{\mathrm{loc}}$ (hence in a distributional sense). The former is excluded by \eqref{eq:logpot_as_convergence} and the definition of $\Omega_1$. For simplicity, we denote the subsequence by $\frac 1 n \log |\mathbf G_{n}(z)|$ and observe that it can only converge to $U_t(z) + g(1)$ by \eqref{eq:logpot_as_convergence}. Let $\varphi\in\mathcal C_c^\infty(\C)$ be a compactly supported smooth  test function.
Using  \eqref{eq:distr_Poisson_finite_n}, the definition of the Laplacian $\Delta$ on the space of distributions and the $L^1_{\mathrm{loc}}$-convergence implies
\begin{multline*}
\int_{\C} \varphi d\llbracket \eee^{-\frac t{2n} \partial_z^2} P_n\rrbracket
=
\int_{\C} \Delta \varphi(z) \frac 1 {2\pi n} \log |\mathbf G_{n}(z)| \dint z \dint \bar z
\\\ton
\int_{\C}\Delta\varphi (z) \frac 1 {2\pi}(U_{t}(z)+g(1)) \dint z \dint \bar z
=
\int_{\C}\varphi(z) \nu_{t}(\dint z)
\end{multline*}
as $n\to\infty$; which means that $\llbracket \eee^{-\frac t{2n} \partial_z^2} P_n\rrbracket $ converges vaguely to $\nu_t \coloneqq \frac 1 {2\pi} \Delta U_t$ (which is a Radon measure by~\cite[Theorem~3.7.2]{ransford} since $U_{t}$ is subharmonic by Proposition~\ref{prop:log_potential_U_t_cont}). Note that $\nu_t(\C)\leq 1$ since  $\nu_t$ is a vague limit of probability measures.

Let us now show that $U_t(z)$ coincides with the logarithmic potential $W_t(z) \coloneqq \int_\C \log |z-y| \, \nu_t (\dd y)$ of $\nu_t$. Since $U_t$ is harmonic outside some large disk by Proposition~\ref{prop:log_potential_large_z}, $\nu_t$ is a measure with compact support. By Weyl's lemma~\cite[Lemma~3.7.10]{ransford}, the function  $U_t(z)-W_t(z)$ is harmonic. As $|z|\to\infty$, we have  $W_t(z) = \nu_t(\C) \log |z| + O(1/|z|)$ by~\cite[Theorem~3.1.2]{ransford} and $U_t(z) = \log |z| + O(1/|z|)$ (since $U_t$ coincides with the log-potential of the Wigner law $\mathsf{sc}_t$ for large $|z|$ by Proposition~\ref{prop:log_potential_large_z}). It follows from these formulas together with $\nu_t(\C)\leq 1$ that  the harmonic function $U_t(z)-W_t(z)$ is bounded below and hence is constant by Liouville's theorem~\cite[Corollary 1.3.2]{ransford}. We conclude that $\nu_t(\C) = 1$ and $U_t(z) = W_t(z)$, which proves~\eqref{eq:log_potential_limit}.

Now that we know that the limit measure $\nu_{t}$ is a probability measure, the vague convergence can be lifted to weak convergence.
So, for $\P$-almost all $\omega$,  $\llbracket \eee^{-\frac t{2n} \partial_z^2} P_n\rrbracket $ converges weakly to $\nu_t$.
Since we have been working with subsequences, we conclude that $\llbracket \eee^{-\frac t{2n} \partial_z^2} P_n\rrbracket $ (viewed as a random element with values in the space of finite measures on $\C$ endowed with the weak topology) converges to $\nu_{t}$ in probability. This proves Theorem~\ref{theo:main_general_g} for $t>0$.

For general $t\in\C\backslash\{0\}$, we consider $t=|t| \eee^{i\phi}\in\C$ and note that \eqref{eq:heat_flow_applied_to_P_n_with_general_g} can be rephrased as
\begin{align*}
\eee^{-\frac t{2n} \partial_z^2} P_n (z)
=\sum_{k=0}^n \xi_k \eee^{n g(k/n) + o(n)} \left(\frac t n\right)^{k/2} \He_k\left(\frac {\sqrt n z} {\sqrt t}\right)=\sum_{k=0}^n \xi_k \tilde{\mathbf c}_{k;n}(\eee^{-i\phi/2}z)
\end{align*}
for $\tilde {\mathbf c}_{k;n}(z)=\eee^{i\phi k /2}\eee^{n g(k/n)+o(n)}\big(\frac {|t|} n \big)^{k/2} \He_k\big(\frac{\sqrt n z}{\sqrt {|t|}}\big)$. Obviously, $\tilde{\mathbf c}_{k;n}(z)$ satisfies the same assumption \eqref{eq:lem_Laplace_ass} as for real $t$, i.e. $\frac 1 n \log|\tilde{\mathbf c}_{k;n}(z)|\to f_t(k/n,z)$. Hence, the logarithmic potential is given by
\begin{align*}
U_{t}(z)
&\coloneqq
\lim_{n\to\infty} \frac 1 {n} \log \left|\eee^{-\frac t{2n} \partial_z^2} P_n(z)\right| -  g(1)\\
&=
\lim_{n\to\infty} \frac 1 {n} \log \left|\eee^{-\frac {|t|}{2n} \partial_z^2} P_n(\eee^{-i\phi/2}z)\right|- g(1)
=
U_{|t|}(\eee^{-i\phi/2}z).
\end{align*}
Applying the Laplacian, we obtain the push-forward
\begin{align*}
\nu_{t}(z)=\frac{1}{2\pi}\Delta U_{t}(z)=\frac{\eee^{-i\phi}}{2\pi}\Delta U_{|t|}(\eee^{-i\phi/2}z)=\nu_{|t|}(\eee^{-i\phi/2}z)
\end{align*}
and the representation of the function $f_t(\alpha,z)$ follows from the case of real $t$.

\subsection{Laplace's method for random sums}
In this section we study random sums, such as polynomials \eqref{eq:heat_flow_applied_to_P_n_with_general_g} for fixed argument $z\in\C$, where the summation consists of independent coefficients with an exponential behavior as $n\to\infty$. In this case oscillatory cancellations cannot happen and the asymptotics are determined by the term with largest exponent.

\begin{lemma}\label{lem:Laplace}
Let $\xi_0,\xi_1,\dots$ be a sequence of non-deterministic complex i.i.d.~random variables with finite logarithmic moments, that is $\E\log(1+|\xi_0|)<\infty$. Let $(\mathbf c_{k;n})_{0\leq k\le n,n\in\N}$ be an array of complex numbers and $f:[0,1]\to \R$ be a continuous function.
If
\begin{align}\label{eq:lem_Laplace_ass}
\frac 1 n\log|\mathbf c_{k;n} |=f(k/n )+o(1)
\end{align}
as $n\to\infty$ uniformly in $0\leq k\leq  n$, then $\mathbf G_n \coloneqq\sum_{k=0}^{n}\xi_k \mathbf c_{k;n} $ satisfies
\begin{align}\label{eq:lem_Laplace}
\frac 1 n \log\lvert\mathbf G_n \rvert \toprobab  U \coloneqq \sup_{\alpha \in[0,1]}f(\alpha).
\end{align}
\end{lemma}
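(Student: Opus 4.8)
The plan is to establish two bounds, which together give \eqref{eq:lem_Laplace}: the almost sure inequality $\limsup_{n\to\infty}\frac1n\log|\mathbf G_n|\le U$, and $\P\big(\tfrac1n\log|\mathbf G_n|<U-\eps\big)\to 0$ for every $\eps>0$. Since $f$ is continuous on the compact interval $[0,1]$, the supremum $U=\sup_{\alpha\in[0,1]}f(\alpha)$ is finite and is attained at some $\alpha^*\in[0,1]$. Throughout, write the uniform error in \eqref{eq:lem_Laplace_ass} as $\rho_n\to 0$, so that $|\mathbf c_{k;n}|=\eee^{n(f(k/n)+\rho_n)}$ uniformly in $k$.

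\emph{Upper bound.} Since $\E\log(1+|\xi_0|)<\infty$, a Borel--Cantelli argument (exactly as in \cite[Lemma~4.4]{KZ14}) shows that for every $\varepsilon>0$ there is an a.s.\ finite random variable $M$ with $|\xi_k|\le M\eee^{\varepsilon k}$ for all $k\in\N_0$. Combining this with $|\mathbf c_{k;n}|\le\eee^{n(U+\rho_n)}$ gives
$$
|\mathbf G_n|\le M\sum_{k=0}^n\eee^{\varepsilon k}|\mathbf c_{k;n}|\le M(n+1)\eee^{\varepsilon n}\eee^{n(U+\rho_n)},
$$
hence $\limsup_{n\to\infty}\frac1n\log|\mathbf G_n|\le U+\varepsilon$ almost surely; letting $\varepsilon\downarrow 0$ finishes this direction.

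\emph{Lower bound (the crux).} Fix $\eps>0$. By continuity of $f$, choose $\delta>0$ with $f(\alpha)\ge U-\eps/3$ for all $\alpha\in[\alpha^*-\delta,\alpha^*+\delta]\cap[0,1]$, and put $J_n:=\{k\in\{0,\dots,n\}:k/n\in[\alpha^*-\delta,\alpha^*+\delta]\}$, so that $|J_n|\ge\delta n/2$ and $|\mathbf c_{k;n}|\ge\eee^{n(U-\eps/2)}$ for all $k\in J_n$, once $n$ is large. Let $Q(r):=\sup_{w\in\C}\P(|\xi_0-w|\le r)$ be the Lévy concentration function of $\xi_0$; since $\xi_0$ is non-degenerate, $Q(r)\downarrow\sup_{w}\P(\xi_0=w)<1$ as $r\downarrow 0$, so there exist $r_1>0$ and $q_0<1$ with $Q(r_1)\le q_0$. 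With $\delta_n:=\eee^{n(U-\eps)}$, for every $k\in J_n$ and $n$ large we have $\delta_n/|\mathbf c_{k;n}|\le\eee^{-n\eps/2}<r_1$, hence the complex summand $\mathbf c_{k;n}\xi_k$ has concentration function at scale $\delta_n$ equal to $Q(\delta_n/|\mathbf c_{k;n}|)\le q_0$. Conditioning on $(\xi_k)_{k\notin J_n}$ (which is independent of $(\xi_k)_{k\in J_n}$) and applying a Kolmogorov--Rogozin-type anticoncentration inequality for $\C$-valued independent summands to $\{\mathbf c_{k;n}\xi_k\}_{k\in J_n}$, we obtain, uniformly over the value of the shift $\sum_{k\notin J_n}\mathbf c_{k;n}\xi_k$,
$$
\P\big(|\mathbf G_n|<\delta_n\big)\le\sup_{w\in\C}\P\Big(\Big|\sum_{k\in J_n}\mathbf c_{k;n}\xi_k-w\Big|<\delta_n\Big)\le\frac{C}{\sqrt{|J_n|(1-q_0)}}\le\frac{C'}{\sqrt n}\to 0 .
$$
Thus $\P\big(\tfrac1n\log|\mathbf G_n|<U-\eps\big)\to 0$, and together with the upper bound this proves $\tfrac1n\log|\mathbf G_n|\toprobab U$.

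The only genuine obstacle is the lower bound, and within it the anticoncentration step: a single dominant term $\xi_{k}\mathbf c_{k;n}$ does not suffice, because $\xi_0$ may have atoms and then $Q(r)$ need not tend to $0$; this is why one keeps the entire window $J_n$ of $\asymp n$ mutually independent terms and invokes a Kolmogorov--Rogozin inequality whose denominator grows with $|J_n|$. Since the summands $\mathbf c_{k;n}\xi_k$ are rescaled and \emph{rotated} copies of $\xi_0$, the bound cannot in general be reduced to, say, the real parts alone, so one should record (or cite) the version of the Kolmogorov--Rogozin/Esseen concentration inequality valid for $\mathbb R^2$-valued summands. Everything else --- the almost sure upper bound and the bookkeeping with the uniform $o(1)$ in \eqref{eq:lem_Laplace_ass}, the existence of the maximizer $\alpha^*$, and the conditioning identity --- is routine.
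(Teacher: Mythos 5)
Your proposal is correct and follows essentially the same route as the paper: the almost sure upper bound via the sub-exponential growth $|\xi_k|\le M\eee^{\varepsilon k}$ implied by the logarithmic moment condition, and the lower bound by isolating a window of $\asymp n$ indices near the maximizer of $f$ and applying the Kolmogorov--Rogozin (Esseen) anticoncentration inequality for complex-valued independent summands. Your conditioning on $(\xi_k)_{k\notin J_n}$ plays exactly the role of the paper's use of the subadditivity $Q(X+Y,r)\le Q(X,r)$ of the concentration function, so the two arguments are the same in substance.
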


According to the previous proof, we may think of $f$ to be the logarithmic potential of the limiting empirical zero distribution of the summands and $U$ to be the limiting log-potential of the zeros of $\mathbf G_n(z)$, up to additive constants.

The proof of Lemma \ref{lem:Laplace} is an adjustment of the ideas in \cite[\S 4.3-4.4]{KZ14}. As we will see below, the upper bound of \eqref{eq:lem_Laplace} holds $\P$-a.s.~and simply follows from taking the supremum in $n\in\N$. The lower bound however, crucially depends on the coefficients to be random (i.e.\ non-deterministic). In particular, we will use the Kolmogorov--Rogozin inequality below, which states that sums of random variables with high probability do not concentrate anywhere.

For a complex random variable $X$, define its concentration function
\begin{align}\label{eq:conc_fct_def}
Q(X,r)=\sup_{z\in\C}\P(|X-z|<r),\quad r>0.
\end{align}
It is easy to see that for all $r,a>0$
\begin{align}\label{eq:conc_fct_scale}
Q(aX,r) = Q(X,r/a).
\end{align}
Also, if $X$ and $Y$ are independent, then
\begin{align}\label{eq:conc_fct_sum}
Q(X+Y,r)\le Q(X,r).
\end{align}

The following anti-concentration result  can be found in \cite[(6.7)]{Esseen68}, see also \cite{Kolm58}.

\begin{proposition}[Kolmogorov--Rogozin]\label{prop:KolmRog}
There exists an absolute constant $c>0$ such that for all $n\in\N$,  independent complex random variables $X_1,\dots,X_n$ and $r>0$ we have
\begin{align*}
Q\Big(\sum_{k=1}^n X_k,r\Big)\le c\left(\sum_{k=1}^n\big(1-Q(X_k,r)\big)\right)^{-1/2}.
\end{align*}
\end{proposition}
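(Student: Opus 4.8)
The statement is the classical Kolmogorov--Rogozin anti-concentration inequality, and the plan is to reprove it by the Fourier-analytic method of Esseen. Identify $\C$ with $\R^2$. First, by the scaling relation \eqref{eq:conc_fct_scale} (replace each $X_k$ by $X_k/r$, so that $Q(X_k,r)=Q(X_k/r,1)$ and $Q(\sum X_k,r)=Q(\sum X_k/r,1)$) it suffices to treat $r=1$. Write $a_k:=1-Q(X_k,1)$ and $A:=\sum_{k=1}^{n}a_k$; since $Q\le 1$ the asserted bound is trivial when $A\le 1$ provided $c\ge 1$, so we may assume $A\ge 1$. The single external ingredient is \emph{Esseen's concentration inequality}: there is an absolute constant $c_1>0$ such that every probability measure $\mu$ on $\R^2$ with characteristic function $\varphi$ satisfies
\[
Q(\mu,1)\ \le\ c_1\int_{|t|\le 1}|\varphi(t)|\,\dint t .
\]
(One proves this by convolving $\mu$ with a fixed smooth probability density supported in a small ball, bounding $Q(\mu,1)$ by a constant times the supremum of the resulting density, and applying Fourier inversion; see \cite{Esseen68}.)

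Apply this to $S:=\sum_{k=1}^{n}X_k$, whose characteristic function factors as $\varphi_S=\prod_{k=1}^{n}\varphi_k$. The elementary bound $y\le \eee^{\,y-1}$ with $y=|\varphi_k(t)|^2\in[0,1]$ gives $|\varphi_k(t)|\le \eee^{-\frac12(1-|\varphi_k(t)|^2)}$, whence
\[
Q(S,1)\ \le\ c_1\int_{|t|\le 1}\eee^{-\frac12 H(t)}\,\dint t,\qquad H(t):=\sum_{k=1}^{n}(1-|\varphi_k(t)|^2)\ \ge\ 0 .
\]
The crucial per-summand estimate is a lower bound on the defect $1-|\varphi_k(t)|^2$. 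Introducing an independent copy $X_k'$ of $X_k$, one has $1-|\varphi_k(t)|^2=\E[1-\cos\langle t,X_k-X_k'\rangle]$, so integrating over the unit disk,
\[
\int_{|t|\le 1}(1-|\varphi_k(t)|^2)\,\dint t=\E[\psi(X_k-X_k')],\qquad \psi(y):=\pi-\int_{|t|\le 1}\cos\langle t,y\rangle\,\dint t .
\]
By the Riemann--Lebesgue lemma $\psi$ is continuous, $\psi(0)=0$, $\psi(y)>0$ for $y\neq 0$, and $\psi(y)\to\pi$ as $|y|\to\infty$; hence $c_2:=\inf_{|y|\ge 1}\psi(y)>0$. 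Since $\P(|X_k-X_k'|<1)=\E_{X_k'}\P(|X_k-X_k'|<1\mid X_k')\le Q(X_k,1)$ and $\psi\ge 0$, we get $\int_{|t|\le 1}(1-|\varphi_k(t)|^2)\,\dint t\ge c_2\P(|X_k-X_k'|\ge 1)\ge c_2 a_k$, and therefore $\int_{|t|\le 1}H(t)\,\dint t\ge c_2 A$.

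It remains to prove $\int_{|t|\le 1}\eee^{-H(t)/2}\,\dint t\le c_3 A^{-1/2}$, and \emph{this is the main obstacle}. The averaged bound $\int_{|t|\le 1}H\,\dint t\ge c_2 A$ alone is not sufficient, because the summand defects $1-|\varphi_k(t)|^2$ could in principle be large only on pairwise almost-disjoint small-$t$ sets, so that $H$ stays small on a region of non-negligible area. Rogozin's resolution uses that each $1-|\varphi_k(t)|^2=\E[1-\cos\langle t,X_k-X_k'\rangle]$ is a genuine \emph{characteristic-function} defect: heuristically, for $X_k-X_k'$ of small typical size the defect is quadratic near the origin, $1-|\varphi_k(t)|^2\asymp\E[\langle t,X_k-X_k'\rangle^2]$, whereas for large typical size it averages over $t$ in the disk to a positive constant; in both regimes the defects cannot all concentrate, and $\int_{|t|\le 1}\eee^{-H/2}\,\dint t$ turns out to be of the Gaussian order $A^{-1/2}$. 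I would make this rigorous following \cite{Esseen68}, splitting $X_k-X_k'$ at level $1$ and using on $\{|X_k-X_k'|\le 1\}$ that $|\langle t,X_k-X_k'\rangle|\le 1$ for $|t|\le 1$, hence $1-\cos\langle t,X_k-X_k'\rangle\ge \tfrac{2}{\pi^2}\langle t,X_k-X_k'\rangle^2$ there; the classical combinatorial route of \cite{Kolm58} reaches the same bound via a symmetrization together with the Littlewood--Offord--Erd\H{o}s anti-concentration estimate for Rademacher sums of plane vectors. Assembling the pieces gives $Q(\sum_{k=1}^{n}X_k,1)\le c_1c_3A^{-1/2}$ for $A\ge 1$, which together with the trivial case $A<1$ and the reduction $r\mapsto 1$ is exactly the claimed inequality.
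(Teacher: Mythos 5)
The paper does not prove this proposition at all: it is invoked as a known classical inequality with direct references to Esseen~\cite[(6.7)]{Esseen68} and Kolmogorov~\cite{Kolm58}, so there is no in-paper proof to compare against.

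Your proposal is a reasonable blueprint of the Esseen Fourier-analytic argument, and the preliminary reductions are all correct: the rescaling to $r=1$ via \eqref{eq:conc_fct_scale}, the trivial dispatch of $A\le 1$, Esseen's concentration lemma, the elementary $|\varphi_k(t)|\le \eee^{-\frac12(1-|\varphi_k(t)|^2)}$, the symmetrization identity $1-|\varphi_k(t)|^2=\E[1-\cos\langle t,X_k-X_k'\rangle]$, and the averaged lower bound $\int_{|t|\le 1}(1-|\varphi_k(t)|^2)\,\dint t\ge c_2\,a_k$ via the continuity and positivity of $\psi$. You also correctly identify that these facts by themselves are not enough: knowing $\int_{|t|\le 1}H\,\dint t\ge c_2 A$ controls only an average of $H$, which is compatible with $\int_{|t|\le 1}\eee^{-H/2}\,\dint t$ staying of order $1$, and hence does not yield the needed $A^{-1/2}$ decay.

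That, however, is precisely where the proof ends: the central quantitative estimate $\int_{|t|\le 1}\eee^{-H(t)/2}\,\dint t\le c_3 A^{-1/2}$ is \emph{asserted} rather than established. The sketch you give (split $X_k-X_k'$ at level $1$ and use $1-\cos x\ge\tfrac{2}{\pi^2}x^2$ for $|x|\le 1$ to force a quadratic growth of $H$ near the origin, and hence a Gaussian-type integral) points in the right direction and matches Esseen's scheme, but none of the ensuing case analysis, the choice of an effective covariance or direction $t$, or the final integration is carried out. As it stands the argument does not compile into a proof; it is a correct outline with a genuine, explicitly acknowledged gap at the decisive step. Given that the paper simply cites the result, filling that gap would require reproducing a nontrivial portion of \cite{Esseen68}, not merely a few more lines.
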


We are now ready to give the
\begin{proof}[Proof of Lemma \ref{lem:Laplace}]
Fix $\varepsilon>0$. Let us first show the upper bound of \eqref{eq:lem_Laplace}, which even holds $\P$-a.s., analogously as we have shown \eqref{eq:log_pot_upperbound}.
By \cite[Lemma 4.4]{KZ14}, the finiteness  of the logarithmic moment is equivalent to sub-exponential growth of i.i.d.\ random variables. More precisely, there is an a.s.\ finite random variable $M$ such that $|\xi_k|\le M \eee^{\varepsilon k}$ for all $k\in \N_0$.
 Together with assumption \eqref{eq:lem_Laplace_ass} it follows that for $n$ sufficiently large we have

\begin{align}
\vert\mathbf G_n \vert\le M \sum_{k=0}^{n} \eee^{\varepsilon k}|\mathbf c _{k;n} |\le M\eee^{\varepsilon n} \sum_{k=0}^{n }\exp[n f(k/n )+\varepsilon n]\le M\exp[nU +3\varepsilon n]\quad \P-\text{a.s.}\label{eq:lem_upperbound2}
\end{align}
for $n$ sufficiently large. This proves the upper bound.
Now let us turn to the lower bound, which we claim to be
\begin{align}\label{eq:lem_lowerbound}
\P\Big(\vert\mathbf G_n \vert< \eee^{n(U -3\varepsilon)}\Big)=\mathcal O (1/\sqrt n).
\end{align}
Since $U=\sup_{\alpha\in [0,1]} f(\alpha)$ and $f$ is continuous, there exists an interval $J=[\alpha^* -\delta,\alpha^* +\delta] \subset (0,1)$ such that $U \le f(\alpha )+\varepsilon$  for all $\alpha$ in $J$. Combined with our assumption \eqref{eq:lem_Laplace_ass}, this implies that
\begin{align*}
|\mathbf c_{k;n} |>\eee^{nU -2\varepsilon n}
\qquad
\text{ for all }
k\in \mathcal J_n\coloneqq[n\alpha^* -n\delta,n\alpha^* + n\delta] \cap \N,
\end{align*}
provided that $n$ is sufficiently large.
Define $a_{k,n}=\eee^{-nU +2\varepsilon n}\mathbf c_{k;n} $ and note that $|a_{k,n}|>1$ for $k\in\mathcal J_n$. We split our sum $\mathbf G_n$  into $\mathbf G_{n,1}=\sum_{k\in\mathcal J_n} a_{k;n}\xi_k$ and $\mathbf G_{n,2}=\sum_{k\not\in\mathcal J_n} a_{k,n}\xi_k$. We use \eqref{eq:conc_fct_sum} to get rid of $\mathbf G_{n,2}$ and then apply Proposition \ref{prop:KolmRog} to obtain
\begin{align}
\P\Big(\vert\mathbf G_n \vert<\eee^{n(U -3\varepsilon)}\Big)&\le Q(\mathbf G_{n,1}+\mathbf G_{n,2},\eee^{-n\varepsilon})\nonumber\\
&\le Q(\mathbf G_{n,1},\eee^{-n\varepsilon})\nonumber\\
&\le c\left(\sum_{k\in\mathcal J_n}\big(1-Q(a_{k,n}\xi_k,\eee^{-n\varepsilon})\big)\right)^{-1/2}\nonumber\\
&\le  c\left(\sum_{k\in\mathcal J_n}\big(1-Q(\xi_k,\eee^{-n\varepsilon})\big)\right)^{-1/2},\label{eq:ineq_concentration_functions}
\end{align}
where the last step follows from \eqref{eq:conc_fct_scale} and monotonicity of $Q$ in $r$. Since the random variables $\xi_k$ are non-deterministic, we may choose $n$ so large that $Q(\xi_0,\eee^{-n\varepsilon})\le \tilde c$ for some $\tilde c <1$ and arrive at our claim
\begin{align*}
\P\Big(\vert\mathbf G_n \vert<\eee^{n(U -3\varepsilon)}\Big)&\le c/\sqrt{(1-\tilde c)|\mathcal J_n|} = \mathcal O (1/\sqrt n),
\end{align*}
where we used that $|\mathcal J_n| \geq \delta n$ for sufficiently large $n$.
Ultimately, the stochastic convergence in Lemma \ref{lem:Laplace} follows from combining \eqref{eq:lem_lowerbound} and \eqref{eq:lem_upperbound2}.
\end{proof}

\begin{remark}\label{rem:lem}
From \eqref{eq:lem_lowerbound} and the Borel--Cantelli Lemma, we deduce that the conclusion of Lemma~\ref{lem:Laplace} holds $\P$-a.s.\ along any subsequence $(m_j)_{j\in \N}$ such that $m_j>j^3$, that is
$$
\frac 1 {m_j} \log\lvert\mathbf G_{m_j} \rvert \toasj  U = \sup_{\alpha \in[0,1]}f(\alpha).
$$
\end{remark}

\begin{remark}
The only instance in which we made use of $\xi_k$ having \emph{identical} distribution is after \eqref{eq:ineq_concentration_functions}. Inspecting the proof, shows that Theorem \ref{theo:main_general_g} remains valid for $\xi_k$ having different distributions with uniform bound on their logarithmic moment and concentration function.
\end{remark}

\section{Proofs: Properties of \texorpdfstring{$\nu_t$}{nu t}}
\label{sec:proof_nu_t}

\subsection{Calculations with logarithmic potentials}\label{subsec:log_pots}
Let us collect some important identities for the upcoming considerations. Recall from~\eqref{eq:Psi_def} that
\begin{equation}\label{eq:Psi_def_rep}
\Psi(z) = \frac 14 \Re\left(z^2 - z \sqrt{z^2 - 4}\right)  + \log\left|\frac{z + \sqrt{z^2 - 4}}{2}\right| -\frac 12,
\qquad
z\in \C.
\end{equation}
It follows that $\Psi(z) = \Re \psi(z)$, where $\psi(z)$ is the multivalued analytic function
\begin{equation}\label{eq:psi_def_small}
\psi(z) = \frac 14 \left(z^2 - z \sqrt{z^2 - 4}\right)  + \log\left(\frac{z + \sqrt{z^2 - 4}}{2}\right) -\frac 12,
\qquad
z\in \C\backslash[-2,2].
\end{equation}
Because of the presence of the logarithm, the branches of $\psi(z)$ differ by $2\pi \ii n$ with  $n\in \Z$. It follows that $\psi'(z)$ is a \emph{univalued} analytic function on $\C\backslash[-2,2]$, and in fact it is easy to check that
$$
\psi'(z) =  \frac 12 \left( z - \sqrt{z^2 - 4}\right), \qquad \lim_{|z|\to\infty}\psi'(z) = 0.
$$
Also, it follows from~\eqref{eq:Psi_def_rep} that $\Psi(z) = \Re \psi(z):\C \to\R$ is a well-defined, continuous function which is harmonic outside $[-2,2]$. For $x\in [-2,2]$,  the term $\sqrt{x^2-4}$ is purely imaginary and $\Psi(x)$ simplifies to
\begin{equation}\label{eq:Psi_for_x_real_smaller_2}
\Psi(x)=\frac{1}{4}\Re(x^2-x\sqrt{x^2-4})+\log\left\lvert\frac{x+\sqrt{x^2-4}}{2}\right\rvert -\frac 12=\frac{x^2}{4}-\frac 1 2,
\qquad x\in [-2,2].
\end{equation}
As explained above, $\Psi(z)$ is the logarithmic potential of the semicircle distribution $\mathsf{sc}_1$ on $[-2,2]$, and we have $\Delta\Psi=2\pi\cdot\mathsf{sc}_1$ in the distributional sense. The  Stieltjes transform of  $\mathsf{sc}_1$ can be rephrased as
\begin{align}\label{eq:Stieltjes_semicircle}
\int_{-2}^{2} \frac{\mathsf{sc}_{1}(\dd y)}{z-y}
=2\partial_z \Psi(z) = \partial_z (\psi(z) + \psi(\bar z))= \psi'(z) = \frac 12 \left( z - \sqrt{z^2 - 4}\right),
\qquad
z\in \C\backslash[-2,2].
\end{align}
Let us finally mention one more consequence of~\eqref{eq:Psi_def_rep}:  for all $u\in \C$ with  $|u|>1$ we have
$$
\Psi\left( u + \frac 1u\right) = \frac 12 \Re \frac 1 {u^2} + \log |u|.
$$
Indeed, for $z=u+\frac 1u \in \C \backslash [-2,2]$ we have $z^2 - 4 = (u-\frac 1u)^2$ and hence $(z + \sqrt {z^2-4})/2 = (u + \frac 1u + u-\frac 1u)/2 = u$, where the sign of the square root was chosen to fulfill our convention that $(z + \sqrt {z^2-4})/2$ should have absolute value $>1$.

\begin{lemma}\label{lem:f_der_in_alpha}
Let $g:[0,1]\to\R$  be continuously differentiable on $(0,1)$.  For all $t>0$ and $z\in \C$, the function $f_t(\alpha, z)$ is continuously differentiable in $\alpha\in (0,1)$ and  we have
\begin{equation}\label{eq:f_alpha_z_der_in_alpha}
\frac{\partial}{\partial \alpha} f_t(\alpha, z) = g'(\alpha) + \log \left|\frac{z}{2} + \sqrt{\frac{z^2}{4} - t\alpha}\right|.
\end{equation}
\end{lemma}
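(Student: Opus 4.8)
The plan is to differentiate $f_t(\alpha,z)=g(\alpha)+\frac{\alpha}{2}\log(t\alpha)+\alpha\,\Psi\!\left(z/\sqrt{t\alpha}\right)$ term by term in $\alpha$ (using $|t|=t$ for $t>0$, so that $\alpha\log\sqrt{|t|\alpha}=\frac{\alpha}{2}\log(t\alpha)$). The first two summands are immediate: $g$ is differentiable by hypothesis, and $\partial_\alpha\!\left(\frac{\alpha}{2}\log(t\alpha)\right)=\frac12\log(t\alpha)+\frac12$. Everything nontrivial sits in the last summand, which I would first handle at an $\alpha$ for which $w:=z/\sqrt{t\alpha}\notin[-2,2]$. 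On a neighbourhood of such a $w$ we may write $\Psi=\Re\psi$ for a fixed holomorphic branch $\psi$ as in~\eqref{eq:psi_def_small}, whose derivative $\psi'(w)=\frac12\!\left(w-\sqrt{w^2-4}\right)$ is single-valued. Since $\alpha\mapsto w(\alpha)=z(t\alpha)^{-1/2}$ is smooth with $w'(\alpha)=-w/(2\alpha)$, the chain rule and product rule give $\partial_\alpha\!\left(\alpha\Psi(w)\right)=\Psi(w)+\alpha\,\Re\!\left(\psi'(w)w'(\alpha)\right)=\Psi(w)-\frac12\Re\!\left(w\,\psi'(w)\right)$.

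Next I would substitute $w\psi'(w)=\frac12\!\left(w^2-w\sqrt{w^2-4}\right)$ together with the explicit formula~\eqref{eq:Psi_def_rep} for $\Psi(w)$; the two occurrences of $\frac14\Re\!\left(w^2-w\sqrt{w^2-4}\right)$ cancel and one is left with $\partial_\alpha\!\left(\alpha\Psi(w)\right)=\log\!\left|\tfrac{w+\sqrt{w^2-4}}{2}\right|-\frac12$. Summing the three contributions, the constants $\pm\frac12$ cancel, leaving $g'(\alpha)+\frac12\log(t\alpha)+\log\!\left|\tfrac{w+\sqrt{w^2-4}}{2}\right|$. To reach the claimed expression I would absorb $\frac12\log(t\alpha)=\log\sqrt{t\alpha}$ into the last logarithm and use the branch identity $\sqrt{t\alpha}\,\sqrt{w^2-4}=\sqrt{z^2-4t\alpha}$, valid because both sides are holomorphic in $z$ on $\C\setminus[-2\sqrt{t\alpha},2\sqrt{t\alpha}]$, have the same square, and behave like $z$ at infinity, hence coincide. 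This turns $\sqrt{t\alpha}\cdot\tfrac{w+\sqrt{w^2-4}}{2}$ into $\tfrac{z+\sqrt{z^2-4t\alpha}}{2}=\tfrac{z}{2}+\sqrt{\tfrac{z^2}{4}-t\alpha}$ and yields~\eqref{eq:f_alpha_z_der_in_alpha} at every $\alpha$ with $z/\sqrt{t\alpha}\notin[-2,2]$.

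Finally I would treat the exceptional values of $\alpha$, which occur only when $z\in\R$ and are precisely those with $w=z/\sqrt{t\alpha}\in[-2,2]$, i.e. $\alpha\ge z^2/(4t)$. There $\Psi(w)=w^2/4-\frac12=\frac{z^2}{4t\alpha}-\frac12$, so $\alpha\Psi(w)=\frac{z^2}{4t}-\frac{\alpha}{2}$ has $\alpha$-derivative $-\frac12$; combined with the first two terms this gives $g'(\alpha)+\frac12\log(t\alpha)$, which agrees with the right-hand side of~\eqref{eq:f_alpha_z_der_in_alpha} because $\sqrt{z^2/4-t\alpha}$ is then purely imaginary and $\left|\tfrac{z}{2}+\sqrt{\tfrac{z^2}{4}-t\alpha}\right|=\sqrt{t\alpha}$. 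To conclude that $f_t(\cdot,z)\in C^1\!\left((0,1)\right)$ I would note that the right-hand side of~\eqref{eq:f_alpha_z_der_in_alpha} depends continuously on $\alpha$ on all of $(0,1)$ (the map $\alpha\mapsto\left|\tfrac{z}{2}+\sqrt{\tfrac{z^2}{4}-t\alpha}\right|$ being continuous across the transition point $\alpha=z^2/(4t)$ as well), while $f_t(\cdot,z)$ is continuous on $(0,1)$ and classically differentiable off that single point with the stated derivative; the elementary criterion that a continuous function whose derivative exists off a point and extends continuously there is differentiable there upgrades this to $C^1$.

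The only genuinely delicate point --- the anticipated main obstacle --- is the bookkeeping with the square-root branches, in particular justifying $\sqrt{t\alpha}\,\sqrt{w^2-4}=\sqrt{z^2-4t\alpha}$ under the convention of Remark~\ref{rem:Psi}, and handling the locus $z/\sqrt{t\alpha}\in[-2,2]$, where $\Psi$ ceases to be the real part of a holomorphic function, so that the chain-rule computation must be replaced by the direct one and patched to the generic case across $\alpha=z^2/(4t)$ using only the continuity (not $C^2$-smoothness) of $\Psi$ along the real axis.
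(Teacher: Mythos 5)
Your proposal is correct and follows essentially the same route as the paper's proof: term-by-term differentiation, the chain rule applied to $\alpha\Psi(z/\sqrt{t\alpha})$ via the local holomorphic branch $\psi$ with $\psi'(w)=\tfrac12(w-\sqrt{w^2-4})$, the cancellation leaving $\Re(\psi(w)-\tfrac12 w\psi'(w))=\log\lvert\tfrac{w+\sqrt{w^2-4}}{2}\rvert-\tfrac12$, the direct computation from $\Psi(x)=x^2/4-1/2$ on the interval, and the matching of one-sided derivatives at $\alpha=z^2/(4t)$. Your extra care with the branch identity $\sqrt{t\alpha}\,\sqrt{w^2-4}=\sqrt{z^2-4t\alpha}$ and the explicit $C^1$-patching criterion are welcome refinements of steps the paper treats more tersely, but they do not constitute a different argument.
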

\begin{proof}
It follows from~\eqref{eq:f_def} that
\begin{align*}
\frac{\partial}{\partial \alpha} f_t(\alpha,z)
&=
g'(\alpha)
+
\frac 12 \log (t\alpha)  + \frac 12  +  \Re \frac{\partial}{\partial \alpha} \left(\alpha \psi\left(\frac{z}{\sqrt{t\alpha}}\right)\right)
\\
&=
g'(\alpha)
+
\frac 12 \log (t\alpha)  + \frac 12  +  \Re \left(\psi\left(\frac{z}{\sqrt{t\alpha}}\right) - \frac 12 \cdot \frac z {\sqrt{t\alpha}} \cdot \psi'\left(\frac{z}{\sqrt{t\alpha}}\right)\right).
\end{align*}
Suppose first that $z\in \C\backslash[-2\sqrt{t\alpha},2\sqrt{t\alpha}]$.  From~\eqref{eq:psi_def_small} and~\eqref{eq:Stieltjes_semicircle} it follows that
$$
\Re\left(\psi(x) -  \frac 12 x \psi'(x)\right) = \log \left|\frac{x + \sqrt{x^2 - 4}}{2}\right| -\frac 12,
\qquad
x\in \C\backslash[-2,2],
$$
which yields~\eqref{eq:f_alpha_z_der_in_alpha}. If $z\in (-2\sqrt{t\alpha},2\sqrt{t\alpha})$, then~\eqref{eq:Psi_for_x_real_smaller_2} implies $\frac{\partial}{\partial \alpha} f_t(\alpha, z) = g'(\alpha) + \frac 12 \log (t\alpha)$, and~\eqref{eq:f_alpha_z_der_in_alpha} is true.  Finally, if $z=\pm 2\sqrt{t\alpha}$, the above shows that the left and right derivatives of $f_t(\alpha, z)$ in $\alpha$ exist and are equal, hence~\eqref{eq:f_alpha_z_der_in_alpha} continues to hold in this case.
\end{proof}

\subsection{Properties of the logarithmic potential}\label{logPotProp.sec}
Recall from~\eqref{eq:f_def} that $f_t(\alpha,z) = g(\alpha)+ \alpha\log \sqrt{|t|\alpha} + \alpha \Psi( z/\sqrt{t\alpha})$ and we claimed in Proposition \ref{prop:log_potential_U_t_cont} that $U_t(z)=\sup_{\alpha \in [0,1]} f_t(\alpha,z) -g(1)$ is continuous and subharmonic.

\begin{proof}[Proof of Proposition \ref{prop:log_potential_U_t_cont}]
Let $t>0$.
For each $\alpha \in (0,1]$, the function $z\mapsto f_t(\alpha, z)$ is continuous. To prove that $U_t(z)=\sup_{\alpha \in [0,1]} f_t(\alpha,z) -g(1) $ is continuous, it suffices to check that the family $(f_{t}(\alpha, \cdot))_{\alpha\in [0,1]}$ is equicontinuous if $z$ is restricted to an arbitrary compact set $K\subset \C$. The function $\Psi(z)$ is Lipschitz on $K$, that is $|\Psi(z_1) - \Psi(z_2)| \leq C_K |z_1-z_2|$ for all $z_1,z_2 \in K$. (Indeed, $\partial_z \Psi$ exists and is bounded on $K\backslash [-2,2]$, as follows from the explicit formula~\eqref{eq:Stieltjes_semicircle}. The same applies to $\partial_{\overline{z}} \Psi = \overline{\partial_z \Psi}$ since $\Psi$ is real-valued. It follows that $(\partial/ \partial x)\Psi$ and $(\partial/\partial y)\Psi$ are bounded on $K\backslash [-2,2]$. Since $\Psi$ is continuous, we can write $\Psi(z_1)-\Psi(z_2)$ as an integral of directional derivative of $\Psi$, which is bounded a.e.,\ and the claim follows.)  It follows from the Lipschitz property of $\Psi$ that for all $\alpha \in (0,1]$,
$$
|f_t(\alpha,z_1) - f_t(\alpha,z_2)|
=
\left|\alpha \Psi\left(\frac{z}{\sqrt{t\alpha}}\right) - \alpha \Psi\left(\frac{z}{\sqrt{t\alpha}}\right) \right|
\leq C_K \alpha^{1/2} t^{-1/2} |z_1-z_2|,
$$
which proves equicontinuity.
Being a logarithmic potential of the Wigner law, the function $\Psi(z)$ is subharmonic on $\C$; see~\cite[Theorem~3.1.2]{ransford}.  Hence, $z\mapsto f_t(\alpha, z) -g(1)$ is subharmonic for every $\alpha\in [0,1]$. Being a pointwise maximum of these functions and continuous, the function $U_{t}(z)$ is subharmonic as well by~\cite[Theorem~3.4.2]{ransford}.
\end{proof}

\begin{proof}[Proof of Proposition \ref{prop:log_potential_large_z}]
Suppose for simplicity that $g$ is differentiable.  By Lemma~\ref{lem:f_der_in_alpha},  for all $\alpha \in (0,1)$,
$$
\frac{\partial}{\partial \alpha} f_t(\alpha, z) = g'(\alpha) + \log \left|\frac{z}{2} + \sqrt{\frac{z^2}{4} - t\alpha}\right|.
$$
Our assumptions on $g$ (including its concavity and $g'_-(1) > -\infty$)  imply that $g'(\alpha)$ is bounded below on $(0,1)$. It follows that, for sufficiently large $|z|$, the derivative is positive and the function $\alpha \mapsto f_t(\alpha, z)$ is strictly increasing. Hence, its supremum is attained at $\alpha = 1$. Inserting this value into~\eqref{eq:log_potential_limit} gives the first claim. If $g$ is not differentiable, one argues similarly using one-sided derivatives.

The statement about the analytic moments then follows from the first claim, since the analytic moments appear as the coefficients in the Laurent expansion of the Cauchy transform---which is obtained by differentiating the log potential---near infinity. 
\end{proof}

\subsection{Collapse to the Wigner law: Proof of Theorem \ref{theo:collapse_to_wigner} and Proposition \ref{prop:t_sing<t_wig}}
\begin{proof}[Proof of Theorem \ref{theo:collapse_to_wigner}]
We are going to prove that for $t\ge t_{\mathrm{Wig}}$ and all $z\in\C$ the maximum of $f_t(\cdot,z)$ is attained at $\alpha=1$, that is $f_t(\alpha,z) \leq f_t(1,z)$ for all $\alpha \in (0,1)$. This would imply that $U_t(z) = f_t(1,z)- g(1)= \log \sqrt{t}+\Psi(z/\sqrt{t})$, which is the logarithmic potential of the semicircle distribution $\mathsf{sc}_t$ supported on $[-2\sqrt t , +2\sqrt t ]$; see  Section \ref{sec:Prop_log_pot}.    For  $t>0$ and by the definition of $f_t$, the inequality $f_t(\alpha,z) \leq f_t(1,z)$ is equivalent to the claim
\begin{align}\label{eq:comparison_alpha_to_1}
g(\alpha)-g(1)+(\alpha-1)\log\sqrt t+\alpha\log\sqrt{\alpha}\le \Psi\left(\frac{z}{\sqrt{t}}\right)-\alpha\Psi\left(\frac{z}{\sqrt{t\alpha}}\right)
\end{align}
for all $z\in\C$. Let as suppose for the moment that the minimum  of the right-hand side (considered as a function of $z$) is attained at $z=0$,\footnote{Note that if the maximizer of $f_t(\cdot,z)$ is a well-defined function $\alpha_t(z)$, then by \eqref{eq:alpha_0_measure} or \eqref{eq:alpha_t}, respectively, it as an cumulative distribution function of disks or ellipsoids, respectively. Hence, we may think of it as an ``increasing'' function along certain paths from $z=0$ to $z=\infty$ and this monotonicity makes it reasonable to guess that $\alpha_t(0)=1$ already implies $\alpha_t(z)\equiv 1$.} which we will explain below. Then, with $\Psi(0)=-1/2$ we can rephrase \eqref{eq:comparison_alpha_to_1} as
\begin{align*}
g(\alpha)-g(1)+\frac {\alpha}2\log(\alpha)+\frac {1-\alpha} 2 \le \frac {1-\alpha} 2 \log t.
\end{align*}
This is precisely our assumed bound
\begin{align*}
t\ge \exp \left( 2 \cdot \frac{g(\alpha)-g(1)}{1-\alpha}+\frac{\alpha}{1-\alpha}\log\alpha+1\right)
\end{align*}
for all $\alpha\in(0,1)$.

Conversely, if $0<t<t_{\mathrm{Wig}}$, then there exists an $\alpha\in (0,1)$ such that \eqref{eq:comparison_alpha_to_1} is violated at $z=0$ and, by continuity of $\Psi$, in a neighborhood of $z=0$. Hence, $U_t(z)$ is strictly larger than the logarithmic potential of the semicircle distribution $\mathsf{sc}_t$ in a neighborhood of $z=0$ and  we conclude that $\nu_t$ is not  $\mathsf{sc}_t$. On the other hand, the moments of $\nu_t$ agree with those of $\mathsf{sc}_t$ by Proposition~\ref{prop:log_potential_large_z} and since the latter distribution (being compactly supported) is uniquely determined by its moments (in the class of distributions on $\R$), we conclude that $\nu_t$ cannot be concentrated on $\R$.

In order to show that the right-hand side of \eqref{eq:comparison_alpha_to_1} is minimized at $z=0$, we shall analyze the auxiliary function
$$
h(z)= h(z;\alpha) \coloneqq\Psi(z)-\alpha \Psi(z/\sqrt {\alpha})
$$
for $0< \alpha<1$. Since $\Psi$ is a logarithmic potential of a probability distribution, but as can also be deduced directly from the definition of $\Psi$, we have  $\lim_{|z| \to\infty} {\Psi(z)}/{\log|z|}=1$. In view of $\alpha<1$, this implies that $h(z) \to +\infty$  as $|z|\to\infty$. Since $h$ is continuous on the whole plane,  the minimum of $h$ is attained.
By \eqref{eq:Stieltjes_semicircle}, we have for all $z\in \C\backslash [-2,2]$ that $h$ is differentiable and
$$\partial_z h(z)=\frac{1}{4}(z-\sqrt{z^2-4})-\frac{\sqrt{\alpha}}{4}(z/\sqrt{\alpha}-\sqrt{z^2/\alpha-4})=\sqrt{z^2-4}-\sqrt{z^2-4\alpha}\neq 0.$$
This implies that  $h$ has no critical points outside of $[-2,2]$ since at any such point we have $\frac{\partial}{\partial x} h(z)  = \frac{\partial}{\partial y} h(z) = 0$ implying that $\partial_z h(z) = 0$.
It follows that the minimum of $h$ is attained on $[-2,2]$.
Consequently, recalling~\eqref{eq:Psi_for_x_real_smaller_2}, we obtain
\begin{align*}
h(x)=\frac{x^2}{4}-\frac 1 2-\alpha\left(\frac{x^2}{4\alpha}-\frac 1 2\right)\equiv \frac{\alpha-1}{2},
\qquad
x\in[-2\sqrt{\alpha},+2\sqrt{\alpha}].
\end{align*}
By symmetry, it remains to study $x\in(2\sqrt{\alpha},2)$. On this interval the derivative is, by the identity $\frac{\partial}{\partial x} = 2 \Re \partial_z$ (valid for real-valued functions) and~\eqref{eq:Stieltjes_semicircle},
\begin{align*}
\frac{\dd}{\dd x} h(x)= \frac{\dd}{\dd x} \left(\frac{x^2}{4}-\frac 1 2 - \alpha \Psi(x/\sqrt {\alpha})\right) = \frac 1 2 x-\sqrt \alpha\Re\big(2(\partial_z \Psi)(x/\sqrt{\alpha})\big)=\frac 1 2 \sqrt{x^2-4\alpha}>0.
\end{align*}
Combining the above considerations, the claim $\inf_{z\in \C} h(z;\alpha) = h(0;\alpha) = (\alpha-1)/2$ follows.
\end{proof}

\begin{proof}[Proof of Proposition \ref{prop:t_sing<t_wig}]
Suppose that $\nu_0$ is such that $t_{\mathrm{sing}}= t_{\mathrm{Wig}}$. Our aim is to show that $\alpha_0(s) = \min (s^2/r^2,1)$, for some $r>0$ and all $s\in \C$.
Observe that
$$
t_{\mathrm{sing}} =  t_{\mathrm{Wig}} =\sup_{\alpha\in(0,1)}\exp \left( 2 \cdot \frac{g(\alpha)-g(1)}{1-\alpha}+\frac{\alpha}{1-\alpha}\log\alpha+1\right)
\geq \eee^{-2g'(1)}
$$
as can be seen by taking $\alpha \uparrow 1$. In view of the definition of $t_{\mathrm{sing}}$,
this implies
\begin{align}\label{eq:t_sing<t_wig}
\sup_{s \in  (\eee^{- g'(0)}, \eee^{- g'(1)})} \left\lvert \left(\frac {\alpha_0(s)}{s}\right)' \right\rvert
=
1 / t_{\mathrm{sing}}
\leq
\eee^{2g'(1)}.
\end{align}
Therefore,
\begin{align}\label{eq:contradiction}
\left\lvert \left(\frac {\alpha_0(s)}{s}\right)' \right\rvert\le \eee^{2g'(1)}
\qquad \text{ for all } s \in  (\eee^{- g'(0)}, \eee^{- g'(1)}).
\end{align}
By integration, it follows that for all $s\in (\eee^{- g'(0)}, \eee^{- g'(1)})$,
\begin{equation}\label{eq:contradiction_tech1}
\left|\frac{\alpha_0(s)}{s}\right|
=
\left|\int_{\eee^{-g'(0)}}^{s} \left(\frac {\alpha_0(y)}{y}\right)' \dd y \right|
\leq
\int_{\eee^{-g'(0)}}^{s} \left| \left(\frac {\alpha_0(y)}{y}\right)'\right| \dd y
\leq
(s- \eee^{-g'(0)}) \eee^{2g'(1)}.
\end{equation}
In particular, taking  $s\uparrow \eee^{-g'(1)}$ and recalling that $\alpha_0(\eee^{-g'(1)}) = 1$ gives
\begin{equation}\label{eq:contradiction_tech2}
1 \leq (\eee^{-g'(1)} - \eee^{-g'(0)}) \eee^{g'(1)}.
\end{equation}
It follows that  $\eee^{-g'(0)}=0$.
Suppose first that~\eqref{eq:contradiction} is strict for some $s_0\in (0, \eee^{- g'(1)})$.  Since $\alpha_0(s)$ is continuously differentiable on $(0, \eee^{- g'(1)})$ by    \hyperref[cond:A3]{(A3)},
there exists an open interval $V\ni s_0$ such that~\eqref{eq:contradiction} is strict, for all $s\in V$.
It follows that~\eqref{eq:contradiction_tech1} is strict for all  $s>s_0$. In particular, taking $s\uparrow \eee^{-g'(1)}$ shows that~\eqref{eq:contradiction_tech2} is strict, that is $1 < (\eee^{-g'(1)} -0) \eee^{g'(1)} = 1$, which is  a contradiction. This proves that~\eqref{eq:contradiction} is, in fact, an equality. That is,
$$
\left\lvert \left(\frac {\alpha_0(s)}{s}\right)' \right\rvert = \eee^{2g'(1)}
\qquad \text{ for all } s \in  (0, \eee^{- g'(1)}).
$$
Since the derivative of $\alpha_0(s)/s$ is continuous, we infer that $(\alpha_0(s)/s)'$  is constant on $(0, \eee^{- g'(1)})$. Since $\lim_{s\downarrow 0}\alpha_0(s)/s = 0$ by    \hyperref[cond:A3]{(A3)},  we have $\alpha_0(s)/s = C_1 s$, where $C_1 =  \eee^{2g'(1)}$ or $C_1= - \eee^{2g'(1)}$. The latter case is impossible in view of $\alpha_0(s) \geq 0$. This proves that $\alpha_0(s) = \eee^{2g'(1)} s^2$ for all $s\in (0, \eee^{- g'(1)})$.
It follows that $\nu_0$ is the uniform distribution on the disk of radius $r= \eee^{- g'(1)}$ centered at the origin, which completes the proof of Proposition \ref{prop:t_sing<t_wig}.
\end{proof}

\subsection{Transport maps and proof of the global push-forward Theorem \ref{theo:pushforward}}\label{subsec:proof_pushforward}
We need several lemmas in which we analyze the behavior of the transport maps $T_t$ and the Joukowsky-type map $J_{\beta}$. These maps are defined by
\begin{align}
J_{\beta}(y)&=y + \frac{\beta}{y},\quad  y\in \C\backslash\{0\}, \quad  \text{ for a parameter } \beta>0,\label{eq:J_r_t_def}\\
T_t (w)&= w+t\frac{\alpha_0(w)}{w}, \quad w\in \C\backslash\{0\}, \quad  \text{ for a parameter } t>0. \label{eq:t_t_def_2}
\end{align}
Note that, on each particular circle $\{|w|=r\}$, the map $T_t$ coincides with $J_{t\alpha_0(r)}$ since $\alpha_0(w)$ only depends on $|w|$, but, in general, these maps are different. The next lemma records some standard properties of the Joukowsky-type map $J_\beta$.

\begin{lemma}\label{lem:J_r_t_maps_conformally}
Fix any $\beta>0$.  
\begin{enumerate}
\item[(a)] The function $J_{\beta}$ maps both $\{|y|< \sqrt{\beta}\}$ and $\{|y|>\sqrt{\beta}\}$ conformally to $\C\backslash [-2\sqrt{\beta},+2\sqrt{\beta}]$.
\item[(b)] The circle $\{|y| = \sqrt{\beta}\}$ is mapped to the interval $[-2\sqrt{\beta},+2\sqrt{\beta}]$.
\item[(c)] For every $r>0$ with $r\neq \sqrt{\beta}$, the circle $\{|y|= r\}$ is mapped bijectively to the ellipse
$$
\left\{u+ \ii v \in \C:\frac{u^2}{\left(r + \frac \beta r\right)^2} + \frac{v^2}{\left(r - \frac \beta r\right)^2} = 1\right\}.
$$
\item[(d)] Let $r>\sqrt \beta$. The domain $\{|w|>r\}$ is mapped conformally to the exterior of the above ellipse, while the annulus $\{\sqrt \beta < |w| < r\}$ is mapped conformally to the interior of the above ellipse from which the interval $[-2\sqrt{\beta},+2\sqrt{\beta}]$ has been excluded.
\end{enumerate}
\end{lemma}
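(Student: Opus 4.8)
The plan is to reduce everything to the classical Joukowsky map $J_1(u) = u + u^{-1}$ by means of the scaling identity $J_\beta(y) = \sqrt\beta\, J_1(y/\sqrt\beta)$, which is immediate from~\eqref{eq:J_r_t_def}. Under the linear change of variables $y = \sqrt\beta\, u$ (and correspondingly $z = \sqrt\beta\, w$ in the target), the circle $\{|y| = \sqrt\beta\}$ becomes the unit circle, the disk $\{|y| < \sqrt\beta\}$ and its complement $\{|y| > \sqrt\beta\}$ become $\{|u| < 1\}$ and $\{|u| > 1\}$, the interval $[-2\sqrt\beta, +2\sqrt\beta]$ becomes $[-2,2]$, and the ellipse appearing in (c) with half-axes $r + \beta/r$ and $|r - \beta/r|$ becomes the ellipse with half-axes $\rho + \rho^{-1}$ and $|\rho - \rho^{-1}|$, where $\rho = r/\sqrt\beta$. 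It therefore suffices to prove (a)--(d) in the normalized case $\beta = 1$, and then rescale back at the end.

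For (a): the map $J_1$ is holomorphic on $\C\setminus\{0\}$ with $J_1'(u) = 1 - u^{-2}$, which vanishes only at $u = \pm 1$, so $J_1$ is locally conformal on each of $\{|u| < 1\}$ and $\{|u| > 1\}$. Injectivity on either of these domains follows from the elementary identity $J_1(u_1) - J_1(u_2) = (u_1 - u_2)\big(1 - (u_1 u_2)^{-1}\big)$, since the relation $u_1 u_2 = 1$ is incompatible with $u_1, u_2$ both lying in $\{|u| > 1\}$, and likewise with both in $\{|u| < 1\}$. For surjectivity onto $\C \setminus [-2,2]$, note that $J_1(u) = z$ means $u^2 - zu + 1 = 0$, whose two roots $u_\pm$ satisfy $u_+ u_- = 1$; if $|u_+| = |u_-|$ then $|u_\pm| = 1$, and conversely $|u_+| = 1$ forces $u_- = \bar u_+$, hence $z = u_+ + u_- = 2\Re u_+ \in [-2,2]$. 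Thus $z \notin [-2,2]$ precisely when exactly one of the two roots has modulus greater than $1$ (and the other less than $1$), which shows that $J_1$ restricts to a holomorphic bijection, hence a conformal map, from each of $\{|u| < 1\}$ and $\{|u| > 1\}$ onto $\C \setminus [-2,2]$. Part (b) is the direct computation $J_1(e^{\ii\theta}) = 2\cos\theta$, which traces $[-2,2]$ as $\theta$ runs over $[0, 2\pi)$.

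For (c), parametrize $\{|u| = \rho\}$ with $\rho \neq 1$ by $u = \rho e^{\ii\theta}$ and compute $J_1(u) = (\rho + \rho^{-1})\cos\theta + \ii(\rho - \rho^{-1})\sin\theta$; since $\rho \neq \rho^{-1}$, this is exactly the stated ellipse, traversed once as $\theta$ runs over $[0,2\pi)$, and the map is a bijection onto it because $J_1$ is injective on whichever of $\{|u| < 1\}$, $\{|u| > 1\}$ contains the circle. For (d) with $\rho = r/\sqrt\beta > 1$, I will invoke (a): $J_1$ is a conformal bijection $\{|u| > 1\} \to \C \setminus [-2,2]$, under which the circle $\{|u| = \rho\}$ corresponds to the ellipse of (c) by (b)--(c). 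The annulus $\{1 < |u| < \rho\}$ and the exterior $\{|u| > \rho\}$ are the two connected components of $\{|u| > 1\} \setminus \{|u| = \rho\}$, so they map conformally onto the two connected components of $(\C \setminus [-2,2]) \setminus (\text{ellipse})$, namely the bounded one (the interior of the ellipse with the slit $[-2,2]$ removed) and the unbounded one (the exterior of the ellipse); since the closure of the annulus meets the unit circle, whose image is the slit, the annulus must map to the bounded component and the exterior to the unbounded one. Rescaling by $\sqrt\beta$ then recovers the general $\beta$ and finishes the proof. I do not anticipate any genuine difficulty here; the only points needing a little care are the component identification in (d) and the bookkeeping in (a) --- ruling out $u_+ u_- = 1$ with both factors on the same side of the unit circle, and pinning down exactly when the two roots lie on the unit circle --- but these are entirely standard features of the Joukowsky map.
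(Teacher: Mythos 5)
Your proof is correct and follows essentially the same route as the paper: rescale by $\sqrt\beta$ to reduce to the classical Joukowsky map $u\mapsto u+1/u$, use its standard mapping properties for (a)--(b), parametrize the circle to get the ellipse in (c), and deduce (d) from connectivity. The only cosmetic differences are that you prove the classical Joukowsky facts from scratch (the paper simply cites them) and that you identify the components in (d) via boundary behavior near the unit circle rather than via the paper's observation that $J_\beta(\infty)=\infty$; both are fine.
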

\begin{proof}
The classical Joukowsky function $z\mapsto z + \frac 1 z$ maps each of the domains $\{|z|>1\}$ and $\{0<|z|<1\}$ conformally to $\C\backslash [-2,+2]$. The unit circle $\{|z| = 1\}$ is mapped to $[-2,2]$.  Writing
$$
J_{\beta}(y) = y+ \frac{\beta}{y} = \sqrt{\beta} \left(\frac{y}{\sqrt{\beta}} + \frac{\sqrt{\beta}}{y}\right),
\quad y\in \C\backslash\{0\},
$$
proves~(a) and~(b). To prove~(c), write $y=r \eee^{\ii \phi}$ and observe that $J_{\beta}(r\eee^{\ii \phi}) = (r + \frac \beta r) \cos \phi + \ii (r-\frac{\beta}{r})\sin \phi$.  Part (d) follows from the previous points upon noting that $J_\beta(\infty) = \infty$.
\end{proof}

In the following lemmas we work under the same assumptions as in Proposition \ref{prop:T_t_bijective}. In particular, $t>0$ is real and satisfies~\eqref{eq:t_sing}, which we write in the form
\begin{equation}\label{eq:t_sing_rep}
\frac 1 t > \sup_{s \in  (\eee^{- g'(0)}, \eee^{- g'(1)})} \left\lvert \left(\frac {\alpha_0(s)}{s}\right)' \right\rvert.
\end{equation}
Note that the second representation of $t_{\mathrm{sing}}$ in \eqref{eq:t_sing} follows writing $s=\eee^{-g'(\alpha)}$, observing that by monotonicity of $g'$ the maximizer can be found within $\alpha\in(0,1)$ and then carrying out the differentiation of $\alpha_0(s)/s$ by using \eqref{eq:alpha_0}.
According to    \hyperref[cond:A3]{(A3)}, we have
\begin{equation}\label{eq:alpha_0_divided_s_0}
\alpha_0'(0)=\lim_{s\downarrow 0} \frac{\alpha_0(s)}{s} = 0.   
\end{equation}
\begin{lemma}\label{lem:ineq_e_g_prime_sqrt_t_alpha}
We have $\eee^{-g'(\alpha)} > \sqrt {t\alpha}$ for all $\alpha \in (0,1]$.
\end{lemma}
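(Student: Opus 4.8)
The plan is to translate the asserted inequality into a statement about the radial profile $s\mapsto\alpha_0(s)/s$ on the annulus $\mathcal R_0$ and then feed in the derivative bound that is built into the definition of $t_{\mathrm{sing}}$. Under \hyperref[cond:A3]{(A3)} the derivative $g'$ is continuous and strictly decreasing on $(0,1)$, so $\alpha\mapsto s:=\eee^{-g'(\alpha)}$ is an increasing bijection of $(0,1)$ onto $(\eee^{-g'(0)},\eee^{-g'(1)})$, and \eqref{eq:alpha_0} gives $\alpha_0(s)=\alpha$ for the corresponding pair $(\alpha,s)$. Hence the claim $\eee^{-g'(\alpha)}>\sqrt{t\alpha}$ for all $\alpha\in(0,1)$ is equivalent to the pointwise estimate $t\,\alpha_0(s)<s^2$, i.e.\ $\alpha_0(s)/s<s/t$, for every $s\in(\eee^{-g'(0)},\eee^{-g'(1)})$.

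To establish this I would set $k(s):=\alpha_0(s)/s$. By \hyperref[cond:A3]{(A3)} the function $\alpha_0$, and hence $k$, is $C^1$ on $(\eee^{-g'(0)},\eee^{-g'(1)})$, and the standing assumption \eqref{eq:t_sing_rep} says precisely that $\sup_y|k'(y)|<1/t$ there. Moreover $k(s)\to 0$ as $s\downarrow\eee^{-g'(0)}$: when $g'(0)<\infty$ this is immediate since $\alpha_0(\eee^{-g'(0)})=0$, and when $g'(0)=+\infty$ it is exactly \eqref{eq:alpha_0_divided_s_0}, which in turn relies on the extra clause of \hyperref[cond:A3]{(A3)}. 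Integrating $k'$ from the inner endpoint then yields, for each such $s$,
\[
0\le k(s)=\int_{\eee^{-g'(0)}}^{s}k'(y)\,\dd y \le s\,\sup_y|k'(y)| < \frac{s}{t},
\]
where I used $\eee^{-g'(0)}\ge 0$ and $s>0$; this is the desired bound $\alpha_0(s)/s<s/t$.

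Undoing the substitution, $t\,\alpha_0(s)<s^2$ gives $\sqrt{t\,\alpha_0(s)}<s$ (both sides being non-negative), and with $s=\eee^{-g'(\alpha)}$, $\alpha_0(s)=\alpha$ this is the assertion $\sqrt{t\alpha}<\eee^{-g'(\alpha)}$ for every $\alpha\in(0,1)$. The argument is essentially a one-line mean-value estimate once the reformulation is in place, so there is no genuine obstacle; the only point requiring some care is the continuous extension of $k$ to the inner boundary of $\mathcal R_0$, where the case $g'(0)=+\infty$ forces one to invoke the corresponding hypothesis of \hyperref[cond:A3]{(A3)} rather than a trivial continuity argument.
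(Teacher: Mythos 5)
Your proof is correct and follows essentially the same route as the paper: both arguments integrate the derivative of $s\mapsto\alpha_0(s)/s$ from the inner radius $\eee^{-g'(0)}$ up to $s=\eee^{-g'(\alpha)}$, bound the integral by the interval length (at most $s$) times the supremum of the derivative (strictly less than $1/t$ by \eqref{eq:t_sing_rep}), and use $\alpha_0(\eee^{-g'(0)})=0$ or \eqref{eq:alpha_0_divided_s_0} at the lower endpoint. The reformulation via $k(s)=\alpha_0(s)/s$ and the care taken with the case $g'(0)=+\infty$ match the paper's proof exactly.
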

\begin{proof}
By~\eqref{eq:t_sing_rep}, we have
$$
\frac 1t \eee^{-g'(\alpha)} > \int_{\eee^{-g'(0)}}^{\eee^{-g'(\alpha)}} \left(\frac {\alpha_0(s)}{s}\right)' \dint s = \frac {\alpha_0(\eee^{-g'(\alpha)})}{\eee^{-g'(\alpha)}} - 0 = \alpha \eee^{g'(\alpha)},
$$
where we also used that $\alpha_0(\eee^{-g'(\alpha)}) = \alpha$ and $\alpha_0(\eee^{-g'(0)}) = 0$ (if $\eee^{-g'(0)} > 0$) or~\eqref{eq:alpha_0_divided_s_0} (if $\eee^{-g'(0)} = 0$).
\end{proof}
\begin{corollary}\label{cor:lem:ineq_e_g_prime_sqrt_t_alpha}
For all $w\in \C\backslash\{0\}$, we have $|w| > \sqrt{t\alpha_0(w)}$.
\end{corollary}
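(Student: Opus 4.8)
The plan is to reduce everything to Lemma~\ref{lem:ineq_e_g_prime_sqrt_t_alpha} by splitting according to the three regimes for $\alpha_0(w)$ recorded in~\eqref{eq:alpha_0}. If $|w| \le \eee^{-g'(0)}$, then $\alpha_0(w) = 0$ and the inequality $|w| > 0 = \sqrt{t\alpha_0(w)}$ is immediate since $w \ne 0$. If $\eee^{-g'(0)} < |w| < \eee^{-g'(1)}$, then $\alpha := \alpha_0(w) \in (0,1)$ satisfies $|w| = \eee^{-g'(\alpha)}$ by~\eqref{eq:alpha_0}, so Lemma~\ref{lem:ineq_e_g_prime_sqrt_t_alpha} gives $|w| = \eee^{-g'(\alpha)} > \sqrt{t\alpha} = \sqrt{t\alpha_0(w)}$ at once.

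The case $|w| \ge \eee^{-g'(1)}$, where $\alpha_0(w) = 1$, is the only delicate one and I expect it to be the main obstacle: here the claim is $|w| > \sqrt t$, and simply letting $\alpha \uparrow 1$ in Lemma~\ref{lem:ineq_e_g_prime_sqrt_t_alpha} produces only the non-strict bound $\eee^{-g'(1)} \ge \sqrt t$. To obtain strictness I would instead bound $t_{\mathrm{sing}}$ from above. By the fundamental theorem of calculus applied to $s \mapsto \alpha_0(s)/s$ on the (non-degenerate) interval $(\eee^{-g'(0)}, \eee^{-g'(1)})$ — taking a limit at the left endpoint when $g'(0) = +\infty$ and using~\eqref{eq:alpha_0_divided_s_0} — together with $\alpha_0(\eee^{-g'(1)}) = 1$, one gets
\[
\eee^{g'(1)} = \frac{\alpha_0(\eee^{-g'(1)})}{\eee^{-g'(1)}} = \int_{\eee^{-g'(0)}}^{\eee^{-g'(1)}} \left(\frac{\alpha_0(s)}{s}\right)' \dd s \le \Bigl( \sup_{s \in (\eee^{-g'(0)}, \eee^{-g'(1)})} \Bigl| \Bigl(\tfrac{\alpha_0(s)}{s}\Bigr)' \Bigr| \Bigr)\, \eee^{-g'(1)},
\]
so that $\sup_s |(\alpha_0(s)/s)'| \ge \eee^{2g'(1)}$, i.e.\ $t_{\mathrm{sing}} \le \eee^{-2g'(1)}$ by the definition~\eqref{eq:t_sing}. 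Since $t < t_{\mathrm{sing}}$, this yields $\sqrt t < \eee^{-g'(1)} \le |w|$, as required.

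The remaining points are routine: that $s \mapsto \alpha_0(s)/s$ is $C^1$ on the open annulus (because under~\hyperref[cond:A3]{(A3)} the derivative $g'$ is a smooth strictly decreasing function and $\alpha_0(s) = (g')^{-1}(-\log s)$) and extends continuously to the closed annulus with endpoint values $0$ and $1$; and that the possibly improper integral above converges, which follows from the monotonicity and boundedness of $\alpha_0(s)/s$ near the left endpoint. No tools beyond Lemma~\ref{lem:ineq_e_g_prime_sqrt_t_alpha}, the description~\eqref{eq:alpha_0} of $\alpha_0$, and the definition~\eqref{eq:t_sing} of $t_{\mathrm{sing}}$ are needed.
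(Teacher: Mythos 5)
Your proof is correct and follows essentially the same route as the paper: the cases $\alpha_0(w)=0$ and $\alpha_0(w)\in(0,1)$ are handled identically, and for $|w|\ge \eee^{-g'(1)}$ the paper simply applies Lemma~\ref{lem:ineq_e_g_prime_sqrt_t_alpha} "with $\alpha=1$", whose proof run at $\alpha=1$ is exactly your computation $\eee^{g'(1)}=\int_{\eee^{-g'(0)}}^{\eee^{-g'(1)}}(\alpha_0(s)/s)'\,\dd s\le \eee^{-g'(1)}/t_{\mathrm{sing}}$ combined with the strict inequality $t<t_{\mathrm{sing}}$. Your explicit attention to where the strictness comes from (namely from $t<t_{\mathrm{sing}}$, not from letting $\alpha\uparrow 1$) is a fair clarification of a point the paper glosses over, but it is not a different argument.
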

\begin{proof}
If $|w| \in (\eee^{-g'(0)}, \eee^{-g'(1)})$, then we apply Lemma~\ref{lem:ineq_e_g_prime_sqrt_t_alpha} with $\alpha = \alpha_0(w)$. For $0< |w|\leq \eee^{-g'(0)}$ the claim is trivial since $\alpha_0(w) =0$, while for $|w|\geq \eee^{-g'(1)}$ we have $\alpha_0(w)=1$ and can apply Lemma~\ref{lem:ineq_e_g_prime_sqrt_t_alpha} with $\alpha = 1$.
\end{proof}

\begin{lemma}\label{lem:map_T_t_bijective}
The map $T_t:\C\backslash\{ 0\}\to\C\backslash\{0\}$ is bijective.
\end{lemma}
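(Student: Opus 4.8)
The plan is to exploit the fact that, since $\alpha_0(w)$ depends only on $|w|$, the restriction of $T_t$ to any circle $\{|w|=r\}$ coincides with the Joukowsky-type map $J_{t\alpha_0(r)}$ from \eqref{eq:J_r_t_def}. By Corollary~\ref{cor:lem:ineq_e_g_prime_sqrt_t_alpha} we have $r>\sqrt{t\alpha_0(r)}$ for every $r>0$, so part~(c) of Lemma~\ref{lem:J_r_t_maps_conformally} shows that $T_t$ maps $\{|w|=r\}$ bijectively onto the ellipse $\partial\mathcal E_{r,t}$ (see \eqref{eq:def_E_r_mathcal_ellipse_0}), whose half-axes are $a(r):=r+t\alpha_0(r)/r$ and $b(r):=r-t\alpha_0(r)/r$, the latter being strictly positive by the same corollary. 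Thus it suffices to prove that the boundary ellipses $\partial\mathcal E_{r,t}$, $r>0$, are pairwise disjoint and together cover $\C\backslash\{0\}$: injectivity then follows because $T_t(w)=T_t(w')$ forces $|w|=|w'|$ (the common value lies on a unique boundary ellipse) and then $w=w'$ by the injectivity of $J_{t\alpha_0(|w|)}$ on the circle $\{|w|=|w'|\}$; surjectivity follows because every $z\neq 0$ lies on some $\partial\mathcal E_{r,t}$, hence is a value of $T_t$ on the circle $\{|w|=r\}$.

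To obtain this, I would show that $a$ and $b$ are continuous and strictly increasing on $(0,\infty)$, with $a(0^+)=b(0^+)=0$ and $a(r),b(r)\to\infty$ as $r\to\infty$. Continuity is clear since $r\mapsto\alpha_0(r)/r$ is continuous. On the interval $(\eee^{-g'(0)},\eee^{-g'(1)})$ we have $a'(r)=1+t\,(\alpha_0(r)/r)'$ and $b'(r)=1-t\,(\alpha_0(r)/r)'$, and both are positive because $|(\alpha_0(r)/r)'|<1/t$ by the definition of $t_{\mathrm{sing}}$ in \eqref{eq:t_sing}, rewritten as \eqref{eq:t_sing_rep}, together with $t<t_{\mathrm{sing}}$. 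On $(0,\eee^{-g'(0)}]$ (if nonempty) one has $\alpha_0\equiv 0$, so $a(r)=b(r)=r$; on $[\eee^{-g'(1)},\infty)$ one has $\alpha_0\equiv 1$, so $a(r)=r+t/r$, $b(r)=r-t/r$, with $a'(r)=1-t/r^2>0$ (since $r\ge \eee^{-g'(1)}>\sqrt t$ by Corollary~\ref{cor:lem:ineq_e_g_prime_sqrt_t_alpha}) and $b'(r)=1+t/r^2>0$. Being continuous on $(0,\infty)$ and strictly increasing on each of these three pieces, $a$ and $b$ are strictly increasing on all of $(0,\infty)$. Finally $a(0^+)=b(0^+)=0$: if $g'(0)<\infty$ this is because $a(r)=b(r)=r$ near $0$, and if $g'(0)=+\infty$ it follows from the assumption $\lim_{r\downarrow 0}\alpha_0(r)/r=0$ in \hyperref[cond:A3]{(A3)}; and $a(r),b(r)\to\infty$ is immediate from $a(r)\ge b(r)=r-t/r$ for $r\ge\eee^{-g'(1)}$.

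With this in hand, disjointness and the covering property follow at once. For $z=x+\ii y\in\C\backslash\{0\}$, set $\Phi_z(r):=x^2/a(r)^2+y^2/b(r)^2$; since $a,b$ are strictly increasing and at least one of $x,y$ is nonzero, $\Phi_z$ is continuous and strictly decreasing on $(0,\infty)$, with $\Phi_z(r)\to\infty$ as $r\to 0^+$ and $\Phi_z(r)\to 0$ as $r\to\infty$. Hence there is a unique $r=r(z)>0$ with $\Phi_z(r)=1$, i.e.\ with $z\in\partial\mathcal E_{r,t}$. This shows simultaneously that $\bigcup_{r>0}\partial\mathcal E_{r,t}=\C\backslash\{0\}$ and that $\partial\mathcal E_{r,t}$ and $\partial\mathcal E_{r',t}$ are disjoint whenever $r\ne r'$. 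Combining this with the per-circle bijectivity of $T_t$ recorded in the first paragraph completes the proof.

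I expect the only real obstacle to be the case analysis establishing strict monotonicity of $a$ and $b$ across the three radial regimes, together with their behaviour as $r\downarrow 0$ — this is exactly where the hypotheses $t<t_{\mathrm{sing}}$ and \hyperref[cond:A3]{(A3)} (and, through Corollary~\ref{cor:lem:ineq_e_g_prime_sqrt_t_alpha}, Lemma~\ref{lem:ineq_e_g_prime_sqrt_t_alpha}) enter; everything else is a formal consequence of the Joukowsky picture.
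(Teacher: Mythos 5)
Your proof is correct and follows essentially the same route as the paper's: establish strict monotonicity of the half-axis functions $r\mapsto r\pm t\alpha_0(r)/r$ across the three radial regimes using $t<t_{\mathrm{sing}}$ and Lemma~\ref{lem:ineq_e_g_prime_sqrt_t_alpha}, then combine the resulting foliation of $\C\backslash\{0\}$ by the nested ellipses $\partial\mathcal E_{r,t}$ with the per-circle bijectivity of the Joukowsky-type maps. Your explicit argument via $\Phi_z$ merely spells out the disjointness/covering step that the paper leaves implicit.
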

\begin{proof}
Let us first argue that the functions  $s\mapsto s\pm t \alpha_0(s)/s$ are strictly increasing and continuous self-bijections of the interval $(0,\infty)$. For $s\in (\eee^{- g'(0)}, \eee^{- g'(1)})$, their derivatives  are given by $s\mapsto 1 \pm t (s\alpha_0'(s)-\alpha_0(s))/s^2$, and our condition on $t$, see~\eqref{eq:t_sing}, implies that both functions are strictly increasing. The same conclusion remains in force for $0 < s \leq \eee^{- g'(0)}$ since then $\alpha_0(s) = 0$, and for $s\geq \eee^{- g'(1)}$ since then $\alpha_0(s) = 1$ and the derivatives of the functions $s\mapsto s \pm t/s$, which are given by $s\mapsto 1 \mp t/s^2$, are positive for $s\geq \eee^{-g'(1)}$ (assuming that $\eee^{-g'(1)}< +\infty$ since otherwise the claim is empty). The latter claim follows from the inequality $t < \eee^{-2g'(1)}$ which holds by Lemma~\ref{lem:ineq_e_g_prime_sqrt_t_alpha}.
To summarize, the functions  $s\mapsto s\pm t \frac{\alpha_0(s)}{s}$ are strictly increasing and continuous on $s>0$. To prove that they are self-bijections of $(0,\infty)$, it remains to observe that
$$
\lim_{s\downarrow 0} \left(s\pm t \frac{\alpha_0(s)}{s}\right) = 0,
\qquad
\lim_{s\uparrow +\infty} \left(s\pm t \frac{\alpha_0(s)}{s}\right) = +\infty.
$$
In the former assertion we used~\eqref{eq:alpha_0_divided_s_0}, whereas the latter one follows from $0\leq \alpha_0(s)\leq 1$.

Let $s>0$ be arbitrary.  It follows from the definition of $T_t$ and from the formula $\alpha_0(w) = \alpha_0(|w|)$ that, on the circle $\{|w|= s\}$,  the map $T_t$ coincides with the Joukowski-type function $w\mapsto w + t\alpha_0(s)/w$.
It follows from Lemma~\ref{lem:J_r_t_maps_conformally} that $T_t$ is  a bijection between the circle $\{|w|= s\}$ and the ellipse $\partial \mathcal E_{s,t}$, where
\begin{equation}\label{eq:def_E_r_mathcal_ellipse}
\mathcal E_{s,t}\coloneqq\left\{z=u+iv\in\C:\frac{u^2}{\left(s+t\frac{\alpha_0(s)}{s}\right)^2}+\frac{v^2}{\left(s-t\frac{\alpha_0(s)}{s}\right)^2}\leq 1\right\}.
\end{equation}
To compute the semi-axes of $\mathcal E_{s,t}$, just compute $T_t(w)$ for  $w= \pm s$ and $w= \pm \ii s$.
It follows from the monotonicity established above that the ellipses $\mathcal E_{s,t}$ are nested in the sense that $\mathcal E_{s,t}\subset \mathcal E_{s',t}$ for $0<s<s'<\infty$. Moreover,  the boundaries of the ellipses $\mathcal E_{s,t}$ form a foliation of $\C\backslash\{0\}$ in the sense that they are disjoint and their union is $\C\backslash\{0\}$. Hence $T_t:\C\backslash \{0\}\to \C\backslash \{0\}$ is a bijection.
\end{proof}

Later, in \eqref{eq:DT_t>0} of the proof of Theorem \ref{theo:pushforward} (b), we shall see that the Jacobian of $T_t$ is strictly positive, providing an analytic proof of the bijectivity of $T_t$, which however lacks the geometric insight of the proof above.

\begin{lemma}\label{lem:T_t_maps_circles_to_ellipses}
Let $s>0$ be arbitrary. The map $T_t$ is a bijection between the following pairs of sets:
\begin{itemize}
\item the punctured disk $\{w\in \C: 0< |w| \leq  s\}$ and the punctured ellipse $\mathcal E_{s,t}\backslash\{0\}$.
\item the circle $\{w\in \C: |w| = s\}$ and the ellipse $\partial \mathcal E_{s,t}$.
\item the set $\{w\in \C: |w|>s\}$ and $\C\backslash \mathcal E_{s,t}$.
\end{itemize}
\end{lemma}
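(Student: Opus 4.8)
The plan is to reduce everything to facts already in hand: Lemma~\ref{lem:map_T_t_bijective} gives that $T_t\colon\C\setminus\{0\}\to\C\setminus\{0\}$ is a bijection, and its proof (together with Proposition~\ref{prop:T_t_bijective}(b),(c)) shows that for each $r>0$ the restriction of $T_t$ to the circle $\{|w|=r\}$ is a bijection onto $\partial\mathcal E_{r,t}$, and that the closed ellipses $\mathcal E_{r,t}$ are nested with pairwise disjoint boundaries whose union is $\C\setminus\{0\}$. Given this, the three assertions are purely set-theoretic bookkeeping, so I would not prove anything substantially new, only organize these pieces.

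First I would record the decomposition of $\mathcal E_{s,t}$ in terms of this foliation. The origin lies in every $\mathcal E_{s,t}$ (set $u=v=0$ in~\eqref{eq:def_E_r_mathcal_ellipse}). For $r>s$ the inclusion $\mathcal E_{s,t}\subset\mathcal E_{r,t}$ together with $\partial\mathcal E_{s,t}\cap\partial\mathcal E_{r,t}=\emptyset$ upgrades, using connectedness, to $\mathcal E_{s,t}\subset\Int\mathcal E_{r,t}$, hence $\partial\mathcal E_{r,t}\cap\mathcal E_{s,t}=\emptyset$; for $r\le s$ one has $\partial\mathcal E_{r,t}\subset\mathcal E_{r,t}\subset\mathcal E_{s,t}$. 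Since every point of $\C\setminus\{0\}$ lies on exactly one $\partial\mathcal E_{r,t}$, this yields
\[
\mathcal E_{s,t}\setminus\{0\}=\bigcup_{0<r\le s}\partial\mathcal E_{r,t},
\qquad
\C\setminus\mathcal E_{s,t}=\bigcup_{r>s}\partial\mathcal E_{r,t},
\]
the second identity because $0\in\mathcal E_{s,t}$ and the boundaries are pairwise disjoint.

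Next I would combine this with the level-set behavior of $T_t$. On the source side, $\{0<|w|\le s\}=\bigsqcup_{0<r\le s}\{|w|=r\}$, the circle $\{|w|=s\}$, and $\{|w|>s\}=\bigsqcup_{r>s}\{|w|=r\}$ are disjoint unions of circles; $T_t$ sends $\{|w|=r\}$ bijectively onto $\partial\mathcal E_{r,t}$, and the target ellipses $\partial\mathcal E_{r,t}$ are likewise pairwise disjoint. Since a bijection that restricts to a bijection on each block of a partition of its domain onto the corresponding block of a partition of its codomain also restricts to a bijection of any union of blocks onto the union of their images, applying this to the three groupings above together with the two displayed identities gives exactly the three claimed bijections. (The middle statement is already Proposition~\ref{prop:T_t_bijective}(b); I would include it only for completeness.)

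I do not expect a genuine obstacle here. The only point requiring a little care is the passage from the nonstrict inclusion $\mathcal E_{s,t}\subset\mathcal E_{r,t}$ to containment in the \emph{open} ellipse, which relies on disjointness of the boundaries, and the bookkeeping around the origin: $0\in\mathcal E_{s,t}$, so deleting it is harmless in the first identity but must be noted in the second and in the statement about the punctured disk.
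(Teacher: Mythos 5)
Your proposal is correct and takes essentially the same route as the paper: the paper's proof of this lemma is a one-line reference to the proof of Lemma~\ref{lem:map_T_t_bijective}, which establishes precisely the ingredients you use (the circle $\{|w|=r\}$ maps bijectively onto $\partial\mathcal E_{r,t}$, the ellipses are nested, and their boundaries foliate $\C\backslash\{0\}$). You merely spell out the resulting set-theoretic bookkeeping more explicitly, which is harmless and accurate.
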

\begin{proof}
All these facts have been established in the proof of Lemma~\ref{lem:map_T_t_bijective}.
\end{proof}

Note that Lemmas~\ref{lem:map_T_t_bijective}, \ref{lem:T_t_maps_circles_to_ellipses} and their proofs imply Proposition \ref{prop:T_t_bijective}. 
We now turn to the proof of Theorem~\ref{theo:pushforward}, which we prove in the order \textit{(d),(c),(a),(b),(e)}: Part \textit{(d)} provides the unique maximizer, with which we calculate the Stieltjes transform of $\nu_t$ in Part \textit{(c)}. This will then be compared to the Stieltjes transform of the push-forward $(T_t)_\#\nu_0$ for Part \textit{(a)}, which we use to calculate the density in Part \textit{(b)}, implying the differentiability claimed in Part \textit{(e)}.

\begin{proof}[Proof of Theorem~\ref{theo:pushforward}: Part~(d)]
Recall from Lemma~\ref{lem:f_der_in_alpha} that
\begin{equation}\label{eq:d_d_alpha_f_t_rep}
\frac{\partial}{\partial \alpha} f_t(\alpha, z) = g'(\alpha) + \log \left|\frac{z}{2} + \sqrt{\frac{z^2}{4} - t\alpha}\right|.
\end{equation}
Take some $\alpha \in (0,1)$.
We  claim that the right-hand side of~\eqref{eq:d_d_alpha_f_t_rep} is $=0$ (respectively, $\leq 0$ or $>0$) if and only if $z\in \partial \mathcal E_{\eee^{-g'(\alpha)},t}$ (respectively, $z\in \mathcal E_{\eee^{-g'(\alpha)},t}$ or $z\in \C \backslash \mathcal E_{\eee^{-g'(\alpha)},t}$) . To prove the claim, consider the quadratic equation $J_{t\alpha}(y) = y + t\alpha/y =z$. By the properties of the Joukowsky-type maps, see Lemma~\ref{lem:J_r_t_maps_conformally},  it has two solutions given by
$$
y_{\pm}= \frac 12 (z\pm \sqrt{z^2 - 4 t\alpha}),
\qquad
|y_-|\leq \sqrt{t\alpha},
\qquad
|y_+|\geq  \sqrt {t\alpha}.
$$
Note that $\sqrt {t\alpha} < \eee^{-g'(\alpha)}$ by Lemma~\ref{lem:ineq_e_g_prime_sqrt_t_alpha}. We can  rewrite the right-hand side of~\eqref{eq:d_d_alpha_f_t_rep} as follows:
$$
g'(\alpha) + \log \left|\frac{z}{2} + \sqrt{\frac{z^2}{4} - t\alpha}\right| = g'(\alpha) + \log |y_+|.
$$
Now, $g'(\alpha) + \log |y_+| = 0$ if and only if $|y_+| = \eee^{-g'(\alpha)}$, which in turn is equivalent to $z\in \partial \mathcal E_{\eee^{-g'(\alpha)},t}$ since the function $y\mapsto y + t\alpha/y$ maps the circle $\{|y| = \eee^{-g'(\alpha)}\}$ to the ellipse
$$
\partial \mathcal E_{\eee^{-g'(\alpha)},t}
=
\left\{u+iv\in\C: \frac{u^2}{\left(\eee^{-g'(\alpha)} +t\alpha \eee^{g'(\alpha)}\right)^2}+\frac{v^2}{\left(\eee^{-g'(\alpha)} -t\alpha \eee^{g'(\alpha)}\right)^2} = 1\right\}.
$$
Similarly, $g'(\alpha) + \log |y_+| \leq  0$ if and only if $\sqrt {t\alpha} \leq |y_+| \leq  \eee^{-g'(\alpha)}$, which is equivalent to $z\in  \mathcal E_{\eee^{-g'(\alpha)},t}$ by Lemma~\ref{lem:J_r_t_maps_conformally}. Finally, $g'(\alpha) + \log |y_+| > 0$ if and only if $|y_+| >  \eee^{-g'(\alpha)}$ (the right-hand side being $>\sqrt {t\alpha}$ by Lemma~\ref{lem:ineq_e_g_prime_sqrt_t_alpha}),  which is  equivalent to $z\in \C \backslash \mathcal E_{\eee^{-g'(\alpha)},t}$. This proves our claim.

Let now $z= T_t(w)$ for some $w\in \mathcal R_0$. Then, $z\in \partial \mathcal E_{|w|,t}$ and $|w| = \eee^{-g'(\alpha_0(w))} \in (\eee^{-g'(0)}, \eee^{-g'(1)})$. By the above claim, the right-hand side of~\eqref{eq:d_d_alpha_f_t_rep} vanishes if $\alpha = \alpha_0(w)$. Moreover, since the ellipses $\mathcal E_{s,t}$ are nested and the function $\alpha \mapsto \eee^{-g'(\alpha)}$ is increasing, we have $z\in \C \backslash \mathcal E_{\eee^{-g'(\alpha)},t}$ provided $0 < \alpha < \alpha_0(w)$ and $z\in \mathcal E_{\eee^{-g'(\alpha)},t}$ provided $\alpha_0(w) \leq \alpha < 1$. By the above claim it follows that the right-hand side of~\eqref{eq:d_d_alpha_f_t_rep} is positive for $0 < \alpha < \alpha_0(w)$ and negative for $\alpha_0(w) \leq \alpha < 1$. Hence, the unique maximizer of the function $\alpha \mapsto f_t(\alpha, z)$ is attained at $\alpha = \alpha_0(w)$. This proves that $\alpha_t(T_t(w)) = \alpha_0(w)$ for $|w|\in (\eee^{-g'(0)}, \eee^{-g'(1)})$.

Let now $z= T_t(w)$ with  $0<|w|\leq \eee^{-g'(0)}$. Then, $z\in \partial \mathcal E_{|w|,t}$ and  since the  $\mathcal E_{s,t}$ are nested, we have $z\in \mathcal E_{\eee^{-g'(\alpha)},t}$ for all $\alpha\in (0,1)$. It follows from the above claim that the function $\alpha \mapsto f_t(\alpha, z)$ is strictly decreasing, whence $\alpha_t(z) = 0 = \alpha_0(w)$.
Similarly,  if $z= T_t(w)$ with $|w|\geq \eee^{-g'(1)}$, then $z$ is outside all ellipses $\mathcal E_{\eee^{-g'(\alpha)},t}$, $\alpha\in (0,1)$,  and it follows that the function $\alpha \mapsto f_t(\alpha, z)$ is strictly increasing, whence $\alpha_t(z) = 1 = \alpha_0(w)$.
\end{proof}

\begin{proof}[Proof of Theorem~\ref{theo:pushforward}: Part~(c)]
Here, we prove the identity \eqref{eq:Stieltjes_pushforward}. Continuity of the Stieltjes transform will be discussed after the proof of Part (b), below.

To compute the Stieltjes transform of $\nu_t$, we shall use the formula $m_t(z) = 2(\partial_z U_{t})(z)$; see Remark \ref{rem:Wirtinger_logpot_Stieltjes}.
If $z = T_t(w)\in \mathcal R_t$ with $w\in \mathcal R_0$, then by Part~(d) $\alpha_t(z)= \alpha_0(w)\in (0,1)$ is the unique  maximum of $\alpha\mapsto f_t(\alpha, z)$ implying that $(\partial_\alpha f_t)(\alpha_t(z),z)=0$. On the other hand, if $z=T_t(w)$ with $|w| < \eee^{-g'(0)}$ or $|w|>\eee^{-g'(1)}$, then $\alpha_t(z) = 0$ or $1$, implying that $\partial_z\alpha_t(z)=0$. In either case, the total differential formula together with~\eqref{eq:log_potential_limit} and~\eqref{eq:f_def} gives
\begin{align*}
\partial_z U_{t}(z)
&=
(\partial_\alpha f)(\alpha_t(z),z)\partial_z \alpha_t(z)+(\partial_z f)(\alpha_t(z),z)
=
(\partial_z f)(\alpha_t(z),z)\\
&=
\alpha_t(z)(\partial_z \Psi) \left(\frac z {\sqrt{t\alpha_t(z)}}\right) \frac{1}{\sqrt{t\alpha_t(z)}}
=
\alpha_0(w)(\partial_z \Psi) \left(\frac z {\sqrt{t\alpha_0(w)}}\right) \frac{1}{\sqrt{t\alpha_0(w)}}
.
\end{align*}
By~\eqref{eq:Stieltjes_semicircle}, this leads to
$$
\partial_z U_{t}(z)
=
\frac 1 {4t} \left(z - \sqrt{z^2-4\alpha_t(z) t}\right)
$$
provided $z\notin [-2\sqrt{\alpha_0(w) t},+2\sqrt{\alpha_0(w) t}]$. In fact, this condition is fulfilled automatically since otherwise we would have
\begin{align}\label{eq:z_not_in_I}
z^2 \leq 4 t \alpha_0(w) < (w + t\alpha_0(w)/w)^2 = (T_t(w))^2
\end{align}
(with a strict inequality by Corollary~\ref{cor:lem:ineq_e_g_prime_sqrt_t_alpha}), which is a contradiction to $z= T_t(w)$.
Therefore, taking $z=T_t(w)$ and using that $\alpha_t(T_t(w))= \alpha_0(w)$ by Part~(d), we get
$$
m_t(T_t(w)) = 2(\partial_z U_{t})(T_t(w)) = \frac 1 {2t} \Big(T_t(w)-\sqrt{(T_t(w))^2-4t\alpha_0(w)}\Big)=\frac{\alpha_0(w)}{w},
$$
which proves the claim in following three cases $w\in \mathcal R_0$, $|w| < \eee^{-g'(0)}$ or $|w|>\eee^{-g'(1)}$. Since $w\mapsto \alpha_t(T_t(w))$ is continuous at $\partial \mathcal R_0$ (by part (d) and the definition \eqref{eq:alpha_0} of $\alpha_0$), so is $w\mapsto m_t(T_t(w))$ (also, as $m_t$ and $T_t$ are continuous) and the claim $m_t(T_t(w))=\alpha_0(w)/w$ for all $w\neq 0$ follows by continuity.
\end{proof}

The next lemma will be needed to prove Part~(a) of Theorem~\ref{theo:pushforward}.

\begin{lemma}\label{lem:roots_y_-_y_+}
Fix some $r>0$. Let $z= T_t(w)$ for some $w\in \C\backslash\{0\}$. The quadratic equation $J_{t\alpha_0(r)} (y) = z$ has two complex solutions $y=y_+$ and $y=y_-$ in $\C \backslash\{0\}$ (which may coincide).
\begin{itemize}
\item[(a)] If $|w|<r$, then the solutions satisfy $|y_-|<r$ and  $|y_+|<r$.
\item[(b)] If $|w|>r$, then one of the solutions satisfies  $|y_-|<r$,   while the other satisfies $|y_+|>r$.
\item[(c)] If $|w| = r$, then one of the solutions satisfies  $|y_-|<r$,   while the other satisfies $|y_+|=r$.
\end{itemize}
\end{lemma}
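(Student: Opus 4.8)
The plan is to recognise the ellipse $J_{t\alpha_0(r)}(\{|y|=r\})$ as the set $\mathcal{E}_{r,t}$ from~\eqref{eq:def_E_r_mathcal_ellipse_0}, and then to locate $z=T_t(w)$ relative to $\mathcal{E}_{r,t}$ by means of the nested--ellipse structure established in Proposition~\ref{prop:T_t_bijective}. We may assume $\beta:=t\alpha_0(r)>0$, the equation being non-quadratic otherwise. The roots of $y^2-zy+\beta=0$ satisfy $y_+y_-=\beta$ and $y_++y_-=z$, so in particular $y_\pm\neq 0$. By Corollary~\ref{cor:lem:ineq_e_g_prime_sqrt_t_alpha}, applied to any point of modulus $r$, we have $r>\sqrt{\beta}$; hence Lemma~\ref{lem:J_r_t_maps_conformally} applies with this $\beta$. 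Since $\alpha_0$ depends only on the modulus, the image $J_\beta(\{|y|=r\})$ is precisely the ellipse $\partial\mathcal{E}_{r,t}$, with half-axes $r\pm\beta/r=r\pm t\alpha_0(r)/r$. Lemma~\ref{lem:J_r_t_maps_conformally}(a),(b),(d) then tells us: $J_\beta$ maps $\{|y|>r\}$ bijectively onto the exterior $\C\setminus\mathcal{E}_{r,t}$; it maps $\{\sqrt{\beta}<|y|<r\}$ bijectively onto $\operatorname{int}(\mathcal{E}_{r,t})\setminus[-2\sqrt{\beta},2\sqrt{\beta}]$; it maps the punctured disk $\{0<|y|<\sqrt{\beta}\}$ bijectively onto $\C\setminus[-2\sqrt{\beta},2\sqrt{\beta}]$; and it maps $\{|y|=\sqrt{\beta}\}$ onto $[-2\sqrt{\beta},2\sqrt{\beta}]$. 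Because $2\sqrt{\beta}<r+\beta/r$ by the inequality of arithmetic and geometric means, the segment $[-2\sqrt{\beta},2\sqrt{\beta}]$ lies in $\operatorname{int}(\mathcal{E}_{r,t})$.

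From this one reads off the moduli of the two preimages of an arbitrary $z\in\C$. If $z\in\operatorname{int}(\mathcal{E}_{r,t})$, then one preimage lies in $\{\sqrt{\beta}<|y|<r\}$ or on $\{|y|=\sqrt{\beta}\}$ and the other has modulus $\le\sqrt{\beta}$, so both have modulus $<r$. If $z\in\C\setminus\mathcal{E}_{r,t}$, then (noting that then $z\notin[-2\sqrt{\beta},2\sqrt{\beta}]$) exactly one preimage lies in $\{|y|>r\}$ and the other in $\{0<|y|<\sqrt{\beta}\}$, so one has modulus $>r$ and the other modulus $<r$. If $z\in\partial\mathcal{E}_{r,t}$, then one preimage has modulus $r$ and, since $y_+y_-=\beta<r^2$, the other has modulus $<r$. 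It remains to place $z=T_t(w)$: by Proposition~\ref{prop:T_t_bijective}(b) we have $z\in\partial\mathcal{E}_{|w|,t}$, while Proposition~\ref{prop:T_t_bijective}(c) says the closed ellipses $\mathcal{E}_{s,t}$ are nested and their boundaries pairwise disjoint. Consequently $\partial\mathcal{E}_{|w|,t}\subset\operatorname{int}(\mathcal{E}_{r,t})$ if $|w|<r$, $\partial\mathcal{E}_{|w|,t}\subset\C\setminus\mathcal{E}_{r,t}$ if $|w|>r$, and $\partial\mathcal{E}_{|w|,t}=\partial\mathcal{E}_{r,t}$ if $|w|=r$. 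Feeding these three alternatives into the preimage count above yields claims (a), (b) and (c) respectively.

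The argument is essentially bookkeeping with conformal images of the Joukowsky-type map, so I do not expect a serious obstacle. The one point that deserves care is the ``linking'' identification in the first paragraph, namely that the ellipse produced by applying the fixed map $J_{t\alpha_0(r)}$ to the circle $\{|y|=r\}$ coincides with the ellipse $\mathcal{E}_{r,t}$ that arises as the $T_t$-image of $\{|w|=r\}$; once this is in place, the nesting of the $\mathcal{E}_{s,t}$ does all the work. The only other delicate step is verifying $[-2\sqrt{\beta},2\sqrt{\beta}]\subset\operatorname{int}(\mathcal{E}_{r,t})$, which guarantees that the branch cut of $J_\beta$ never interferes when $z$ lies on or outside $\partial\mathcal{E}_{r,t}$.
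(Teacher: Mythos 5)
Your proof is correct and follows essentially the same route as the paper's: both rest on the conformal-mapping properties of $J_{\beta}$ from Lemma~\ref{lem:J_r_t_maps_conformally}, the inequality $r>\sqrt{t\alpha_0(r)}$, and the identification of $J_{t\alpha_0(r)}(\{|y|=r\})$ with $\partial\mathcal{E}_{r,t}$ together with the nesting of these ellipses to locate $z=T_t(w)$. The only cosmetic difference is that the paper first separates the case $z\in[-2\sqrt{\beta},2\sqrt{\beta}]$ and then argues outside the branch cut, whereas you organize the cases by interior/boundary/exterior of $\mathcal{E}_{r,t}$ and occasionally use the product relation $y_+y_-=\beta$; both are sound.
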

\begin{proof}
Recall from the proof of Lemma~\ref{lem:map_T_t_bijective} that $s-t\alpha_0(s)/s > 0$ for all $s>0$. Taking $s=r$ this implies that $\sqrt{t\alpha_0(r)}<r$. In particular, $\{|y|>r\} \subset \{|y|>\sqrt{t\alpha_0(r)}\}$. It follows that the function $J_{t\alpha_0(r)}$ is a bijection between the following pairs of sets:
\begin{itemize}
\item the circle $\{|y| = r\}$ and the ellipse $\partial \mathcal E_{r,t}$.
\item $\{|y|>r\}$ and $\C\backslash \mathcal E_{r,t}$.
\end{itemize}

\vspace*{2mm}
\noindent
\textit{Case 0.} Let first $z\in I = [-2\sqrt{t\alpha_0(r)},+2\sqrt{t\alpha_0(r)}]$ and $z\neq 0$ (in particular, $z$ is real). Then, $z^2 \leq 4 t\alpha_0(r) < (r + t\alpha_0(r)/r)^2$ implying that $z$ belongs to the interior of $\mathcal E_{r,t}$; see~\eqref{eq:def_E_r_mathcal_ellipse}. From Lemma~\ref{lem:T_t_maps_circles_to_ellipses} it follows that $|w|<r$, so that we are in the setting of Part~(a). Now, for $z\in I$, the solutions of $J_{t\alpha_0(r)}(y) = z$ satisfy $|y_+| = |y_-|  = \sqrt{t\alpha_0(r)} < r$ by Lemma~\ref{lem:J_r_t_maps_conformally}, which proves the claim of Part~(a).

\vspace*{2mm}
In the following we assume that $z\notin I$. It follows from Lemma~\ref{lem:J_r_t_maps_conformally} that one of the solutions of the equation $J_{t\alpha_0(r)}(y) = z$, denoted by $y_-$, satisfies $|y_-| < \sqrt{t \alpha_0(r)} < r$, while the other solution, denoted by $y_+$, satisfies $|y_+| > \sqrt{t \alpha_0(r)}$. In order to decide whether $|y_+|$ is smaller or larger than $r$, we have to consider the following cases.

\vspace*{2mm}
\noindent
\textit{Case 1.}
Suppose that $|w|<r$ and $z\notin I$.  By Lemma~\ref{lem:T_t_maps_circles_to_ellipses}, $z= T_t(w)$ belongs to the interior of $\mathcal E_{r,t}$. Recall that $J_{t\alpha_0(r)}$ maps $\{|y|\geq r\}$ conformally to the complement of the interior of $\mathcal E_{r,t}$. It follows that $|y_+|<r$.

\vspace*{2mm}
\noindent
\textit{Case 2.}
Suppose that $|w|>r$ and $z\notin I$.  By Lemma~\ref{lem:T_t_maps_circles_to_ellipses}, $z = T_t(w) \in \C\backslash \mathcal E_{r,t}$. Since $J_{t\alpha_0(r)}$ is a conformal map between $\{|y|>r\}$ and $\C\backslash \mathcal E_{r,t}$, it follows that  $|y_+| >r$.

\vspace*{2mm}
\noindent
\textit{Case 3.} Suppose that $|w|=r$ and $z\notin I$. By Lemma~\ref{lem:T_t_maps_circles_to_ellipses}, $z = T_t(w)\in \partial \mathcal E_{r,t}$ and since $J_{t\alpha_0(r)}$ is a bijection between $\{|y|=r\}$ and $\partial \mathcal E_{r,t}$, it follows that  $|y_+| = r$. Note that by the definition of $T_t(w)$ it actually holds $y_+=w$.
\end{proof}

\begin{proof}[Proof of Theorem~\ref{theo:pushforward}: Part~(a)]
We will compute the Stieltjes transform $\tilde m_t(z)$ of the push-forward measure $(T_t)_\#\nu_{0}$ with the help of residue calculus and show that it coincides with the Stieltjes transform $m_t(z)$ of $\nu_t$.  Fix some $z = T_t(w) \in \C \backslash\{0\}$ with  $w\in \C\backslash\{0\}$.   Recall that  $\alpha_0(y)=\alpha_0(|y|)$.  By \eqref{eq:alpha_0}, $\nu_{0}$ has a two-dimensional  Lebesgue density $\alpha_0'(|y|)/(2\pi|y|)$, $y\in \C\backslash\{0\}$.  Using polar coordinates and representing the angular integral as a contour integral leads to
\begin{align*}
\tilde m_{t}(z)
&\coloneqq\int_{\C}\frac{1}{z-u} ((T_t)_\#\nu_{0})(\dint u)\\
&=\int_{\C}\frac{1}{z-T_t(y)} \nu_{0}(\dint y)\\
&=\frac 1 {2\pi} \int_0^\infty \int_0^{2\pi}\frac{1}{z-T_t(r \eee^{i\varphi})}\alpha_0'(r) \, \dint r \, \dint \varphi\\
&=\int_0^\infty  \frac{1}{2\pi i} \oint_{\partial B_r(0)}\frac{1}{yz-y^2-t\alpha_0(r)} \, \dint y  \, \alpha_0'(r)\dint r,
\end{align*}
where we used the substitution $y= r \eee^{i\varphi}$ and~\eqref{eq:t_t_def_2}.
For every $r>0$ we consider the quadratic polynomial
$$
f_r(y) = yz - y^2 - t\alpha_0(r)=  y (z-J_{t\alpha_0(r)}(y)).
$$

\vspace*{2mm}
\noindent
\textit{Case 1.}
If $|w|<r$, then by Lemma~\ref{lem:roots_y_-_y_+} the roots of $f_r$ satisfy $|y_-| <r$ and $|y_+|<r$. We claim that
\begin{equation}\label{eq:residue_counting0}
\frac 1 {2\pi \ii} \oint_{\partial B_r(0)}\frac{\dint y}{yz-y^2-t\alpha_0(r)}  = 0,
\qquad
|w|<r.
\end{equation}
Indeed, we can therefore deform the integration contour to be $\partial B_s(0)$ with any $s>r$ and then let $s\to +\infty$. Observing that $1/f_r(y) \sim -1/y^2$ as $|y|\to\infty$ while the length of $\partial B_s(0)$ is $2\pi s$ proves the claim.

\vspace*{2mm}
\noindent
\textit{Case 2.} Let $0< r < |w|$. Then, by Lemma~\ref{lem:roots_y_-_y_+}, the roots of $f_r$ satisfy $|y_-|<r< |y_+|$. In the proof of the same lemma we showed that $z\notin I = [-2\sqrt{t\alpha_0(r)},+2\sqrt{t\alpha_0(r)}]$.  The roots of $f_r$ are given by
\begin{align*}
y_{\pm}(r)=\frac 1 2 \Big(z\pm\sqrt{z^2-4t\alpha_0(r)}\Big).
\end{align*}
Here, $\sqrt{\,\cdot\,}$ has a branch cut along the negative axis (in which the argument never falls since $z\notin I$) and the sign is chosen such that $y_-(r) \to 0$ and $y_+(r) \to z$ as $r\downarrow 0$.   From the residue theorem it follows that
\begin{align}
\frac{1}{2\pi i}\oint_{\partial B_r(0)}\frac{1}{yz-y^2-t\alpha_0(r)} dy
=
\mathop{\mathrm{Res}}_{y= y_-}\frac{1}{f(y)}
=
\frac{1}{z-2y_-}
=
\frac{1}{\sqrt{z^2-4t\alpha_0(r)}},
\qquad
0< r < |w|.
\label{eq:residue_counting}
\end{align}

With this at hand, we are  ready to complete the proof of Part~(a). It follows from~\eqref{eq:residue_counting0} and~\eqref{eq:residue_counting} that
\begin{align*}
\tilde m_{t}(z)&=\int_0^{|w|}  \frac{\alpha_0'(r)\dint r}{\sqrt{z^2-4t\alpha_0(r)}}
&=\frac{z- \sqrt{z^2 - 4 t\alpha_0(|w|)}}{2t}\\
&=\frac{T_t(w)-\sqrt{(T_t(w))^2-4t\alpha_0(w)}}{2t}
=\frac{\alpha_0(w)}{w}.
\end{align*}
It follows from Part~(c) that $\tilde m_{t}(z) = m_t(z)$, which yields $(T_t)_\#\nu_{0}= \nu_t$. The proof is complete.
\end{proof}

\begin{proof}[Proof of Theorem~\ref{theo:pushforward}: Part~(b)]
Let us first consider the density $p_0$ of $\nu_0$. By \eqref{eq:alpha_0_measure}, it is given by $p_0(z)=\frac{\alpha_0'(|z|)}{2\pi |z|}$.
Our assumption $t_{\mathrm{sing}}>0$ is equivalent to $\sup_{s}|\frac{\alpha_0'(s)}{s}-\frac{\alpha_0(s)}{s^2}|<\infty$, where the supremum is taken over $s\in (\eee^{-g'(0)},\eee^{-g'(1)})$. Moreover, by Corollary \ref{cor:lem:ineq_e_g_prime_sqrt_t_alpha} we have $s>\sqrt{t\alpha_0(s)}$ for all $0<t<t_{\mathrm{sing}}<\infty$, or equivalently $\frac{\alpha_0(s)}{s^2}<\frac 1 t$. Thus
\begin{align*}
2\pi\sup_{z\in\C}p_0(z)=\sup_{s\in (\eee^{-g'(0)},\eee^{-g'(1)})}\frac{\alpha_0'(s)}{ s}&<\sup_{s\in (\eee^{-g'(0)},\eee^{-g'(1)})} \left(\frac{\alpha_0'(s)}{s} -\frac{\alpha_0(s)}{s^2}+\frac{1}{ t}\right) \\
&\leq \frac 1 {t_{\mathrm{sing}}}+\frac 1 t<\infty.
\end{align*}
By the push-forward statement of Part (a), it holds $p_t(z) = p_0(T_t^{-1} (z)) |DT_t^{-1}(z)|$, and since $p_0$ is bounded, it remains to show that the Jacobian determinant $|D T_t^{-1}|$ stays bounded. In other words, we have to show that $|DT_t(w)|$ stays away from $0$ for $|w|\in (\eee^{-g'(0)},\eee^{-g'(1)})$, where
\begin{align}\label{eq:Jacobian}
|DT_t(w)|&=\left\vert\left(
\begin{array}
[c]{cc}%
\partial_w T_t(w) & \partial_{\bar{w}} T_t(w)\\
\partial_w\overline{T_t(w)} & \partial_{\bar w}\overline{T_t(w)}%
\end{array}
\right)\right\vert\\
&=|\partial_w T_t(w)|^2-|\partial_{\bar w}T_t(w)|^2=\left\lvert 1+t\frac{\partial_w\alpha_0(w)}{w}-t\frac{\alpha_0(w)}{w^2}\right\rvert^2-\left\lvert t\frac{\partial_{\bar w}\alpha_0(w)}{w}\right\rvert^2.\nonumber
\end{align}
Observe that the complex derivatives of the real-valued and rotation invariant  function $\alpha_0(w) = \alpha_0(|w|)$ can be written as $\partial_w\alpha_0(w)=\frac{\bar w}{2|w|}\alpha_0'(|w|)$ and $\partial_{\bar w}\alpha_0(w)=\frac{w}{2|w|}\alpha_0'(|w|)$. Hence,
\begin{align*}
|\partial_w T_t(w)|=\left\vert 1  + t\frac{|w|^2}{w^2}\left(\frac{\alpha_0'(|w|)}{2|w|}-\frac{\alpha_0(|w|)}{|w|^2}\right) \right\vert \geq  1 -t\frac{\alpha_0'(|w|)}{2|w|}+t\frac{\alpha_0(|w|)}{|w|^2},
\end{align*}
since $t>0$ and $|w|^2/w^2$ has absolute value $1$. Using again the assumption $t<t_{\mathrm{sing}}$ (see \eqref{eq:t_sing_rep}), we obtain
\begin{align*}
|\partial_w T_t(w)| \geq 1 - t\frac{\alpha_0'(|w|)}{2|w|}+ t\frac{\alpha_0(|w|)}{|w|^2}\geq 1  -\frac t {t_{\mathrm{sing}}}+ t\frac{\alpha_0'(|w|)}{2|w|}>0.
\end{align*}
Noting that $|\partial_{\bar w} T_t(w)| = t\frac{\alpha_0'(|w|)}{2|w|}$, we conclude that
\begin{align}\label{eq:DT_t>0}
|DT_t(w)|&=|\partial_w T_t(w)|^2-|\partial_{\bar w}T_t(w)|^2\\
&\geq \left(1-\frac t {t_{\mathrm{sing}}}+t\frac{\alpha_0'(|w|)}{2|w|}\right)^2-\left(t\frac{\alpha_0'(|w|)}{2|w|}\right)^2>\left(1-\frac t {t_{\mathrm{sing}}}\right)^2 > 0,\nonumber
\end{align}
which completes the proof.
\end{proof}

Now, the continuity of the Stieltjes transform $m_t$ on $\C$ claimed in Part (c) follows from the fact that $m_t(z) = \int_{\C} \frac{1}{z - y} p_t(y)\dint y$ converges to $m_t(z_0)$ as $z\to z_0$ since the family $\{1/(z-y)\}_{z\in B_1(z_0)}$ is uniformly locally integrable in $\C$ and $p_t$ is bounded with bounded support $\mathcal R_t$.

\begin{proof}[Proof of Theorem~\ref{theo:pushforward}: Part~(e)]
Following up on the previous discussion,  $\Re m_t$ and $-\Im m_t$ are partial weak derivatives of $U_t$ according to $m_t=2\partial_zU_t$, hence $U_t$ is weakly (real) differentiable with continuous weak derivatives, implying that $U_t$ is continuously differentiable in the classical sense on the whole of $\C$.

Let now $w\in \C\backslash\{0\}$. We already know from Part~(d) that $\alpha_t(T_t(w))=\alpha_0(w)$.
It follows from~\eqref{eq:log_potential_limit} and~\eqref{eq:f_def} that
\begin{align*}
U_{t}(T_t(w))
&=
f_t(\alpha_0(w),T_t(w))-g(1)\\
&=
g(\alpha_0(w))-g(1)+\alpha_0(w)\log\sqrt{t\alpha_0(w)}
+
\alpha_0(w)\Psi\left(\frac{w}{\sqrt{t\alpha_0(w)}} + \frac{\sqrt{t\alpha_0(w)}} {w}\right).
\end{align*}
By Corollary~\ref{cor:lem:ineq_e_g_prime_sqrt_t_alpha}, $|w| > \sqrt{t\alpha_0(w)}$.  We now use the Joukowsky substitution:
$$
u\coloneqq\frac{w}{\sqrt{t\alpha_0(w)}}, \qquad |u|>1, \qquad z\coloneqq u + \frac 1 u, \qquad z\in \C \backslash[-2,2].
$$
With this notation, we have $\sqrt{z^2-4} = u - \frac 1 u$ and the definition of $\Psi$, see~\eqref{eq:Psi_def_rep}, simplifies to
\begin{equation*}
\Psi(z) = \Psi\left(u + \frac 1u \right) =  \frac 12 \Re \left(\frac 1 {u^2}\right) + \log |u|.
\end{equation*}
Taking everything together we arrive at
$$
U_{t}(T_t(w))
=
g(\alpha_0(w))-g(1) + \frac{\alpha_0(w)^2}{2} \Re \left(\frac{t}{w^2}\right) + \alpha_0(w) \log |w|,
$$
which completes the proof since $U_0(w) = g(\alpha_0(w)) + \alpha_0(w) \log |w| - g(1)$ by~\eqref{eq:nu_g}.
\end{proof}

\begin{proof}[Proof of Proposition~\ref{prop:littlewood_offord_push_forward_beta_greater_one_half}]
Fix some $t  \in (0,1)$.
First of all, we show that the push-forward of the restriction of $\nu_0$ to the disk $\{|w|< t^{\beta/(2\beta-1)}\}$ is the Wigner law on $[-2t^{\beta/(2\beta-1)}, 2t^{\beta/(2\beta-1)}]$ multiplied by $t^{1/(2\beta-1)}$. To this end, consider some infinitesimal annulus $\{r < |w|< r + \dint r\}$, where $0 < r < t^{\beta/(2\beta-1)}$. The total $\nu_0$-measure of this annulus is $\dint (r^{1/\beta}) = (1/\beta) r^{(1/\beta) - 1} \dint r$. The map $\tilde T_t$ maps any point $w$ from this annulus to $(\Re w) \cdot 2\sqrt t r^{1/(2\beta)-1}$. The uniform probability distribution on the infinitesimal annulus is thus mapped to the arcsine distribution\footnote{We use the following standard fact: If $(X,Y)$ is uniformly distributed on the boundary of the unit circle, then $X$ has the arcsine density $x\mapsto 1/(\pi\sqrt{1-x^2})$ on the interval $(-1,1)$.} on the interval $[-2\sqrt t r^{1/(2\beta)}, +2\sqrt t r^{1/(2\beta)}]$. The density is
$$
f_{r;t}(a) =  \frac 1 {\pi \sqrt{4 t r^{1/\beta} - a^2}} \ind_{|a|<2\sqrt t r^{1/(2\beta)}}.
$$
Let us now fix some $a\in (0, 2t^{\beta/(2\beta-1)})$. It follows from the formula for $f_{r;t}(a)$ that the roots from the annulus $\{r < |w|< r + \dint r\}$ do not reach the point $a$ at time $t$ if $2\sqrt t r^{1/(2\beta)} < a$ or,  equivalently, $r < (a/(2\sqrt t))^{2\beta}$. Since the measure $\nu_0$ is rotationally invariant, it can be disintegrated into uniform distributions on the infinitesimal annuli  of the form $\{r < |w|< r + \dint r\}$. For the density of the push-forward of $\nu_0$ restricted to the disk $\{|w|< t^{\beta/(2\beta-1)}\}$ under $\tilde T_t$ we obtain the formula
\begin{equation}\label{eq:LO_singular_part_wigner}
p_{t}(a) = \int_{(a/(2\sqrt t))^{2\beta}}^{t^{\beta/(2\beta-1)}} \frac {\dint (r^{1/\beta})} {\pi \sqrt{4 t r^{1/\beta} - a^2}}
=
\int_0^{\sqrt{4 t \cdot t^{1/(2\beta-1)} - a^2}} \frac {\dint x} {2\pi t} = \frac {\sqrt{4 t^{2\beta/(2\beta-1)} - a^2}}{2\pi t},
\end{equation}
where we used the substitution $x= \sqrt{4 t r^{1/\beta} - a^2}$ with  $\dint x = (2 t / x) \dint (r^{1/\beta})$. The right-hand side is
the density of the Wigner law on $[-2t^{\beta/(2\beta-1)}, 2t^{\beta/(2\beta-1)}]$ multiplied by $t^{1/(2\beta-1)}$, which proves our claim.

The remainder of the proof of Proposition~\ref{prop:littlewood_offord_push_forward_beta_greater_one_half} follows the same method as the proof of Theorem~\ref{theo:pushforward} with some modifications which we are now going to describe.   It follows from Lemma~\ref{lem:J_r_t_maps_conformally} that the points $w\in \C$ with fixed $|w| = r \in (t^{\beta/(2\beta-1)}, 1)$ are mapped by $\tilde T_t$ to the ellipse $\partial \mathcal E_{r,t}$ with half-axes $r + t r^{(1/\beta)-1}$ and $r - t r^{(1/\beta)-1}$.
The crucial observation is that the functions $r\mapsto r +  t r^{(1/\beta) - 1}$ and $r\mapsto  r -  t r^{(1/\beta) - 1}$ are monotone increasing as long as $r\in [t^{\beta/(2\beta-1)} , 1]$. Indeed, their derivatives are $r\mapsto 1 \pm ((1/\beta)-1) t r^{(1/\beta)-2}$. The minimum of these derivatives is attained either at $r= t^{\beta/(2\beta-1)}$ or at $r=1$. Taking $r=1$ gives the value $1 \pm ((1/\beta)-1) t \geq 1 - |(1/\beta)-1| > 0$ since $\beta>1/2$ and $0< t < 1$. Taking $r= t^{\beta/(2\beta-1)}$ gives the value $1 \pm ((1/\beta)-1) \geq 1 - |(1/\beta)-1| > 0$ since $\beta>1/2$. So, the half-axes of the ellipses $\partial \mathcal E_{r,t}$ with $r\in (t^{\beta/(2\beta-1)} , 1)$ are monotone increasing and it follows that these ellipses foliate the ellipse $\mathcal E_t$ with half-axes $1+t$ and $1-t$ with exception of the interval $[-2t^{\beta/(2\beta-1)}, +2t^{\beta/(2\beta-1)}]$. This makes it possible to repeat the proof of Theorem~\ref{theo:pushforward}, provided we exclude the disk $|w|\leq t^{\beta/(2\beta-1)}$ from consideration. In particular, one shows that $\alpha_t(T_t(w)) = \alpha_0(w)$ for all $w\in \C$ with $|w|> t^{\beta/(2\beta-1)}$ and that the Stieltjes transform of $\nu_t$ is given by
$$
m_{t}(T_t(w))
\coloneqq
\int_{\C}\frac{\nu_{t}(\dint y)}{T_t(w) - y}  = \frac{\alpha_0(w)}{w},
\qquad
|w|> t^{\beta/(2\beta-1)}.
$$

It remains to compute the Stieltjes transform of the push-forward $(\tilde T_t)_\#\nu_{0}$. We decompose it into two parts as follows:
\begin{align*}
\tilde m_{t}(z)
\coloneqq\int_{\C}\frac{\nu_{0}(\dint y)}{z- \tilde T_t(y)}
=\int_{\{|u| < t^{\beta/(2\beta-1)}\}} \frac{\nu_{0}(\dint y)}{z- \tilde T_t(y)}
+
\int_{\{t^{\beta/(2\beta-1)} < u < 1\}} \frac{\nu_{0}(\dint y)}{z- \tilde T_t(y)}
=: I_1 + I_2.
\end{align*}
The first integral can be evaluated using formula~\eqref{eq:wigner_stieltjes} for the Stieltjes transform of the Wigner measure~\eqref{eq:LO_singular_part_wigner}:
$$
I_1 \coloneqq \int_{\{|u| < t^{\beta/(2\beta-1)}\}} \frac{\nu_{0}(\dint y)}{z- \tilde T_t(y)}
=
t^{1/(2\beta-1)} \cdot \frac {z - \sqrt{z^2 -4t^{2\beta/(2\beta-1)}}} {2t^{2\beta/(2\beta-1)}}
=
\frac {z - \sqrt{z^2 -4t^{2\beta/(2\beta-1)}}} {2t}
.
$$
The second integral can be evaluated by repeating the arguments used in the proof of Theorem~\ref{theo:pushforward},  Part~(a). Recall that the measure $\nu_0$ is rotationally invariant and $\nu_0(\{|z|<r\}) = r^{1/\beta}$ for $0\leq r\leq 1$. Hence,
\begin{align*}
I_2\coloneqq \int_{\{t^{\beta/(2\beta-1)} < u < 1\}} \frac{\nu_{0}(\dint y)}{z- \tilde T_t(y)}
&= \frac 1 {2\pi} \int_{t^{\beta/(2\beta-1)}}^1 \int_0^{2\pi}\frac{\dint \varphi}{z- \tilde T_t(r \eee^{i\varphi})}  \, \dint (r^{1/\beta})\\
&= \int_{t^{\beta/(2\beta-1)}}^1  \frac{1}{2\pi i} \oint_{\partial B_r(0)} \frac{\dint y}{yz-y^2-tr^{1/\beta}}   \, \dint (r^{1/\beta}),
\end{align*}
where we used the substitution $y= r \eee^{i\varphi}$ and the formula $\tilde T_t(y) = y + t r^{1/\beta}/y$. The inner integral can be analyzed using residue calculus as in the proof of Theorem~\ref{theo:pushforward},  Part~(a). With the notation $z= \tilde T_t(w) \in \C\backslash [-2t^{\beta/(2\beta-1)}, 2t^{\beta/(2\beta-1)}]$ for some $w\in \C$ with $|w|> t^{\beta/(2\beta-1)}$, the result is
$$
\frac{1}{2\pi i} \oint_{\partial B_r(0)} \frac{\dint y}{yz-y^2-tr^{1/\beta}}
=
\frac{1}{\sqrt{z^2-4t r^{1/\beta}}},
\quad
\text{ if }
\quad
t^{\beta/(2\beta-1)} < r < |w|,
$$
while for $|w| < r$ the integral vanishes.
Recall that $\alpha_0(w) = \min \{1, |w|^{1/\beta}\}$. It follows that
\begin{align*}
I_2
=
\int_{t^{\beta/(2\beta-1)}}^{\min\{|w|,1\}}  \frac{\dint (r^{1/\beta})}{\sqrt{z^2-4t r^{1/\beta}}}
&=
-\int_{\sqrt{z^2 - 4 t \cdot t^{1/(2\beta-1)}}}^{\sqrt{z^2 - 4t \alpha_0(w)}} \frac{\dint x}{2t}\\
&=
\frac {\sqrt{z^2 - 4  t^{2\beta/(2\beta-1)}} - \sqrt{z^2 - 4t \alpha_0(w)}} {2t}, 
\end{align*}
where we used the substitution $x = \sqrt{z^2-4t r^{1/\beta}}$ with $\dint x = - (2t/x)  \dint (r^{1/\beta})$. Taking our formulas for $I_1$ and $I_2$ together and recalling that $z= \tilde T_t(w) = w + t \alpha_0(w)/w$ gives
$$
\tilde m_{t}(z)
=
\int_{\C}\frac{\nu_{0}(\dint y)}{z- \tilde T_t(y)}
=
\frac {z - \sqrt{z^2 - 4t \alpha_0(w)}} {2t}
=
\frac {z - \sqrt{z^2 - 4t \alpha_0(w)}} {2t}
=
\frac{\alpha_0(w)}{w}
.
$$
By comparing the Stieltjes transforms, it follows that $\nu_t$ coincides with the push-forward $(\tilde T_t)_\#\nu_{0}$.
\end{proof}

\subsection{Proofs of the local results around Theorem~\ref{theo:local_push_forward}}\label{subsec:proof_local}
\begin{proof}[Proof of Proposition~\ref{prop:Stieltjes}]
For all $(z,t) \in \mathcal D$, the maximizer $\alpha_t(z)$ of $f_t(\alpha,z)$  satisfies $\partial_z\alpha_t(z)=0$ (in cases D0 and D1) or $(\partial_\alpha f_t)(\alpha_t(z),z)=0$ (in case D2). In all three cases,
\begin{equation}\label{eq:prod_ders_vanish}
(\partial_\alpha f_t)(\alpha_t(z),z)\partial_z \alpha_t(z) = 0.
\end{equation}
We denote the derivative of $f_t(\alpha, z)$ in its first argument by
$\partial_\alpha f_t$. By~\eqref{eq:f_def} and the chain rule, the Wirtinger derivative of $f_t(\alpha, z)$ in its second argument is given by
\begin{align*}
\partial_z f_t (\alpha, z)
&=
\alpha \partial_z \left[\Psi\left(\frac{z}{\sqrt{t\alpha}}\right)\right]
=
\frac{\sqrt \alpha}{\sqrt t} (\partial_z \Psi)\left(\frac{z}{\sqrt{t\alpha}}\right)
=
\frac{\sqrt \alpha}{4 \sqrt t}  \left(\frac{z}{\sqrt{t\alpha}} - \sqrt{\frac{z^2}{t\alpha} - 4}\right)\\
&=
\frac 1 {4t} \left(z - \sqrt{z^2 - 4\alpha  t}\right)
,
\quad
z\in \C \backslash [-2\sqrt{\alpha t},2\sqrt{\alpha t}],
\end{align*}
where we used the formula $\partial_z \Psi(z) = \frac 14 ( z - \sqrt{z^2 - 4})$ for $z\in \C \backslash [-2,2]$; see~\eqref{eq:Stieltjes_semicircle}.
Recall from Theorem~\ref{theo:main_general_g} that $U_{t}(z) =  f_t(\alpha_t(z),z) - g(1)$.  In cases D1 and D2, the chain rule and~\eqref{eq:prod_ders_vanish} give
\begin{align}
\partial_z U_{t}(z)
&=
(\partial_\alpha f_t)(\alpha_t(z),z)\partial_z \alpha_t(z)+ (\partial_z f_t)(\alpha_t(z),z) \partial_z z + (\partial_{\overline{z}} f_t)(\alpha_t(z),z) \partial_z \overline{z}\notag \\
&=
(\partial_z f_t)(\alpha_t(z),z) \notag\\
&=
\frac 1 {4t} \left(z - \sqrt{z^2 - 4\alpha_t(z)  t}\right)
,
\quad
z\in \C \backslash [-2\sqrt{\alpha_t(z) t},2\sqrt{\alpha_t(z) t}].\label{eq:partial_z_U_t_z}
\end{align}
Note that the latter condition on $z$ is fulfilled by \eqref{eq:z_not_in_I}.   Observe that~\eqref{eq:partial_z_U_t_z} continues to hold in case D0 since then $\alpha_t(z) \equiv 0$ implying  $U_t(z) = f_t(0,z) - g(1) = g(0)- g(1)$ and $\partial_z U_{t}(z) = 0$. In all three cases, we have
$$
m_t(z) = 2 \partial_z U_t(z) = \frac 1 {2t} \left(z - \sqrt{z^2 - 4\alpha_t(z)  t}\right).
$$
Solving this in $\alpha_t(z)$ gives
\begin{equation}\label{eq:alpha_t_as_function_of_m_t_rep}
\alpha_t(z) = m_t(z) (z - t m_t(z)),
\end{equation}
which completes the first part of the claim. The last claim \eqref{eq:m_t_alpha_0} follows from combining previous considerations: By Lemma \ref{lem:f_der_in_alpha} and the definition \eqref{eq:T_inverse} of the local inverse $T_t^{-1}$, $\alpha_0(T_t^{-1})$ is a critical point of $f_t(\cdot,z)$ and by uniqueness it follows $\alpha_t(z)=\alpha_0(T_t^{-1}(z))$.
\end{proof}

\begin{proof}[Proof of Theorem \ref{theo:local_push_forward}]
If $(z,t)$ is regular, we have a local bijection and we shall write $z=T_t(w)$ and $w=T_t^{-1}(z)$, respectively.
Similar to \eqref{eq:Jacobian}, the Jacobian $DT_t^{-1}$ of $T_t^{-1}$ can be rephrased as the inverse Jacobian of $T_t$ by the inverse function theorem
\begin{align*}
\left(
\begin{array}
[c]{cc}%
\partial_z T_t^{-1}(z) & \partial_{\bar{z}} T_t^{-1}(z)\\
\partial_z\overline{T_t^{-1}(z)} & \partial_{\bar z}\overline{T_t^{-1}(z)}%
\end{array}
\right)
=DT_t^{-1}(z)
=DT_t(w)^{-1}
&=\left(
\begin{array}
[c]{cc}%
\partial_w T_t(w) & \partial_{\bar{w}} T_t(w)\\
\partial_w\overline{T_t(w)} & \partial_{\bar w}\overline{T_t(w)}%
\end{array}
\right)^{-1}\\
&=\frac{1}{|DT_t(w)|}\left(
\begin{array}
[c]{cc}%
\partial_{\bar w} \overline{T_t(w)} &- \partial_{\bar{w}} T_t(w)\\ -
\partial_w\overline{T_t(w)} & \partial_{ w}T_t(w)%
\end{array}
\right)
\end{align*}
Recall that by Proposition \ref{prop:Stieltjes} or Remark \ref{rem:Wirtinger_logpot_Stieltjes} the density is given by $\pi p_t(z)=\partial_{\bar z}m_t(z)$. By the chain rule we have
\begin{align*}
\partial_{\bar{z}}m_{t}(z)
&=\partial_{\bar{z}}\big(m_{0}(T_t^{-1}(z))\big)=(\partial_z m_0)(T_t^{-1}(z))\cdot\partial_{\bar z}T_t^{-1}(z) + (\partial_{\bar z}m_0)(T_t^{-1}(z))\cdot\partial_{\bar z}\overline{T_t^{-1}}(z)
\\
&=\frac{1}{|DT_t(w)|}\Big(-(\partial_w m_0)(w)\cdot\partial_{\bar w}T_t(w) + (\partial_{\bar w}m_0)(w)\cdot\partial_{w}T_t(w)\Big).
\end{align*}
But since $T_{t}(w)=w+tm_{0}(w)$, we have $\partial_{\bar w}T_t(w)=t\partial_{\bar w} m_0(w)$ and $\partial_w T_t(w)=1+t\partial_w m_0(w)$
giving
\begin{align*}
\partial_{\bar{z}}m_{t}(z)
&=\frac{1}{|DT_t(w)|}\Big(-\partial_w m_0(w)\cdot t\partial_{\bar w} m_0(w) + \partial_{\bar w}m_0(w)+\partial_{\bar w}m_0(w)\cdot t\partial_w m_0(w)\Big)\\
&=\frac{\partial_{\bar w } m_0(w)}{|DT_t(w)|}.
\end{align*}
After dividing this result by $\pi,$ the theorem follows by the change of
variables theorem: the density of $(T_{t})_{\#}(\nu_{0})$ at $z=T_t(w)$ is just
$1/{|DT_t(w)|}$ times the density of $\nu_{0}$ at $w$.
\end{proof}

\subsection{Optimal transport for Weyl polynomials: Proof of Theorem~\ref{theo:OT}}\label{subsec:proof_OT}
By a rescaling argument of the polynomial $P_n$, and hence its coefficients, we may assume without loss of generality that $g(0)=0$.
Since the Theorem \ref{theo:pushforward} implies that $\nu_0$ does not charge hypersurfaces, we may apply Brenier's theorem \ref{thm:Brenier}.
As described above Theorem \ref{theo:OT}, this implies that $T$ is an optimal transport map iff it is the gradient of a convex function. Indeed, this is the case for Weyl polynomials. For $t<1$, Theorem \ref{theo:pushforward} shows that $\nu_t$ is a push-forward under $T_t$ and the function\footnote{This function is $U_t(T_t(z))$ up to an additive constant; see Section~\ref{Weyl.sec}.}
$$
\Phi(z)\coloneqq\frac{1+t}{2}\Re(z)^2+\frac{1-t}{2}\Im (z)^2
$$
is convex with gradient given by $z+t\bar z = T_t(z)$. Note that even though $T_1$ is not injective and Theorem \ref{theo:pushforward} does technically not apply, it is true that $\nu_{1}={T_1}_\#\nu_0$ (which is the semicircle distribution).
For any $0\le s,t\le 1$, it holds
\begin{align*}
W_2(\nu_s,\nu_t)\le \Big(\int |T_s(w)-T_t(w)|^2d\nu_0(w)\Big)^{1/2}=|t-s|W_2(\nu_0,\nu_1).
\end{align*}
Therefore, it follows from the triangle inequality of the Wasserstein metric that $(\nu_{t})_{t\in[0,1]}$ is a constant speed geodesic, see \cite[Theorem 5.27]{Santa}.

Conversely, assume there would be a convex function $\Phi$ such that its gradient\footnote{here, we identify $\nabla=2\partial_{\bar z}=2\bar \partial_z$.} is given by
\[\nabla \Phi(z)=T_t(z)=z+t\frac{\bar z\alpha_0(z)}{|z|^2}=x(1+\beta(x^2+y^2))+iy(1-\beta(x^2+y^2)),\]
where we denoted $z=x+iy$ and abbreviated the radial function $\beta(x^2+y^2)=t\alpha_0(x+iy)/(x^2+y^2)$.
A convex function is $\mathcal C^2$ almost everywhere, hence by differentiating in both real directions again, we have
\[2xy\beta'(x^2+y^2)=\partial_y\partial_x\Phi(z)=\partial_x\partial_y\Phi(z)=-2xy\beta'(x^2+y^2).\]
But this implies $\beta'=0$, or with other words $\alpha_0(z)=c|z|^2$ for some $c>0$. Due to $g(0)=0$, this is the case of $\nu_0$ being the uniform measure on $\bD$.

\subsection{Evenly spaced roots: Proof of Proposition~\ref{prop:evenly_spaced_roots}}
Let first  $t>0$. Observe that, by~\eqref{eq:heat_flow_z_n_hermite_poly},
$$
\eee^{-\frac t{2n} \partial_z^2} (z^n - r^n) = \Big(\frac t n\Big)^{n/2} \He_n\left(\frac {z\sqrt n}{\sqrt t}\right) - r^n.
$$
By Lemma~\ref{lem:hermite_asymptotics}, the logarithmic rate of growth of this polynomial is given by
\begin{equation}\label{eq:evenly_spaced_log_potential}
V(z) \coloneqq \lim_{n\to\infty} \frac 1n \log \left|\Big(\frac t n\Big)^{n/2} \He_n\left(\frac {z\sqrt n}{\sqrt t}\right) - r^n\right|
=
\max\left\{\frac 12 \log |t| + \Psi\left(\frac z {\sqrt t}\right),\log r\right\},
\end{equation}
for all $z\in \C \backslash( [-2\sqrt t,2\sqrt t]\cup L_{r,t})$, where $L_{r,t}$ is the set of all $z\in \C$ for which the terms under the maximum are equal; see~\eqref{eq:def_level_set_L_r_t_evenly_spaced}.  To prove that the empirical distribution of zeros of $\eee^{-\frac t{2n} \partial_z^2} (z^n - r^n)$ converges to $\frac 1{2\pi} \Delta V$ it suffices to check that~\eqref{eq:evenly_spaced_log_potential} holds in $L^1_{\mathrm{loc}}$ and all roots of $\eee^{-\frac t{2n} \partial_z^2} (z^n - r^n)$ are located in some disk $\{|z|\leq B\}$ not depending on $n$. All zeros of the monic polynomial  $(\frac t n)^{n/2} \He_n(z\sqrt n/\sqrt t)$ are located in $[-3\sqrt t, + 3\sqrt t]$. Denoting them by $y_{1:n}, \ldots, y_{n:n}$ we have
$$
\frac 1n \log \left|\Big(\frac t n\Big)^{n/2} \He_n\left(\frac {z\sqrt n}{\sqrt t}\right)\right|
=
\frac 1n \sum_{j=1}^n \log |z - y_{j:n}|
\geq
\frac 1n \sum_{j=1}^n \log |z/2| = \log |z/2| \geq  2 \log r
$$
provided $|z|\geq  B \coloneqq \max\{6\sqrt t, 2r^2\}$. It follows that all zeros of $\eee^{-\frac t{2n} \partial_z^2} (z^n - r^n)$, which we denote by $u_{1:n}, \ldots, u_{n:n}$, are located in $\{|z|\leq B\}$, for all $n\in \N$. To prove the $L^1_{\mathrm{loc}}$-convergence in~\eqref{eq:evenly_spaced_log_potential} it  suffices to check the local boundedness in $L^2$, meaning that
$$
\sup_{n\in \N} \frac 1 {n^2} \int_K   \log^2 \left|\prod_{j=1}^n (z - u_{j:n})\right|\, \dint z \dint \bar z <\infty,
$$
for every disk $K = \{|z|< C_0\}$. By the inequality between arithmetic and quadratic means we have
\begin{align*}
\frac 1 {n^2} \int_K   \log^2 \left|\prod_{j=1}^n (z - u_{j:n})\right|\, \dint z \dint \bar z
&\leq
\frac 1 {n} \int_K \sum_{j=1}^n \log^2 |z - u_{j:n}|\, \dint z \dint \bar z\\
&\leq
\frac 2 {n} \sum_{j=1}^n \int_K  (\log^2_+ |z - u_{j:n}| + \log^2_- |z - u_{j:n}|)\, \dint z \dint \bar z.
\end{align*}
The right-hand side is bounded uniformly in $n$ since $\log^2_+ |z - u_{j:n}|$ is uniformly bounded (recall that $|z|\leq C_0$ and $|u_{j,n}|\leq B$) and since $\int_K\log^2_- |z - u_{j:n}|\, \dint z \dint \bar z \leq \int_{\{|w|\leq 1\}}\log^2_- |w| \, \dint w \dint \bar w <\infty$.

Let us finally prove parts (a), (b), (c) of Proposition~\ref{prop:evenly_spaced_roots}.
Recall that $\Psi(z)$ is harmonic outside $[-2,2]$. It follows that the function  $V$ is harmonic outside $L_{r,t}\cup [-2\sqrt t, + 2\sqrt t]$, hence $\Delta V$ is concentrated on this set. For $z\in [-2\sqrt t, + 2\sqrt t]$, we have $\Psi(z/\sqrt t) = z^2/(4t) - \frac 12$; see~\eqref{eq:Psi_for_x_real_smaller_2}. \emph{Proof of (a)}: If $0< t\leq r^2/\eee$, then $\Psi(z/\sqrt t) < \log (r/\sqrt t)$ for all $z\in (-2\sqrt t, + 2\sqrt t)$, hence the function $V(z)$ stays constant in a small open set containing this interval and the support of $\Delta V$ is contained in $L_{r,t}$. \emph{Proof of (b)}:  If $t>r^2/\eee$, then $\Psi(z/\sqrt t) < \log (r/\sqrt t)$ for all $z\in (-\sqrt{2 t \log (r^2 \eee /t)}, +\sqrt{2 t \log (r^2 \eee /t)})$ (which is a subset of the interval $(-2\sqrt t, + 2\sqrt t)$), hence the function $V(z)$ stays constant in a small open set containing the former interval and (b) follows. \emph{Proof of (c)}: Let us check that $\inf_{z\in \C}\Psi(z) = \Psi(0) = -1/2$. Since $\Psi$ is continuous and $\lim_{|z| \to\infty} {\Psi(z)}/{\log|z|}=1$, the infimum is attained somewhere in $\C$. By \eqref{eq:Stieltjes_semicircle}, for all $z\in \C\backslash [-2,2]$ we have,
$$
\partial_z \Psi(z)=\frac{1}{4}(z-\sqrt{z^2-4})\neq 0.
$$
This implies that  $\Psi$ has no critical points outside of $[-2,2]$ since at any such point we have $\frac{\partial}{\partial x} \Psi(z)  = \frac{\partial}{\partial y} \Psi(z) = 0$ implying that $\partial_z \Psi(z) = 0$. So, the infimum is attained on the interval $[-2,2]$ and must be $\Psi(0) = -1/2$ by~\eqref{eq:Psi_for_x_real_smaller_2}.   It follows that $V(z) = \log \sqrt t + \Psi(z/\sqrt t)$ for all $t>r^2\eee$ and hence $\mu_{r,t}$ is the Wigner law on $[-2\sqrt t, 2\sqrt t]$.

Note that the above analysis applies if $r\neq 0$ is complex, provided we replace $r$ by $|r|$ throughout.  To prove part (d), let $t= |t|\eee^{\ii \theta/2}\in \C \backslash\{0\}$.  Then,
$$
\eee^{-\frac t{2n} \partial_z^2} (z^n - r^n) = \left(\frac {\sqrt{|t|} \, \eee^{\ii \theta/2}} {\sqrt n}\right)^{n} \He_n\left(\frac {z\sqrt n}{\sqrt {|t|}\, \eee^{\ii \theta/2}}\right) - r^n.
$$
We can absorb the term $\eee^{\ii \theta/2}$  into $z$ and $r$ and reduce the problem to the one already solved.
\hfill $\Box$

\subsection{Proofs of ODE's and PDE's}\label{subsec:proof_PDE}
For completeness, let us first provide the well known arguments that lead to the finite $n$ equations.

\begin{proof}[Proof of \eqref{eq:ODE_for_poly}, \eqref{eq:PDE_finite_n} and \eqref{eq:PDE_finite_n_stieltjes}]
The derivatives of $P_n(z;t)=\xi_n a_{n,n}\prod_{j=1}^n (z-z_j(t))$ are given by
\begin{align*}
\partial_z P_n(z;t)&=\xi_n a_{n,n}\sum_{k=1,\ldots,n}\prod_{\substack{j=1,\dots,n\\j\neq k}}(z-z_j(t))\\
\partial_z^2 P_n(z;t)&=\xi_n a_{n,n}\sum_{\substack{k,l=1,\dots,n\\k\neq l}}\prod_{\substack{j=1,\dots,n\\j\neq k,l}}(z-z_j(t))
\end{align*}
and from the backward heat equation we know that $\partial_t P_n(z;t)=-\frac{1}{2n}\partial_z^2P_n(z;t)$. In particular, for a zero $z_j(t)$ it follows from the chain rule
\begin{align*}
0=\partial_t \big(P_n(z_j(t);t) \big)=\big(\partial_t P_n\big)(z_j(t);t)+\partial_z P_n(z_j(t);t)\partial _t z_j(t),
\end{align*}
which is equivalent to \eqref{eq:ODE_for_poly}, since
\begin{align}\label{eq:ODE_for_poly_proof}
\partial_t z_j(t)=\frac 1 {2n}\frac{\partial_z^2P_n(z_j(t);t)}{\partial_z P_n(z_j(t);t)}=\frac{1}{n}\sum_{\substack{k=1,\dots,n\\k\neq j}}\frac{1}{z_j(t)-z_k(t)}.
\end{align}
For the logarithmic potential $U_t^{(n)}$ of the empirical distribution $\llbracket P_n(\cdot;t) \rrbracket $ it follows for all $z\neq z_1(t),\dots,z_n(t)$ that
\begin{align*}
\partial_t U_t^{(n)}(z)=\partial_t \frac{1}{n} \log | P_n(z;t)|&=\frac{1}{n} \frac{\partial_t P_n(z;t)}{2P_n(z;t)}+0 \\
&=-\frac 1 {2n^2}\frac{\partial_z^2 P_n(z;t)}{2P_n(z;t)}\\
&=-\frac 1 {4n^2}\sum_{\substack{j,k=1,\dots,n\\j\neq k}} \frac{1}{(z-z_j(t))(z-z_k(t))} ,
\end{align*}
where we used the complex chain rule with zero antiholomorphic part. On the other hand
\begin{align*}
(\partial_z U_t^{(n)}(z))^2=\left( \frac{1}{2n}\frac{\partial_z P_n(z;t)}{P_n(z;t)}+0\right)^2=\frac 1 {4n^2}\sum_{j,k=1,\dots,n} \frac{1}{(z-z_j(t))(z-z_k(t))} ,
\end{align*}
hence what remains in the difference is the diagonal term
\begin{align*}
\partial_t U_t^{(n)}(z)+(\partial_z U_t^{(n)}(z))^2=\frac 1 {4n^2}\sum_{j=1,\dots,n} \frac{1}{(z-z_j(t))^2}.
\end{align*}
Finally, \eqref{eq:PDE_finite_n_stieltjes} follows immediately from \eqref{eq:PDE_finite_n} by applying $2\partial_z$.
\end{proof}

We now turn to the derivation of the PDE's for $U_t$ and $m_t$.
\begin{proof}[Proof of Theorem \ref{theo:PDE}]
On the one hand, squaring~\eqref{eq:partial_z_U_t_z} gives
\begin{align*}
-\left(\partial_z U_{t}(z)\right)^2
&=
-\frac{1}{16t^2}\left(z^2-2z\sqrt{z^2-4t\alpha_t(z)}+z^2-4t\alpha_t(z) \right)\\
&=
\frac{\alpha_t(z)}{4t}-\frac{z}{8t^2}\left(z-\sqrt{z^2-4t\alpha_t(z)}\right).
\end{align*}
On the other hand, to compute the $\partial_t (U_{t}(z))$, we use the chain rule:
\begin{align*}
\partial_t (U_{t}(z))
=
\partial_t \left(f_t(\alpha_t(z),z)\right)
=
(\partial_\alpha f_t)(\alpha_t(z),z)\partial_t \alpha_t(z)+(\partial_t f_t)(\alpha_t(z),z)
=
(\partial_t f_t)(\alpha_t(z),z),
\end{align*}
where the last equality holds since $\partial_t\alpha_t(z)=0$ (in cases (D0) and (D1)) or $(\partial_\alpha f_t)(\alpha_t(z),z)=0$ (in case (D2)).
Using the expression for $f_t(\alpha, z)$ given in Theorem~\ref{theo:main_general_g} and the formula $\partial_t\log\sqrt{|t|}=1/(4t)$ (recall that $\partial_t$  is the Wirtinger derivative in the \emph{complex} variable $t$, explaining the strange constant $1/4$) we obtain
\begin{align*}
\partial_t f_t(\alpha,z)
&=
\alpha \left(\frac 1 {4t} - \left(\partial _z\Psi\right)\left(\frac{z}{\sqrt{t\alpha}}\right) \frac{z}{2t^{3/2}\sqrt{\alpha}}\right)\\
&=
\frac \alpha {4t}-\frac{\alpha z}{8t\sqrt{t\alpha}}\left(\frac{z}{\sqrt{t\alpha}}-\sqrt{\frac{z^2}{t\alpha}-4}\right)\\
&=
\frac{\alpha}{4t}-\frac{z}{8t^2}\left(z-\sqrt{z^2-4t\alpha}\right).
\end{align*}
It follows that
$$
\partial_t U_{t}(z)
=
(\partial_t f_t)(\alpha_t(z),z)
=
\frac{\alpha_t(z)}{4t}-\frac{z}{8t^2}\left(z-\sqrt{z^2-4t\alpha_t(z)}\right)
=
-(\partial_z U_{t}(z))^2,
$$
thus proving part~(a) of the theorem.

Part~(b) again follows from the formula  $m_t(z) = 2 \partial_z U_t(z)$ and the PDE for $U_t$, more precisely
$$
\partial_t m_t(z)
=
2 \partial_t \partial_z U_t(z)
=
2 \partial_z \partial_t U_t(z)
=
- 2 \partial_z [(\partial_z U_{t}(z))^2]
=
- \frac 12  \partial_z [m_{t}^2(z)]
=
- m_t(z) \partial_z m_t(z).
$$
Since the function $U_t(z)$ takes real values,  $\partial_{\overline{t}} U_t(z)$ is the complex conjugate of $\partial_t U_t(z)$. It follows that
$$
\partial_{\overline{t}}  m_t(z)
=
2 \partial_{\overline{t}} \partial_z U_t(z)
=
2 \partial_z \partial_{\overline{t}} U_t(z)
=
2 \partial_z  \overline{\partial_{t} U_t(z)}
=
- \frac 12  \partial_z [\overline{m_{t}^2(z)}]
=
- \overline{m_t(z)} \partial_z \overline{m_t(z)}.
$$

Part (c) follows by applying the operator $(2/\pi)\partial_z \partial_{\bar z}$ to both sides of the PDE for $U_t$, leaving the $z$-derivative unevaluated on the right-hand side of the PDE.

In order to prove of part (d), recall that in the proof of Theorem \ref{theo:pushforward}, we verified that the condition $z/\sqrt{t \alpha_t(z)}\not\in[-2,2]$ is fulfilled, see \eqref{eq:z_not_in_I}. Moreover, it follows from Theorem \ref{theo:pushforward} that $\mathcal D_0=\{(T_t(w),t)\in\C: |w|<\eee^{-g'(0)}, 0<|t|<t_{\mathrm{sing}}\}$, $\mathcal D_1=\{(T_t(w),t)\in\C: |w|>\eee^{-g'(1)}, 0<|t|<t_{\mathrm{sing}}\}$ and $\mathcal D_2=\{(z,t)\in\C: z\in\mathcal R_t, 0<|t|<t_{\mathrm{sing}}\}$ satisfy (D0), (D1) and (D2), respectively, in Definition \ref{def:regular_points} of regular points. Hence, we excluded only the 1-dimensional curves $\partial \mathcal R_t=\partial\mathcal E_{\eee^{-g'(0)},t}\cup\partial\mathcal E_{\eee^{-g'(1)},t}$. On the other hand, Theorem \ref{theo:pushforward}~(e) states that $U_t$ is differentiable for all $z\in\C$ (actually, we show in \eqref{eq:partial_z_U_t_z} that the derivatives match on both sides of the curves). Therefore, the domains $\mathcal D_0,\mathcal D_1$ and $\mathcal D_2$ can be glued together and the PDE can be continuously extended to $\mathcal D\coloneqq\C\times B_{t_{\mathrm{sing}}}(0)$. More precisely, the right hand side $-\left(\partial_z U_t(z)\right)^2$ of \eqref{eq:PDE_global} exists as a continuous function on $\mathcal D$ and the partial derivative $\partial_t U_t(z)$ on $ \mathcal D_0\cup\mathcal D_1\cup \mathcal D_2$ can be continuously extended to a weak derivative $\partial_t U_t(z)$ on whole $\mathcal D$, but a continuous weak derivative $\partial_t U_t (z)$ of a continuous function is continuously differentiable, see for instance \cite[\S 9.1, Remark 2]{brezis_book}.
\end{proof}

%
%

\subsection*{Acknowledgments}
We thank Martin Huesmann for suggesting the example in Section~\ref{martin.sec}.

\subsection*{Funding}
BH is supported in part by a grant from the Simons Foundation. CH is supported in part by the MoST grant 111-2115-M-001-011-MY3.
JJ and ZK are funded by the Deutsche Forschungsgemeinschaft (DFG, German Research Foundation) under Germany's Excellence Strategy EXC 2044 - 390685587, Mathematics M\"unster: \emph{Dynamics-Geometry-Structure} and have been supported by the DFG priority program SPP 2265 \emph{Random Geometric Systems}.

\bibliography{BIB_weyl_heat_flow_bib}
\bibliographystyle{plainnat}

\end{document}